\documentclass[11pt,reqno, a4paper]{amsart} 

\newcommand\version{August 6, 2026}

\usepackage{epsfig}
\usepackage{graphicx}
\usepackage{enumerate}
\usepackage{color} 
\usepackage[toc,page]{appendix}

\usepackage{setspace}

\usepackage[dvipsnames]{xcolor}
\usepackage{bm,bbm}
\usepackage{cancel}
\usepackage{enumitem}

\usepackage{physics}

\usepackage{amsmath,euscript}
\usepackage{amsthm}
\usepackage{amssymb}
\usepackage{graphicx}
\usepackage{mathrsfs}
\usepackage{epsf}
\usepackage{xcolor}
\usepackage{psfrag}
\usepackage{enumerate}
\usepackage{amsfonts,amssymb,amsthm,amsmath} 
\usepackage[unicode]{hyperref}

\usepackage[numbers]{natbib}
\usepackage{enumitem} 
\usepackage{xspace}
\usepackage[margin=1.37in]{geometry}

\usepackage{changes}
\definechangesauthor[name={SB}, color=magenta]{sb}
\definechangesauthor[name={JF}, color=blue]{jf}
\definechangesauthor[name={ML}, color=red]{ml}
\definechangesauthor[name={MS}, color=green]{ms}

\usepackage{soul}

\newtheorem{thm}{Theorem}[section]
\newtheorem{lemma}[thm]{Lemma}
\newtheorem{prop}[thm]{Proposition}
\newtheorem{cor}[thm]{Corollary}

\theoremstyle{remark}

\theoremstyle{definition}

\numberwithin{equation}{section}

\usepackage{soul}
\setstcolor{red}

\newcommand{\nc}{\newcommand}

\nc{\la}{\label}
\nc{\ba}{\begin{array}}
\nc{\ea}{\end{array}}
\nc{\bs}{\begin{split}}
\nc{\es}{\end{split}}

\newcommand{\R}{\mathbb{R}}
\newcommand{\C}{\mathbb{C}}
\newcommand{\Z}{\mathbb{Z}}
\newcommand{\T}{\mathbb{T}}

\newcommand{\cD}{\mathcal{D}}
\newcommand{\cC}{\mathcal{C}}

\newcommand{\cF}{\mathcal{F}}
\newcommand{\cG}{\mathcal{G}}
\newcommand{\cH}{\mathcal{H}}

\newcommand{\cL}{\mathcal{L}}
\newcommand{\cN}{\mathcal{N}}       

\newcommand{\fh}{\mathfrak{h}}
\nc{\ran}{\rangle}
\nc{\lan}{\langle}

\newcommand{\one}{\mathbf{1}}

\newcommand{\Ran}{\operatorname{Ran}}

\renewcommand{\Re}{\mathrm{Re}} 
\renewcommand{\Im}{\mathrm{Im}} 

\nc{\bfone}{{\bf 1}}
\nc{\1}{{\bf 1}}

\usepackage{centernot}

\usepackage[normalem]{ulem}

\newcommand{\DETAILS}[1]{}

\nc{\den}{\text{den}}
\nc{\ex}{\text{xc}}

\usepackage{soul}

\usepackage{tikz}
\usetikzlibrary{automata,positioning,arrows}
\begin{document}
\title[From Bogoliubov--de Gennes to Ginzburg--Landau]{From Bogoliubov--de Gennes to Ginzburg--Landau: Critical Points Near $T_{\rm c}$ in the Non-Magnetic Case}

\author{Rupert L. Frank}
\address[Rupert L. Frank]{Mathe\-matisches Institut, Ludwig-Maximilians-Universit\"at M\"unchen, The\-resienstr.~39, 80333 M\"unchen, Germany, and Munich Center for Quantum Science and Technology, Schel\-ling\-str.~4, 80799 M\"unchen, Germany, and Mathematics 253-37, Caltech, Pasa\-de\-na, CA 91125, USA}
\email{r.frank@lmu.de}

\author{Christian Hainzl}
\address[Christian Hainzl]{Mathe\-matisches Institut, Ludwig-Maximilians-Universit\"at M\"unchen, The\-resienstr.~39, 80333 M\"unchen, Germany, and Munich Center for Quantum Science and Technology, Schel\-ling\-str.~4, 80799 M\"unchen, Germany}
\email{hainzl@math.lmu.de}

\author{Dong Hao Ou Yang}
\address[Dong Hao Ou Yang]{Mathe\-matisches Institut, Ludwig-Maximilians-Universit\"at M\"un\-chen, The\-resienstr.~39, 80333 M\"unchen, Germany}
\email{ouyang@math.lmu.de}

\date{\version}

\thanks{\copyright\, 2026 by the authors. This paper may be reproduced, in its entirety, for non-commercial purposes.\\
	Partial support through US National Science Foundation grant DMS-1954995 (R.L.F.), as well as through the Deutsche Forschungsgemeinschaft (DFG, German Research Foundation) through Germany’s Excellence Strategy EXC-2111-390814868 (R.L.F.) and through Project-ID 470903074 – TRR 352 (R.L.F., C.H., D.H.O.Y.) is acknowledged.}

\begin{abstract}
	We study the relation between the Bogoliubov--de Gennes (BdG) equation and the Ginzburg--Landau (GL) equation for a BCS model without external fields. While previous rigorous derivations of GL theory from BCS theory have focused on energies and minimizers, here we consider arbitrary critical points in the relevant energy regime. For temperatures close to the critical temperature, we prove that every sufficiently small solution of the BdG equation admits an asymptotic factorization into a microscopic Cooper-pair profile and a macroscopic order parameter. The latter satisfies the GL equation up to an error that vanishes in the scaling limit. We also prove the opposite direction, going from a solution of the GL equation to an approximate solution of the BdG equation. Our analysis relies on a Birman--Schwinger reformulation of the BdG equation, a Lyapunov--Schmidt type reduction, and semiclassical estimates at low regularity. 
\end{abstract}

\maketitle

\tableofcontents

\section{Introduction and main results}\label{sec:intro}

Many physical phenomena can be described both by microscopic and macroscopic models. A microscopic model takes the individual constituent particles into account, while a macroscopic model only considers their distribution. Usually, macroscopic models are much easier to deal with, both analytically and numerically, while microscopic models have the conceptual advantage of being closer to first principles. It is an important and relevant question to understand the relation between macroscopic and microscopic models and, more specifically, to identify parameter regimes in which a microscopic model is approximated by a macroscopic model and to estimate the accuracy of this approximation. This is often associated with a scaling limit.

One way of considering the micro-to-macro transition is on the level of the relevant energies and their minimizers. Another, more refined way is to show that any critical point of the microscopic model, at least in an energy range comparable to the minimal energy, is approximated by a critical point of the macroscopic model. This is precisely what we will do in the present paper in the context of superconductivity.

In 1950, Ginzburg and Landau \cite{LanLifs-81} introduced a model of superconductivity, describing its \textit{macroscopic} properties, which has been very successful and widely used in physics.  They arrived at their conclusion from a phenomenological perspective without consideration of a microscopic mechanism. Critical points of the corresponding energy function satisfy what is known as the \textit{Ginzburg--Landau (GL) equation}.

In 1957, Bardeen, Cooper and Schrieffer \cite{BCS-57} established the first \textit{microscopic} theory of superconductivity based on an effective interacting fermionic many-body Hamiltonian. In this framework, superconductivity arises from a pairing instability of the normal state: an effective attraction between fermionic quasiparticles in the vicinity of the Fermi surface makes the normal state thermodynamically unstable below a critical temperature, and the system develops a nonzero pairing amplitude. The resulting Cooper pairs are extended many-body correlations near the Fermi surface rather than two-particle bound states in the vacuum Schr\"odinger sense, but their collective behavior can be described by an effective complex wave function. For more information we refer to \cite{deGennes-66,LanLifs-81}. Critical points of the Bardeen--Cooper--Schrieffer (BCS) functional satisfy what is known as the \textit{Bogoliubov--de Gennes (BdG) equation}.

In 1959, Gor'kov \cite{Gorkov-59} established on a physical level of rigor a connection between the BCS model and the GL model close to the critical temperature. The first mathematically rigorous proof was given in \cite{FHSS-12} in the case of small external fields that vary on the macroscopic scale; see also \cite{FHSS-16}. For later extensions to magnetic fields with non-vanishing flux, see \cite{FHL-19,DeuHainzlMaier-22,DeuHainzlMaier-23}. These works show that, in the relevant scaling limit, the minimal BCS energy is approximated by the minimal GL energy and that corresponding minimizers are close, in some sense.
 
In this paper, we initiate the study of this approximation on the level of critical points. Our main result is that solutions of the BdG equation (that is, critical points of the BCS functional) are approximated by solutions of the GL equation (that is, critical points of the GL functional) and vice versa. We prove this for temperatures close to the critical temperature and under a natural assumption on the energy of solutions of the BdG equation. Our most serious assumption, which we hope to remove in future work, is that there are no external fields. While this makes the BdG equation translation invariant, we emphasize that we are looking for general, not necessarily translation invariant solutions. Thus, our work retains many of the difficulties that are encountered in the presence of external fields.

Anticipating the precise formulation in Theorems \ref{thm:deriv-GL} and \ref{thm:deriv-BdG}, our main results show that solutions $\alpha$ of the BdG equation and solutions $\psi$ of the GL equation are related by the asymptotic factorization
\begin{equation}
    \label{eq:introprod}
    \alpha(x,y) \approx h\, \psi(h(x+y)/2) \, \alpha_*(x-y)
\end{equation}
with a universal function $\alpha_*$ depending only on the microscopic interaction and the chemical potential. Here $h>0$ is the small asymptotic parameter that characterizes the closeness of the temperature to the critical temperature. Moreover, $\alpha$, the \emph{Cooper pair wave function}, is a function on $\R^d\times\R^d$ that satisfies $\alpha(x,y) = \alpha(y,x)$ and $\alpha(x+z,y+z)=\alpha(x,y)$ for any $x,y\in\R^d$ and $z\in h^{-1}\Z^d$, and $\psi$, the \emph{Ginzburg--Landau order parameter}, is a function on $\R^d$ that satisfies $\psi(X+Z)=\psi(X)$ for all $X\in\R^d$ and $Z\in\Z^d$.

The product form \eqref{eq:introprod}, where the center of mass variable $(x+y)/2$ and the relative variable $x-y$ live on different scales, is reminiscent of the Weyl quantization in the theory of pseudodifferential operators and, indeed, techniques from semiclassical analysis play an important role in what follows. In contrast to other works on semiclassical analysis, however, the smoothness of the objects we are interested in (or rather the control of this smoothness in terms of $h$) is not guaranteed and we need to work at very low regularity.

We now turn towards a rigorous formulation of our main results.


\subsection{The Bogoliubov--de Gennes equation}\label{sec:BCS}
In BCS theory, the state of a superconductor is described by a pair of operators $(\gamma,\alpha)$ on $L^{2}(\R^d)$ that satisfies
\begin{align}\label{BdG-1pdm}
	\gamma=\gamma^{*},\quad\overline{\alpha}=\alpha^{*},\quad 0\leq \Gamma_{(\gamma,\alpha)}:=\begin{pmatrix}
		\gamma	&	\alpha\\
		\overline{\alpha}	&	1-\overline{\gamma}
	\end{pmatrix}\leq \one\text{ on }L^{2}(\R^{d})\oplus L^{2}(\R^{d}),
\end{align}
where $\overline{K}:=C K C$ and $C$ denotes complex conjugation on $L^{2}(\R^{d})$.  On the level of integral kernels, the requirement $\overline{\alpha}=\alpha^*$ means $\alpha(y,x)=\alpha(x,y)$.  Furthermore, we assume $\gamma$ and $\alpha$ are \textit{$h^{-1}\Z^{d}$-periodic operators} in the sense that they commute with translations of the lattice (equivalently, in terms of integral kernel, $\alpha(x+Z/h,y+Z/h)=\alpha(x,y)$ for any $Z\in\Z^{d}$).

The Bogoliubov--de Gennes (BdG) equation that we study in this paper depends on the following `microscopic parameters':
\begin{itemize}
    \item an even function $V$ on $\R^d$,
    \item the chemical potential $\mu\in\R$,
    \item the temperature $T=\beta^{-1}> 0$.
\end{itemize}
The function $-2V$ is the local 2-body potential modeling the interaction of the fermions. In the usual physics applications we have $\mu>0$, but we do not require this.

The BdG equation reads
\begin{align}\label{BdG-eqn3a}
	\boxed{\alpha=-\frac{1}{2}\Big[\tanh\Big(\frac{\beta H(-2V\alpha)}{2}\Big)\Big]_{12} \,.}
\end{align}
Here $V\alpha$ denotes the operator on $L^2(\R^d)$ with integral kernel $V(x-y)\alpha(x,y)$. Moreover, we set
$$
\fh:=-\Delta-\mu
$$
and, for any periodic operator $\sigma$ on $L^2(\R^d)$,
\begin{align*}
	H(\sigma):=\begin{pmatrix}
		\fh	&	\sigma\\
		\overline{\sigma}	&	-\overline{\fh}
	\end{pmatrix} \,.
\end{align*}
Note that \eqref{BdG-eqn3a} is a nonlinear equation since the right side depends on $\alpha$ through the off-diagonal of the operator $H(-2V\alpha)$. We will explain in Appendix \ref{app:derbdg} that the BdG equation implies that there is a $\gamma$ such that $(\gamma,\alpha)$ is a critical point of the BCS functional.


\subsection{The Ginzburg--Landau equation}\label{sec:GL}
In Ginz\-burg--Landau (GL) theory, a superconductor is described a complex-valued function $\psi:\R^{d}\rightarrow\C$ (known as the \textit{order parameter} in Landau's theory).  This functions is $\Z^{d}$-periodic, i.e., $\psi(X+Z)=\psi(X)$ for all $X\in\R^{d}$ and $Z\in\Z^{d}$.

The GL equation reads
\begin{equation}
    \label{GL-eqn}
    \boxed{\big(-\nabla\cdot \Lambda_0\nabla-\Lambda_{2} D\big)\psi+\Lambda_{3}|\psi|^{2}\psi = 0 \,.}
\end{equation}
Here $\Lambda_0$ is a positive definite $d\times d$-matrix and $\Lambda_{2}, \Lambda_{3}$ and $D$ are some positive constants. (The notation may be surprising, but we choose it for consistancy with our previous work \cite{FHSS-16}.) As explained in the introduction, Ginzburg and Landau \cite{GinzLandau-50} viewed $\Lambda_0$, $\Lambda_2$, $\Lambda_3$ and $D$ as phenomenological parameters, while Gorkov \cite{Gorkov-59} interpreted them as arising from a microscopic model.

Clearly, a solution $\psi$ of the GL equation is a critical point of the well-known GL energy functional.


\subsection{Assumptions}
In our model, we assume
\begin{enumerate}
	\item[(A1)] $\mu\in\R$
	\item[(A2)] $V$ is reflection-symmetric (that is, $V(-x)=V(x)$ for all $x$), nonnegative, $V\in L^{1}(\R^{d}) \cap L^{\infty}(\R^{d})$, and $|x|^{2}V\in L^{\infty}(\R^{d})$
\end{enumerate}

To formulate our assumptions on the critical temperature, we recall \cite{HainHaSeriSolov-08} that there is a uniquely defined number $T_c\geq0$ such that
$$
\inf{\rm spec}_{L^2_{\rm symm}(\R^d)} \left( \frac{\mathfrak h}{\tanh \frac{\mathfrak h}{2T}} - V \right) < 0
\qquad\text{for all}\ 0\leq T<T_c
$$
and
$$
\frac{\mathfrak h}{\tanh \frac{\mathfrak h}{2T_c}} - V \geq 0
\qquad 
$$
Here $L_{\rm symm}^{2}(\R^{d})$ denotes the subspace of reflection-symmetric functions in $L^{2}(\R^{d})$.
\begin{enumerate}
	\item[(A3)] We have $T_c>0$.
\end{enumerate}
Assumption (A2) implies that the essential spectrum of the operator $\fh / \tanh \frac{\fh}{2T_c} - V$ in $L^2_{\rm symm}(\R^d)$ is $[2T_c,\infty)$ if $\mu\geq 0$ and $[|\mu| /\tanh(|\mu|/2T_c ),\infty)$ if $\mu < 0$. Thus, under assumption (A3) the operator has a zero eigenvalue and this eigenvalue is of finite multiplicity.

\begin{enumerate}
	\item[(A4)] The eigenvalue 0 of $\fh / \tanh \frac{\fh}{2T_c} - V$ in $L^2_{\rm symm}(\R^d)$ is simple.
\end{enumerate}

We choose a reflection-symmetric eigenfunction $\alpha_*$ corresponding to the zero-eigenvalue of $\fh / \tanh\frac{\fh}{2T_c} - V$. As $\fh$ and $V$ are real operators, $\alpha_*$ can be chosen real.

Finally, we suppose the inverse temperature $\beta=T^{-1}$ belongs to the following regime, where $\beta_c:=T_c^{-1}$:
\begin{enumerate}
	\item[(A5)] $\beta=\beta_{c}(1+Dh^{2})$ for some constant $D>0$.
\end{enumerate}


\subsection{Main results}\label{sec:main1}
To state our main results, we fix some notation. We assume throughout that $1\leq d\leq 3$ and write $\T^d=\R^d/\Z^d$, which is the box $[0,1]^d$ with opposite sides identified. The Lebesgue space $L^2(\T^d)$, the Sobolev space $H^1(\T^d)$ and its dual space $H^{-1}(\T^d)$ are defined as usual, and for a function $\psi$ on $\T^d$ belonging to one of these spaces $\mathcal H$, $\|\psi\|_{\mathcal H}$ denotes the corresponding norm. Moreover, for an $h^{-1}\Z^d$-periodic operator $\sigma$ on $L^2(\R^d)$ let
\begin{equation}
    \label{eq:h1normop}
    \|\sigma \|_{\cH_h^1}:= \Big( h^d \Tr \chi_{\T^d_{h}} \sigma^{*}\sigma \chi_{\T^d_{h}} + h^{d-2} \sum_{j=1}^d \Tr \chi_{\T^d_{h}} (p_j\sigma -\sigma p_j)^{*}(p_j\sigma -\sigma p_j) \chi_{\T^d_{h}} \Big)^{1/2}.
\end{equation}
Here $\chi_{\T^d_{h}}$ is the characteristic function of $\T^d_h = \R^d/h^{-1}\Z^d$ and $p_j = -i\nabla_j$. This is an $H^1$-type norm for operators where, however, the derivatives act only with respect to the center of mass variable $(x+y)/2$ of the integral kernel $\sigma(x,y)$. These norms are discussed in more detail in Section \ref{sec:preliminary}.

The following is our first main result. It shows that, given a solution of the BdG equation, we obtain an approximate solution of the GL equation.

\begin{thm}\label{thm:deriv-GL}
	Let $1\leq d\leq 3$ and suppose Assumptions (A1)--(A5) hold. Let $h>0$ be sufficiently small and let $\alpha$ be a solution of the BdG equation \eqref{BdG-eqn3a} such that
	\begin{align}\label{small-data-deriv}
		\|\alpha\|_{\cH_{h}^{1}}&\lesssim h \,.
	\end{align}
	Then one can decompose $\alpha$ as 
	\begin{align}\label{alpha-decomposition}
		\alpha(x,y)&=h \, \psi(h(x+y)/2) \, \alpha_{*}(x-y)+\xi(x,y)
	\end{align}
	for a $\Z^{d}$-periodic function $\psi$ on $\T^d$ and an $h^{-1} \Z^d$-periodic operator $\xi$, satisfying
	\begin{align}\label{GL-esti}
		\|\psi\|_{H^{1}(\T^d)}\lesssim\|\psi\|_{L^{2}(\T^d)}\lesssim 1 \,, \qquad\|\xi\|_{\cH_{h}^{1}}&\lesssim h^{7/6} \|\psi\|_{L^{2}(\T^{d})}\,.
	\end{align}
	Moreover, $\psi$ is an approximate solution to the GL equation \eqref{GL-eqn} in the sense that
	\begin{align}\label{GL-esti1}
		\big\|\big(-\nabla\cdot \Lambda_0\nabla-\Lambda_{2}D\big)\psi+\Lambda_{3}|\psi|^{2}\psi \big\|_{H^{-1}(\T^d)}\lesssim h^{1/6}\,\|\psi\|_{L^2(\T^d)} \,.
	\end{align}
    Here the coefficients $\Lambda_0$, $\Lambda_2$ and $\Lambda_3$ are given by \eqref{eq:lambda0}--\eqref{eq:lambda3} below.
\end{thm}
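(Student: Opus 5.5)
We follow the strategy announced in the abstract: a Birman--Schwinger reformulation of \eqref{BdG-eqn3a}, a Lyapunov--Schmidt reduction onto the direction of $\alpha_*$, and semiclassical expansion of the resulting reduced equation.

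\emph{Step 1: Birman--Schwinger form.} Set $\Delta:=-2V\alpha$. We expand the right side of \eqref{BdG-eqn3a} in powers of $\Delta$ using the resolvent representation of $\tanh$,
\[
\tanh(\beta z/2)=\frac{2}{\beta}\sum_{n\in\Z}\frac{1}{z-i\omega_n},\qquad \omega_n=\pi(2n+1)/\beta ,
\]
together with the resolvent identity for $H(\Delta)$ around $H(0)$. The upper-right entry then splits as
\[
\big[\tanh(\tfrac{\beta H(\Delta)}{2})\big]_{12}=L_{\beta}\Delta+N_\beta(\Delta)+R_\beta(\Delta),
\]
where $L_\beta$ is linear, $N_\beta$ is cubic, and $R_\beta$ is of order five. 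The operator $L_\beta$ acts on kernels and is diagonal in the center-of-mass momentum. When the center-of-mass momentum vanishes, $L_\beta$ reduces to multiplication by $\tanh(\beta\fh/2)/\fh$ in the relative variable. Multiplying by $\fh/\tanh(\beta\fh/2)$ rewrites the BdG equation as
\[
\Big(\frac{\fh}{\tanh\frac{\beta\fh}{2}}-V\Big)\alpha=(\text{momentum corrections})+\text{cubic}+\text{higher}.
\]
At the level of the symbol this is $K_{T,h}(P,\cdot)-V$, with $P$ the center-of-mass momentum (which is of size $h$).

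\emph{Step 2: Lyapunov--Schmidt reduction.} Define $\psi$ as the projection of $\alpha$ onto $\alpha_*$ in the relative variable, fibrewise in the center of mass:
\[
h\,\psi(hX)=\frac{\langle \alpha_*,\alpha(X+\cdot/2,X-\cdot/2)\rangle}{\|\alpha_*\|^2},
\]
and let $\xi$ be the orthogonal remainder. Assumption (A4) gives a spectral gap of $\fh/\tanh(\fh/2T_c)-V$ on $\{\alpha_*\}^\perp$ within $L^2_{\rm symm}$. Since $\beta-\beta_c=O(h^2)$ and the momentum corrections are $O(h^2)$ in the $\cH^1_h$ sense, the gap persists, and $\xi$ is controlled by the right side projected onto the orthogonal complement. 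The cubic term is $O(h^3)$ in the appropriate norm, using \eqref{small-data-deriv}, Hölder-type inequalities for Schatten norms per unit volume, and $d\le3$. The momentum-correction term, however, would need two center-of-mass derivatives of $\alpha$, whereas only $\cH^1_h$ control is available. We therefore interpolate: we split frequencies at a cutoff $|P|\sim h^{1-\epsilon}$, using a Taylor expansion below the cutoff and a crude bound above it. This loss is where the exponent $h^{7/6}$ comes from, and it yields $\|\xi\|_{\cH^1_h}\lesssim h^{7/6}\|\psi\|_{L^2}$. The same projection, together with a coercivity argument (the quadratic form in the $\psi$ direction at order $h^2$ is $\langle\nabla\psi,\Lambda_0\nabla\psi\rangle$ minus lower-order terms), gives $\|\psi\|_{H^1}\lesssim\|\psi\|_{L^2}$. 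The bound $\|\psi\|_{L^2}\lesssim1$ follows from \eqref{small-data-deriv}.

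\emph{Step 3: reduced equation.} We project the rewritten equation onto $\alpha_*$, test against $\alpha_*(x-y)\varphi(h(x+y)/2)$ with $\varphi\in H^1(\T^d)$, and divide by $h^3$. Semiclassical expansion of the linear term gives the following contributions:
\begin{itemize}
\item the $O(h^2)$ coefficient from $\beta-\beta_c$, which produces $-\Lambda_2D\psi$ by first-order perturbation theory, with $\Lambda_2$ given by the $\partial_\beta$ derivative of the symbol paired with $\alpha_*$;
\item the second-order term in $P$, which produces $-\nabla\cdot\Lambda_0\nabla\psi$;
\item the cubic term evaluated on the leading product $h\psi\alpha_*$, which produces $\Lambda_3|\psi|^2\psi$.
\end{itemize}
Every error is estimated in $H^{-1}$ after testing, via $\|\xi\|_{\cH^1_h}\lesssim h^{7/6}$, the low-regularity Taylor remainders (which again cost $h^{1/6}$), and the quintic remainder. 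Together these give \eqref{GL-esti1}.

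The main obstacle is the low-regularity semiclassical expansion in Steps 2--3. A solution $\alpha$ is only known to be $\cH^1_h$-bounded, so the expansions of $L_\beta$ and of the cubic term to second order in center-of-mass derivatives cannot use smooth-symbol calculus. The optimization of the frequency cutoff, which trades Taylor error against the high-frequency tail, is what fixes the exponent $1/6$.
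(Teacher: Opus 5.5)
Your overall architecture (reformulate, reduce onto the $\alpha_*$-direction, extract $\Lambda_0,\Lambda_2,\Lambda_3$ semiclassically) matches the paper. The argument breaks in Step 2, where you claim the gap of $\ell_{T_c}$ on $\{\varphi_*\}^\perp$ "persists" because the momentum corrections are $O(h^2)$.

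The symbol of $K_T-k_T$ is $f_\beta(p+hq/2,p-hq/2)-f_\beta(p,p)$. It is only bounded by $\min\{1,h^2|q|^2\}$, so it is of order one at center-of-mass momenta $h|q|\gtrsim 1$. The hypothesis $\|\alpha\|_{\cH^1_h}\lesssim h$ makes the high-frequency part of $\alpha$ small in $\cL^2_h$, but not in $\cH^1_h$. So "Taylor below the cutoff, crude bound above it" only returns $\|\xi\|_{\cH^1_h}\lesssim h$. That is no better than the assumption, and it is not proportional to $\|\psi\|_{L^2}$.

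What is missing is a coercivity bound for $L_T=\one-V^{1/2}K_TV^{1/2}$ that is uniform in the center-of-mass momentum; this cannot be obtained by perturbing $\ell_{T_c}$. The paper gets it from the convexity inequality $f_\beta(p,q)\le\frac12\big(f_\beta(p,p)+f_\beta(q,q)\big)$. After conjugating with $e^{\pm i r\cdot p_X/2}$, this yields $L_{T_c}\ge\theta\big(\one-|A_{p_X}\rangle\langle A_{p_X}|\big)$ with $A_p(r)=\varphi_*(r)\cos(p\cdot r/2)$. Hence $P^\perp L_TP^\perp\gtrsim P^\perp$ for all center-of-mass momenta. Combined with $\int\sin^2(p\cdot r/2)|\varphi_*(r)|^2\,dr\gtrsim\min\{1,|p|^2\}$, it also gives $P_\kappa^\perp L_TP_\kappa^\perp\gtrsim\kappa P_\kappa^\perp$.

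The same problem affects the $\alpha_*$-direction. At high center-of-mass frequency the reduced symbol there behaves like $\min\{1,h^2|q|^2\}$, not like $h^2\,q\cdot\Lambda_0 q$, and the $\alpha_*$-component is coupled to the orthogonal one at order one. So your coercivity argument for $\|\psi\|_{H^1}\lesssim\|\psi\|_{L^2}$ only controls the low frequencies of your uncut $\psi$, and Step 3 also needs the high-frequency $\alpha_*$-part to be small.

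The paper handles this in three moves:
\begin{itemize}
\item It performs the reduction with respect to the low-frequency projection $P_\kappa=\lambda_\kappa P$, with $\kappa=h^{5/6}$, and places the high-frequency $\alpha_*$-component in the complement.
\item It controls that complement through the equation. This uses that $L_T$ commutes with $\Delta_X$ and is bounded, which is why the form $\varphi=V^{1/2}\alpha$ is used rather than multiplying by the unbounded $\fh/\tanh(\beta\fh/2)$. It also uses $\|N_T(\sigma)\|_{\cH^1_h}\lesssim h^{-1}\|\sigma\|_{\cL^6_h}^3$.
\item Inverting $P_\kappa^\perp L_TP_\kappa^\perp$ then gives, after absorption, $\|P_\kappa^\perp\varphi\|_{\cH^1_h}\lesssim\big(\kappa^{1/2}+\kappa^{-1}h^{-1}\|\varphi\|_{\cH^1_h}^2\big)\|P_\kappa\varphi\|_{\cH^1_h}\lesssim h^{1/6}\|P_\kappa\varphi\|_{\cH^1_h}$.
\end{itemize}

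This balance is the source of both exponents. The inverse gap $\kappa^{-1}$ multiplies a nonlinearity of size $h^2\|\varphi\|_{\cH^1_h}$ in $\cH^1_h$; it is $h^2$, not the $h^3$ you state, because the $\cH^1_h$ bound loses $h^{-1}$. That produces $h^{7/6}=h^2\kappa^{-1}$ for $\xi$. It also produces the $h^{1/6}$ in the GL error, via $\|N_T'(-2V^{1/2}\varphi)-N_T'(-2V^{1/2}P_\kappa\varphi)\|_{\cL^2_h}\lesssim h^{13/6}\|P_\kappa\varphi\|_{\cH^1_h}$. The Taylor remainder is only $O(h^{9/4})\|\Psi\|_{H^1_h}$ and is not what fixes the exponent.
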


Our second main result addresses the reverse direction, where, given a solution of the GL equation, we obtain an approximate solution of the BdG equation.

\begin{thm}\label{thm:deriv-BdG}
	Let $1\leq d\leq 3$ and suppose Assumptions (A1)--(A5) hold.  Let $\psi\in H^{1}(\T^{d})$ be a solution to the GL equation \eqref{GL-eqn}, with $\Lambda_{0},\Lambda_{2}$ and $\Lambda_{3}$ defined by \eqref{eq:lambda0}--\eqref{eq:lambda3} below. Then, for any sufficiently small $h>0$, there is a pair $(\gamma_\psi,\alpha_\psi)$ of $h^{-1}\Z^d$-periodic operators satisfying \eqref{BdG-1pdm} such that one can decompose $\alpha_\psi$ as
    \begin{align}\label{Valpha-trial-decomp}
        \alpha_{\psi}(x,y)&=h \, \psi(h(x+y)/2) \, \alpha_{*}(x-y)+\xi_{\psi}(x,y)
    \end{align}
    with
    \begin{align}\label{Valpha-trial-rem}
        \|\xi_{\psi}\|_{\cH_{h}^{1}}&\lesssim h^{7/6}\|\psi\|_{L^{2}(\T^{d})}.
    \end{align}
    Moreover, $\alpha_{\psi}$ is an approximate solution to the BdG equation in the sense that
	\begin{align}\label{BdG-approx}
		\Big\|\alpha_{\psi}+\frac{1}{2}\Big[\tanh\Big(\frac{\beta H(-2V\alpha_{\psi})}{2}\Big)\Big]_{12}\Big\|_{\cL_{h}^{2}}\lesssim h^{19/6}\|\psi\|_{L^{2}(\T^{d})}.
	\end{align}    
\end{thm}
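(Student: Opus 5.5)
The statement to prove is Theorem \ref{thm:deriv-BdG}. The plan is to construct $\alpha_\psi$ explicitly as a corrected trial state and then expand the BdG map to third order in the off-diagonal part. Let $\alpha_0(x,y):=h\,\psi(h(x+y)/2)\,\alpha_*(x-y)$ and set $\Delta_0:=-2V\alpha_0$. Since $\psi$ solves \eqref{GL-eqn} and $\psi\in H^1(\T^d)$, elliptic regularity for the cubic GL equation in $d\le 3$ gives $\psi\in H^2$, and by bootstrapping $\psi\in H^k$ for all $k$. It also gives the a priori bound $\|\psi\|_{L^\infty}\lesssim \sqrt{\Lambda_2 D/\Lambda_3}$, by the maximum principle applied to $|\psi|^2$. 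In contrast to Theorem \ref{thm:deriv-GL}, this direction therefore does not suffer from the low-regularity issue: every norm of $\psi$ is controlled by $\|\psi\|_{L^2}$ together with $D,\Lambda_j$.

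\textbf{Step 1 (expansion).} Write $F(\alpha):=\alpha+\frac12[\tanh(\beta H(-2V\alpha)/2)]_{12}$. Using the resolvent (Cauchy) representation of $\tanh$ over the Matsubara frequencies $\omega_n=\pi(2n+1)/\beta$, expand
\[
F(\alpha)=L_\beta\alpha - N(\alpha)+R(\alpha),
\]
where $L_\beta\alpha$ is the linearization $\alpha-\frac12[\ldots]$ to first order in $\Delta=-2V\alpha$, $N$ is the cubic term, and $R$ is the fifth-order remainder, bounded in $\cL^2_h$ by $\|\Delta\|^5\sim h^5$. This is the expansion already used in \cite{FHSS-12,FHSS-16}.

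\textbf{Step 2 (semiclassics).} Apply $L_\beta$ to $\alpha_0$. To leading order the kernel is $K_{T_c}$ acting on the relative variable, and $\alpha_*$ lies in its kernel by (A4), after multiplying by the symbol $\frac{\fh}{\tanh(\fh/2T)}-V$ appropriately. The next order in $h$ comes from a semiclassical (Moyal) expansion in the center-of-mass momentum $hp$, which produces $-h^3\nabla\cdot\Lambda_0\nabla\psi$. The shift $\beta-\beta_c=\beta_cDh^2$ produces $-h^3\Lambda_2 D\psi$. The cubic term $N(\alpha_0)$ produces $h^3\Lambda_3|\psi|^2\psi$, localized in $\alpha_*$-type profiles. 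By the GL equation these $h^3$ contributions cancel when projected onto $\alpha_*$. With smooth $\psi$ the remainders are $O(h^4)$ in the projected direction.

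\textbf{Step 3 (correction and the exponent).} The component orthogonal to $\alpha_*$ (in the relative variable) does not cancel, and is of size $h^3$. To remove it, add a correction
\[
\xi_\psi:=-\,(L_{\beta_c}|_{\alpha_*^\perp})^{-1}Q\big[F(\alpha_0)\big],
\]
where $Q$ projects off $\alpha_*$. This inverse is bounded by the spectral gap coming from (A4). Since $\|F(\alpha_0)\|\lesssim h^3\cdot h^{-?}$ in the $\cL^2_h$ normalization, the correction is small in $\cH^1_h$.

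The stated rates $h^{7/6}$ and $h^{19/6}$ suggest that the authors do not exploit full smoothness. Instead they cut $\psi$ off in frequency at $|k|\le h^{-1/3}$, or use only the $H^1$ bound together with the mollification estimates from the proof of Theorem \ref{thm:deriv-GL}. The loss of $h^{1/6}$ relative to the natural $h^{4/3}$ then comes from balancing the cutoff against commutator terms. I would follow that route: replace $\psi$ by $\psi_{\le h^{-1/3}}$ in $\alpha_0$, and put the high-frequency part into $\xi_\psi$. This part is bounded by $h\|\psi_{>}\|_{H^1}\lesssim h\cdot h^{1/6}$ using the $H^{3/2}$ regularity of $\psi$.

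\textbf{Step 4 (admissibility).} Verify \eqref{BdG-1pdm} by taking $(\gamma_\psi,\alpha_\psi)$ to be the off-diagonal and diagonal parts of the Fermi–Dirac projection-type operator $(1+e^{\beta H(\Delta)})^{-1}$ with $\Delta=-2V\alpha_{\rm trial}$. This operator automatically satisfies $0\le\Gamma\le1$. Taking $\alpha_\psi$ to be its $12$ entry, the residual in \eqref{BdG-approx} becomes exactly $\alpha_\psi-\alpha_{\rm trial}$, measured after one more application of the map. This is estimated by Steps 2–3.

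The main obstacle is Step 2: the semiclassical error estimates for the linear and cubic terms with the precise $h$-powers in the $\cL^2_h$ and $\cH^1_h$ norms.
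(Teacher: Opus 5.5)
The gap is in Step~3: you never check how the correction feeds back into the $\varphi_*$-projected equation.

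Your overall architecture does match the paper's. You take a leading profile $h\psi(hX)\varphi_*(r)$ and add a correction obtained by inverting the linearization on a complement of the $\varphi_*$-direction. The GL equation cancels the $\varphi_*$-component at order $h^3$, and a final application of the BdG map makes \eqref{BdG-1pdm} automatic via $(1+e^{\beta H})^{-1}$. The paper works in $\varphi=V^{1/2}\alpha$, where $L_T$ is self-adjoint. The uniform bound $P^\perp L_TP^\perp\gtrsim P^\perp$ that you attribute to (A4) is really Proposition~\ref{prop:inv-bdd}, which needs Lemma~\ref{lem:fT-identity} because $K_T$ depends on the center-of-mass momentum.

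Since $L_T$ does not commute with the relative-variable projection $P$, you get $PF(\alpha_0+\xi)=PF(\alpha_0)+PL_TP^\perp\xi+\dots$. The only available control is $\|PL_TP^\perp\sigma\|_{\cL^2_h}\lesssim h^2\|\sigma\|_{\cL^2_h}+\|\frac{-\Delta_X}{\one-\Delta_X}\sigma\|_{\cL^2_h}$, which gives no smallness at center-of-mass momenta $-\Delta_X\sim 1$. The bounds for the nonlinear part of $F(\alpha_0)$ lose $h^{-s}$ in $\cH^s_h$ (Proposition~\ref{prop:BCS-NL-norm}). So all you know about the correction is $\|\xi\|_{\cL^2_h}\lesssim h^3$ and $\|\xi\|_{\cH^1_h}\lesssim h^2$. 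These bounds allow its cubic part to sit at $-\Delta_X\sim1$, and then $PL_TP^\perp\xi$ is of order $h^3$. That is exactly the order of the GL cancellation you need to preserve. Smoothness of $\psi$ does not help unless you prove non-lossy center-of-mass regularity bounds for $N_T$, and the proposal contains none.

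The paper closes this gap with a momentum cut-off in the \emph{projection}, not in $\psi$; $\varphi_{\psi,0}$ uses the full $\psi$. It sets $P_\kappa=\one(-\Delta_X\le\kappa)P$ with $\kappa=h^{5/6}$ and $\varphi_{\psi,1}=-(P_\kappa^\perp L_TP_\kappa^\perp)^{-1}P_\kappa^\perp F_T^{\rm BCS}(\varphi_{\psi,0})$. The correction thus also absorbs the high-momentum $\varphi_*$-component, where only $L_T\gtrsim\kappa$ holds. The back-reaction on the low-momentum equation is then $\|P_\kappa L_TP^\perp\varphi_{\psi,1}\|_{\cL^2_h}\lesssim\kappa\cdot\kappa^{-1/2}\cdot h^3=h^{41/12}$, by Lemma~\ref{lem:Q0LP-bdd} and Proposition~\ref{prop:PLP-inv-bdd}.

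This is also where the exponents come from, not from a cut-off of $\psi$ balanced against commutators. One has $\|\varphi_{\psi,1}\|_{\cH^1_h}\lesssim\kappa^{-1}h^2=h^{7/6}$. The available Lipschitz bound for $N_T'$ is in $\cL^6_h$, which is controlled by $\cH^1_h$ and not by $\cL^2_h$. So your Step~4 needs this rough $\cH^1_h$ bound (Lemma~\ref{lem:regularity-trial}), and it produces $h^2\cdot h^{7/6}=h^{19/6}$.

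Your cut-off of $\psi$ could serve as an alternative fix, but only with an argument you do not give. Suppose $\psi$ is band-limited at $|k|\le M$. Then $L_T$ and $(P^\perp L_TP^\perp)^{-1}$ commute with $\Delta_X$, and the cubic term of a band-limited operator is band-limited at $3M$, since center-of-mass frequencies add under operator products. Hence the cubic part of the correction has $-\Delta_X\lesssim h^2M^2$, and its back-reaction is $O(h^5M^2)$. Only the quintic remainder and the nonlinear differences would then be left to the lossy estimates.
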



\subsection{Discussion of the result}

Let us make the following remarks concerning Theorems \ref{thm:deriv-GL} and \ref{thm:deriv-BdG}.

\smallskip

(a) It is justified to think of the part $h\,\psi(h(x+y)/2)\,\alpha_*(x-y)$ in the decompositions \eqref{alpha-decomposition} and \eqref{Valpha-trial-decomp} as the main part of $\alpha$ and $\alpha_\psi$. Indeed, the $\cH^1_h$-norm of this term is $h \|\psi\|_{H^1(\T^d)}$ and this is larger than $O(h^{7/6})\|\psi\|_{L^2(\T^d)}$, which is the bound on the remainder $\xi$ in \eqref{GL-esti} and \eqref{Valpha-trial-rem}. Note that this also implies that the operator $\alpha=\alpha_\psi$ in Theorem \ref{thm:deriv-BdG} satisfies \eqref{small-data-deriv}.

\smallskip

(b) The crucial thing about the quantity $h^{19/6}= h^{3+1/6}$ in \eqref{BdG-approx} is that it is $o(h^3)$. It captures a cancellation at order $h^3$ due to the GL equation. This is analogous to the quantity $h^{1/6}$ in \eqref{GL-esti1}.

\smallskip

(c) Note that the same quantity $h^{1/6}$ expresses a smallness in \eqref{alpha-decomposition}, \eqref{GL-esti1}, \eqref{Valpha-trial-rem} and \eqref{BdG-approx}.
The exponent $1/6$ is technical and can be improved upon, but our choice is convenient, since it works in all dimensions and for all these quantities. 

\smallskip

(d) Theorem \ref{thm:deriv-GL} does not provide a lower bound on $\|\psi\|_{L^2(\T^d)}$. However, if $\psi\to 0$ in $L^2(\T^d)$ along a sequence of $h$'s tending to zero, then \eqref{GL-esti1} implies that $\Lambda_2 D$ is an eigenvalue of the operator $-\nabla\cdot\Lambda_0\nabla$ in $L^2(\T^d)$. Thus, for all but a discrete set of values of $D$, $\psi$ does not tend to zero.

\smallskip

(e) The assumption \eqref{small-data-deriv} in Theorem \ref{thm:deriv-GL}
implies both that $\alpha$ is small and that it varies on the scale $h^{-1}$ in the center of mass variable $(x+y)/2$.

\smallskip

(f) When $(\gamma,\alpha)$ is a \emph{minimizer} of the BCS functional, then with some work one can show that $\alpha$ satisfies \eqref{small-data-deriv}. Therefore Theorem \ref{thm:deriv-GL} gives the decomposition \eqref{alpha-decomposition} with the bounds \eqref{GL-esti}, which is one of the main results of \cite{FHSS-12}. However, the results of \cite{FHSS-12} are valid in the presence of external fields, which we have not yet included in the critical point setting. Also, the assumptions on $V$ are slightly weaker in \cite{FHSS-12}.

\smallskip

(g) The solution of the GL equation \eqref{GL-eqn} with minimal energy is the constant solution. Depending on $\Lambda_0$, $\Lambda_2D$ and $\Lambda_3$, the equation can also have non-constant solutions. This is easy to see in the one-dimensional case by a phase-plane analysis, and such one-dimensional solutions also provide examples of non-constant solutions in higher dimensions. In general, there are also genuinely higher-dimensional solutions. Meanwhile, while we believe that there are non-minimal solutions of the BdG equation, we are not aware of any unpublished results on this and we will provide a proof in a forthcoming paper.


\subsection{Conventions}

In Theorems \ref{thm:deriv-GL} and \ref{thm:deriv-BdG} and throughout the rest of this paper, to simplify notations, we write $A\lesssim B$ if there is some constant $c>0$, depending only on $\mu$, $D$ and norms of $V$, such that $A\leq cB$. More precisely, the statement of Theorem \ref{thm:deriv-GL} is that for $V$, $\mu$ and $D$ such that Assumptions (A1)--(A5) hold and for a given parameter $M$, there are constants $C$ and $h_0$ such that for all $0<h\leq h_0$ and for any $h^{-1}\Z^d$-periodic solution $\alpha$ satisfying $\|\alpha\|_{\cH_{h}^{1}} \leq M h$, the inequalities \eqref{GL-esti} and \eqref{GL-esti1} hold with $\lesssim \ldots$ replaced by $\leq C \times \ldots$.

\smallskip

Finally, let us state the formulas for the coefficients in the GL equation. With $\alpha_*$ as specified after Assumption (A4) we let
\begin{equation}
\label{eq:deft}
t_*(p) := 2(2\pi)^{-d/2} \int_{\R^d} V(x)\alpha_*(x) e^{-ip\cdot x}\,dx \,,\qquad p\in\R^d \,.
\end{equation}
We define a matrix $\Lambda_0\in\R^{d\times d}$ and constants $\Lambda_2,\Lambda_3\in\R$ in terms of $t_*$. We need the functions
\begin{equation}
\label{eq:defg12}
g_1(z) := \frac{e^{2z}-2ze^z-1}{z^2(1+e^z)^2}
\qquad\text{and}\qquad
g_2(z) := \frac{2e^z (e^z-1)}{z(e^z+1)^3} \,.
\end{equation}
Then
\begin{align}
\label{eq:lambda0}
\left(\Lambda_0\right)_{ij} & := \frac{\beta_c^2}{16} \int_{\R^d} t_*(p)^2 \left( \delta_{ij} g_1(\beta_c(p^2-\mu)) + 2\beta_c p_ip_j g_2(\beta_c(p^2-\mu))\right)\frac{dp}{(2\pi)^d} \,, \\
\label{eq:lambda2}
\Lambda_2 & := \frac{\beta_c}{8} \int_{\R^d} t_*(p)^2\, \cosh^{-2}(\beta_c(p^2-\mu)/2) \,\frac{dp}{(2\pi)^d} \,, \\
\label{eq:lambda3}
\Lambda_3 & := \frac{\beta_c^2}{16} \int_{\R^d} t_*(p)^4\, \frac{g_1(\beta_c(p^2-\mu))}{p^2-\mu}\,\frac{dp}{(2\pi)^d} \,.
\end{align}
Note that $\Lambda_2>0$ and $\Lambda_3>0$, since the integrands are pointwise positive. Moreover, one can show that the matrix $\Lambda_0$ is positive definite, see \cite[Sec.~1.4]{FHSS-12}.


\subsection{Outline of the proof}

Let us briefly outline the strategy that we use to prove our first main result, Theorem \ref{thm:deriv-GL}.

The first step in our proof is to reformulate the BdG equation in Birman--Schwinger form, which has the advantage that we work with bounded, rather than unbounded operators. This is done in Section \ref{sec:bs}. This reformulation is due to \cite{FHL-19} and later also used in \cite{FrankHainzl-18,DeuHainzlMaier-22,DeuHainzlMaier-23}. A novelty of our paper here is that the final result in Theorem 1.1 is stated in terms of the original Cooper pair wave function and not in its Birman--Schwinger form.

In the BdG equation in Birman--Schwinger form we separate the linear and the nonlinear contributions. Sections \ref{sec:linear} and \ref{sec:nonlinear} are somewhat technical, but important, and concern mapping properties of these linear and nonlinear terms.

The main part of the proof of Theorem \ref{thm:deriv-GL} appears in Section \ref{sec:pf-deriv-GL}. Our strategy there is in the spirit of the Lyapunov--Schmidt reduction technique in the sense that we study separately the projection of the equation on the range of a `relevant' subspace and its complement. In contrast to many other applications of this method, the `relevant' subspace has a huge dimension. In fact, the dimension is only finite because we introduced a momentum cut-off and, in particular, it diverges as $h\to 0$. 

It is the projection of the BdG equation on this `relevant' subspace that eventually gives the approximate GL equation. However, to extract the GL equation, we need a rather precise control on the `irrelevant' contribution of $\alpha$, which is obtained from the projection onto the complement of the `relevant' subspace. This is the content of Proposition \ref{prop:small-NP-diff}. The actual extraction of the GL equation appears in Propositions \ref{prop:GL-matrix} and \ref{prop:GL-nonlinear} and uses ideas related to semiclassical analysis.

The strategy to prove Theorem \ref{thm:deriv-BdG} is similar to the proof of Theorem \ref{thm:deriv-GL}, but in reverse order. The main analytic ingredients are the same. Remarkably, in Theorem \ref{thm:deriv-BdG} a first-order ansatz is not sufficient and a second-order correction needs to be taken into account. This is the term $\varphi_{\psi,1}$ in \eqref{eq:defphipsi}. It does not have an analogue in the corresponding upper bound construction in the minimization setting in \cite{FHSS-16}.


\subsection*{Acknowledgements}

The suggestion to study the relation between solutions of the BDG and GL equations is due to I.~M.~Sigal, to whom we are very grateful. The origins of this work go back to the fall of 2014 and throughout these years we had the opportunity to discuss the results with many people including I.~M.~Sigal, L.~Chen, T.~Tzanateas and A.~Geisinger. We would like to thank them for their input, which has greatly advanced our understanding of the subject.

\smallskip

\section{The Birman--Schwinger formulation}\label{sec:bs}

Our goal in this section is to rewrite the BdG equation \eqref{BdG-eqn3a} in an equivalent form that is reminiscent of the Birman--Schwinger reformulation of the Schr\"odinger equation.

We set
$$
\rho(z) := \tanh(z/2)
$$
and let
$$
\omega_{n}:=\pi(2n+1)/\beta
$$
denote the poles of $z\mapsto\rho(\beta z)$, known in the physics literature as \textit{Matsubara frequencies}. Then we have the representations
$$
\tanh(\frac{\beta E}{2}) = \int_{\mathcal C} \frac{dz}{2\pi i} \,\rho(\beta z) (z-E)^{-1}
= - \frac{2}{\beta} \sum_{n\in\Z} \frac{1}{i\omega_n - E}
$$
for $E\in\R$. Here $\cC=\{r\pm i\pi/(2\beta)\mid r\in\R\}$ is positively oriented. Both the integral and the sum representation are understood in a principal value sense.

By the spectral mapping theorem, we obtain, for any periodic operator $\sigma$ on $L^2(\R^d)$,
$$
\tanh(\frac{\beta H(\sigma)}{2}) = \int_{\mathcal C} \frac{dz}{2\pi i} \,\rho(\beta z) (z-H(\sigma))^{-1}
= - \frac{2}{\beta} \sum_{n\in\Z} \frac{1}{i\omega_n - H(\sigma)} \,.
$$
Writing
\begin{align*}
    (z-H(\sigma))^{-1} & = (z-H(0))^{-1} + (z-H(0))^{-1} \begin{pmatrix}
    0& \sigma \\ \overline\sigma & 0
\end{pmatrix} (z-H(0))^{-1} \\
& \quad + (z-H(0))^{-1} \begin{pmatrix}
    0& \sigma \\ \overline\sigma & 0
\end{pmatrix} (z-H(0))^{-1} \begin{pmatrix}
    0& \sigma \\ \overline\sigma & 0
\end{pmatrix} (z-H(\sigma))^{-1}
\end{align*}
and taking the upper off-diagonal argument, we find
\begin{align*}
    \left[(z-H(\sigma))^{-1}\right]_{12} & = (z-\fh)^{-1}\sigma(z+\overline{\fh})^{-1} \\
& \quad + (z-\fh)^{-1}\sigma(z+\overline{\fh})^{-1} \overline\sigma (z-\fh)^{-1} \left[(z-H(\sigma))^{-1}\right]_{12} \,.
\end{align*}
Consequently, if we define the linear operator $K_T$ by
\begin{align}\label{linear}
	K_{T}(\sigma)\DETAILS{&:=\frac{\tanh[\beta(\fh)_{x}/2]+\tanh[\beta(\fh)_{y}/2]}{(\fh)_{x}/2+(\fh)_{y}/2}\nonumber\\}
	&: = \int_{\cC}\frac{dz}{2\pi i} \, \rho(\beta z)(z-\fh)^{-1}\sigma(z+\overline{\fh})^{-1} = - \frac{2}{\beta}\sum_{n\in\Z}(i\omega_{n}-\fh)^{-1}\sigma(i\omega_{n}+\overline{\fh})^{-1}
\end{align}
and the nonlinear mapping $N_{T}$ by
\begin{align}\label{nonlinear}
	N_{T}(\sigma)&:=\int_{\cC}\frac{dz}{2\pi i} \, \rho(\beta z)(z-\fh)^{-1}\sigma(z+\overline{\fh})^{-1}\overline{\sigma}[(z-H(\sigma)^{-1})]_{12}\nonumber\\
	&= - \frac{2}{\beta}\sum_{n\in\Z}(i\omega_{n}-\fh)^{-1}\sigma(i\omega_{n}+\overline{\fh})^{-1}\overline{\sigma}[(z-H(\sigma))^{-1}]_{12},
\end{align}
then we have shown that
\begin{equation}
    \label{eq:tanhkn}
    \left[ \tanh(\frac{\beta H(\sigma)}{2}) \right]_{12} = K_{T}(\sigma) + N_{T}(\sigma) \,.
\end{equation}
This allows us to rewrite the BdG equation \eqref{BdG-eqn3a} as
\begin{align}\label{BdG-eqn5}
	(\one-K_{T}V)\alpha+\tfrac{1}{2}N_{T}(-2V\alpha) = 0.
\end{align}

Next, we introduce a convenient and symmetric version of \eqref{BdG-eqn5}.  We denote $\varphi :=V^{1/2}\alpha$ and, multiplying both sides of \eqref{BdG-eqn5} by $V^{1/2}$, we find
\begin{align}\label{BdG-BS-eqn}
	\boxed{ (\one-V^{1/2}K_{T}V^{1/2})\varphi+\tfrac{1}{2}V^{1/2}N_{T}(-2V^{1/2}\varphi) = 0 \,.}
\end{align}
We call \eqref{BdG-BS-eqn} the \textit{Birman--Schwinger representation} of the BdG equation. What we have shown is that, if $\alpha$ solves \eqref{BdG-eqn5}, then $\varphi=V^{1/2}\alpha$ solves \eqref{BdG-BS-eqn}.  Conversely, if $\varphi$ solves \eqref{BdG-BS-eqn}, then $\alpha = K_TV^{1/2}\varphi  - \tfrac12 N_T(-2V^{1/2}\varphi)$ solves \eqref{BdG-eqn5}, as is easily verified. Therefore \eqref{BdG-BS-eqn} and \eqref{BdG-eqn5} are equivalent.


\section{Preliminary material}\label{sec:setup}

\subsection{Coordinates}
Originally, the solutions $\alpha$ and $\varphi$ of equations \eqref{BdG-eqn3a} and \eqref{BdG-BS-eqn} are functions of the variables $(x,y)\in\R^d\times\R^d$. In what follows it is often easier to work in the relative and center of mass coordinates $r=x-y$ and $X=(x+y)/2$, so that it is convenient to introduce the notation
\begin{align}
	\zeta_{X}^{r} :=X+\frac{r}{2},
\end{align}
and thus $x=\zeta_{X}^{r}$ and $y=\zeta_{X}^{-r}$. 


\subsection{Lebesgue and Sobolev norms of functions}

We will use two different $L^p$-type spaces for $1\leq p<\infty$. The first one is the usual space $L^p(\R^d)$, while the second one, denoted by $L^p_h(\T_h^d)$, consists of $L^p$-integrable functions $\Psi$ on $\T^d_h = \R^d/h^{-1}\Z^d$ with the volume-normalized norm
$$
\| \Psi \|_{L^p_h(\T^d_h)} = \left( h^d \int_{\T^d_h} |\Psi(X)|^p\,dX \right)^{1/p}.
$$
For $p=\infty$ this distinction disappears.

For $\Psi\in L^2_h(\T^d_h)$ we have the Fourier transform
\begin{equation}
    \label{hZ-FT}
    \hat\Psi(q) = h^{d} \int_{\T^d_{h}} e^{-ihq\cdot X} \Psi(X) \,dX \,,
    \quad q\in 2\pi\Z^d \,,
\end{equation}
and the Plancherel identity
$$
\|\Psi \|_{L^2_h(\T^d_h)} = \Big( \sum_{q\in 2\pi\Z^d} |\hat\Psi(q)|^2 \Big)^{1/2}.
$$
For $s\geq 0$ the Sobolev space $H^s_h(\T^d_h)$ consists of functions $\Psi\in L^2_h(\T^d_h)$ such that
$$
\| \Psi \|_{H^s_h(\T^d_h)} := \left( \sum_{q\in 2\pi\Z^d} (1+|q|^2)^s |\hat\Psi(q)|^2 \right)^{1/2}<\infty \,.
$$
In terms of the Laplacian $-\Delta_X$ with respect to the coordinate $X$ in $\T^d_h$ we have
$$
\| \Psi \|_{H^s_h(\T^d_h)} = \Big( \int_{\T^d_h} \big|((\one-h^{-2}\Delta_{X})^{s/2}\Psi)(X)\big|^{2} \, dX\Big)^{1/2}.
$$
For $s<0$, $H^s_h(\T^d_h)$ is the dual space of $H^{-s}_h(\T^d_{h})$.

Finally, when $h=1$, we drop the subscript in $\T^d_h$, $L^p_h(\T^d_h)$ and $H^s_h(\T^d_h)$.


\subsection{Local \texorpdfstring{$p$}{p}-th Schatten space}\label{sec:preliminary}

We let $\cL_{h}^\infty$ denote the space of bounded operators $\sigma$ on $L^2(\R^d)$ that are $h^{-1}\Z^{d}$-periodic and satisfy $\overline\sigma =\sigma^*$. For the operator norm we use interchangeably the notations
$$
\|\sigma\|\equiv \|\sigma\|_{\cL_{h}^{\infty}} \,.
$$

We define, for $h^{-1}\Z^d$ periodic operators $\sigma$ on $L^2(\R^d)$, the trace per unit volume $\Tr_{\T^d_{h}}$ by
\begin{align}\label{tr-unit}
	\Tr_{\T^d_{h}}(\sigma)&=\Tr(\chi_{\T^d_{h}}\sigma\chi_{\T^d_{h}}),
\end{align}
where $\Tr$ is the standard trace on $L^{2}(\R^{d})$ and $\chi_{\T^d_{h}}$ is the characteristic function of $\T^d_{h}$.

For each $1\leq p<\infty$, we let
$$
\cL_{h}^{p}
$$
be the space of operators $\sigma\in\cL_h^\infty$ such that
\begin{align}
	\|\sigma\|_{\cL_{h}^{p}}& := \big[h^{d}\Tr_{\T^d_{h}}\big(\sigma^{*}\sigma\big)^{p/2}\big]^{1/p}<\infty \,.
\end{align}
We emphasize the $h$-dependence of this norm and we note that the $\cL_{h}^{2}$-norm can be expressed as 
\begin{align*}
	\|\sigma\|_{\cL_{h}^{2}}^{2}&=h^{d}\int_{\R^{d}}\int_{\T^d_{h}}|\sigma(x,y)|^{2} \,dy \, dx =h^{d}\int_{\R^{d}}\int_{\T^d_{h}}|\sigma(\zeta_{X}^{r},\zeta_{X}^{-r})|^{2} \,dX\,dr.
\end{align*}
Moreover, $\cL_{h}^{2}$ is a Hilbert space with inner product
\begin{align}
	\langle \sigma,\sigma'\rangle_{\cL_{h}^{2}}&=h^{d}\int_{\R^{d}}\int_{\T^d_{h}}\overline{\sigma(\zeta_{X}^{r},\zeta_{X}^{-r})}\sigma'(\zeta_{X}^{r},\zeta_{X}^{-r})\,dX\,dr.
\end{align}
It is clear that the Cauchy--Schwarz and triangle inequalities hold
\begin{align}\label{CS-triangle}
	\big|\langle \sigma,\sigma'\rangle_{\cL_{h}^{2}}\big|\leq \|\sigma\|_{\cL_{h}^{2}}\|\sigma'\|_{\cL_{h}^{2}},\quad\|\sigma+\sigma'\|_{\cL_{h}^{p}}&\leq \|\sigma\|_{\cL_{h}^{p}}+\|\sigma'\|_{\cL_{h}^{p}}.
\end{align}
Furthermore, for $1\leq p,q,r\leq \infty$ with $1/p+1/q=1/r$, we have the general H\"older inequality
\begin{align}\label{Holder-periodic}
	\|\sigma\sigma'\|_{\cL_{h}^{r}}&\leq \|\sigma\|_{\cL_{h}^{p}}\|\sigma'\|_{\cL_{h}^{q}}.
\end{align}
For proof of these facts, we refer to \cite{DeuHainzlMaier-23,FHSS-12} and references therein.  

Next, we introduce Sobolev type norms for operators in $\cL_{h}^{2}$.  This is more conveniently formulated on the level of integral kernels.  Let $\sigma$ be an $h^{-1}\Z^{d}$-periodic operator.  For each $r\in\R^{d}$, the function $X\mapsto \sigma(\zeta_{X}^{r},\zeta_{X}^{-r})$ is $h^{-1}\Z^{d}$-periodic, and we define the Fourier transform on $(2\pi\Z^{d})\times\R^{d}$ in the center of mass and relative position coordinate $(X,r)$ as
\begin{align}\label{eq:fourier}
	\hat{\sigma}(q,p)& :=\frac{h^{d}}{(2\pi)^{d/2}}\int_{\T^d_{h}}\int_{\R^{d}}e^{-ihq\cdot X}e^{-ip\cdot r}\sigma(\zeta_{X}^{r},\zeta_{X}^{-r})\,dr\,dX \,,
    \quad q\in 2\pi\Z^d \,,\ p\in\R^d \,.
\end{align}
Note that 
\begin{align}
	\label{invFT}\sigma(\zeta_{X}^{r},\zeta_{X}^{-r})&=\frac{1}{(2\pi)^{d/2}}\sum_{q\in 2\pi\Z^{d}}\int_{\R^{d}}e^{ihq\cdot X}e^{ip\cdot r}\hat{\sigma}(q,p)dp,\\
	\label{FT-unitary}\|\sigma\|_{\cL_{h}^{2}}^{2}&=\sum_{q\in 2\pi\Z^{d}}\int_{\R^{d}}|\hat{\sigma}(q,p)|^{2}dp \,.
\end{align}
Then, for each $s\geq 0$, we define $\cH_{h}^{s}$ as the space of operators $\sigma\in\cL_h^2$ such that the following Sobolev type norm is finite:
%
\begin{align}\label{Hs-norm-K}
	\|\sigma\|_{\cH_{h}^{s}}
	&=\Big(\sum_{q\in 2\pi\Z^{d}}\int_{\R^{d}}(1+|q|^{2})^{s}|\hat{\sigma}(q,p)|^{2}dp\Big)^{1/2}.
\end{align}
Clearly, $\cH_{h}^{0}=\cL_{h}^{2}$ and $\cH_{h}^{s_{1}}\subseteq \cH_{h}^{s_{2}}$ if $s_{1}\geq s_{2}$.

There is a different way to interpret the norms $\|\sigma\|_{\cH_{h}^{s}}$, which will be useful in Section \ref{sec:nonlinear}. We note that $\mathcal L^2_h$ is a complex Hilbert space with respect to its natural inner product. In this Hilbert space we define an unbounded, selfadjoint operator $-\Delta_X$ via its quadratic form
$$
h^d \sum_{j=1}^d \Tr_{\T^d_{h}} (p_j \sigma - \sigma p_j)^* (p_j\sigma - \sigma p_j)
$$
with form domain given by all $\sigma\in\mathcal L^2_h$ for which $p_j \sigma -\sigma p_j\in \mathcal L^2_h$ for all $j$. Here, as before, $p_j = -i\nabla_j$. Using cyclicity of the trace per unit volume, one easily verifies that
$$
-\Delta_X \sigma = (-\Delta)\sigma + \sigma(-\Delta) - 2 \sum_{j=1}^d p_j \sigma p_j \,.
$$
Note, in particular, that $\overline\sigma=\sigma^*$ implies the same for $-\Delta_X\sigma$. 

The motivation for the notation $-\Delta_X\sigma$ is that its integral kernel satisfies
$$
(-\Delta_X \sigma)(\zeta^r_X,\zeta_X^{-r}) = -\sum_{j=1}^d \frac{\partial^2}{\partial X_j^2} \left( \sigma(\zeta^r_X,\zeta_X^{-r})\right).
$$
In terms of Fourier coefficients, one finds that
$$
\widehat{(-\Delta_X\sigma)}(q,p) = h^2 |q|^2 \hat\sigma(q,p)
\qquad \text{for all}\ q\in 2\pi\Z^d \,,\ p\in\R^d \,.
$$
This equality shows, in particular, that the expressions \eqref{eq:h1normop} and \eqref{Hs-norm-K} for $\|\sigma\|_{\mathcal H^1_h}$ coincide.

Now for every $s\geq 0$, the operator $(\one-h^{-2}\Delta_X)^{s/2}$ is defined by the functional calculus and we find the following alternative expression for the $\mathcal H^s_h$-norm:
$$
\| \sigma \|_{\mathcal H^s_h} = \| (\one-h^{-2}\Delta_X)^{s/2} \sigma \|_{\mathcal L^2_h} \,.
$$


\subsection{Useful estimates}\label{sec:useful}

In this subsection, we prove some useful estimates, which will be crucial when we study the nonlinear operator $N_{T}$ in \eqref{BdG-BS-eqn}.

\begin{lemma}\label{lem:op-norm-Delta}
	Let $\varepsilon>0$ and $s:=d/2+\varepsilon$.
    Then there is a constant $C_{d,\varepsilon}$ such that for $V\in L^{1}(\R^{d})$ and $\sigma\in\cH^s_h$ one has
	\begin{align}\label{op-norm-Delta}
		\|V^{1/2}\sigma\|\leq C_{d,\varepsilon}\|V\|_{L^{1}}^{1/2}\|\sigma\|_{\cH_{h}^{s}} \,.
	\end{align}
\end{lemma}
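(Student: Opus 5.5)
Here $V^{1/2}\sigma$ is the operator with kernel $V(x-y)^{1/2}\sigma(x,y)$, i.e. multiplication in the relative variable $r=x-y$. The plan is to estimate its operator norm on $L^2(\R^d)$ by a Schur-type test using the Fourier representation \eqref{invFT} in the center-of-mass variable.

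\emph{Step 1: Fourier expansion.} By \eqref{invFT}, the kernel $\sigma(\zeta_X^r,\zeta_X^{-r})$ is a superposition over $q\in 2\pi\Z^d$ of
\[
e^{ihq\cdot X}\, f_q(r), \qquad f_q(r):=(2\pi)^{-d/2}\int_{\R^d} e^{ip\cdot r}\hat\sigma(q,p)\,dp .
\]
For fixed $q$, the operator $T_q$ with kernel $V(x-y)^{1/2} e^{ihq\cdot(x+y)/2} f_q(x-y)$ equals $e^{ihq\cdot x/2}\, A_q\, e^{ihq\cdot y/2}$, where $A_q$ is convolution by $g_q:=V^{1/2}f_q$. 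The outer factors are unimodular multiplication operators, so $\|T_q\|=\|A_q\|\le \|g_q\|_{L^1(\R^d)}$ by Young's inequality.

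\emph{Step 2: bound on each mode.} By Cauchy--Schwarz and Plancherel in $p$,
\[
\|g_q\|_{L^1}\le \|V^{1/2}\|_{L^2}\|f_q\|_{L^2}=\|V\|_{L^1}^{1/2}\Big(\int_{\R^d}|\hat\sigma(q,p)|^2dp\Big)^{1/2}.
\]
This step needs no regularity in $r$, which is exactly why the $L^1$ assumption on $V$ suffices.

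\emph{Step 3: summation over $q$.} By the triangle inequality and Cauchy--Schwarz in $q$,
\[
\|V^{1/2}\sigma\|\le \sum_q \|T_q\| \le \|V\|_{L^1}^{1/2}\Big(\sum_{q\in2\pi\Z^d}(1+|q|^2)^{-s}\Big)^{1/2}\|\sigma\|_{\cH_h^s}.
\]
The lattice sum converges precisely because $2s=d+2\varepsilon>d$, and this gives $C_{d,\varepsilon}$, independent of $h$.

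The only delicate point is justifying the interchange of the $q$-sum with the operator norm. The $q$-series converges absolutely in operator norm by the estimate just obtained, so I would first verify the identity for finitely many $q$ and then pass to the limit, using that the $\cL^2_h$ convergence of the truncations identifies the kernels. The periodicity of $\sigma$ plays no further role, since $T_q$ is treated directly as an operator on $L^2(\R^d)$.
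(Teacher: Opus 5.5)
Your proof is correct, but it takes a different route from the paper's.

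\textbf{The paper's route.} It works directly with the kernel. Schur's test bounds $\|V^{1/2}\sigma\|$ by $\int_{\R^d}V^{1/2}(r)\,\tilde\sigma(r)\,dr$, where $\tilde\sigma(r)=\sup_X|\sigma(\zeta_X^r,\zeta_X^{-r})|$. Cauchy--Schwarz in $r$ follows. Finally, the Sobolev embedding $H^s(\T^d)\hookrightarrow L^\infty(\T^d)$ is applied for each fixed $r$ (after rescaling, so the constant is $h$-independent), and the result is integrated in $r$.

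\textbf{Your route.} You decompose $\sigma$ into center-of-mass Fourier modes. You recognize each mode $T_q$ as a convolution conjugated by unimodular phases and bound it by Young's inequality. You then sum over $q$ with Cauchy--Schwarz. In effect you inline the standard proof of the Sobolev embedding (absolute summability of Fourier coefficients when $2s>d$), applying the triangle inequality at the level of operators rather than pointwise in $X$.

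\textbf{What each buys.} Your version gives the explicit constant $C_{d,\varepsilon}=\big(\sum_{q\in 2\pi\Z^d}(1+|q|^2)^{-s}\big)^{1/2}$ and makes the $h$-independence transparent. The paper's version is marginally sharper at the intermediate step, since $\tilde\sigma\le\sum_q|f_q|$ pointwise. It also avoids any summation-interchange issue: it only needs $X\mapsto\sigma(\zeta_X^r,\zeta_X^{-r})$ to be continuous for a.e.\ $r$, which the embedding itself supplies.

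\textbf{The step you flag.} Only $V\in L^1$ is assumed, so $V^{1/2}$ need not act boundedly on $\cL^2_h$, and $V^{1/2}\sigma$ need not even lie in $\cL^2_h$. The identification of $\sum_q T_q$ with $V^{1/2}\sigma$ should therefore go through kernels rather than $\cL^2_h$-convergence of $V^{1/2}\sigma_N$. Since $\sigma_N\to\sigma$ in $\cL^2_h$ and $V^{1/2}\in L^2(\R^d)$, you get $V^{1/2}(r)\,(\sigma_N-\sigma)\to0$ in $L^1(dr;L^2_h(dX))$. This gives convergence in $L^1_{\rm loc}$ and hence in distributions, which matches the distributional kernel of the operator-norm limit of the partial sums. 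With that adjustment the argument is complete.
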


\begin{proof}
	Denoting $\tilde{\sigma}(r):=\sup_{X\in \T^d_{h}}\big|\sigma(\zeta_{X}^{r},\zeta_{X}^{-r})\big|$ we obtain, by Schur's test,
	\begin{align}
		\|V^{1/2}\sigma\|^{2}&\leq \Big(\sup_{x}\int_{\R^{d}}\big|V^{1/2}(x-y)||\sigma(x,y)|\big|dy\Big)\Big(\sup_{y}\int_{\R^{d}}\big|V^{1/2}(x-y)||\sigma(x,y)|\big|dx\Big)\nonumber\\
		&\leq \Big( \sup_{x}\int_{\R^{d}} \! |V^{1/2}(x-y)||\tilde{\sigma}(x-y)|dy\Big)\!\Big(\sup_{y}\int_{\R^{d}} \! |V^{1/2}(x-y)||\tilde{\sigma}(x-y)|dx\Big)\nonumber\\
		&\leq \Big(\int_{\R^{d}}|V^{1/2}(r)||\tilde{\sigma}(r)|dr\Big)^{2} \nonumber \\
        & \leq \|V\|_{L^{1}} \|\tilde{\sigma}\|_{L^{2}}^2 \,.
	\end{align}
	By the Sobolev embedding on the torus $\T^d$ (see, e.g., \cite{Hebey}), there is a constant $C_{d,\varepsilon}$ such that
	\begin{align}
		\sup_{X\in \T^d}|f(X)|&\leq C_{d,\varepsilon}\|f\|_{H^{s}(\T^d)},
	\end{align}
	where $s=d/2+\varepsilon$.  It then follows from rescaling for each $r\in\R^{d}$ that
	\begin{align*}
		|\tilde{\sigma}(r)|&\leq C_{d,\varepsilon}\big\|\sigma(\cdot+r/2,\cdot-r/2)\big\|_{H_{h}^{s}(\T^d_{h})}.
	\end{align*}
	This implies that
	\begin{align}
		\|V^{1/2}\sigma\|&\leq \|V\|_{L^{1}}^{1/2}\|\tilde{\sigma}\|_{L^{2}}\nonumber\\
		&\leq C_{d,\varepsilon}\|V\|_{L^{1}}^{1/2}\Big(\int_{\R^{d}}\big\|\sigma(\cdot+r/2,\cdot-r/2)\big\|_{H_{h}^{s}(\T^d_{h})}^{2}dr\Big)^{1/2}\nonumber\\
		&= C_{d,\varepsilon}\|V\|_{L^{1}}^{1/2}\|\sigma\|_{\cH_{h}^{s}}.
	\end{align}
	This completes the proof.
\end{proof}

\begin{lemma}\label{lem:Lpnorm-Delta}
	Let $2\leq p<\infty$ be a positive integer and let $s=1$ for all $2\leq p<\infty$ when $d=1,2$ and
	$$
    s=\begin{cases}
		1	\quad	&	\text{for }2\leq p\leq 3 \,,\\
		3/2&\text{for }p>3 \,.
	\end{cases}
    $$
    when $d=3$. Then there is a constant $C_{p,d}$ such that if $V\in L^{p/(p-1)}(\R^{d})$ and $\sigma \in \cH_{h}^s$, then 
	\begin{align}\label{Lpnorm-Delta}
		\|V^{1/2}\sigma\|_{\cL_{h}^{2p}}&\leq C_{p,d}\|V\|_{L^{p/(p-1)}}^{1/2}\|\sigma\|_{\cH_{h}^{s}} \,.
	\end{align}
\end{lemma}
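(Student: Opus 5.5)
We will exploit that $p$ is an integer, so that
$$
\|V^{1/2}\sigma\|_{\cL_h^{2p}}^{2p}=h^d\Tr_{\T^d_h}\big((A^*A)^p\big),\qquad A:=V^{1/2}\sigma,
$$
can be written as an explicit cyclic integral of $2p$ integral kernels. Each of these kernels is either $A(x,y)=V^{1/2}(x-y)\sigma(x,y)$ or $A^*(x,y)=\overline{A(y,x)}$. Since $V$ is even, both are bounded in absolute value by $V^{1/2}(r)\,|\sigma(\zeta_X^{\pm r},\zeta_X^{\mp r})|$. The resulting bound has the form
$$
h^d\int_{\T^d_h}dx_1\int_{(\R^d)^{2p-1}}\prod_{j=1}^{2p}V^{1/2}(r_j)\,\big|\sigma_j(X_j,r_j)\big|\;dx_2\cdots dx_{2p},
$$
where $x_{2p+1}=x_1$, $r_j=x_j-x_{j+1}$, $X_j=(x_j+x_{j+1})/2$, and $\sigma_j$ is the kernel of $\sigma$ or its reflection in $r$. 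The trace per unit volume and the periodicity allow us to change variables from $(x_1,\dots,x_{2p})$ to $(x_1,r_1,\dots,r_{2p-1})$, with the constraint $\sum_j r_j=0$. Then $X_j=x_1+c_j(r)$, where each shift $c_j$ depends only on the relative variables.

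Next, we fix $r=(r_j)$ and integrate over $x_1\in\T^d_h$ first. The functions $x_1\mapsto|\sigma_j(x_1+c_j,r_j)|$ are $h^{-1}\Z^d$-periodic, so translation invariance and Hölder's inequality with $2p$ factors give
$$
h^d\int_{\T^d_h}\prod_j|\sigma_j(x_1+c_j,r_j)|\,dx_1\le\prod_j g(r_j),\qquad g(r):=\|\sigma(\zeta_\cdot^{r},\zeta_\cdot^{-r})\|_{L^{2p}_h(\T^d_h)}.
$$
(The reflected $\sigma_j$ give $g(-r)$; these are handled identically.) The Sobolev embedding $H^s(\T^d)\subset L^{2p}(\T^d)$ holds whenever $s\ge d(\tfrac12-\tfrac1{2p})$. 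This condition explains the choice of $s$ in the statement. For $d=1,2$ one has $s=1$ for every $p$. For $d=3$ one has $s=1$ exactly up to $p=3$ (since $H^1(\T^3)\subset L^6$), and $s=3/2$ covers all finite $p$. After rescaling to $\T^d_h$, with the normalizations of $L^{2p}_h$ and $H^s_h$ built in so that the constant is $h$-independent, this yields $g(r)\le C\|\sigma(\zeta_\cdot^r,\zeta_\cdot^{-r})\|_{H^s_h(\T^d_h)}$. Integrating the square over $r$ then gives $\|g\|_{L^2(\R^d)}\le C\|\sigma\|_{\cH^s_h}$, exactly as in the proof of Lemma~\ref{lem:op-norm-Delta}.

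It remains to bound $\int\prod_{j=1}^{2p}w(r_j)\,\delta(\textstyle\sum_j r_j)\,dr$, where $w:=V^{1/2}g$. This is a $2p$-fold convolution $w*\cdots*w$ evaluated at $0$. Young's inequality with exponents $q=2p/(2p-1)$, which satisfy $\sum_j 1/q_j=2p-1$, bounds it by $\|w\|_{L^q}^{2p}$. Hölder's inequality with $\tfrac{2p-1}{2p}=\tfrac{p-1}{2p}+\tfrac12$ then gives
$$
\|w\|_{L^q}\le\|V^{1/2}\|_{L^{2p/(p-1)}}\|g\|_{L^2}=\|V\|_{L^{p/(p-1)}}^{1/2}\|g\|_{L^2}.
$$
Combining these bounds yields $\|A\|_{\cL_h^{2p}}^{2p}\le C\|V\|_{L^{p/(p-1)}}^{p}\|\sigma\|_{\cH^s_h}^{2p}$, which is \eqref{Lpnorm-Delta}.

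I expect the main obstacle to be the bookkeeping in the first step. This means writing $\Tr_{\T^d_h}(A^*A)^p$ correctly as a cyclic integral with one variable in $\T^d_h$ and the others in $\R^d$, checking that the lattice periodicity produces exactly the constraint $\sum r_j=0$ with no extra lattice sums, and keeping track of the factor $h^d$ against the normalized Sobolev embedding on $\T^d_h$. A complex-interpolation alternative (between the trivial $\cL^2_h$ bound with $V\in L^\infty$ and Lemma~\ref{lem:op-norm-Delta}) would work for most $(d,p)$, but it fails at the endpoint $d=3$, $p=3$. This is why I would use the direct Hölder--Young argument.
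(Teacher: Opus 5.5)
Your proposal is correct and follows essentially the same route as the paper. Both arguments expand $h^d\Tr_{\T^d_h}(A^*A)^p$ as a cyclic kernel integral and pass to relative variables with $\sum_j r_j=0$. They then apply H\"older in the center-of-mass variable to get $\prod_j\|\sigma(\cdot+r_j/2,\cdot-r_j/2)\|_{L^{2p}_h}$, use Young's inequality for the $2p$-fold convolution at $0$ followed by H\"older to extract $\|V\|_{L^{p/(p-1)}}^{p}\|f\|_{L^2}^{2p}$, and finish with the rescaled Sobolev embedding $H^s(\T^d)\subset L^{2p}(\T^d)$.
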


\begin{proof}
	First, we compute
	\begin{align*}
		\|V^{1/2}\sigma\|_{\cL_{h}^{2p}}^{2p}\DETAILS{&=h^{d}\Tr_{\T^d_{h}}(\Delta_{\alpha}^{*}\Delta_{\alpha})^{p}\nonumber\\}
		&=h^{d}\int_{\R^{2pd}}dx_{1}...dx_{2p} \, \chi_{\T^d_{h}}(x_{1})\prod_{k=1}^{2p}V^{1/2}(x_{k}-x_{k+1})\sigma_{(k)}(x_{k},x_{k+1}) \,,
	\end{align*}
	where $x_{2p+1}=x_{1}$, $\sigma_{(k)}(x,y)=\sigma(x,y)$ if $k$ is even and its complex conjugate if $k$ is odd.  To compute the above integral, we introduce the following notations:
	\begin{align*}
		X&=x_{1},\quad r_{1}=x_{1}-x_{2},\quad r_{2}=x_{2}-x_{3},\quad...,\\
		r_{2p-1}&=x_{2p-1}-x_{2p},\quad r_{2p}=-(r_{1}+r_{2}+...+r_{2p-1}).
	\end{align*}
	Furthermore, we denote $S_{m}$ to be a linear combination of $r_{k}$'s such that $(x_{m}+x_{m+1})/2=X-S_{m}$.  By a change of variable, periodicity of $\sigma$, H\"older and Sobolev inequalities, the above integral satisfies the following estimate
	\begin{align}\label{Lp-Delta-comp2}
		\|V^{1/2}\sigma\|_{\cL_{h}^{2p}}^{2p} \DETAILS{=h^{d}\int_{\R^{(2p+1)d}}dXdr_{1}...dr_{2p} \delta(r_{1}+...+r_{2p})\chi_{\T^d_{h}}(X)\nonumber\\
			&\quad\quad\quad\quad\quad\times\prod_{k=1}^{2p}V^{1/2}(r_{k})\sigma_{(k)}(X-S_{k}+r_{k}/2,X-S_{k}-r_{k}/2)\nonumber\\
			}& \leq \int_{\R^{2pd}}dr_{1}...dr_{2p} \, \delta(r_{1}+...+r_{p})V^{1/2}(r_{1})...V^{1/2}(r_{2p})\nonumber\\
		&\quad\quad\quad\quad\quad\times\Big(h^{d}\int_{\T^d_{h}}dX\prod_{k=1}^{2p}\big|\sigma(X-S_{k}+r_{k}/2,X-S_{k}-r_{k}/2)\big|\Big)\nonumber\\
		&\leq\int_{\R^{2pd}}dr_{1}...dr_{2p} \, \delta(r_{1}+...+r_{2p}) \prod_{k=1}^{2p}V^{1/2}(r_{k})f(r_{k})\nonumber \\
        & \leq \|V^{1/2}f\|_{L^{2p/(2p-1)}}^{2p}\leq \|V^{1/2}\|_{L^{2p/(p-1)}}^{2p}\|f\|_{L^{2}}^{2p} \nonumber \\
        &=\|V\|_{L^{p/(p-1)}}^{p}\|f\|_{L^{2}}^{2p},
	\end{align}
	where
	\begin{align*}
		f(r)&:=\big\|\sigma(\cdot+r/2,\cdot -r/2)\big\|_{L_{h}^{2p}(\T^d_{h})}.
	\end{align*}
	
	Now, by the Sobolev embedding on the torus $\T^d$ and scaling, we obtain for each $r\in\R^{d}$ that
	\begin{align}
		f(r)&\leq C_{p,d}\big\|\sigma(\cdot+r/2,\cdot-r/2)\big\|_{H_{h}^{s}(\T^d_{h})} \,,
	\end{align}
	where $s$ is as in the theorem. This gives
	\begin{align*}
		\|f\|_{L^{2}}&\leq C_{p,d}\Big(\int_{\R^{d}}\big\|\sigma(\cdot+r/2,\cdot-r/2)\big\|_{H_{h}^{s}(\T^d_{h})}^{2}dr\Big)^{1/2}=C_{p,d}\|\sigma\|_{\cH_{h}^{s}}.
	\end{align*}
	Putting everything together, we arrive at the claimed bound.
\end{proof}

\begin{lemma}\label{lem:Hsnorm-Delta}
	If $V\in L^{\infty}(\R^{d})$, then it holds for any $s\geq 0$ that 
	\begin{align}\label{H12norm-Delta}
		\|V^{1/2}\sigma\|_{\cH_{h}^{s}}&\leq \|V\|_{L^{\infty}}^{1/2}\|\sigma\|_{\cH_{h}^{s}}.
	\end{align}
\end{lemma}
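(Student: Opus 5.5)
The estimate follows from two facts. First, multiplication by $V^{1/2}$ acts only in the relative variable $r$. Second, the $\cH_h^s$-norm measures regularity only in the center-of-mass variable $X$.

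Concretely, the integral kernel of $V^{1/2}\sigma$ in the coordinates $(X,r)$ is
$$
(V^{1/2}\sigma)(\zeta_X^r,\zeta_X^{-r}) = V^{1/2}(r)\,\sigma(\zeta_X^r,\zeta_X^{-r}).
$$
The factor $V^{1/2}(r)$ does not depend on $X$, so it commutes with the Fourier transform in $X$. I therefore first take the partial Fourier transform in $X$ only:
$$
\check\sigma(q,r) := h^d\int_{\T^d_h} e^{-ihq\cdot X}\sigma(\zeta_X^r,\zeta_X^{-r})\,dX, \qquad q\in 2\pi\Z^d,\ r\in\R^d.
$$
This gives $\check{(V^{1/2}\sigma)}(q,r)=V^{1/2}(r)\check\sigma(q,r)$.

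By Plancherel in the relative variable, i.e.\ \eqref{FT-unitary} applied fibrewise in $q$, we have
$$
\|\sigma\|_{\cH_h^s}^2=\sum_{q}(1+|q|^2)^s\int_{\R^d}|\check\sigma(q,r)|^2\,dr .
$$
Here the normalization of \eqref{eq:fourier} is used: the $(2\pi)^{-d/2}$ factor makes the $p$-transform unitary. Consequently
$$
\|V^{1/2}\sigma\|_{\cH_h^s}^2=\sum_q(1+|q|^2)^s\int_{\R^d} V(r)\,|\check\sigma(q,r)|^2\,dr \le \|V\|_{L^\infty}\|\sigma\|_{\cH_h^s}^2 .
$$
Taking square roots gives the claim.

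There is no real obstacle; one only has to check two things. The first is that the $(X,r)$ Fourier transform in \eqref{eq:fourier} factorizes as an $X$-transform followed by a unitary $r$-transform, which is immediate from its definition. The second is that $V^{1/2}\sigma$ is again $h^{-1}\Z^d$-periodic with $\overline{V^{1/2}\sigma}=(V^{1/2}\sigma)^*$, so that it lies in the relevant space; this holds because $V$ is real and even.

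Equivalently, one could argue via $\|\sigma\|_{\cH^s_h}=\|(\one-h^{-2}\Delta_X)^{s/2}\sigma\|_{\cL^2_h}$. In this form, multiplication by $V^{1/2}(r)$ commutes with $(\one-h^{-2}\Delta_X)^{s/2}$, and its norm on $\cL_h^2$ is at most $\|V\|_{L^\infty}^{1/2}$.
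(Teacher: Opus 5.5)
Your proof is correct and is essentially the paper's argument. The paper uses exactly your closing remark: it writes $\|V^{1/2}\sigma\|_{\cH_h^s}=\|(\one-h^{-2}\Delta_X)^{s/2}V^{1/2}\sigma\|_{\cL_h^2}=\|V^{1/2}(\one-h^{-2}\Delta_X)^{s/2}\sigma\|_{\cL_h^2}$ and then bounds the multiplication by $V^{1/2}$ on $\cL_h^2$ by $\|V\|_{L^\infty}^{1/2}$, and your fibrewise computation in $(q,r)$ is just the explicit Fourier-side version of that commutation.
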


\begin{proof}
	This estimate essentially follows from the fact that $V$ is independent of $X$.  Indeed, by H\"older inequality \eqref{Holder-periodic} and the fact that $[V,-\Delta_{X}]=0$, we have
	\begin{align*}
		\|V^{1/2}\sigma\|_{\cH_{h}^{s}}&=\|(\one-h^{-2}\Delta_{X})^{s/2}V^{1/2}\sigma\|_{\cL_{h}^{2}}=\|V^{1/2}(\one-h^{-2}\Delta_{X})^{s/2}\sigma\|_{\cL_{h}^{2}}\nonumber\\
		&\leq \|V^{1/2}\|_{L^{\infty}}\|(\one-h^{-2}\Delta_{X})^{s/2}\sigma\|_{\cL_{h}^{2}}=\|V\|_{L^{\infty}}^{1/2}\|\sigma\|_{\cH_{h}^{s}}.
	\end{align*}
	This completes the proof.
\end{proof}
%
%
%

\bigskip


\section{Linear analysis}\label{sec:linear}

In this section, we study the linear operator $\one-V^{1/2}K_{T}V^{1/2}$
that appears in the Birman--Schwinger version \eqref{BdG-BS-eqn} of the BdG equation.  


\subsection{The operators \texorpdfstring{$K_T$}{KT} and \texorpdfstring{$k_T$}{kT}}

As in \cite{FrankHainzl-18}, for $0<\beta<\infty$ we introduce the functions
\begin{align}\label{Xi-beta}
	\Xi_{\beta}(E,E')& :=
    \begin{cases}   
    \frac{\tanh(\beta E/2)+\tanh(\beta E'/2)}{E+E'} & \text{if}\ E\neq -E' \,, \\
    (\beta/2)/\cosh^{2}(\beta E/2) & \text{if}\ E= -E' \,,
    \end{cases}
\end{align}
and
$$
\chi_{\beta}(E) := \Xi_\beta(E,E) = \frac{\tanh(\beta E/2)}{E} \,.
$$

We recall that $K_T$, defined in \eqref{linear}, is an operator, acting on $h^{-1}\Z^{d}$-periodic operators in $\cL_{h}^{2}$. We introduce the operator
$$
k_{T}:=\chi_{\beta}(-\Delta_{r}-\mu)
\qquad \text{in}\ L^2_{\rm symm}(\R^d)
$$ 
with $T=\beta^{-1}$, where $-\Delta_{r}$ denotes the Laplacian in the $r$-variable. We will identify $k_T$ with an operator acting on $h^{-1}\Z^d$-periodic operators in $\cL_{h}^{2}$ that only acts with respect to the relative variable $r=x-y$.

Our goal in this subsection is to derive a Fourier representation of the operators $K_T$ and $k_T$. This will involve the function
\begin{align}\label{eq:fbeta}
	f_{\beta}(p,q)&:=\Xi_{\beta}\big(|p|^{2}-\mu,|q|^{2}-\mu\big).\DETAILS{=\frac{\tanh(\beta(|p|^{2}-\mu)/2)+\tanh(\beta(|q|^{2}-\mu)/2)}{(|p|^{2}-\mu)+(|q|^{2}-\mu)}.}
\end{align}
For the Fourier transform of operators in $\cL_{h}^2$, see \eqref{eq:fourier}.

\begin{lemma}
    Let $\sigma\in \cL_{h}^{2}$ be an even, $h^{-1}\Z^{d}$-periodic operator in $\cL_{h}^{2}$. Then
    $$
    [K_{T}(\sigma)](\zeta_{X}^{r},\zeta_{X}^{-r})
	=\frac{1}{(2\pi)^{d/2}}\sum_{q\in 2\pi\Z^{d}}\int_{\R^{d}} f_{\beta}(p+hq/2,p-hq/2)e^{ihq\cdot X}e^{ip\cdot r}\hat{\sigma}(q,p)dp
    $$
    and
    $$
    [k_{T}(\sigma)](\zeta_{X}^{r},\zeta_{X}^{-r})=\frac{1}{(2\pi)^{d/2}}\sum_{q\in 2\pi\Z^{d}}\int_{\R^{d}}f_{\beta}(p,p)e^{ihq\cdot X}e^{ip\cdot r}\hat{\sigma}(q,p)dp.
    $$
\end{lemma}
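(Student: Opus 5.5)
The plan is to compute the action of $K_T$ on a single plane-wave kernel and then extend to general $\sigma$ by linearity and density. Using the inversion formula \eqref{invFT}, $\sigma$ is a superposition of the elementary kernels
$$
e_{q,p}(x,y) := e^{ihq\cdot (x+y)/2}\, e^{ip\cdot(x-y)}, \qquad q\in 2\pi\Z^d,\ p\in\R^d ,
$$
which are $h^{-1}\Z^d$-periodic in the sense required. Each $e_{q,p}$ factorizes as $e^{i(p+hq/2)\cdot x}\, e^{-i(p-hq/2)\cdot y}$, i.e.\ it is the rank-one operator $|k_1\rangle\langle k_2|$ with $k_1=p+hq/2$ and $k_2=p-hq/2$. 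Since $\fh$ is diagonal in plane waves, $(z-\fh)^{-1}$ acts on the left factor as multiplication by $(z-(|k_1|^2-\mu))^{-1}$. Because $\overline{\fh}=\fh$, the factor $(z+\overline{\fh})^{-1}$ acting on the right gives $(z+|k_2|^2-\mu)^{-1}$.

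With $E=|k_1|^2-\mu$ and $E'=|k_2|^2-\mu$, the kernel $e_{q,p}$ is therefore multiplied by
$$
\int_{\cC}\frac{dz}{2\pi i}\,\rho(\beta z)\,\frac{1}{(z-E)(z+E')} .
$$
For $E\ne -E'$, partial fractions give $(z-E)^{-1}(z+E')^{-1}=(E+E')^{-1}\big[(z-E)^{-1}-(z+E')^{-1}\big]$. The scalar representation $\int_\cC \frac{dz}{2\pi i}\rho(\beta z)(z-E)^{-1}=\tanh(\beta E/2)$ from Section~\ref{sec:bs}, together with oddness of $\tanh$, then yields
$$
\frac{\tanh(\beta E/2)+\tanh(\beta E'/2)}{E+E'} = \Xi_\beta(E,E') .
$$
The case $E=-E'$ follows by continuity, or equivalently by differentiating the scalar formula, which gives $(\beta/2)\cosh^{-2}(\beta E/2)$. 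Hence $K_T(e_{q,p})=f_\beta(p+hq/2,p-hq/2)\,e_{q,p}$. Written in the coordinates $(\zeta^r_X,\zeta^{-r}_X)$, this is $e^{ihq\cdot X}e^{ip\cdot r}$ times that multiplier.

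Superposing against $\hat\sigma(q,p)$ gives the first formula. To make this rigorous, note that $\Xi_\beta$ is bounded: $|\Xi_\beta|\le \beta/2$, since $\tanh(\beta\,\cdot/2)$ is $\beta/2$-Lipschitz. So the right-hand side defines a bounded Fourier multiplier on $\cL^2_h$ by \eqref{FT-unitary}. One then checks the identity on a dense class, for instance $\hat\sigma$ in $C_c$ with finitely many $q$, and extends by continuity.

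The step I expect to need the most care is showing that $K_T$ itself, defined through the contour integral or the Matsubara sum, is a bounded operator on $\cL^2_h$ and commutes with the superposition. Only conditional convergence (a principal value) is available there. I would handle this with the Matsubara sum: for each fixed $(q,p)$ the partial sums $-\frac{2}{\beta}\sum_{|n|\le N}(i\omega_n-E)^{-1}(i\omega_n+E')^{-1}$ converge absolutely, since the terms decay like $n^{-2}$. The symmetric partial sums are uniformly bounded by $\sum_n 2\beta^{-1}\omega_n^{-2}\lesssim\beta$, because $|i\omega_n-E|\ge|\omega_n|$. Dominated convergence on the Fourier side then justifies exchanging the sum with the integral.

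For $k_T$, the operator is by definition $\chi_\beta(-\Delta_r-\mu)$ acting only in $r$. On $e^{ip\cdot r}$ it multiplies by $\chi_\beta(|p|^2-\mu)=\Xi_\beta(|p|^2-\mu,|p|^2-\mu)=f_\beta(p,p)$, independently of $q$. Applying this under \eqref{invFT} gives the second formula directly.
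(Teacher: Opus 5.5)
Your proposal is correct and follows essentially the paper's route. Both arguments diagonalize $K_T$ in the Fourier variables $(q,p)$ and reduce to the scalar integral $\int_{\cC}\frac{dz}{2\pi i}\rho(\beta z)(z-E)^{-1}(z+E')^{-1}=\Xi_\beta(E,E')$ with $E=|p+hq/2|^2-\mu$, $E'=|p-hq/2|^2-\mu$; the paper computes this through the position-space Green's function kernel $F_T(Y,s-r)$, whereas you act directly on plane-wave eigenkernels, and your absolutely convergent Matsubara sum, the bound $0\le\Xi_\beta\le\beta/2$, and the density argument justify the interchanges of sums and integrals that the paper leaves implicit.
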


\begin{proof} 
    Recall that $\fh=\overline{\fh}=-\Delta-\mu$.  Let $G_{z}:\R^{d}\times\R^{d}\rightarrow\R$ be the Green's function, that is, the integral kernel of the resolvent $(z-\fh)^{-1}$.  Since
\begin{align*}
	(z+\overline{\fh})^{-1}&=-(-z-\fh)^{-1},
\end{align*}
it follows that $-G_{-z}$ is the Green function of $(z+\overline{\fh})^{-1}$. Recalling the definition \eqref{linear} of $K_T(\sigma)$ for a $h^{-1}\Z^{d}$-periodic operator $\sigma\in\cL_{h}^{2}$, we can write 
\begin{align}
	[K_{T}(\sigma)](\zeta_{X}^{r},\zeta_{X}^{-r})&=-\int_{\cC}\frac{dz}{2\pi i}\,\rho(\beta z)\iint_{\R^d\times\R^d}G_{z}(\zeta_{X}^{r},u)G_{-z}(v,\zeta_{X}^{-r})\sigma(u,v) \,dudv\nonumber\\
	&=-\int_{\cC}\frac{dz}{2\pi i}\,\rho(\beta z)\iint_{\R^d\times\R^d}G_{z}(\zeta_{X}^{r},\zeta_{Y}^{s})G_{-z}(\zeta_{Y}^{-s},\zeta_{X}^{-r})\sigma(\zeta_{Y}^{s},\zeta_{Y}^{-s}) \, dY \, ds\nonumber\\
	&=-\int_{\cC}\frac{dz}{2\pi i}\,\rho(\beta z)\iint_{\R^d\times\R^d}G_{z}(\zeta_{X}^{r},\zeta_{X+Y}^{s})G_{-z}(\zeta_{X+Y}^{-s},\zeta_{X}^{-r})\nonumber\\
	&\quad\quad\quad\quad\quad\quad\quad\quad\quad\quad\quad\quad\quad\times e^{iY\cdot(-i\nabla_{X})}\sigma(\zeta_{X}^{s},\zeta_{X}^{-s}) \,dY\,ds,
\end{align}
where we have applied a change of variable $u=Y+s/2=\zeta_{Y}^{s}$ and $v=Y-s/2=\zeta_{Y}^{-s}$.  We define 
\begin{align}\label{F-kernel}
	F_{T}(X,Y,r,s)& :=-\int_{\cC}\frac{dz}{2\pi i} \, \rho(\beta z)G_{z}(\zeta_{X}^{r},\zeta_{X+Y}^{s})G_{-z}(\zeta_{X+Y}^{-s},\zeta_{X}^{-r}),
\end{align}
so that 
\begin{align}
	&[K_{T}(\sigma)](\zeta_{X}^{r},\zeta_{X}^{-r})=\iint_{\R^d\times\R^d} dY \, ds \, F_{T}(X,Y,r,s)e^{iY\cdot(-i\nabla_{X})}\sigma(\zeta_{X}^{s},\zeta_{X}^{-s}).
\end{align}
Since the operator $\fh$ is translation invariant, we have $G_{z}(x,y)=G_{z}(x-y)=G_{z}(y-x)$, so that $F_{T}(X,Y,r,s)$ depends only on $Y$ and $s-r$ since
\begin{align*}
	G_{z}(\zeta_{X}^{r},\zeta_{X+Y}^{s})&=G_{z}(\zeta_{X}^{r}-\zeta_{X+Y}^{s})=G_{z}(\zeta_{Y}^{s-r}).
\end{align*}
Hence, we shall write $F_{T}(Y,s-r)$ for $F_{T}(X,Y,r,s)$ for simplicity.  Moreover, by Fourier transform and integrating over $z$, we can write $F_{T}$ as follows:
\begin{align}
	F_{T}(Y,r-s)&=-\int_{\cC}\frac{dz}{2\pi i}\,\rho(\beta z)G_{z}(\zeta_{Y}^{s-r})G_{-z}(\zeta_{Y}^{-(s-r)})\nonumber\\
	&=\frac{1}{(2\pi)^{2d}} \int_{\cC}\frac{dz}{2\pi i}\,\rho(\beta z) \iint_{\R^{d}\times\R^d} \frac{e^{ip\cdot \zeta_{Y}^{s-r}}e^{iq\cdot \zeta_{Y}^{-(s-r)}}}{(z-(|p|^{2}-\mu))(z+(|q|^{2}-\mu))}dpdq\nonumber\\
	\DETAILS{&=\frac{1}{(2\pi)^{2d}}\iint_{\R^d\times\R^d}\int_{\cC}\frac{dz}{2\pi i}\,\rho(\beta z)\frac{e^{ip\cdot (s-r)}e^{iq\cdot Y}}{(z-(|p+q/2|^{2}-\mu))(z+(|p-q/2|^{2}-\mu))} \, dpdq\nonumber\\}
	&=\frac{1}{(2\pi)^{2d}} \iint_{\R^d\times\R^d} f_{\beta}(p+q/2,p-q/2)e^{ip\cdot(s-r)}e^{iq\cdot Y} \, dpdq,
\end{align}
with $f_\beta$ from \eqref{eq:fbeta}. Consequently,
\begin{align}\label{KT-kernel-rep}
	[K_{T}(\sigma)](\zeta_{X}^{r},&\zeta_{X}^{-r})=\frac{1}{(2\pi)^{2d}}\iint_{\R^d\times\R^d}\iint_{\R^d\times\R^d}f_{\beta}(p+q/2,p-q/2)e^{ip\cdot(s-r)}\nonumber\\
	&\quad\quad\quad\times\Big[\frac{1}{(2\pi)^{d/2}}\int_{\R^{d}}\sum_{k\in 2\pi\Z^{d}}e^{i(hk+q)\cdot Y}e^{ihk\cdot X}e^{i\ell\cdot s}\hat{\sigma}(k,\ell)d\ell\Big]dYdsdpdq\nonumber\\
	&=\frac{1}{(2\pi)^{d/2}}\sum_{q\in 2\pi\Z^{d}}\int_{\R^{d}} f_{\beta}(p+hq/2,p-hq/2)e^{ihq\cdot X}e^{ip\cdot r}\hat{\sigma}(q,p)dp.
\end{align}
This proves the first assertion in the lemma.

The proof of the second one is similar, noting that $f_{\beta}(p,p)=\chi_{\beta}(|p|^{2}-\mu)$. We omit the details.
\end{proof}

The formulas in the previous lemma leads to the following two lemmas, relating the operators $K_{T}$ and $k_{T}$.

\begin{lemma}\label{lem:KTkT-diff}
	It holds for each $\sigma\in\cL_{h}^{2}$ that 
	\begin{align}\label{KTkT-diff}
		\|(K_{T}-k_{T})\sigma\|_{\cL_{h}^{2}}&\lesssim\Big\|\frac{\Delta_{X}}{\one-\Delta_{X}}\sigma\Big\|_{\cL_{h}^{2}}.
	\end{align}
\end{lemma}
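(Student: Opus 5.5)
By the Fourier representation in the previous lemma together with the unitarity \eqref{FT-unitary}, both $K_T$ and $k_T$ act as Fourier multipliers in the variables $(q,p)\in 2\pi\Z^d\times\R^d$. Since $\Delta_X$ has symbol $-h^2|q|^2$, the operator $\Delta_X(\one-\Delta_X)^{-1}$ has symbol $h^2|q|^2/(1+h^2|q|^2)$ in absolute value. The claim \eqref{KTkT-diff} is therefore equivalent to the pointwise multiplier bound
\begin{equation*}
\big|f_\beta(p+hq/2,p-hq/2)-f_\beta(p,p)\big|\lesssim \frac{h^2|q|^2}{1+h^2|q|^2}
\qquad\text{for all } p\in\R^d,\ q\in 2\pi\Z^d .
\end{equation*}
Writing $k:=hq$, it suffices to prove $|f_\beta(p+k/2,p-k/2)-f_\beta(p,p)|\lesssim \min\{1,|k|^2\}$ uniformly in $p,k\in\R^d$, with $\beta$ ranging in a compact subset of $(0,\infty)$ by (A5).

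For $|k|\ge 1$ the bound follows from boundedness: $\Xi_\beta(E,E')$ is the divided difference of $\tanh(\beta\,\cdot/2)$ between $-E'$ and $E$, so $0<\Xi_\beta\le \beta/2$ by the mean value theorem. The real work is the small-$|k|$ regime. Set $E_\pm=|p\pm k/2|^2-\mu$, and note $E_\pm=|p|^2+|k|^2/4-\mu\pm p\cdot k$. The naive Lipschitz estimate gives only a bound by $|k|\,(|p|+|k|)$, and this fails in two ways: it is first order in $|k|$, and it is not uniform in $p$.

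To obtain second order I will use the symmetry of $\Xi_\beta$ in its arguments and its dependence on $E+E'$ and $E-E'$. Write $u=(E_++E_-)/2=|p|^2+|k|^2/4-\mu$ and $w=(E_+-E_-)/2=p\cdot k$. Then $F(u,w):=\Xi_\beta(u+w,u-w)$ is smooth and even in $w$, so there is no linear term in $w$. Hence
\begin{equation*}
F(u,w)-F(u_0,0)=\big[F(u,w)-F(u,0)\big]+\big[F(u,0)-F(u_0,0)\big],
\end{equation*}
with $u_0=|p|^2-\mu$. The second bracket is bounded by $|k|^2\sup|\chi_\beta'|/4$, since $\chi_\beta$ has bounded derivative. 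The first bracket is bounded by $w^2\sup|\partial_w^2F(u,\cdot)|$, where $w^2=(p\cdot k)^2\le|p|^2|k|^2$.

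The main obstacle is the factor $|p|^2$, which must be absorbed by decay of $\partial_w^2F$ for large $|p|$. To handle it I will use the representation
\begin{equation*}
\Xi_\beta(E,E')=-\frac{2}{\beta}\sum_n \frac{1}{(i\omega_n-E)(i\omega_n+E')},
\end{equation*}
coming from \eqref{linear}, or equivalently the explicit divided-difference formula. Differentiating twice in $w$ gives sums of terms like $|i\omega_n-E|^{-a}|i\omega_n+E'|^{-b}$ with $a+b=4$. Since $|p|\sim|p\pm k/2|$ for $|k|\le 1$ and large $|p|$, the energies $E_\pm\gtrsim|p|^2$ are comparable. The resulting bound $|\partial_w^2F|\lesssim(1+|p|^2)^{-1}$ then follows from $\sum_n(\omega_n^2+|p|^4)^{-2}\lesssim|p|^{-6}$, or more crudely by splitting off $\tanh\approx 1$ in the explicit formula. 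For bounded $|p|$, smoothness of $F$ on compact sets suffices. Combining the two regimes yields the $\min\{1,|k|^2\}$ bound, and hence the lemma via Plancherel \eqref{FT-unitary}.
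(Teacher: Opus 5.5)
Your proposal is correct and follows the paper's proof: Plancherel \eqref{FT-unitary} reduces the claim to the multiplier bound $|f_\beta(p+hq/2,p-hq/2)-f_\beta(p,p)|\lesssim\min\{1,h^2|q|^2\}\lesssim h^2|q|^2/(1+h^2|q|^2)$. The paper simply asserts this pointwise bound, whereas you justify it: boundedness of $\Xi_\beta$ handles large $|hq|$, evenness of $F(u,w)$ in $w=p\cdot k$ removes the linear term, and decay of $\partial_w^2F$ at large $|p|$ absorbs the factor $|p|^2$. That justification is sound.
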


\begin{proof}
	Using \eqref{FT-unitary} and the inequality
	\begin{align*}
		|f_{\beta}(p+hq/2,p-hq/2)-f_{\beta}(p,p)|\lesssim\min\{1,h^{2}|q|^{2}\}\lesssim\frac{h^{2}|q|^{2}}{1+h^{2}|q|^{2}},
	\end{align*}
	we obtain
	\begin{align*}
		\|(K_{T}-k_{T})\sigma\|_{\cL_{h}^{2}}^{2}\DETAILS{&=\|(f_{\beta}(p_{X}/2+p_{r},p_{X}/2-p_{r})-f_{\beta}(p_{r},p_{r}))\hat{\varphi}\|_{\widetilde{\cL}_{h}^{2}}^{2}\nonumber\\}
		&=\sum_{q\in2\pi\Z^{d}}\int_{\R^{d}}\big|f_{\beta}(p+hq/2,p-hq/2)-f_{\beta}(p,p)\big|^{2}|\hat{\sigma}(q,p)|^{2}dp\nonumber\\
		&\lesssim \sum_{q\in 2\pi\Z^{d}}\int_{\R^{d}}\Big(\frac{h^{2}|q|^{2}}{1+h^{2}|q|^{2}}\Big)^{2}|\hat{\sigma}(q,p)|^{2}dp=\Big\|\frac{\Delta_{X}}{\one-\Delta_{X}}\sigma\Big\|_{\cL_{h}^{2}}^{2},
	\end{align*}
	which completes the proof.
\end{proof}

\begin{lemma}\label{lem:kT-diff}
	Let $0<\beta\leq\beta'<\infty$ with $T=\beta^{-1}$ and $T'=\beta'^{-1}$.  Then it holds that
	\begin{align}\label{kT-diff}
		\|k_{T}-k_{T'}\|_{\cL_{h}^{2}\rightarrow\cL_{h}^{2}}&\lesssim\beta^{-1}|\beta-\beta'|,
	\end{align}
	and
	\begin{align}\label{KT-diff}
		\|K_{T}-K_{T'}\|_{\cL_{h}^{2}\rightarrow\cL_{h}^{2}}&\lesssim \beta^{-1}|\beta-\beta'|.
	\end{align}
\end{lemma}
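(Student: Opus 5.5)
Both operators act in the Fourier representation of the preceding lemma as multiplication operators on $\hat\sigma(q,p)$. The symbol of $K_T-K_{T'}$ is $f_\beta(p+hq/2,p-hq/2)-f_{\beta'}(p+hq/2,p-hq/2)$, and that of $k_T-k_{T'}$ is $\chi_\beta(|p|^2-\mu)-\chi_{\beta'}(|p|^2-\mu)$. By the Plancherel identity \eqref{FT-unitary}, each operator norm on $\cL^2_h$ equals the supremum of the modulus of its symbol. Since $f_\beta(p,p)=\chi_\beta(|p|^2-\mu)$, both bounds follow from the single pointwise estimate
\begin{equation*}
\sup_{E,E'\in\R}\big|\Xi_\beta(E,E')-\Xi_{\beta'}(E,E')\big|\lesssim \beta^{-1}|\beta-\beta'| ,
\end{equation*}
applied with $E=|p+hq/2|^2-\mu$ and $E'=|p-hq/2|^2-\mu$, or with $E=E'$.

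To prove this I would use the scaling identity $\Xi_\beta(E,E')=\beta\,\Xi_1(\beta E,\beta E')$ and write
\begin{equation*}
\Xi_{\beta'}-\Xi_\beta=\int_\beta^{\beta'}\partial_b\Xi_b(E,E')\,db .
\end{equation*}
From the definition,
\begin{equation*}
\partial_b\Xi_b(E,E')=\frac{\tfrac{E}{2}\cosh^{-2}(bE/2)+\tfrac{E'}{2}\cosh^{-2}(bE'/2)}{E+E'} .
\end{equation*}
When $E,E'$ have the same sign, or are well separated, this is a convex-type combination bounded by $\tfrac12\max\{|E|\cosh^{-2}(bE/2),|E'|\cosh^{-2}(bE'/2)\}$, which is $\lesssim b^{-1}$ because $\sup_x|x|\cosh^{-2}(x/2)<\infty$. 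Near $E=-E'$ one has a difference quotient of $g(x)=x\cosh^{-2}(bx/2)/2$ (odd in $x$), namely $(g(E)-g(-E'))/(E-(-E'))$. It is bounded by $\sup|g'|\lesssim 1$ after the scaling is taken into account, which is again $\lesssim b^{-1}$ per unit of $b$ up to constants.

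Alternatively, and more cleanly, I would set $F(x,y)=\Xi_1(x,y)$. Then $\partial_b[b\,F(bE,bE')]=F+bE\,\partial_xF+bE'\,\partial_yF=\big(1+x\partial_x+y\partial_y\big)F\big|_{(bE,bE')}$, so it suffices to show that $(1+x\partial_x+y\partial_y)F$ is bounded on $\R^2$. This is the main step. I would write $F(x,y)=\int_0^1\rho'(tx+(1-t)(-y))\,dt$ with $\rho=\tanh(\cdot/2)$, a mean-value representation valid on and off the diagonal $x=-y$. Then
\begin{equation*}
(1+x\partial_x+y\partial_y)F=\int_0^1\big(\rho'(s)+s\rho''(s)\big)\big|_{s=tx-(1-t)y}\,dt ,
\end{equation*}
since the Euler operator acts on the homogeneous argument $s$ as $s\,d/ds$. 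The integrand is bounded because $\rho'(s)+s\rho''(s)=(s\rho'(s))'$ and $s\rho'(s)$ has bounded derivative.

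Integrating over $b\in[\beta,\beta']$ then gives $|\Xi_{\beta'}-\Xi_\beta|\le C|\beta'-\beta|$, which is not yet the claimed form. To get the factor $\beta^{-1}$, I would instead note that the relevant regime (A5) has $\beta\sim\beta'\sim\beta_c$, so $C|\beta-\beta'|\lesssim\beta^{-1}|\beta-\beta'|$ with a constant depending on $\beta_c$, i.e.\ on $V,\mu$, as allowed by the conventions. For a genuinely uniform statement I would seek a bound of order $b^{-1}$ on $\partial_b\Xi_b$. That requires a bound on $\partial_b[bF(bE,bE')]$ of order $b^{-1}$, which fails at $E=E'=0$, where $\Xi_b=b/2$. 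I therefore expect the intended reading to use that $\beta,\beta'$ are comparable to $\beta_c$, and I would state the estimate with that convention. Checking the exact $\beta$-dependence of the constant is the only delicate point.
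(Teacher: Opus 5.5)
Your argument is correct and follows the same route as the paper. You bound the symbol difference $\Xi_{\beta'}-\Xi_\beta$ pointwise by integrating $\partial_b\Xi_b$ over $[\beta,\beta']$, and then apply Plancherel exactly as in Lemma \ref{lem:KTkT-diff}.

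Where you differ from the paper is the $\beta$-scaling, and there you are right. Your Euler-operator identity gives the sharp uniform bound $|\partial_b\Xi_b|\le\sup_s|(s\rho'(s))'|=\tfrac12$. Hence $\|K_T-K_{T'}\|,\ \|k_T-k_{T'}\|\le\tfrac12|\beta-\beta'|$. Moreover, since $\Xi_b(0,0)=b/2$, for $\mu>0$ the symbol of $k_T-k_{T'}$ equals $(\beta-\beta')/2$ on the sphere $|p|=\sqrt\mu$. So no bound $C\beta^{-1}|\beta-\beta'|$ with $C$ independent of $\beta$ can hold.

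The paper gets the factor $\beta^{-1}$ by writing $f_\beta=G(\beta E,\beta E')$ with $G(x,y)=(\tanh x+\tanh y)/(x+y)$. The correct identity is $f_\beta=\tfrac{\beta}{2}G(\beta E/2,\beta E'/2)$. The estimate $\partial_b G(bx,by)=O(b^{-1})$ is true, but the dropped prefactor $\beta/2$ turns it back into $O(1)$. The paper's sign claim is also reversed, since $G(bx,by)=\int_0^1\cosh^{-2}(b(tx-(1-t)y))\,dt$ is nonincreasing in $b$.

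Every application of the lemma has $\beta,\beta'\in[\beta_c,\beta_c(1+Dh^2)]$. So your reading, with a constant depending on $\beta_c$ as the paper's conventions allow, is the right one, and nothing downstream changes.

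One correction to your first sketch: for $E,E'$ of the same sign, $\partial_b\Xi_b$ is a convex combination of $\tfrac12\cosh^{-2}(bE/2)$ and $\tfrac12\cosh^{-2}(bE'/2)$. It is therefore $O(1)$, not $O(b^{-1})$. Your difference-quotient argument with the odd function $g$ already gives $|\partial_b\Xi_b|\le\sup|g'|=O(1)$ in all cases at once, which agrees with your final conclusion.
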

\begin{proof}
	We denote $\delta\beta=\beta'-\beta\geq 0$.  Define the function $G_{\beta}:\R^{d}\times\R^{d}\rightarrow\R$ by
	\begin{align}
		G(x,y)&=\frac{\tanh(x)+\tanh(y)}{x+y}.
	\end{align}
	Then, by the fundamental theorem of calculus, we have for each fixed $x,y\in\R^{d}$ that 
	\begin{align*}
		0&\leq G(\beta' x,\beta' y)-G(\beta x,\beta y)=G((\beta+t\delta\beta)x,(\beta+t\delta\beta)y)\big|_{t=0}^{t=1}\nonumber\\
		&=\int_{0}^{1}\Big(\frac{d}{dt}G((\beta+t\delta\beta)x,(\beta+t\delta\beta)y)\Big) dt\nonumber\\
		&=\delta\beta\int_{0}^{1}\nabla G((\beta+t\delta\beta)x,(\beta+t\delta\beta)y)\cdot (x,y)^{\intercal}dt\nonumber\\
		&=\delta\beta\int_{0}^{1}\frac{1}{(\beta+t\delta\beta)(x+y)}\Big[\frac{x}{\cosh^{2}((\beta+t\delta\beta)x)}+\frac{y}{\cosh^{2}((\beta+t\delta\beta)y)}\nonumber\\
		&\quad\quad\quad\quad\quad-\frac{1}{\beta+t\delta\beta}G((\beta+t\delta\beta)x,(\beta'+\delta\beta)y)\Big]dt\leq \beta^{-1}\delta\beta.
	\end{align*}
	Since
	\begin{align*}
		f_{\beta}(p+hq/2,p-hq/2)=G(\beta((p+hq/2)^{2}-\mu),\beta((p-hq/2)^{2}-\mu)),
	\end{align*}
	by the same computations as in the proof of Lemma \ref{lem:KTkT-diff}, we obtain \eqref{KT-diff}.  The same method gives \eqref{kT-diff}.
\end{proof}


\subsection{The operators \texorpdfstring{$L_T$}{LT} and \texorpdfstring{$\ell_T$}{lT}}

In this subsection we study the operators
\begin{align}
	L_{T}&=\one-V^{1/2}K_{T}V^{1/2}
\end{align}
and
\begin{align}
	\ell_{T}&=\one-V^{1/2}k_{T}V^{1/2} \,.
\end{align}

Similarly as $k_T$, we consider $\ell_T$ both as an operator in $L^2_{\rm symm}(\R^d)$ and as an operator acting on even, $h^{-1}\Z^d$-periodic operators in $\cL_h^2$.

We begin by noting that by a Birman--Schwinger argument one can reformulate Assumptions (A1)--(A4) equivalently in terms of the family of operators $\ell_T$. In particular, the operator $\ell_{T_{c}}$ has a simple eigenvalue $0$. We let $\varphi_*$ denote a normalized, reflection symmetric and real eigenfunction. By simplicity of the eigenvalue, it follows that
\begin{align}\label{alpha*}
	\varphi_{*}&=\pm \frac{1}{\|V^{1/2}\alpha_{*}\|_{L^{2}}}V^{1/2}\alpha_{*}.
\end{align}
We shall choose a normalization for $\alpha_{*}$ such that $\|V^{1/2}\alpha_{*}\|_{L^{2}}=1$ and define $\varphi_{*}$ such that $\langle\varphi_*,V^{1/2}\alpha_{*}\rangle=1$.  Let 
$$
P:=\ket{\varphi_{*}}\bra{\varphi_{*}}
\quad\text{on}\ L^{2}_{\rm symm}(\R^{d}) \,.
$$
Note that Assumptions (A2) and (A4) imply that there is some $\theta>0$ such that 
\begin{align}\label{gap}
	\ell_{T_{c}}&\geq \theta P^{\perp},\quad\text{ where }P^{\perp}:=\one-P.
\end{align}
We define the projection (a cutoff in momentum space of center of mass coordinate) for a some fixed parameter $0<\kappa\leq 1$ that will be chosen later:
\begin{align*}
	\lambda_{\kappa}&:=\one(-\Delta_{X}\leq \kappa).
\end{align*}
Furthermore, we define
\begin{align}\label{P}
	P_{\kappa}&:=\lambda_{\kappa}P,\quad P_{\kappa}^{\perp}:=\one-P_{\kappa}.
\end{align}
Clearly, we can further decompose $P_{\kappa}^{\perp}=P^{\perp}+\lambda_{\kappa}^{\perp}P$, where $\lambda_{\kappa}^{\perp}=\one(-\Delta_{X}>\kappa)$.


%

\begin{prop}\label{prop:inv-bdd}
Under Assumptions (A1)--(A5) with sufficiently small $h$ and $0<h\ll\kappa\leq 1$, one has the following bounds
\begin{align}
	\label{inv-bdd1}P^{\perp}L_{T}P^{\perp}&\gtrsim P^{\perp},\\
	\label{inv-bdd2}P_{\kappa}^{\perp}L_{T}P_{\kappa}^{\perp}&\gtrsim \kappa P_{\kappa}^{\perp}.
\end{align}
\end{prop}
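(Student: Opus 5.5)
The plan is to compare $L_T$ with the translation-invariant model operator $\ell_{T_c}$, whose spectral gap \eqref{gap} is available, and to control the two differences $K_T-k_T$ (center-of-mass dependence) and $k_T-k_{T_c}$ (temperature shift). Throughout, $P$, $P^\perp$ act only in the relative variable $r$, while $\lambda_\kappa$ acts only in $X$. Hence all of these projections commute with each other and with $\ell_T$ and $k_T$. They do not commute with $K_T$, but $K_T$ is a Fourier multiplier in $q$, so it commutes with $\lambda_\kappa$.

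\emph{Step 1.} Write
\begin{align*}
L_T=\ell_{T_c}-V^{1/2}(k_T-k_{T_c})V^{1/2}-V^{1/2}(K_T-k_T)V^{1/2}.
\end{align*}
By Lemma \ref{lem:kT-diff} and (A5), the middle term has norm $\lesssim \|V\|_{L^\infty}h^2$. Moreover $k_T\geq k_{T_c}$, since $\beta\ge\beta_c$ and $\chi_\beta$ is increasing in $\beta$; so this term even has a favorable sign in one direction, but size $h^2$ suffices. For the last term, Lemma \ref{lem:KTkT-diff} together with the fact that $V^{1/2}$ commutes with $\Delta_X$ gives the following. On the range of $\lambda_{\kappa'}$ the operator $V^{1/2}(K_T-k_T)V^{1/2}$ has norm $\lesssim h^2\kappa'/(1+h^2\kappa')\cdot$ (up to $h$-scaling conventions) $\lesssim \kappa'$, since $-\Delta_X$ has symbol $h^2|q|^2$. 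On all of $\cL^2_h$ it is bounded by a constant $C_0$.

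\emph{Step 2: proof of \eqref{inv-bdd1}.} The difficulty is that $K_T-k_T$ is $O(1)$ for large $-\Delta_X$, so we cannot treat it perturbatively on all of $P^\perp$. Instead I would use positivity. In Fourier variables,
\begin{align*}
\langle\sigma,L_T\sigma\rangle=\sum_q\langle\hat\sigma(q,\cdot),(1-V^{1/2}f_\beta(\cdot+hq/2,\cdot-hq/2)V^{1/2})\hat\sigma(q,\cdot)\rangle,
\end{align*}
so it suffices to bound $1-V^{1/2}M_qV^{1/2}$ from below fiberwise on $P^\perp$. Two bounds are needed:
\begin{itemize}
\item[(i)] For $h|q|\le\delta$, the fiber operator is within $O(\delta^2+h^2)$ of $\ell_{T_c}\geq\theta$ on $P^\perp$.
\item[(ii)] For $h|q|\ge\delta$, I would use $f_\beta(p+k,p-k)\le \tfrac12\big(f_\beta(p+k,p+k)+f_\beta(p-k,p-k)\big)$, up to a strictly smaller factor, i.e. the Cauchy--Schwarz-type inequality $\Xi_\beta(E,E')\le \sqrt{\chi_\beta(E)\chi_\beta(E')}$ known from \cite{FrankHainzl-18,FHSS-12}. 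This would show $1-V^{1/2}M_qV^{1/2}\ge c(\delta)>0$ on the whole fiber.
\end{itemize}
This last point is the main obstacle. It requires strictness: the nonzero shift $k$ should strictly decrease the top of the spectrum of $V^{1/2}M_qV^{1/2}$ below 1, uniformly for $|k|\ge\delta/2$. I would get this by a compactness and continuity argument in $k$ (using $|x|^2V\in L^\infty$ and decay of $f_\beta$ as $|k|\to\infty$), combined with simplicity (A4) to exclude equality. The cross terms $P^\perp$/$P$ do not matter here because the lower bound in (ii) holds on the full fiber.

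\emph{Step 3: proof of \eqref{inv-bdd2}.} Decompose $P_\kappa^\perp=P^\perp+\lambda_\kappa^\perp P$. On $\lambda_\kappa^\perp P$, i.e. fibers with $h^2|q|^2>\kappa$ projected onto $\varphi_*$, I would expand $\langle\varphi_*,(1-V^{1/2}M_qV^{1/2})\varphi_*\rangle$ for small $k=hq/2$. It equals $O(h^2)$ plus a term $\gtrsim |k|^2$, and this quadratic term is exactly the positive definiteness of $\Lambda_0$ (cf. \cite[Sec.~1.4]{FHSS-12}). For $|k|\ge\delta$ one uses (ii). Together these give $\gtrsim\min(\kappa,1)-Ch^2\gtrsim\kappa$, since $h\ll\kappa$. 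The off-diagonal terms $\langle P^\perp\sigma,L_T\lambda_\kappa^\perp P\sigma\rangle$ are $\lesssim |k|^2$ for small $k$ per fiber, because the first-order term in $k$ vanishes by evenness. A Schur-complement estimate per fiber, using \eqref{inv-bdd1} on $P^\perp$, then yields $a\ge c|k|^2$ and $b\lesssim|k|^2$, so $a-b^2/c'\gtrsim|k|^2\gtrsim\kappa$ fiberwise. This gives \eqref{inv-bdd2}.
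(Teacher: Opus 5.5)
Your outline is correct, but it reaches the two bounds by a more roundabout route than the paper.

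\textbf{The paper's route.} The paper works at $T=T_c$ and applies Lemma~\ref{lem:fT-identity} once, globally. Conjugating with $U=e^{-ir\cdot p_X/2}$ and inserting \eqref{gap} gives the single inequality $L_{T_c}\ge\theta\big(\one-|A_{p_X}\rangle\langle A_{p_X}|\big)$, with $A_p(r)=\varphi_*(r)\cos(p\cdot r/2)$, valid on every fiber. Both bounds then reduce to a rank-one computation:
\begin{itemize}
\item \eqref{inv-bdd1} follows from $\sup_p\|P^\perp A_p\|^2<1$. This quantity vanishes at $p=0$ and tends to $1/2$ by Riemann--Lebesgue.
\item The $\lambda_\kappa^\perp$ part of \eqref{inv-bdd2} follows from $1-\|A_p\|^2=\int\sin^2(p\cdot r/2)|\varphi_*(r)|^2\,dr\gtrsim\min\{1,p^2\}$, which holds on the \emph{whole} fiber.
\end{itemize}
Since $L_T$ commutes with $\lambda_\kappa$, there are no cross terms, and passing from $T_c$ to $T$ costs only $O(h^2)\ll\kappa$.

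\textbf{Comparison.} The strictness you flag as the main obstacle is immediate in the paper's setup, because $\|A_p\|<1$ for $p\neq0$. The same formula already gives the quadratic lower bound near $p=0$ on both the $P$ and $P^\perp$ components. As a result, four of your ingredients become unnecessary: the small-$|k|$ Taylor expansion, the appeal to positive definiteness of $\Lambda_0$, the Schur complement for the $P$--$P^\perp$ coupling, and the compactness argument. The paper's constants are also explicit.

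Your route does buy two things. For $|k|\le\delta$ it only uses the gap of $\ell_{T_c}$ on $L^2_{\rm symm}$, since the fiber operators preserve parity ($f_\beta(p+k,p-k)$ is even in $p$). It also makes visible that the $\kappa$ in \eqref{inv-bdd2} comes from the GL stiffness $\Lambda_0$.

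\textbf{A caveat for your step (ii).} The paper's argument shares this issue tacitly. Multiplication by $e^{\pm ik\cdot r}$ does not preserve $L^2_{\rm symm}$. So what the convexity step really uses is \eqref{gap} on all of $L^2(\R^d)$, including a gap for $\ell_{T_c}$ on odd functions. Simplicity in (A4) is formulated on $L^2_{\rm symm}$, so it does not by itself rule out equality coming from the odd sector, as your step (ii) assumes.
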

To prove Proposition \ref{prop:inv-bdd}, we need the following elementary lemma:

\begin{lemma}\label{lem:fT-identity}
For each $p,q\in\R^{d}$ and $T=\beta^{-1}>0$, it holds that 
\begin{align}\label{fT-identity}
	f_{\beta}(p,q)&\leq \frac{1}{2}\big(f_{\beta}(p,p)+f_{\beta}(q,q)\big).
\end{align}
\end{lemma}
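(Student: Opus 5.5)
Setting $E=\beta(|p|^2-\mu)$ and $E'=\beta(|q|^2-\mu)$ and using the homogeneity $\Xi_\beta(E,E')=\beta\,\Xi_1(\beta E,\beta E')$, the claim \eqref{fT-identity} becomes the scalar inequality
\begin{align*}
	\frac{\tanh(x)+\tanh(y)}{x+y}\leq \frac12\Big(\frac{\tanh x}{x}+\frac{\tanh y}{y}\Big),\qquad x,y\in\R ,
\end{align*}
where we have rescaled by a factor $2$ inside the hyperbolic tangent. The left side is interpreted as $1/\cosh^{2}x$ when $x=-y$, and $\tanh(x)/x$ is interpreted as $1$ at $x=0$. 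Both sides are continuous, so it suffices to prove the inequality for $x\neq -y$ and $x,y\neq 0$; the remaining cases follow by continuity.

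To do this I would write the right side minus the left side over the common denominator $2xy(x+y)$:
\begin{align*}
	\frac{(x+y)\big(y\tanh x+x\tanh y\big)-2xy\big(\tanh x+\tanh y\big)}{2xy(x+y)} .
\end{align*}
The numerator expands to
\begin{align*}
	xy\tanh x+y^2\tanh x+x^2\tanh y+xy\tanh y-2xy\tanh x-2xy\tanh y
	=(x-y)\big(x\tanh y-y\tanh x\big).
\end{align*}
It is convenient to rewrite the second factor as $x\tanh y-y\tanh x=xy\big(g(y)-g(x)\big)$ with $g(t):=\tanh(t)/t$. The difference then equals
\begin{align*}
	\frac{(x-y)\big(g(y)-g(x)\big)}{2(x+y)} ,
\end{align*}
and the task is to show that this quantity is nonnegative.

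Here the key fact is that $g$ is even and strictly decreasing in $|t|$, so the sign of $g(y)-g(x)$ equals the sign of $|x|-|y|$. Since $(x-y)(x+y)=x^2-y^2$ has the same sign as $|x|-|y|$, multiplying the numerator and denominator by $(x+y)$ gives
\begin{align*}
	\frac{(x^2-y^2)\big(g(y)-g(x)\big)}{2(x+y)^2}\geq 0 .
\end{align*}

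The only genuine input, and the main point to verify, is the monotonicity of $g$ on $(0,\infty)$. We have
\begin{align*}
	g'(t)=\frac{t\,\mathrm{sech}^2 t-\tanh t}{t^2},
\end{align*}
and $t\,\mathrm{sech}^2t<\tanh t$ for $t>0$ is equivalent to $2t<\sinh(2t)$, which holds. This proves the scalar inequality, and hence \eqref{fT-identity} after undoing the rescaling, with the degenerate cases handled by continuity as above.
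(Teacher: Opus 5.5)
Your proof is correct and is essentially the paper's: both reduce to the scalar inequality and rest on $g(t)=\tanh(t)/t$ being even and decreasing in $|t|$, and your closed form $\frac{(x^2-y^2)(g(y)-g(x))}{2(x+y)^2}$ for the difference is the same rearrangement the paper carries out after normalizing to $a\ge|b|$. You additionally prove the monotonicity of $g$ and handle the case $x=-y$ by continuity, both of which the paper leaves implicit; there is one harmless notational slip: since your $E,E'$ already contain $\beta$, the correct relation is $f_\beta(p,q)=\beta\,\Xi_1(E,E')$.
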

\begin{proof}
It suffices to show for each $a,b\in\R$ that
\begin{align*}
	\frac{\tanh(a)+\tanh(b)}{a+b}\leq \frac{1}{2}\Big(\frac{\tanh(a)}{a}+\frac{\tanh(b)}{b}\Big).
\end{align*}
Without loss of generality, we assume $a\geq |b|$.  In this case, we have $\tanh(b)/b\geq\tanh(a)/a$, which implies that
\begin{align*}
	\frac{\tanh(a)+\tanh(b)}{a+b}&=\frac{1}{2}\Big(\frac{a\tanh(a)/a+b\tanh(b)/b}{a+b}+\frac{\tanh(a)+\tanh(b)}{a+b}\Big)\nonumber\\
	&\leq \frac{1}{2}\Big(\frac{b\tanh(a)/a+a\tanh(b)/b}{a+b}+\frac{\tanh(a)+\tanh(b)}{a+b}\Big)\nonumber\\
	&\leq \frac{1}{2}\Big(\frac{\tanh(a)}{a}+\frac{\tanh(b)}{b}\Big),
\end{align*}
which completes the proof.
\end{proof}

\begin{proof}[Proof of Proposition \ref{prop:inv-bdd}]
We shall first prove the estimates \eqref{inv-bdd1} and \eqref{inv-bdd2} at critical temperature $T=T_{c}$ and then generalize to those temperatures satisfying Assumption (A5).  

We begin with \eqref{inv-bdd1} at $T=T_{c}$.  By Lemma \ref{lem:fT-identity}, we have
\begin{align*}
	K_{T_{c}}&\leq\frac{1}{2}\chi_{\beta_{c}}(-\Delta_{x}-\mu)+\frac{1}{2}\chi_{\beta_{c}}(-\Delta_{y}-\mu).
\end{align*}
By introducing the unitary operator $U=e^{-ir\cdot(-i\nabla_{X})/2}$ and using the fact that $U$ commutes with multiplication of $V^{1/2}$, we can rewrite the r.h.s. of the above inequality as
\begin{align}
	V^{1/2}K_{T_{c}}V^{1/2}&\leq \frac{1}{2}V^{1/2}\big(U^{-1}k_{T_{c}}U+Uk_{T_{c}}U^{-1}\big)V^{1/2}\nonumber\\
	&=\frac{1}{2}U^{-1}V^{1/2}k_{T_{c}}V^{1/2}U+\frac{1}{2}UV^{1/2}k_{T_{c}}V^{1/2}U^{-1}.
\end{align}
Together with \eqref{gap}, this implies that
\begin{align}\label{LTc-lbdd}
	L_{T_{c}}&\geq \one-\frac{1}{2}\Big(U^{-1}V^{1/2}k_{T_{c}}V^{1/2}U+UV^{1/2}k_{T_{c}}V^{1/2}U^{-1}\Big)\nonumber\\
	&\geq \theta\Big(\one-\frac{1}{2}\big(U^{-1}P U+UP U^{-1}\big)\Big).
\end{align}
Consequently, we obtain
\begin{align}\label{Q0LTQ0-lbbd}
	P^{\perp}L_{T_{c}}P^{\perp}&\geq \theta\Big(P^{\perp}-\frac{1}{2}P^{\perp}\big(U^{-1}P U+UP U^{-1}\big)P^{\perp}\Big).
\end{align}

Next, we prove a lower bound for $P^{\perp}-\tfrac{1}{2}P^{\perp}\big(U^{-1}P U+UP U^{-1}\big)P^{\perp}$.  According to \cite{FrankHainzl-18}, we can write
\begin{align}\label{Apr-proj}
	\frac{1}{2}\big(U^{-1}P U+UP U^{-1}\big)&=\ket{A_{p_{X}}}\bra{A_{p_{X}}},
\end{align}
where $p_{X}=-i\nabla_{X}$ and 
\begin{align}
	A_{p}(r)&:=\varphi_{*}(r)\cos\big(p\cdot r/2\big).
\end{align}
Note that, because $p_{X}$ and $r$ commutes, we can treat $p_{X}$ as vector in $\R^{d}$ and $A_{p_{X}}$ as a $L^{2}$-function for each fixed $p_{X}$.  Since $\ket{A_{p_{X}}}\bra{A_{p_{X}}}$ is a rank-1 projection, we simply estimate using its eigenvalue and obtain
\begin{align*}
	P^{\perp}&\ket{A_{p_{X}}}\bra{A_{p_{X}}}P^{\perp}\leq \langle A_{p_{X}},P^{\perp}A_{p_{X}}\rangle\cdot P^{\perp}\nonumber\\
	&=\Big[\int_{\R^{d}}\cos^{2}\big(p_{X}\cdot r/2\big)|\varphi_{*}(r)|^{2}dr-\Big(\int_{\R^{d}}\cos\big(p_{X}\cdot r/2\big)|\varphi_{*}(r)|^{2}dr\Big)^{2}\Big]P^{\perp}.
\end{align*}
Hence, by \eqref{Q0LTQ0-lbbd}, we have
\begin{align}
	P^{\perp}L_{T_{c}}P^{\perp}&\geq cP^{\perp},
\end{align}
where
\begin{align*}
	c&=1-\sup_{q\in\R^{d}}\Big[\int_{\R^{d}}\cos^{2}\big(q\cdot r/2\big)|\varphi_{*}(r)|^{2}dr-\Big(\int_{\R^{d}}\cos\big(q\cdot r/2\big)|\varphi_{*}(r)|^{2}dr\Big)^{2}\Big].
\end{align*}
Since the term in the above bracket vanishes at $p=0$ and, by Riemann-Lebesgue lemma, it is $1/2$ for $p\rightarrow\infty$, we see that $c>0$, which gives \eqref{inv-bdd1} at $T=T_{c}$.

Now, we show \eqref{inv-bdd2} at $T=T_{c}$.  Since $[L_{T_{c}},\lambda_{\kappa}]=[P_{\kappa}^{\perp},\lambda_{\kappa}]=0$, we can decompose
\begin{align}\label{inv-bdd2-1}
	P_{\kappa}^{\perp}L_{T_{c}}P_{\kappa}^{\perp}&=\lambda_{\kappa}P_{\kappa}^{\perp}L_{T_{c}}P_{\kappa}^{\perp}\lambda_{\kappa}+\lambda_{\kappa}^{\perp}P_{\kappa}^{\perp}L_{T_{c}}P_{\kappa}^{\perp}\lambda_{\kappa}^{\perp}\nonumber\\
	&=\lambda_{\kappa}P^{\perp}L_{T_{c}}P^{\perp}\lambda_{\kappa}+P_{\kappa}^{\perp}\lambda_{\kappa}^{\perp}L_{T_{c}}\lambda_{\kappa}^{\perp}P_{\kappa}^{\perp}.
\end{align}
By \eqref{inv-bdd1} and $\kappa\leq 1$, the first component gives
\begin{align*}
	\lambda_{\kappa}P^{\perp}L_{T_{c}}P^{\perp}\lambda_{\kappa}&\gtrsim \lambda_{\kappa}P^{\perp}\lambda_{\kappa}\gtrsim \kappa \lambda_{\kappa}P^{\perp}.
\end{align*}
It remains to estimate the component $P_{\kappa}^{\perp}\lambda_{\kappa}^{\perp}L_{T_{c}}\lambda_{\kappa}^{\perp}P_{\kappa}^{\perp}$.  We observe that $\lambda_{\kappa}^{\perp}P_{\kappa}^{\perp}=\lambda_{\kappa}^{\perp}$ and, by \eqref{LTc-lbdd} and \eqref{Apr-proj}, 
\begin{align}\label{Pper-chikappa-lbdd}
	P_{\kappa}^{\perp}\lambda_{\kappa}^{\perp}L_{T_{c}}\lambda_{\kappa}^{\perp}P_{\kappa}^{\perp}&\gtrsim \lambda_{\kappa}^{\perp}(\one-\ket{A_{p_{X}}}\bra{A_{p_{X}}})\lambda_{\kappa}^{\perp}\gtrsim \big(\one-\langle A_{p_{X}},\lambda_{\kappa}^{\perp}A_{p_{X}}\rangle\big)\cdot \lambda_{\kappa}^{\perp}\nonumber\\
	&=\one(p_{X}^{2}>\kappa)\int_{\R^{d}}\sin^{2}\big(p_{X}\cdot r/2\big)|\varphi_{*}(r)|^{2}dr.
\end{align}
By the same argument in \cite[Lemma 20]{FrankHainzl-18}, we have
\begin{align}
	\int_{\R^{d}}\sin^{2}\big(p_{X}\cdot r/2\big)|\varphi_{*}(r)|^{2}dr&\gtrsim\min\{1,p_{X}^{2}\}.
\end{align}
Substituting this into \eqref{Pper-chikappa-lbdd} and using the fact that $\kappa\leq 1$ yields
\begin{align}
	P_{\kappa}^{\perp}\lambda_{\kappa}^{\perp}L_{T_{c}}\lambda_{\kappa}^{\perp}P_{\kappa}^{\perp}&\gtrsim\lambda_{\kappa}^{\perp}\min\{1,p_{X}^{2}\}\gtrsim\kappa \lambda_{\kappa}^{\perp}.
\end{align}
Consequently, we obtain
\begin{align}
	P_{\kappa}^{\perp}L_{T_{c}}P_{\kappa}^{\perp}&\gtrsim\kappa(\lambda_{\kappa}P^{\perp}+\lambda_{\kappa}^{\perp})=\kappa P_{\kappa}^{\perp},
\end{align}
which proves \eqref{inv-bdd2} at $T=T_{c}$.

Finally, we generalize these estimates to those $T$'s satisfying Assumption (A5).  By Lemma \ref{lem:kT-diff} and Assumption (A2), we have
\begin{align*}
	\big\|L_{T}-L_{T_{c}}\big\|_{\cL_{h}^{2}\rightarrow\cL_{h}^{2}}&=\big\|V^{1/2}(K_{T}-K_{T_{c}})V^{1/2}\big\|_{\cL_{h}^{2}\rightarrow\cL_{h}^{2}}\lesssim \beta-\beta_{c}\lesssim h^{2},
\end{align*}
which implies that there is some constant $C>0$ such that 
\begin{align*}
	L_{T}&\geq L_{T_{c}}-Ch^{2}.
\end{align*}
It then follows from estimates \eqref{inv-bdd1} and \eqref{inv-bdd2} at $T=T_{c}$ and the condition $0<h\ll\kappa\leq 1$ that 
\begin{align*}
	P^{\perp}L_{T}P^{\perp}&\geq P^{\perp}L_{T_{c}}P^{\perp}-Ch^{2}P^{\perp}\gtrsim (1-h^{2})P^{\perp}\gtrsim P^{\perp},
\end{align*}
and
\begin{align*}
	P_{\kappa}^{\perp}L_{T}P_{\kappa}^{\perp}&\geq P_{\kappa}^{\perp}L_{T_{c}}P_{\kappa}^{\perp}-Ch^{2}P_{\kappa}^{\perp}\gtrsim(\kappa-h^{2})P_{\kappa}^{\perp}\gtrsim\kappa P_{\kappa}^{\perp},
\end{align*}
which give \eqref{inv-bdd1} and \eqref{inv-bdd2} for those $T$ satisfying Assumption (A5) and thus completes the proof.
\end{proof}

\begin{prop}\label{prop:PLP-inv-bdd}
Under Assumptions (A1)--(A5) with sufficiently small $h$ and $0<h\ll\kappa\leq 1$, it holds that 
\begin{align}\label{PLP-inv-bdd}
	P^{\perp}(P_{\kappa}^{\perp}L_{T}P_{\kappa}^{\perp})^{-1}P^{\perp}&\lesssim \Big(\one+\kappa^{-1}\frac{-\Delta_{X}}{\one-\Delta_{X}}\Big)P^{\perp}.
\end{align}
Consequently, we obtain
\begin{align}\label{PLP-inv-bdd1}
	P^{\perp}(P_{\kappa}^{\perp}L_{T}P_{\kappa}^{\perp})^{-2}P^{\perp}\lesssim \kappa^{-1}\Big(\one+\kappa^{-1}\frac{-\Delta_{X}}{\one-\Delta_{X}}\Big)P^{\perp}.
\end{align}
\end{prop}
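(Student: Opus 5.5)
The plan is to use that every operator in the statement commutes with $-\Delta_X$. Indeed, $K_T$ is a Fourier multiplier in $q$ by the representation lemma for $K_T$, $V^{1/2}$ acts only in $r$, and $P$ and $\lambda_\kappa$ are functions of $r$ and of $-\Delta_X$ respectively. Hence $L_T$, $P$, $P^\perp$, $\lambda_\kappa$ and $P_\kappa^\perp$ decompose along the fibres $q\in 2\pi\Z^d$ of the center-of-mass Fourier variable. On the fibre $q$ they act on functions of $r$ alone, with $p_X$ replaced by $hq$. It therefore suffices to prove the operator inequality fibre by fibre, with $\frac{-\Delta_X}{\one-\Delta_X}$ replaced by $s_q:=\frac{h^2|q|^2}{1+h^2|q|^2}$. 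We split into low fibres $h^2|q|^2\le\kappa$ and high fibres $h^2|q|^2>\kappa$.

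\emph{Low fibres.} Here $\lambda_\kappa=\one$, so $P_\kappa^\perp=P^\perp$. Thus $(P_\kappa^\perp L_TP_\kappa^\perp)^{-1}$ is the inverse of $P^\perp L_TP^\perp$ on $\Ran P^\perp$. By \eqref{inv-bdd1} this inverse is $\lesssim P^\perp$, which is bounded by the right side of \eqref{PLP-inv-bdd}.

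\emph{High fibres.} Here $\lambda_\kappa=0$, so $P_\kappa^\perp=\one$ and we must bound $P^\perp L_{T,q}^{-1}P^\perp$. We combine \eqref{LTc-lbdd}, \eqref{Apr-proj} and $\|L_T-L_{T_c}\|\lesssim h^2$ (from the end of the proof of Proposition \ref{prop:inv-bdd}) to get
\[
L_{T,q}\ \ge\ D_q - Ch^2,\qquad D_q:=\theta\big(\one-|A_{hq}\rangle\langle A_{hq}|\big).
\]
The smallest eigenvalue of $D_q$ is $\theta(1-\|A_{hq}\|^2)=\theta\int\sin^2(hq\cdot r/2)|\varphi_*(r)|^2dr\gtrsim \min\{1,h^2|q|^2\}\ge\kappa$, using the bound quoted from \cite[Lemma 20]{FrankHainzl-18}. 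Since $h\ll\kappa$, we get $D_q-Ch^2\ge \tfrac12 D_q$, and by operator monotonicity of the inverse $L_{T,q}^{-1}\le 2D_q^{-1}$. Writing $\sigma_q:=\int\sin^2(hq\cdot r/2)|\varphi_*|^2dr$, the inverse is explicit:
\[
D_q^{-1}=\theta^{-1}\Big(\one+\frac{|A_{hq}\rangle\langle A_{hq}|}{\sigma_q}\Big),
\qquad
P^\perp D_q^{-1}P^\perp\le \theta^{-1}\Big(1+\frac{\|P^\perp A_{hq}\|^2}{\sigma_q}\Big)P^\perp .
\]

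The step I expect to be the crux is controlling $\|P^\perp A_{hq}\|^2$, since the crude bound $\sigma_q\gtrsim\kappa$ alone would only give $\kappa^{-1}$, which is too weak when $\kappa<h^2|q|^2<\sqrt\kappa$. The key observation is that $\|P^\perp A_{hq}\|^2$ is the variance of $c:=\cos(hq\cdot r/2)$ under the probability density $|\varphi_*|^2$. Hence
\[
\|P^\perp A_{hq}\|^2=\int c^2|\varphi_*|^2-\Big(\int c\,|\varphi_*|^2\Big)^2\le \int (1-c)^2|\varphi_*|^2\le \int(1-c)(1+c)|\varphi_*|^2=\sigma_q .
\]
This gives $P^\perp L_{T,q}^{-1}P^\perp\lesssim P^\perp$ on high fibres as well, which is even stronger than \eqref{PLP-inv-bdd}. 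If this variance bound turned out to need more care, the fallback is the cruder estimate $\kappa^{-1}\lesssim 1+\kappa^{-1}s_q$ on fibres with $h^2|q|^2\gtrsim 1$, treating intermediate fibres via the Schur complement.

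For \eqref{PLP-inv-bdd1}, note that on each fibre $M:=P_\kappa^\perp L_TP_\kappa^\perp$, restricted to $\Ran P_\kappa^\perp$, is positive with $M\gtrsim\kappa$ by \eqref{inv-bdd2}. Hence $M^{-2}=M^{-1/2}M^{-1}M^{-1/2}\le \|M^{-1}\|\,M^{-1}\lesssim\kappa^{-1}M^{-1}$. Sandwiching between $P^\perp$ and applying \eqref{PLP-inv-bdd} gives the claim.
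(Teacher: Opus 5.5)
\textbf{Gap in the variance bound.} Most of your fibrewise argument is sound. Everything commutes with $-\Delta_X$, the low fibres follow from \eqref{inv-bdd1}, and on the high fibres $L_{T,q}\ge\tfrac12 D_q$ together with Sherman--Morrison correctly reduces the problem to showing $\|P^\perp A_{hq}\|^2\lesssim\sigma_q$. The deduction of \eqref{PLP-inv-bdd1} via $M^{-2}\le\|M^{-1}\|M^{-1}$ is also fine. The gap is exactly at the step you call the crux. The inequality $\int(1-c)^2|\varphi_*|^2\le\int(1-c)(1+c)|\varphi_*|^2$ uses $1-c\le 1+c$, i.e.\ $c\ge 0$. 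But $c=\cos(hq\cdot r/2)$ takes negative values, and the inequality fails even after integration. As $h|q|\to\infty$, Riemann--Lebesgue gives $\int(1-c)^2|\varphi_*|^2\to 3/2$ while $\sigma_q\to 1/2$. So, as written, the variance bound is not proved, and with it the high-fibre estimate on the range $\kappa<h^2|q|^2<\sqrt{\kappa}$, where you rightly note the crude bound is insufficient. Your fallback does not close this either. A Schur-complement treatment of the intermediate fibres would need an off-diagonal bound such as $\|P^\perp L_{T,q}\varphi_*\|\lesssim h^2|q|^2+h^2$ (from Lemmas \ref{lem:KTkT-diff} and \ref{lem:kT-diff}), which you do not supply.

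\textbf{Repair and comparison with the paper.} The conclusion is nevertheless true, and the repair is short, but it needs an extra input: a second moment of $|\varphi_*|^2$. Since $0\le 1-c\le 2$ and $1-c\le (hq\cdot r)^2/8$,
\[
\|P^\perp A_{hq}\|^2\le\int(1-c)^2|\varphi_*|^2\le 2\int(1-c)|\varphi_*|^2\le \frac{h^2|q|^2}{4}\int|r|^2|\varphi_*(r)|^2\,dr .
\]
The last integral is at most $\||x|^2V\|_{L^\infty}\|\alpha_*\|_{L^2}^2$ by (A2). Together with $\|P^\perp A_{hq}\|^2\le 1$, this gives $\|P^\perp A_{hq}\|^2\lesssim\min\{1,h^2|q|^2\}\lesssim\sigma_q$ by the bound from \cite{FrankHainzl-18}. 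Hence $P^\perp L_{T,q}^{-1}P^\perp\lesssim P^\perp$ on the high fibres. With this fix your route is genuinely different from the paper's and yields a stronger, $\kappa$-independent bound. The paper never diagonalises. It splits $P^\perp=P_\kappa^\perp-\lambda_\kappa^\perp P$ and uses $L_{T_c}\ge 0$ with Cauchy--Schwarz. It controls $\lambda_\kappa^\perp P L_{T_c}P\lambda_\kappa^\perp$ by $\frac{-\Delta_X}{\one-\Delta_X}$ via $P\ell_{T_c}P=0$ and Lemma \ref{lem:KTkT-diff}. It then inverts by an $A^*A$ versus $AA^*$ argument for $\cD_\kappa=(P_\kappa^\perp L_{T_c}P_\kappa^\perp)^{-1/2}$ and passes from $T_c$ to $T$ by the resolvent identity. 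That approach avoids explicit rank-one inverses and moment bounds, but it only yields the weaker factor $\one+\kappa^{-1}\frac{-\Delta_X}{\one-\Delta_X}$.
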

\begin{proof}
Again, as in the proof of Proposition \ref{prop:inv-bdd}, we first consider the case at $T=T_{c}$ and later generalize to $T$ satisfying Assumption (A5).

First, since $L_{T_{c}}\geq 0$ due to \eqref{LTc-lbdd}, by the Cauchy--Schwarz inequality, we have
\begin{align}\label{Q0LQ0-bdd}
	P^{\perp}L_{T_{c}}P^{\perp}&\leq 2\big(P_{\kappa}^{\perp}L_{T_{c}}P_{\kappa}^{\perp}+\lambda_{\kappa}^{\perp}PL_{T_{c}}P\lambda_{\kappa}^{\perp}\big).
\end{align}
Then, by Lemma \ref{lem:KTkT-diff} and the fact that $P\ell_{T_{c}}P=0$, we obtain for each $\sigma\in\cL_{h}^{2}$ that 
\begin{align}\label{Q1LQ1-bdd}
	\|\lambda_{\kappa}^{\perp}PL_{T_{c}}P\lambda_{\kappa}^{\perp}\sigma\|_{\cL_{h}^{2}}&=\|\lambda_{\kappa}^{\perp}P(L_{T_{c}}-\ell_{T_{c}})P\lambda_{\kappa}^{\perp}\sigma\|_{\cL_{h}^{2}}\leq \|V^{1/2}(K_{T_{c}}-k_{T_{c}})V^{1/2}P\lambda_{\kappa}^{\perp}\sigma\|_{\cL_{h}^{2}}\nonumber\\
	&\lesssim\Big\|\frac{\Delta_{X}}{\one-\Delta_{X}}V^{1/2}P\lambda_{\kappa}^{\perp}\sigma\Big\|_{\cL_{h}^{2}}\lesssim\Big\|\lambda_{\kappa}^{\perp}P\frac{-\Delta_{X}}{\one-\Delta_{X}}P\lambda_{\kappa}^{\perp}\sigma\Big\|_{\cL_{h}^{2}},
\end{align}
where, in the last equality, we used the fact that $[-i\nabla_{X},\lambda_{\kappa}^{\perp}P]=[-i\nabla_{X},V]=0$.  Since square root function is operator-monotone and $[-i\nabla_{X},P^{\perp}]=0$, this implies that 
\begin{align}\label{Q1LTcQ1-ubdd}
	\lambda_{\kappa}^{\perp}PL_{T_{c}}P\lambda_{\kappa}^{\perp}&\lesssim \lambda_{\kappa}^{\perp}P\Big(\frac{-\Delta_{X}}{\one-\Delta_{X}}\Big)P\lambda_{\kappa}^{\perp}\lesssim P_{\kappa}^{\perp}\Big(\frac{-\Delta_{X}}{\one-\Delta_{X}}\Big)P_{\kappa}^{\perp}.
\end{align}
Substituting \eqref{Q1LTcQ1-ubdd} into \eqref{Q0LQ0-bdd} yields
\begin{align}\label{Q0LQ0-bdd2}
	P^{\perp}L_{T_{c}}P^{\perp}&\lesssim P_{\kappa}^{\perp}\Big(L_{T_{c}}+\frac{-\Delta_{X}}{\one-\Delta_{X}}\Big)P_{\kappa}^{\perp}.
\end{align}

Next, by denoting $\cD_{\kappa}=(P_{\kappa}^{\perp}L_{T_{c}}P_{\kappa}^{\perp})^{-1/2}$ and using \eqref{inv-bdd1} and \eqref{Q0LQ0-bdd2}, we obtain
\begin{align}\label{inv-bdd1-1}
	\cD_{\kappa}P^{\perp}\cD_{\kappa}&\lesssim \cD_{\kappa}P^{\perp}L_{T_{c}}P^{\perp}\cD_{\kappa}\lesssim \cD_{\kappa}P_{\kappa}^{\perp}\Big(L_{T_{c}}+\frac{-\Delta_{X}}{\one-\Delta_{X}}\Big)P_{\kappa}^{\perp}\cD_{\kappa}\nonumber\\
	&\lesssim P_{\kappa}^{\perp}+\cD_{\kappa}\Big(\frac{-\Delta_{X}}{\one-\Delta_{X}}\Big)\cD_{\kappa}\lesssim P_{\kappa}^{\perp}+\Big(\frac{-\Delta_{X}}{\one-\Delta_{X}}\Big)^{1/2}\cD_{\kappa}^{2}\Big(\frac{-\Delta_{X}}{\one-\Delta_{X}}\Big)^{1/2}\nonumber\\
	&\lesssim \Big(\one+\kappa^{-1}\frac{-\Delta_{X}}{\one-\Delta_{X}}\Big)P_{\kappa}^{\perp}.
\end{align}
By using the fact that $\Delta_{X}$ commutes with $P_{\kappa}^{\perp}$ and $L_{T_{c}}$ and sandwiching both sides of \eqref{inv-bdd1-1} with $\Big(\one+\kappa^{-1}\frac{-\Delta_{X}}{\one-\Delta_{X}}\Big)^{-1/2}$, we obtain
\begin{align*}
	\cD_{\kappa}\Big(\one+\kappa^{-1}\frac{-\Delta_{X}}{\one-\Delta_{X}}\Big)^{-1/2}P^{\perp}\Big(\one+\kappa^{-1}\frac{-\Delta_{X}}{\one-\Delta_{X}}\Big)^{-1/2}\cD_{\kappa}\lesssim P_{\kappa}^{\perp}.
\end{align*}
Since $A^{*}A$ and $AA^{*}$ has the same non-zero spectrum for any operator $A$, by regarding $A=\cD_{\kappa}\Big(\one+\kappa^{-1}\frac{-\Delta_{X}}{\one-\Delta_{X}}\Big)^{-1/2}P^{\perp}$ as an operator from $\Ran(P^{\perp})$ to $\Ran(P_{\kappa}^{\perp})$, we obtain
\begin{align*}
	P^{\perp}&\gtrsim P^{\perp}\Big(\one+\kappa^{-1}\frac{-\Delta_{X}}{\one-\Delta_{X}}\Big)^{-1/2}\cD_{\kappa}^{2}\Big(\one+\kappa^{-1}\frac{-\Delta_{X}}{\one-\Delta_{X}}\Big)^{-1/2}P^{\perp}\nonumber\\
	&=\Big(\one+\kappa^{-1}\frac{-\Delta_{X}}{\one-\Delta_{X}}\Big)^{-1/2}P^{\perp}\cD_{\kappa}^{2}P^{\perp}\Big(\one+\kappa^{-1}\frac{-\Delta_{X}}{\one-\Delta_{X}}\Big)^{-1/2},
\end{align*}
where we have used commutivity of above operators.  Again, by sandwiching both sides with $\Big(\one+\kappa^{-1}\frac{-\Delta_{X}}{\one-\Delta_{X}}\Big)^{1/2}$, we obtain \eqref{PLP-inv-bdd} at $T=T_{c}$.  The estimate \eqref{PLP-inv-bdd1} at $T=T_{c}$ then follows from \eqref{PLP-inv-bdd} and \eqref{inv-bdd2} at $T=T_{c}$.

Now, we extend these estimates to those $T$ satisfying Assumption (A5).   Since $P_{\kappa}^{\perp}L_{T}P_{\kappa}^{\perp}$ is invertible, thanks to Proposition \ref{prop:inv-bdd}, by the second resolvent identity, we obtain
\begin{align*}
	(P_{\kappa}^{\perp}L_{T}P_{\kappa}^{\perp})^{-1}&=(P_{\kappa}^{\perp}L_{T_{c}}P_{\kappa}^{\perp})^{-1}+(P_{\kappa}^{\perp}L_{T}P_{\kappa}^{\perp})^{-1}P_{\kappa}^{\perp}(L_{T_{c}}-L_{T})P_{\kappa}^{\perp}(P_{\kappa}^{\perp}L_{T_{c}}P_{\kappa}^{\perp})^{-1}.
\end{align*}
By Assumption (A2), Lemma \ref{lem:kT-diff} and Proposition \ref{prop:inv-bdd}, the second term satisfies the following estimate:
\begin{align*}
	&\big\|(P_{\kappa}^{\perp}L_{T}P_{\kappa}^{\perp})^{-1}P_{\kappa}^{\perp}(L_{T_{c}}-L_{T})P_{\kappa}^{\perp}(P_{\kappa}^{\perp}L_{T_{c}}P_{\kappa}^{\perp})^{-1}\big\|_{\cL_{h}^{2}\rightarrow\cL_{h}^{2}}\nonumber\\
	&\leq\big\|(P_{\kappa}^{\perp}L_{T}P_{\kappa}^{\perp})^{-1}\big\|_{\cL_{h}^{2}\rightarrow\cL_{h}^{2}}\big\|(P_{\kappa}^{\perp}L_{T_{c}}P_{\kappa}^{\perp})^{-1}\big\|_{\cL_{h}^{2}\rightarrow\cL_{h}^{2}}\big\|V^{1/2}(K_{T}-K_{T_{c}})V^{1/2}\big\|_{\cL_{h}^{2}\rightarrow\cL_{h}^{2}}\nonumber\\
	&\lesssim\kappa^{-2}(\beta-\beta_{c})\lesssim \kappa^{-2}h^{2}.
\end{align*}
Together with \eqref{PLP-inv-bdd} at $T=T_{c}$, there are constants $C,C'>0$ such that
\begin{align*}
	P^{\perp}(P_{\kappa}^{\perp}L_{T}P_{\kappa}^{\perp})^{-1}P^{\perp}&\leq C\Big(\one+\kappa^{-1}\frac{-\Delta_{X}}{\one-\Delta_{X}}\Big)P^{\perp}\pm C'\kappa^{-2}h^{2}
\end{align*}
Since $0<h\ll\kappa\leq 1$, we can absorb the term $\pm C'\kappa^{-2}h^{2}$ into the constant $C$ for sufficiently small $h$, which gives \eqref{PLP-inv-bdd} for $T$ satisfying Assumption (A5).  Then \eqref{PLP-inv-bdd1} for such $T$ follows readily from \eqref{PLP-inv-bdd} and Proposition \ref{prop:inv-bdd}.
\end{proof}

\begin{lemma}\label{lem:Q0LP-bdd}
Under Assumptions (A1)--(A5) with sufficiently small $h$ and $0<h\ll\kappa\leq 1$, it holds that
\begin{align}\label{Q0LP-bdd}
	\big\|L_{T}P_{\kappa}\|_{\cL_{h}^{2}\rightarrow\cL_{h}^{2}}\lesssim\kappa,\quad\|L_{T}P_{\kappa}\|_{\cH_{h}^{1}\rightarrow\cL_{h}^{2}}\lesssim \kappa^{1/2}h.
\end{align}
\end{lemma}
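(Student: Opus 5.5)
We prove both bounds in \eqref{Q0LP-bdd} by splitting $L_T P_\kappa$ into a translation-invariant part that annihilates $\varphi_*$, a temperature correction, and a center-of-mass correction. Since $P_\kappa=\lambda_\kappa P$ and $P$ projects onto $\varphi_*$ in the relative variable, every $\sigma$ satisfies
$$
P_\kappa\sigma(\zeta_X^r,\zeta_X^{-r})=\Psi(X)\varphi_*(r),
\qquad \Psi:=\lambda_\kappa\langle\varphi_*,\sigma(\zeta_X^{\cdot},\zeta_X^{-\cdot})\rangle,
$$
where $\hat\Psi$ is supported on $\{q: h^2|q|^2\le\kappa\}$. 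We write
$$
L_T P_\kappa=\ell_{T_c}P_\kappa+(\ell_T-\ell_{T_c})P_\kappa+(L_T-\ell_T)P_\kappa .
$$
The first term vanishes because $\ell_{T_c}\varphi_*=0$, and $\ell_{T_c}$ acts only in $r$ and therefore commutes with $\lambda_\kappa$.

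The second term is handled by Lemma \ref{lem:kT-diff} and Assumption (A2). They give $\|V^{1/2}(k_T-k_{T_c})V^{1/2}\|\lesssim \beta-\beta_c\lesssim h^2$. Since $h\ll\kappa$, this is at most $\kappa$ times $\|P_\kappa\sigma\|_{\cL_h^2}\le\|\sigma\|_{\cL_h^2}$. For the $\cH^1_h$ bound, note that $h^2\le \kappa^{1/2}h$ once $h\le\kappa^{1/2}$, so this term is also harmless there.

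The third term is the essential one. Here we use the Fourier representation of $K_T$ and $k_T$ from the lemma preceding Lemma \ref{lem:KTkT-diff}, exactly as in the proof of that lemma:
$$
\|(L_T-\ell_T)P_\kappa\sigma\|_{\cL_h^2}^2\lesssim \sum_{q}\int |f_\beta(p+hq/2,p-hq/2)-f_\beta(p,p)|^2\,|\widehat{V^{1/2}P_\kappa\sigma}(q,p)|^2\,dp .
$$
This uses that $V^{1/2}$ is bounded and commutes with the $q$-multipliers (Lemma \ref{lem:Hsnorm-Delta}). On the support of $P_\kappa\sigma$ we have $h^2|q|^2\le\kappa\le1$, so the multiplier is $\lesssim h^2|q|^2$. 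This yields
$$
\|(L_T-\ell_T)P_\kappa\sigma\|_{\cL_h^2}\lesssim \|h^2|q|^2\,\widehat{P_\kappa\sigma}\|\le \kappa\,\|\sigma\|_{\cL_h^2},
$$
which is the first estimate. For the second, we split $h^2|q|^2=(h|q|)\cdot(h|q|)$, bound one factor by $\kappa^{1/2}$, and keep the other as $h|q|\le h(1+|q|^2)^{1/2}$. This gives $\lesssim\kappa^{1/2}h\|P_\kappa\sigma\|_{\cH_h^1}\le\kappa^{1/2}h\|\sigma\|_{\cH^1_h}$, using that $P_\kappa$ commutes with $-\Delta_X$ and is a contraction.

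The main point to verify is the Lipschitz/Taylor bound $|f_\beta(p+hq/2,p-hq/2)-f_\beta(p,p)|\lesssim h^2|q|^2$ uniformly in $p$ and in $\beta$ near $\beta_c$. Its $h^2$ form, rather than $h$, relies on the symmetry of $f_\beta$ under $hq\mapsto -hq$, which cancels the first-order term. The bound is already stated in the proof of Lemma \ref{lem:KTkT-diff}, so we may quote it. Even the cruder first-order bound $\lesssim h|q|$ would suffice here, since $h|q|\le\kappa^{1/2}$ on the support. The one fact used beyond the earlier lemmas is that $\lambda_\kappa$, $P$ and $V^{1/2}$ commute with the Fourier multipliers in $q$, which is immediate from the definitions.
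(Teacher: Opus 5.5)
Your argument is correct and is essentially the paper's proof. The paper telescopes $L_TP_\kappa=(L_T-L_{T_c})P_\kappa+(L_{T_c}-\ell_{T_c})P_\kappa$, using \eqref{KT-diff} and Lemma \ref{lem:KTkT-diff} at $T_c$, rather than going through $\ell_T$, but the decisive input is the same multiplier bound $\min\{1,h^2|q|^2\}$ on the region $h^2|q|^2\le\kappa$. One correction to your closing remark: a first-order bound $\lesssim h|q|$ would \emph{not} suffice, because it only yields $\kappa^{1/2}$ and $h$ in place of $\kappa$ and $\kappa^{1/2}h$ --- the quadratic vanishing is exactly what produces the stated powers.
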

\begin{proof}
Since $P_{\kappa}$ projects onto the kernel of $\ell_{T_{c}}$, we obtain
\begin{align*}
	L_{T}P_{\kappa}&=(L_{T}-\ell_{T_{c}})P_{\kappa}=(L_{T}-L_{T_{c}})P_{\kappa}+(L_{T_{c}}-\ell_{T_{c}})P_{\kappa}\nonumber\\
	&=V^{1/2}(K_{T_{c}}-K_{T})V^{1/2}P_{\kappa}+V^{1/2}(k_{T_{c}}-K_{T_{c}})V^{1/2}P_{\kappa}.
\end{align*}
Then, by Lemmas \ref{lem:KTkT-diff}, \ref{lem:kT-diff} and Assumption (A5), since $V$ commutes with $-\Delta_{X}$, we obtain for each $\sigma\in\cL_{h}^{2}$ that
\begin{align*}
	\big\|L_{T_{c}}P_{\kappa}\sigma\|_{\cL_{h}^{2}}&\leq \big\|V^{1/2}(K_{T_{c}}-K_{T})V^{1/2}P_{\kappa}\sigma\big\|_{\cL_{h}^{2}}+\big\|V^{1/2}(k_{T_{c}}-K_{T_{c}})V^{1/2}P_{\kappa}\sigma\big\|_{\cL_{h}^{2}}\nonumber\\
	&\lesssim(\beta-\beta_{c})\big\|V^{1/2}P_{\kappa}\sigma\big\|_{\cL_{h}^{2}}+\Big\|\frac{\Delta_{X}}{\one-\Delta_{X}}V^{1/2}P_{\kappa}\sigma\Big\|_{\cL_{h}^{2}}\nonumber\\
	&\lesssim h^{2}\|\sigma\|_{\cL_{h}^{2}}+\Big\|\frac{\Delta_{X}}{\one-\Delta_{X}}\lambda_{\kappa}P_{\kappa}\sigma\Big\|_{\cL_{h}^{2}}\lesssim(\kappa+h^{2})\|\sigma\|_{\cL_{h}^{2}}.
\end{align*}
Since $0<h\ll\kappa\leq 1$, we can absorb $h^{2}$ into $\kappa$ and thus gives the first estimate in \eqref{Q0LP-bdd}.

Similarly, together with the fact that $V$ and $P_{\kappa}$ commute with $-\Delta_{X}$, the same argument gives us for each $\sigma\in\cH_{h}^{1}$ that
\begin{align}
	\|L_{T_{c}}P_{\kappa}\sigma\|_{\cL_{h}^{2}}&\lesssim h^{2}\|\sigma\|_{\cL_{h}^{2}}+\Big\|\frac{\Delta_{X}}{\one-\Delta_{X}}\lambda_{\kappa}P_{\kappa}\sigma\Big\|_{\cL_{h}^{2}}\nonumber\\
	&\lesssim h^{2}\|\sigma\|_{\cL_{h}^{2}}+\kappa^{1/2}\Big\|\sqrt{\frac{-\Delta_{X}}{\one-\Delta_{X}}}\sigma\Big\|_{\cL_{h}^{2}}\lesssim(h^{2}+\kappa^{1/2}h)\|\sigma\|_{\cH_{h}^{1}}.
\end{align}
Again, we can absorb $h^{2}$ into $\kappa^{1/2}h$, which gives the second relation in \eqref{Q0LP-bdd}.
\end{proof}

\begin{prop}\label{prop:inv-bdd-LP}
Under Assumptions (A1)--(A5) with sufficiently small $h$ and $0<h\ll\kappa\leq 1$, it holds that
\begin{align}
	\label{inv-bdd-LP1}\|(P_{\kappa}^{\perp}L_{T}P_{\kappa}^{\perp})^{-1}L_{T}P_{\kappa}\|_{\cH_{h}^{1}\rightarrow\cL_{h}^{2}}&\lesssim h,\\
	\label{inv-bdd-LP2}\|P^{\perp}(P_{\kappa}^{\perp}L_{T}P_{\kappa}^{\perp})^{-1}L_{T}P_{\kappa}\|_{\cH_{h}^{1}\rightarrow\cL_{h}^{2}}&\lesssim\kappa^{1/2}h,\\
	\label{inv-bdd-LP3}\|P_{\kappa}L_{T}(P_{\kappa}^{\perp}L_{T}P_{\kappa}^{\perp})^{-1}L_{T}P_{\kappa}\|_{\cH_{h}^{1}\rightarrow\cL_{h}^{2}}&\lesssim\kappa^{3/2}h.
\end{align}
\end{prop}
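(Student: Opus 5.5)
The three bounds all come from composing the estimates already available: Lemma \ref{lem:Q0LP-bdd} for $L_TP_\kappa$ on $\cH^1_h\to\cL^2_h$, Proposition \ref{prop:inv-bdd} for $(P_\kappa^\perp L_TP_\kappa^\perp)^{-1}$ (norm $\lesssim\kappa^{-1}$ on $\Ran P_\kappa^\perp$), and Proposition \ref{prop:PLP-inv-bdd} for the improved bound on the $P^\perp$ part. A recurring point is that every operator in sight ($L_T$, $P$, $\lambda_\kappa$, $V^{1/2}$, and hence the inverse) commutes with $-\Delta_X$. Consequently we may decompose $\sigma$ according to the spectral projections of $-\Delta_X$ and estimate fiberwise in $p_X$.

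\textbf{Proof of \eqref{inv-bdd-LP1}.} I split $P_\kappa^\perp=P^\perp+\lambda_\kappa^\perp P$ on the range of the inverse.

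For the $\lambda_\kappa^\perp P$ component, Proposition \ref{prop:inv-bdd} only gives a $\kappa^{-1}$ bound, which is too weak. Since $L_TP_\kappa=L_T\lambda_\kappa P$ and everything commutes with $\lambda_\kappa$, I instead note that $(P_\kappa^\perp L_TP_\kappa^\perp)^{-1}$ commutes with $\lambda_\kappa$. Hence
\[
(P_\kappa^\perp L_TP_\kappa^\perp)^{-1}L_TP_\kappa=\lambda_\kappa (P_\kappa^\perp L_TP_\kappa^\perp)^{-1}\lambda_\kappa L_TP_\kappa .
\]
On $\Ran\lambda_\kappa$ we have $P_\kappa^\perp\lambda_\kappa=P^\perp\lambda_\kappa$, so the inverse there equals $\lambda_\kappa(P^\perp L_TP^\perp)^{-1}P^\perp$. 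This is bounded by \eqref{inv-bdd1}, with no loss in $\kappa$.

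It remains to bound $\|P^\perp L_TP_\kappa\sigma\|_{\cL^2_h}$ by $\kappa^{1/2}h\|\sigma\|_{\cH^1_h}$, which is Lemma \ref{lem:Q0LP-bdd}. Already this gives \eqref{inv-bdd1} with the sharper rate $\kappa^{1/2}h\le h$.

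\textbf{Proof of \eqref{inv-bdd-LP2}.} Multiplying by $P^\perp$ leaves the same expression, so the estimate $\kappa^{1/2}h$ follows as above.

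Alternatively, if the commutation shortcut were to fail, I would use \eqref{PLP-inv-bdd1}. For $\tau=L_TP_\kappa\sigma$, which is supported in $-\Delta_X\le\kappa$, it gives
\[
\|P^\perp(\cdots)^{-1}\tau\|^2\lesssim \kappa^{-1}\big\langle \tau,(1+\kappa^{-1}\tfrac{-\Delta_X}{1-\Delta_X})\tau\big\rangle\lesssim\kappa^{-1}\|\tau\|^2 .
\]
This route loses $\kappa^{-1/2}$, which is exactly why the sharper spectral-localization argument is the one to use.

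\textbf{Proof of \eqref{inv-bdd-LP3}.} I write
\[
P_\kappa L_T(\cdots)^{-1}L_TP_\kappa=(L_TP_\kappa)^*\,P^\perp(\cdots)^{-1}L_TP_\kappa ,
\]
using that the inverse maps into $\Ran P^\perp$ after the localization above. Since $L_T$ is selfadjoint, $(L_TP_\kappa)^*=P_\kappa L_T$. Its $\cL^2_h\to\cL^2_h$ norm is $\lesssim\kappa$ by the first bound of Lemma \ref{lem:Q0LP-bdd}. Combining with \eqref{inv-bdd-LP2} gives $\kappa\cdot\kappa^{1/2}h=\kappa^{3/2}h$.

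\textbf{Main obstacle.} The delicate step is justifying that $(P_\kappa^\perp L_TP_\kappa^\perp)^{-1}$ restricted to $\Ran\lambda_\kappa$ coincides with $(P^\perp L_TP^\perp)^{-1}$ on $\Ran P^\perp\lambda_\kappa$. This requires checking that $L_TP_\kappa\sigma$ lies in $\Ran\lambda_\kappa$ and that the block inverse decouples across $\lambda_\kappa$ and $\lambda_\kappa^\perp$. Both follow from $[L_T,\lambda_\kappa]=0$, but the argument must be written carefully, since $P_\kappa^\perp$ is not itself localized in $-\Delta_X$.
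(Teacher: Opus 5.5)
Your argument is correct. For \eqref{inv-bdd-LP1}--\eqref{inv-bdd-LP2} it takes a more direct route than the paper; your step for \eqref{inv-bdd-LP3} is the same as the paper's.

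\textbf{The paper's route.} The paper never decouples the inverse. It writes $A=(P_\kappa^\perp L_TP_\kappa^\perp)^{-1}P^\perp L_TP_\kappa$ and inserts the operator inequalities of Proposition \ref{prop:PLP-inv-bdd} into $\|A\sigma\|_{\cL_h^2}^2$.
\begin{itemize}
\item For \eqref{inv-bdd-LP1} it uses the squared bound \eqref{PLP-inv-bdd1}. Its prefactor $\kappa^{-1}$ comes from the global lower bound \eqref{inv-bdd2}, which is dictated by frequencies above the cutoff. This is why the paper only obtains $h$.
\item For \eqref{inv-bdd-LP2} it uses \eqref{PLP-inv-bdd} on $\Ran\lambda_\kappa$, where the weight $\one+\kappa^{-1}\frac{-\Delta_X}{\one-\Delta_X}$ is at most $2$. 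This is the same frequency localization you exploit, in a less direct form.
\end{itemize}

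\textbf{Your route.} You use that $M:=P_\kappa^\perp L_TP_\kappa^\perp$ commutes with $\lambda_\kappa$ and that $\lambda_\kappa P_\kappa^\perp=\lambda_\kappa P^\perp$. Take $\tau=P^\perp L_TP_\kappa\sigma$, which lies in $\Ran(\lambda_\kappa P^\perp)$, and set $u=M^{-1}\tau$. Then $u\in\Ran(\lambda_\kappa P^\perp)$ and $P^\perp L_TP^\perp u=\tau$, so $\|u\|_{\cL_h^2}\lesssim\|\tau\|_{\cL_h^2}$ follows from \eqref{inv-bdd1} alone. What you flag as the main obstacle is therefore only a two-line check.

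\textbf{Comparison.} Your argument bypasses Proposition \ref{prop:PLP-inv-bdd} entirely. It shows that the operators in \eqref{inv-bdd-LP1} and \eqref{inv-bdd-LP2} coincide, since $P^\perp A=A$. It also sharpens \eqref{inv-bdd-LP1} from $h$ to $\kappa^{1/2}h$. The paper's route has the advantage of controlling $P^\perp M^{-1}P^\perp$ on all of $\Ran P^\perp$. For this proposition, however, only the restriction to $\Ran\lambda_\kappa$ is ever needed.

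One small slip: where you write ``gives \eqref{inv-bdd1}'' you mean \eqref{inv-bdd-LP1}.
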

\begin{proof}
First, since $L_{T}$ commutes with $\lambda_{\kappa}$, we obtain
\begin{align*}
	A:=(P_{\kappa}^{\perp}L_{T}P_{\kappa}^{\perp})^{-1}L_{T}P_{\kappa}&=(P_{\kappa}^{\perp}L_{T}P_{\kappa}^{\perp})^{-1}P_{\kappa}^{\perp}L_{T}P_{\kappa}=(P_{\kappa}^{\perp}L_{T}P_{\kappa}^{\perp})^{-1}P^{\perp}L_{T}P_{\kappa}.
\end{align*}
Then the bound \eqref{inv-bdd-LP1} follows from Proposition \ref{prop:PLP-inv-bdd} and Lemma \ref{lem:Q0LP-bdd}.  Indeed, since
\begin{align*}
	\lambda_{\kappa}\Big(\one+\kappa^{-1}\frac{-\Delta_{X}}{\one-\Delta_{X}}\Big)\lambda_{\kappa}&\leq \one,
\end{align*}
we obtain for each $\sigma\in\cH_{h}^{1}$ that
\begin{align*}
	\big\|A\sigma\|_{\cL_{h}^{2}}^{2}&=\langle\sigma,P_{\kappa}L_{T}P^{\perp}(P_{\kappa}^{\perp}L_{T}P_{\kappa}^{\perp})^{-2}P^{\perp}L_{T}P_{\kappa}\sigma\rangle_{\cL_{h}^{2}}\nonumber\\
	&\lesssim \kappa^{-1} \big\langle\sigma,P_{\kappa}L_{T}P^{\perp}\lambda_{\kappa}\Big(\one+\kappa^{-1}\frac{-\Delta_{X}}{\one-\Delta_{X}}\Big)\lambda_{\kappa}P^{\perp}L_{T}P_{\kappa}\sigma\big\rangle_{\cL_{h}^{2}}\nonumber\\
	&\lesssim\kappa^{-1}\|P^{\perp}L_{T_{c}}P_{\kappa}\sigma\|_{\cL_{h}^{2}}^{2}\lesssim \kappa^{-1}\|P^{\perp}L_{T}P_{\kappa}\|_{\cH_{h}^{1}\rightarrow\cL_{h}^{2}}^{2}\|\sigma\|_{\cH_{h}^{1}}^{2}\lesssim h^{2}\|\sigma\|_{\cH_{h}^{1}}^{2}.
\end{align*}
Similarly, using the same argument, we have
\begin{align*}
	P^{\perp}A&=P^{\perp}(P_{\kappa}^{\perp}L_{T}P_{\kappa}^{\perp})^{-1}L_{T}P_{\kappa}=P^{\perp}(P_{\kappa}^{\perp}L_{T}P_{\kappa}^{\perp})^{-1}P^{\perp}L_{T}P_{\kappa},
\end{align*}
which gives
\begin{align*}
	\big\|P^{\perp}A\sigma\big\|_{\cL_{h}^{2}}^{2}&=\langle\sigma,P_{\kappa}L_{T}P^{\perp}(P^{\perp}(P_{\kappa}^{\perp}L_{T}P_{\kappa}^{\perp})^{-1}P^{\perp})^{2}P^{\perp}L_{T}P_{\kappa}\sigma\rangle_{\cL_{h}^{2}}\nonumber\\
	&\lesssim\big\langle\sigma, P_{\kappa}L_{T}P^{\perp}\lambda_{\kappa}\Big(\one+\kappa^{-1}\frac{-\Delta_{X}}{\one-\Delta_{X}}\Big)\lambda_{\kappa}P^{\perp}L_{T_{c}}P_{\kappa}\sigma\big\rangle_{\cL_{h}^{2}}\nonumber\\
	&\lesssim\|P^{\perp}L_{T_{c}}P_{\kappa}\sigma\|_{\cL_{h}^{2}}^{2}\lesssim\|P^{\perp}L_{T_{c}}P_{\kappa}\|_{\cH_{h}^{1}\rightarrow\cL_{h}^{2}}^{2}\|\sigma\|_{\cH_{h}^{1}}^{2}\lesssim\kappa h^{2}\|\sigma\|_{\cH_{h}^{1}}^{2}.
\end{align*}
and 
\begin{align*}
	\big\|P_{\kappa}L_{T}A\sigma\big\|_{\cL_{h}^{2}}^{2}&=\langle P^{\perp}A\sigma,(P^{\perp}L_{T}P_{\kappa})^{*}(P^{\perp}L_{T}P_{\kappa})P^{\perp}A\sigma\rangle_{\cL_{h}^{2}}\nonumber\\
	&\leq \|P^{\perp}L_{T}P_{\kappa}\|_{\cL_{h}^{2}\rightarrow\cL_{h}^{2}}^{2}\|P^{\perp}A\sigma\|_{\cL_{h}^{2}}^{2}\lesssim\kappa^{3}h^{2}\|\sigma\|_{\cH_{h}^{1}}^{2}.
\end{align*}
These estimates are precisely \eqref{inv-bdd-LP2} and \eqref{inv-bdd-LP3}, which completes the proof.
\end{proof}

\section{Nonlinear analysis}\label{sec:nonlinear}

In this section, we study the nonlinear operator $N_{T}$. Recall from \eqref{nonlinear} that, for each $\sigma\in\cL_{h}^{2}$, 
\begin{align*}
	N_{T}(\sigma)&=\int_{\cC}\frac{dz}{2\pi i} \, \rho(\beta z)(z-\fh)^{-1}\sigma(z+\overline{\fh})^{-1}\overline{\sigma}[(z-H(\sigma)^{-1})]_{12}\nonumber\\
	&=\frac{2}{\beta}\sum_{n\in\Z}(i\omega_{n}-\fh)^{-1}\sigma(i\omega_{n}+\overline{\fh})^{-1}\overline{\sigma}[(z-H(\sigma))^{-1}]_{12}
\end{align*}
where $\rho(z)=\tanh(z/2)$, $\cC=\{r\pm i\pi/(2\beta_{c})\mid r\in\R\}$ is oriented positively and $ \omega_{n}:=\pi(2n+1)/\beta$ are poles of $\rho(\beta z)$.


\subsection{Mapping properties of \texorpdfstring{$N_T$}{NT}}

The main result of this subsection is Proposition \ref{prop:BCS-NL-norm}, which concerns the norm of $N_T$ as a map from $\cL^6_h$ to $\cH^s_h$. The proof will require several preliminary results.

By a straightforward computation, we obtain the following lemma.

\begin{lemma}\label{lem:inv-blockij}
Let $z\in\C$ with $\Im(z)\neq 0$.  It holds for any $\sigma\in\cL_{h}^{2}$ that 
\begin{align}\label{inv-block12}
	[(z-H(\sigma))^{-1}]_{12}&=(z-\fh)^{-1}\sigma [(z-H(\sigma))^{-1}]_{22}\nonumber\\
	&=[(z-H(\sigma))^{-1}]_{11}\sigma(z+\overline{\fh})^{-1},\\
	\label{inv-block22}[(z-H(\sigma))^{-1}]_{22}&=(z+\overline{\fh})^{-1}+(z+\overline{\fh})^{-1}\overline{\sigma}[(z-H(\sigma))^{-1}]_{12},\\
	\label{inv-block11}[(z-H(\sigma))^{-1}]_{11}&=(z-\fh)^{-1}+[(z-H(\sigma))^{-1}]_{12}\sigma(z-\fh)^{-1}.
\end{align}
Moreover, we have for each $i,j=1,2$ that
\begin{align}\label{inv-blockij-norm}
	\|[(z-H(\sigma))^{-1}]_{ij}\|&\leq |\Im(z)|^{-1}.
\end{align} 
\end{lemma}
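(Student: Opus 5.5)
The plan is to work directly with the block structure of $H(\sigma)$ and the resolvent identity $(z-H(\sigma))(z-H(\sigma))^{-1}=\one$. Throughout I write $R:=(z-H(\sigma))^{-1}$, with blocks $R_{ij}$. Since
\[
z-H(\sigma)=\begin{pmatrix} z-\fh & -\sigma\\ -\overline{\sigma} & z+\overline{\fh}\end{pmatrix},
\]
I read off the block equations from both orderings, $(z-H)R=\one$ and $R(z-H)=\one$.

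First, the second row of $(z-H)R=\one$ gives $-\overline\sigma R_{12}+(z+\overline\fh)R_{22}=\one$. Multiplying on the left by $(z+\overline\fh)^{-1}$ yields \eqref{inv-block22}. This inverse exists because $\overline\fh$ is selfadjoint and $\Im z\neq0$.

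Second, the first row gives $(z-\fh)R_{12}-\sigma R_{22}=0$. Hence $R_{12}=(z-\fh)^{-1}\sigma R_{22}$, which is the first expression in \eqref{inv-block12}.

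Third, I use $R(z-H)=\one$. Its $(1,2)$ entry reads $-R_{11}\sigma+R_{12}(z+\overline\fh)=0$, so $R_{12}=R_{11}\sigma(z+\overline\fh)^{-1}$; this is the second expression in \eqref{inv-block12}. Its $(1,1)$ entry reads $R_{11}(z-\fh)-R_{12}\overline\sigma=\one$, which gives $R_{11}=(z-\fh)^{-1}+R_{12}\overline\sigma(z-\fh)^{-1}$.

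Comparing this with \eqref{inv-block11}, the factor there should presumably be $\overline\sigma$, matching the off-diagonal structure. I would prove the identity with $\overline\sigma$ and note the correction, unless $\sigma$ is meant generically.

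For \eqref{inv-blockij-norm}, I need the selfadjointness of $H(\sigma)$. This holds because $\sigma\in\cL^2_h\subset\cL_h^\infty$ satisfies $\overline\sigma=\sigma^*$, so $H(\sigma)$ is a bounded selfadjoint perturbation of the selfadjoint operator $H(0)$, and it is selfadjoint on the domain of $H(0)$ by Kato--Rellich. The spectral theorem then gives $\|R\|\le|\Im z|^{-1}$. Each block is $R_{ij}=\Pi_iR\Pi_j^*$ with the coordinate injections and projections, which have norm $1$, so $\|R_{ij}\|\le\|R\|$.

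The only point needing care is domain bookkeeping for the unbounded $\fh$: the block identities must be justified on $\mathrm{dom}(\fh)$ and then extended, since all the resulting operators are bounded. I expect no real obstacle beyond this.
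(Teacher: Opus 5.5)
Your proof is correct and follows the same route as the paper: you read off the block entries of $(z-H(\sigma))R=\one$ and $R(z-H(\sigma))=\one$, and you bound each block by $\|R\|\le|\Im z|^{-1}$ using selfadjointness of $H(\sigma)$. Your correction to \eqref{inv-block11} is also right. The $(1,1)$ entry of $R(z-H(\sigma))$ is $R_{11}(z-\fh)-R_{12}\overline{\sigma}$, whereas the paper's own computation writes $R_{12}\sigma$ there by mistake, so the identity should read $R_{11}=(z-\fh)^{-1}+R_{12}\overline{\sigma}(z-\fh)^{-1}$. The slip is harmless, since the rest of the paper uses only \eqref{inv-block12}, \eqref{inv-block22} and the norm bound.
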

\begin{proof}
First, let $A_{ij}:=[(z-H(\sigma))^{-1}]_{ij}$ be the $(i,j)$-th entry of $(z-H(\sigma))^{-1}$.  Then we compute
\begin{align*}
	\begin{pmatrix}
		\one	&	0\\
		0	&	\one
	\end{pmatrix}&=(z-H(\sigma))(z-H(\sigma))^{-1}=\begin{pmatrix}
		z-\fh	&	-\sigma\\
		-\overline{\sigma}	&	z+\overline{\fh}
	\end{pmatrix}\begin{pmatrix}
		A_{11}	&	A_{12}\\
		A_{21}	&	A_{22}
	\end{pmatrix}\nonumber\\
	&=\begin{pmatrix}
		(z-\fh)A_{11}-\sigma A_{12}	&	(z-\fh)A_{12}-\sigma A_{22}\\
		-\overline{\sigma}A_{11}+(z+\overline{\fh})A_{21}	&	-\overline{\sigma}A_{12}+(z+\overline{\fh})A_{22}
	\end{pmatrix}.
\end{align*}
Hence, we have
\begin{align*}
	A_{12}=(z-\fh)^{-1}\sigma A_{22},\quad A_{22}=(z+\overline{\fh})^{-1}+(z+\overline{\fh})^{-1}\overline{\sigma}A_{12},
\end{align*}
This proves \eqref{inv-block22} and the first equality in \eqref{inv-block12}.  Similarly, we compute
\begin{align*}
	\begin{pmatrix}
		\one	&	0\\
		0	&	\one
	\end{pmatrix}&=(z-H(\sigma))^{-1}(z-H(\sigma))=\begin{pmatrix}
		A_{11}	&	A_{12}\\
		A_{21}	&	A_{22}
	\end{pmatrix}\begin{pmatrix}
	z-\fh	&	-\sigma\\
	-\overline{\sigma}	&	z+\overline{\fh}
	\end{pmatrix}\nonumber\\
	&=\begin{pmatrix}
		A_{11}(z-\fh)-A_{12}\sigma	&	-A_{11}\sigma+A_{12}(z+\overline{\fh})\\
		A_{21}(z-\fh)-A_{22}\overline{\sigma}	&	-A_{21}\sigma+A_{22}(z+\overline{\fh})
	\end{pmatrix}.
\end{align*}
Thus, we obtain
\begin{align*}
	A_{12}&=A_{11}\sigma(z+\overline{\fh})^{-1},\quad A_{11}=(z-\fh)^{-1}+A_{12}\sigma(z-\fh)^{-1}.
\end{align*}
which proves \eqref{inv-block11} and the second equality in \eqref{inv-block12}.

Next, since $H(\sigma)$ is self-adjoint for each $\sigma\in\cL_{h}^{2}$, the $2\times 2$ matrix $(z-H(\sigma))^{-1}$ has operator norm  bounded by $|\Im(z)|^{-1}$ on $L^{2}(\R^{d})\oplus L^{2}(\R^{d})$.  Moreover, we observe that, for each $\psi\in L^{2}(\R^{d})$,
\begin{align*}
	\|A_{12}\psi\|_{L^{2}}&=\|P_{12}(z-H(\sigma))^{-1}\Psi\|_{L^{2}(\R^{d})\oplus L^{2}(\R^{d})},
\end{align*}
where $\Psi=(\psi,0)\in L^{2}(\R^{d})\oplus L^{2}(\R^{d})$ and $P_{12}=\begin{pmatrix}
	0	&	1\\
	0	&	0
\end{pmatrix}$.  Since $\|P\|=1$, this implies that 
\begin{align*}
	\|A_{12}\psi\|_{L^{2}}&\leq \|P_{12}(z-H(\sigma))^{-1}\Psi\|_{L^{2}(\R^{d})\oplus L^{2}(\R^{d})}\nonumber\\
	&\leq \|P_{12}(z-H(\sigma))^{-1}\|\|\Psi\|_{L^{2}(\R^{d})\oplus L^{2}(\R^{d})}\nonumber\\
	&\leq \|(z-H(\sigma))^{-1}\|\|\psi\|_{L^{2}}\leq |\Im(z)|^{-1}\|\psi\|_{L^{2}}.
\end{align*}
This proves $\|[(z-H(\sigma))^{-1}]_{12}\|\leq |\Im(z)|^{-1}$.  For other pairs of $(i,j)$'s, we can prove them using the same argument with appropriate chosen $P$ and $\Psi$; hence, we omit for simplicity.  This completes the proof.
\end{proof}

For the the following lemma, we recall the definition of the operator $-\Delta_X$ in the Hilbert space $\mathcal L^2_h$ from Subsection \ref{sec:preliminary}. Also, for $z\in\C$ and $s\in\R$, we define
\begin{align*}
	F_{s}(z):=|\Im(z)|^{-2}(1+|z|)^{s/2} \,.
\end{align*}

\begin{lemma}\label{lem:Hs-resol-norm}
Let $z\in\C\setminus\R$ and $1\leq p\leq \infty$. Then it holds for each $\sigma\in\cL_{h}^{p}$, $\delta>0$ and $s=0,2$ that
\begin{align}\label{resolvent-Laplacian}
	\big\|(\one-\delta^{2}\Delta_{X})^{s/2}\big((z-\fh)^{-1}\sigma(z+\overline{\fh})^{-1}\big)\big\|_{\cL_{h}^{p}}&\lesssim (1+\delta)^{s}F_{s}(z)\|\sigma\|_{\cL_{h}^{p}}.
\end{align}
Moreover, when $p=2$, we have
\begin{align}\label{resolvent-Laplacian2}
	\big\|(\one-\delta^{2}\Delta_{X})^{1/2}\big((z-\fh)^{-1}\sigma(z+\overline{\fh})^{-1}\big)\big\|_{\cL_{h}^{2}}&\lesssim (1+\delta)F_{1}(z)\|\sigma\|_{\cL_{h}^{2}}.
\end{align}
\end{lemma}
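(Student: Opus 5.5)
The plan is to exploit the commutator structure of $-\Delta_X$. Write $R_z:=(z-\fh)^{-1}$ and $\tilde R_z:=(z+\overline{\fh})^{-1}$. Both are translation invariant, i.e.\ functions of $p=-i\nabla$. Consequently the superoperator $\sigma\mapsto p_j\sigma-\sigma p_j$ (the derivative in $X$) satisfies a Leibniz rule with respect to multiplication by $R_z$ and $\tilde R_z$, and it commutes with them:
\begin{align*}
[p_j,R_z\sigma\tilde R_z]=R_z[p_j,\sigma]\tilde R_z .
\end{align*}
This uses $[p_j,R_z]=[p_j,\tilde R_z]=0$. Hence $-\Delta_X$ commutes with the map $\sigma\mapsto R_z\sigma\tilde R_z$. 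If that were the whole story, the bound would hold with $|\Im z|^{-2}$ and no growth in $\delta$. The factor $(1+\delta)^sF_s(z)$ suggests the authors instead estimate via the expanded formula $-\Delta_X\sigma=(-\Delta)\sigma+\sigma(-\Delta)-2\sum_jp_j\sigma p_j$, working in $\cL^p_h$ where the superoperator spectral calculus is less direct. I would follow that route to get an $\cL^p_h$ bound for all $p$.

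\textbf{Case $s=0$.} This is H\"older \eqref{Holder-periodic} with $\|R_z\|,\|\tilde R_z\|\le|\Im z|^{-1}$.

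\textbf{Case $s=2$.} Write $(\one-\delta^2\Delta_X)(R_z\sigma\tilde R_z)=R_z\sigma\tilde R_z+\delta^2(-\Delta_X)(R_z\sigma\tilde R_z)$, and expand the second term as
\begin{align*}
(-\Delta)R_z\sigma\tilde R_z+R_z\sigma\tilde R_z(-\Delta)-2\sum_j p_jR_z\sigma\tilde R_zp_j .
\end{align*}
Each piece is then bounded by H\"older, using the operator bounds
\begin{align*}
\|(-\Delta)R_z\|=\|(\fh+\mu)(z-\fh)^{-1}\|\le \sup_{E\ge-\mu}\frac{|E+\mu|}{|z-E|}\lesssim\frac{1+|z|}{|\Im z|}
\end{align*}
and $\|p_jR_z\|\lesssim (1+|z|)^{1/2}/|\Im z|$, together with the analogous bounds for $\tilde R_z$. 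The constants depend on $\mu$. This gives
\begin{align*}
\|\ldots\|_{\cL^p_h}\lesssim (1+\delta^2)|\Im z|^{-2}(1+|z|)\|\sigma\|_{\cL^p_h}\lesssim (1+\delta)^2F_2(z)\|\sigma\|_{\cL^p_h}.
\end{align*}
One also needs $\sup_{E}|E+\mu|^{1/2}/|z-E|\lesssim (1+|z|)^{1/2}/|\Im z|$, which is elementary: split $|E|\le 2|z|+2|\mu|$ versus larger $E$, where the ratio is $O(1)$.

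\textbf{The $p=2$ bound \eqref{resolvent-Laplacian2}.} Here $\cL^2_h$ is a Hilbert space and $T:=\one-\delta^2\Delta_X$ is a positive self-adjoint operator on it. Let $M$ denote the map $\sigma\mapsto R_z\sigma\tilde R_z$. From the cases above, $\|M\|\le F_0(z)$ and $\|TM\|\lesssim(1+\delta)^2F_2(z)$. The formal exponent of the target, with $F_1(z)=|\Im z|^{-2}(1+|z|)^{1/2}$, matches the geometric mean of $F_0$ and $F_2$, which indicates interpolation. I would argue via the quadratic form:
\begin{align*}
\|T^{1/2}M\sigma\|^2=\langle M\sigma,TM\sigma\rangle\le\|M\sigma\|\,\|TM\sigma\|\lesssim F_0F_2(1+\delta)^2\|\sigma\|^2 .
\end{align*}
Taking square roots gives $(1+\delta)F_1(z)$, since $F_0F_2=F_1^2$.

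The main (though mild) obstacle is the $s=2$ case. One must be careful that the $\cL^p_h$ H\"older inequality applies with a bounded operator on one side; this is the $p=\infty$ case of \eqref{Holder-periodic}. One must also verify the elementary resolvent bounds with the correct $z$-dependence.
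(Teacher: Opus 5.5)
Your proof is correct and is essentially the paper's: H\"older for $s=0$; for $s=2$, the expansion of $-\Delta_X$ on $R_z\sigma\tilde R_z$ together with the bounds $\|\Delta R_z\|\lesssim(1+|z|)/|\Im(z)|$ and $\|p_jR_z\|\lesssim(1+|z|)^{1/2}/|\Im(z)|$; and for $p=2$, the Cauchy--Schwarz step $\langle M\sigma,TM\sigma\rangle\le\|M\sigma\|\,\|TM\sigma\|$. The paper gets the $p_j$ bound more cleanly from $\|p_jR_z\|^2\le\|R_z\|\,\|\Delta R_z\|$, and this matters in your large-$E$ region: there the ratio must be bounded by $|E|^{-1/2}\le|z|^{-1/2}\le(1+|z|)^{1/2}/|\Im(z)|$, since ``$O(1)$'' alone fails for large $|\Im(z)|$. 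Your motivating aside is also wrong, though harmless: $[\Delta_X,M]=0$ only bounds $(\one-\delta^2\Delta_X)^{s/2}M\sigma$ by $|\Im(z)|^{-2}\|(\one-\delta^2\Delta_X)^{s/2}\sigma\|$, which requires regularity of $\sigma$, whereas the lemma is a smoothing estimate with only $\|\sigma\|_{\cL^p_h}$ on the right, and that is exactly why the factors $(1+\delta)^s$ and $(1+|z|)^{s/2}$ appear.
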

\begin{proof}
	First, we recall that $\fh=\overline{\fh}=-\Delta-\mu$ so that, by a direct computation,
	\begin{align*}
		-\Delta(z-\fh)^{-1}&=(z-\fh)^{-1}(-\Delta)=-\one-(z+\mu)(z-\fh)^{-1},\\
		-\Delta(z+\overline{\fh})^{-1}&=(z+\overline{\fh})^{-1}(-\Delta)=\one+(-z+\mu)(z+\overline{\fh})^{-1}.
	\end{align*}
	which gives immediately that
	\begin{align}\label{resol-Lap-esti}
		\big\|\Delta(z-\fh)^{-1}\big\|+\big\|\Delta(z+\overline{\fh})^{-1}\big\|&\leq 2\big(1+(|z|+|\mu|)|\Im(z)|^{-1}\big).
	\end{align}
	
	Next, for each $\sigma\in\cL_{h}^{p}$ with $1\leq p\leq \infty$, we introduce
	\begin{align*}
		\widetilde{R}_{z}(\sigma)&:=(z-\fh)^{-1}\sigma(z+\overline{\fh})^{-1},
	\end{align*}
	\begin{align*}
		\widetilde{R}_{z}^{(0)}(\sigma)&:=\big(-\Delta(z-\fh)^{-1}\big)\sigma(z+\overline{\fh})^{-1}+(z-\fh)^{-1}\sigma\big(-\Delta(z+\overline{\fh})^{-1}\big)
	\end{align*}
	and
	\begin{align*}
		\widetilde{R}_{z}^{(1)}(\sigma)&:=\sum_{j=1}^{d}\big(p_{j}(z-\fh)^{-1}\big)\sigma\big(p_{j}(z+\overline{\fh})^{-1}\big),
	\end{align*}
	where $p_{j}=-i\partial_{x_{j}}$ is the differential operator acting on $j$-th variable.  By the estimate \eqref{resol-Lap-esti} and H\"older's inequality, we have for each $\sigma\in\cL_{h}^{p}$ that
	\begin{align}\label{Rz-esti}
		\big\|\widetilde{R}_{z}(\sigma)\big\|_{\cL_{h}^{p}}&\leq \|\sigma\|_{\cL_{h}^{p}}\big\|(z-\fh)^{-1}\big\|\big\|(z+\overline{\fh})^{-1}\big\|\nonumber\\
		&\leq \|\sigma\|_{\cL_{h}^{p}}|\Im(z)|^{-2}=\|\sigma\|_{\cL_{h}^{p}}F_{0}(z)
	\end{align}
	and
	\begin{align}\label{Rz0-esti}
		\big\|\widetilde{R}_{z}^{(0)}(\sigma)\big\|_{\cL_{h}^{p}}&\leq \|\sigma\|_{\cL_{h}^{p}}\Big(\big\|\Delta(z-\fh)^{-1}\big\|\big\|(z+\overline{\fh})^{-1}\big\|+\big\|(z-\fh)^{-1}\big\|\big\|\Delta(z+\overline{\fh})^{-1}\big\|\Big)\nonumber\\
		&\leq \|\sigma\|_{\cL_{h}^{p}}|\Im(z)|^{-1}\Big(\big\|\Delta(z-\fh)^{-1}\big\|+\big\|\Delta(z+\overline{\fh})^{-1}\big\|\Big)\nonumber\\
		&\leq 2\|\sigma\|_{\cL_{h}^{p}} |\Im(z)|^{-2}\big(|\Im(z)|+|z|+|\mu|\big) \nonumber\\
		&\lesssim \|\sigma\|_{\cL_{h}^{p}}|\Im(z)|^{-2}(1+|z|)=\|\sigma\|_{\cL_{h}^{p}}F_{2}(z).
	\end{align}
	Similarly, together with the Cauchy--Schwarz inequality, we obtain
	\begin{align}\label{Rz1-esti}
		\big\|\widetilde{R}_{z}^{(1)}(\sigma)\big\|_{\cL_{h}^{p}}&\leq \|\sigma\|_{\cL_{h}^{p}}\sum_{j=1}^{d}\big\|p_{j}(z-\fh)^{-1}\big\|\big\|p_{j}(z+\overline{\fh})^{-1}\big\|\nonumber\\
		&\leq d\|\sigma\|_{\cL_{h}^{p}}\big\|\sqrt{-\Delta}(z-\fh)^{-1}\big\|\big\|\sqrt{-\Delta}(z+\overline{\fh})^{-1}\big\|\nonumber\\
		&\leq d\|\sigma\|_{\cL_{h}^{p}}\big\|(\bar{z}-\fh)^{-1}(-\Delta)(z-\fh)^{-1}\big\|^{1/2}\big\|(\bar{z}+\overline{\fh})^{-1}(-\Delta)(z+\overline{\fh})^{-1}\big\|^{1/2}\nonumber\\
		&\leq d\|\sigma\|_{\cL_{h}^{p}}\Big(\big\|(z-\fh)^{-1}\big\|\big\|(z+\overline{\fh})^{-1}\big\|\big\|\Delta(z-\fh)^{-1}\big\|\big\|\Delta(z+\overline{\fh})^{-1}\big\|\Big)^{1/2}\nonumber\\
		&\leq 2d\|\sigma\|_{\cL_{h}^{p}}|\Im(z)|^{-1}\big(1+(|z|+|\mu|)|\Im(z)|^{-1}\big)\nonumber\\
		&\lesssim \|\sigma\|_{\cL_{h}^{p}}|\Im(z)|^{-2}(1+|z|)=\|\sigma\|_{\cL_{h}^{p}}F_{2}(z).
	\end{align}
	Here, in the second line, we have used the following operator inequality for each $j=1,...,d$:
	\begin{align*}
		\big((z-\fh)^{-1}\big)^{*}p_{j}^{2}(z-\fh)^{-1}&\leq \big((z-\fh)^{-1}\big)^{*}(-\Delta)(z-\fh)^{-1}\nonumber\\
		&=\big(\sqrt{-\Delta}(z-\fh)^{-1}\big)^{*}\big(\sqrt{-\Delta}(z-\fh)^{-1}\big),
	\end{align*}
	which implies that 
	\begin{align*}
		\big\|p_{j}(z-\fh)^{-1}\big\|&\leq \big\|\sqrt{-\Delta}(z-\fh)^{-1}\big\|.
	\end{align*}
	The same estimate holds if we replace $\fh$ with $-\overline{\fh}$.
	
	Now, we prove \eqref{resolvent-Laplacian}.  Note that, by definition of the operator $-\Delta_X$,
	\begin{align}\label{Lap-conj-resolv}
		-\Delta_{X}\big(\widetilde{R}_{z}(\sigma)\big)\DETAILS{&=\big(-\Delta(z-\fh)^{-1}\big)\sigma(z+\overline{\fh})^{-1}+(z-\fh)^{-1}\sigma\big(-\Delta(z+\overline{\fh})^{-1}\big)\nonumber\\
		&\quad\quad+2\sum_{j=1}^{d}\big(p_{j}(z-\fh)^{-1}\big)\sigma\big(p_{j}(z+\overline{\fh})^{-1}\big)}=\widetilde{R}_{z}^{(0)}(\sigma)+2\widetilde{R}_{z}^{(1)}(\sigma) \,.
	\end{align}
	Then, for each $\delta>0$, we obtain from H\"older inequality, \eqref{Rz-esti}--\eqref{Rz1-esti} that
	\begin{align*}
		\big\|(\one-\delta^{2}\Delta_{X})\widetilde{R}_{z}(\sigma)\big\|_{\cL_{h}^{p}}&\leq \big\|\widetilde{R}_{z}(\sigma)\big\|_{\cL_{h}^{p}}+\delta^{2}\big\|\Delta_{X}(\widetilde{R}_{z}(\sigma))\big\|_{\cL_{h}^{p}}\nonumber\\
		&\leq \big\|\widetilde{R}_{z}(\sigma)\big\|_{\cL_{h}^{p}}+\delta^{2}\big(\big\|\widetilde{R}_{z}^{(0)}(\sigma)\big\|_{\cL_{h}^{p}}+2\big\|\widetilde{R}_{z}^{(1)}(\sigma)\big\|_{\cL_{h}^{p}}\big)\nonumber\\
		&\lesssim\big(F_{0}(z)+\delta^{2}F_{2}(z)\big)\|\sigma\|_{\cL_{h}^{p}}\lesssim (1+\delta)^{2}F_{2}(z)\|\sigma\|_{\cL_{h}^{p}}.
	\end{align*}
	This proves \eqref{resolvent-Laplacian} for $s=2$.  The case $s=0$ follows directly from \eqref{Rz-esti}. 
	
	Finally, for \eqref{resolvent-Laplacian2}, by the Cauchy--Schwarz inequality and \eqref{resolvent-Laplacian}, we obtain for $p=2$ that
	\begin{align*}
		\big\|(\one-\delta^{2}\Delta_{X})^{1/2}(\widetilde{R}_{z}(\sigma))\big\|_{\cL_{h}^{2}}&=\sqrt{\big\langle \widetilde{R}_{z}(\sigma),(\one-\delta^{2}\Delta_{X})(\widetilde{R}_{z}(\sigma))\big\rangle_{\cL_{h}^{2}}}\nonumber\\
		&\leq \big\|\widetilde{R}_{z}(\sigma)\big\|_{\cL_{h}^{2}}^{1/2}\big\|(\one-\delta^{2}\Delta_{X})(\widetilde{R}_{z}(\sigma))\big\|_{\cL_{h}^{2}}^{1/2}\nonumber\\
		&\lesssim (1+\delta)\big(F_{0}(z)F_{2}(z)\big)^{1/2}\|\sigma\|_{\cL_{h}^{2}}=(1+\delta)F_{1}(z)\|\sigma\|_{\cL_{h}^{2}}.
	\end{align*}
	This completes the proof.
\end{proof}

Together with boundedness of $L_{T}$, these lemmas lead to the following proposition:
\begin{prop}\label{prop:BCS-NL-norm}
If $\sigma\in\cL_{h}^{2}\cap\cL_{h}^{6}$, then it holds for $s=0,1,2$ that
\begin{align}\label{nonlinear-norm}
	\|N_{T}(\sigma)\|_{\cH_{h}^{s}}\lesssim h^{-s}\|\sigma\|_{\cL_{h}^{6}}^{3}.
\end{align}
\end{prop}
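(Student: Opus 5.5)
We will work directly from the contour representation
\[
N_T(\sigma)=\int_{\cC}\frac{dz}{2\pi i}\,\rho(\beta z)\,\widetilde R_z\big(\sigma\,\Theta_z(\sigma)\big),
\qquad \Theta_z(\sigma):=(z+\overline{\fh})^{-1}\overline{\sigma}\,[(z-H(\sigma))^{-1}]_{12},
\]
where $\widetilde R_z(\tau)=(z-\fh)^{-1}\tau(z+\overline{\fh})^{-1}$ is the map of Lemma \ref{lem:Hs-resol-norm}. To obtain this form we use the identity \eqref{inv-block12}, $[(z-H(\sigma))^{-1}]_{12}=[(z-H(\sigma))^{-1}]_{11}\sigma(z+\overline{\fh})^{-1}$. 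This lets us write the integrand as $(z-\fh)^{-1}\,\tau_z\,(z+\overline{\fh})^{-1}$ with
\[
\tau_z:=\sigma(z+\overline{\fh})^{-1}\overline{\sigma}\,[(z-H(\sigma))^{-1}]_{11}\,\sigma .
\]
This puts the expression in the form needed to apply Lemma \ref{lem:Hs-resol-norm}, with a cubic middle factor.

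\textbf{Step 1: Schatten bound on $\tau_z$.} By H\"older's inequality \eqref{Holder-periodic} with exponents $1/2=1/6+1/6+1/6$, together with \eqref{inv-blockij-norm} and $\|(z+\overline{\fh})^{-1}\|\le|\Im z|^{-1}$, we get
\[
\|\tau_z\|_{\cL_h^2}\le |\Im z|^{-2}\|\sigma\|_{\cL_h^6}^3 .
\]

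\textbf{Step 2: Sobolev bounds.} For $s=0,1,2$ we apply \eqref{resolvent-Laplacian} and \eqref{resolvent-Laplacian2} with $p=2$ and $\delta=h^{-1}$. Since $\|\cdot\|_{\cH^s_h}=\|(\one-h^{-2}\Delta_X)^{s/2}\cdot\|_{\cL^2_h}$, this gives
\[
\|\widetilde R_z(\tau_z)\|_{\cH^s_h}\lesssim h^{-s}F_s(z)\,|\Im z|^{-2}\,\|\sigma\|_{\cL_h^6}^3 .
\]

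\textbf{Step 3: the $z$-integral.} On $\cC$ we have $|\Im z|=\pi/(2\beta)$, which is fixed (comparable to $T_c$), and $|\rho(\beta z)|$ is bounded there. The remaining integrand therefore behaves like $(1+|z|)^{s/2}$. For $s\ge0$ this is not integrable, and the growth is in fact never compensated, since $F_s$ only gains in $|\Im z|$. This is the main obstacle.

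To handle it, I would first try the Matsubara sum $\frac{2}{\beta}\sum_n$ instead of the contour. Then $|\Im z|=|\omega_n|\to\infty$ and $F_s(i\omega_n)|\omega_n|^{-2}\sim|\omega_n|^{-4+s/2}$, which is summable for $s\le2$. However, this requires resolvent bounds decaying in $|\Im z|$, which \eqref{resol-Lap-esti} provides: $\|\Delta(z-\fh)^{-1}\|\lesssim 1+|z|/|\Im z|$, which is bounded on the imaginary axis. So along $z=i\omega_n$ we indeed get $F_s\lesssim|\omega_n|^{-2}(1+|\omega_n|)^{s/2}$. Combined with the factor $|\omega_n|^{-2}$ from Step 1, the terms decay like $|\omega_n|^{-4+s/2}$, and the sum converges uniformly in $\beta$ near $\beta_c$.

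The only delicate point is that the Matsubara representation is a principal-value sum for $\tanh$. For the cubic remainder, though, the terms decay like $|\omega_n|^{-3}$ or faster, so the sum converges absolutely and equals the contour expression. This can be justified by deforming the contour $\cC$ to a union of circles around the poles $i\omega_n$, using that the integrand is $O(|z|^{-3})$ away from the real axis. Summing gives $\|N_T(\sigma)\|_{\cH^s_h}\lesssim h^{-s}\|\sigma\|_{\cL_h^6}^3$.
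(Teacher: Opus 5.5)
Your proof is correct and is essentially the paper's: the paper also works with the Matsubara sum and uses the second identity in \eqref{inv-block12} to place the cubic factor $\sigma(i\omega_n+\overline{\fh})^{-1}\overline{\sigma}[(i\omega_n-H(\sigma))^{-1}]_{11}\sigma$ between the two resolvents. It then applies Lemma \ref{lem:Hs-resol-norm} with $\delta=h^{-1}$, H\"older's inequality, and summability of the resulting powers of $|\omega_n|^{-1}$ (for $s=0$ it uses the first identity in \eqref{inv-block12} instead, which makes no difference). The detour through the contour and the deformation argument are not needed, since \eqref{nonlinear} already states $N_T$ as a Matsubara sum, and the summand bounds you derive make that sum absolutely convergent.
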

\begin{proof}
First, by Lemma \ref{lem:inv-blockij} and H\"older inequality \eqref{Holder-periodic}, we obtain from the first equality in \eqref{inv-block12} that
\begin{align*}
	\|N_{T}(\sigma)\|_{\cL_{h}^{2}} &\leq \frac{2}{\beta}\sum_{n\in\Z}\|(i\omega_{n}-\fh)^{-1}\|\|\sigma(i\omega_{n}+\overline{\fh})^{-1}\overline{\sigma}(i\omega_{n}-\fh)^{-1}\sigma[(i\omega_{n}-H(\sigma))^{-1}]_{22}\|_{\cL_{h}^{2}}\nonumber\\
	&\leq \frac{2}{\beta}\|\sigma\|_{\cL_{h}^{6}}^{3}\sum_{n\in\Z}\|(i\omega_{n}-\fh)^{-1}\|\|(i\omega_{n}+\overline{\fh})^{-1}\|\|[(i\omega_{n}-H(\sigma))^{-1}]_{22}\|\nonumber\\
	&\lesssim\|\sigma\|_{\cL_{h}^{6}}^{3}\sum_{n\in\Z}|\omega_{n}|^{-3}\lesssim\|\sigma\|_{\cL_{h}^{6}}^{3},
\end{align*}
where we recall $\omega_{n}=\pi(2n+1)/\beta$.  This proves estimate \eqref{nonlinear-norm} for $s=0$. 

Next, together with Lemma \ref{lem:Hs-resol-norm} and the fact that $F_{1}(i\omega_{n})\lesssim |\omega_{n}|^{-1}$ for each $n\in\Z$, the same argument above and the second equality in \eqref{inv-block12} gives
\begin{align*}
	\|N_{T}(\sigma)\|_{\cH_{h}^{1}}&=\big\|(\one-h^{-2}\Delta_{X})^{1/2}N_{T}(\sigma)\big\|_{\cL_{h}^{2}}\nonumber\\
	&\lesssim \frac{2}{\beta}\sum_{n\in\Z}\big\|(\one-h^{-2}\Delta_{X})^{1/2}\big((i\omega_{n}-\fh)^{-1}\sigma(i\omega_{n}+\overline{\fh})^{-1}\overline{\sigma}\nonumber\\
	&\quad\quad\quad\quad\quad\quad\quad\quad\quad\quad\quad\times[(i\omega_{n}-H(\sigma))^{-1}]_{11}\sigma(i\omega_{n}+\overline{\fh})^{-1}\big)\big\|_{\cL_{h}^{2}}\nonumber\\
	&\lesssim h^{-1}\sum_{n\in\Z}F_{1}(i\omega_{n})\big\|\sigma(i\omega_{n}+\overline{\fh})^{-1}\overline{\sigma}[(i\omega_{n}-H(\sigma))^{-1}]_{11}\sigma\big\|_{\cL_{h}^{2}}\nonumber\\
	&\lesssim h^{-1}\|\sigma\|_{\cL_{h}^{6}}^{3}\sum_{n\in\Z}|\omega_{n}|^{-1}\|(i\omega_{n}+\overline{\fh})^{-1}\|\|[(i\omega_{n}-H(\sigma))^{-1}]_{11}\|\lesssim h^{-1}\|\sigma\|_{\cL_{h}^{6}}^{3}.
\end{align*}
This proves estimate \eqref{nonlinear-norm} for $s=1$.  The case $s=2$ is done in the same way; hence, we omit details for simplicities.  
%
\end{proof}


\subsection{Decomposition of the nonlinearity}

Lemma \ref{lem:inv-blockij} motivates the following decomposition of the nonlinear operator $N_{T}$.  We define, for each $\sigma\in\cL_{h}^{2}$ and $z\in\C$ with $\Im(z)\neq 0$,
\begin{align}
	R_{z}(\sigma)& :=(z-\fh)^{-1}\sigma(z+\overline{\fh})^{-1}\overline{\sigma}(z-\fh)^{-1}\sigma(z+\overline{\fh})^{-1}.
\end{align}
Then we can write $N_{T}(\sigma)=N_{T}'(\sigma)+\widetilde{N}_{T}(\sigma)$ with
\begin{align}\label{NL-leading}
	N_{T}'(\sigma)& := \frac{2}{\beta} \sum_{n\in\Z} R_{i\omega_n}(\sigma)
\end{align}
and
\begin{align}\label{NL-higher}
	\widetilde{N}_{T}(\sigma)& := \frac{2}{\beta} \sum_{n\in\Z} R_{i\omega_n}(\sigma)\overline{\sigma}[(i\omega_n-H(\sigma))^{-1}]_{12} \,.
\end{align}

The rest of this section is devoted to estimates on the norms of $N_{T}'(\sigma),\widetilde{N}_{T}(\sigma)$ and the difference $N_{T}'(\sigma_{1})-N_{T}'(\sigma_{2})$.  In the next proposition, we estimate norms of $N_{T}'(\sigma)$ and $\widetilde{N}_{T}(\sigma)$.

\begin{prop}\label{prop:NL-norm}
For each $\sigma\in\cL_{h}^{2}\cap\cL_{h}^{6}$ such that $(\one-\Delta_{X})^{-1}\sigma\in \cL_{h}^{\infty}$, it holds for $s=0,1$ that
\begin{align}
	\label{Hs-NL}
    &\|N_{T}'(\sigma)\|_{\cH_{h}^{s}}\lesssim h^{-s}\|\sigma\|_{\cL_{h}^{6}}^{3},\quad\|\widetilde{N}_{T}(\sigma)\|_{\cL_{h}^{2}}\lesssim\|(\one-\Delta_{X})^{-1}\sigma\|\|\sigma\|_{\cL_{h}^{6}}^{3}. 
\end{align}
\end{prop}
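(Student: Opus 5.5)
The bound for $N_T'(\sigma)$ will follow the proof of Proposition \ref{prop:BCS-NL-norm}, using the explicit product structure of $R_z(\sigma)$ in place of the block identities of Lemma \ref{lem:inv-blockij}.

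For $s=0$, apply H\"older \eqref{Holder-periodic} with three factors of $\sigma$ in $\cL_h^6$ and bound each of the four resolvents in operator norm by $|\omega_n|^{-1}$. This gives $\|R_{i\omega_n}(\sigma)\|_{\cL_h^2}\le |\omega_n|^{-4}\|\sigma\|_{\cL_h^6}^3$, which is summable in $n$.

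For $s=1$, write
$$
R_z(\sigma)=\widetilde R_z\big(\sigma(z+\overline\fh)^{-1}\overline\sigma(z-\fh)^{-1}\sigma\big),
\qquad \widetilde R_z(\tau)=(z-\fh)^{-1}\tau(z+\overline\fh)^{-1}.
$$
Then apply \eqref{resolvent-Laplacian2} with $\delta=h^{-1}$. This gives a factor $(1+h^{-1})F_1(i\omega_n)\lesssim h^{-1}|\omega_n|^{-1}$, since $F_1(i\omega)=|\omega|^{-2}(1+|\omega|)^{1/2}$ and $|\omega_n|\ge\pi/\beta$. The inner $\cL_h^2$ norm is bounded by $|\omega_n|^{-2}\|\sigma\|_{\cL_h^6}^3$, so the sum over $n$ is $\sum_n|\omega_n|^{-3}<\infty$.

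For $\widetilde N_T$ the goal is to extract one factor of $\sigma$ in operator norm while keeping three in $\cL_h^6$. The term is
$$
R_{i\omega_n}(\sigma)\,\overline\sigma\,[(i\omega_n-H(\sigma))^{-1}]_{12}.
$$
Use \eqref{inv-block12} to write $[(z-H(\sigma))^{-1}]_{12}=(z-\fh)^{-1}\sigma[(z-H(\sigma))^{-1}]_{22}$. Now there are five $\sigma$'s, but only four are needed. Bound $[(z-H(\sigma))^{-1}]_{12}$ by $|\omega_n|^{-1}$ via \eqref{inv-blockij-norm}, use three $\cL_h^6$ norms on the $\sigma$'s inside $R_z$, and estimate the remaining $\overline\sigma$ in operator norm.

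That operator norm must be expressed through $\|(\one-\Delta_X)^{-1}\sigma\|$. To do this, move a factor $(\one-\Delta_X)$ onto the neighbouring resolvent product: write $\overline\sigma=(\one-\Delta_X)\big[(\one-\Delta_X)^{-1}\overline\sigma\big]$. By the explicit formula $-\Delta_X\tau=(-\Delta)\tau+\tau(-\Delta)-2\sum_j p_j\tau p_j$, one can distribute the three terms onto the adjacent resolvents $(z+\overline\fh)^{-1}$ (on the left) and $(z-\fh)^{-1}$ (on the right, coming from $[\cdot]_{12}$ via \eqref{inv-block12}). These are controlled by \eqref{resol-Lap-esti} and the bound for $p_j(z-\fh)^{-1}$ from the proof of Lemma \ref{lem:Hs-resol-norm}.

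Each such conversion costs a factor $(1+|\omega_n|)/|\omega_n|\lesssim1$ per resolvent, so the remaining decay in $n$ is still about $|\omega_n|^{-4}$ or better, and the sum converges. No $h^{-1}$ appears, because $\Delta_X$ here is unscaled.

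\textbf{Main obstacle.} The delicate point is this commutation. The operators $p_j$ and $-\Delta$ must land next to resolvents, which means $\overline\sigma$ has to sit directly between two resolvents. I will therefore first rewrite the tail using \eqref{inv-block11}/\eqref{inv-block12}, so that $\overline\sigma$ is flanked by $(z+\overline\fh)^{-1}$ and $(z-\fh)^{-1}$, with the leftover full-resolvent block bounded in norm. If the $p_j\tau p_j$ term causes trouble, I would instead bound $\|\tau\|$ directly using the multiplier bound $\|\sqrt{-\Delta}(z-\fh)^{-1}\|^2\lesssim (1+|z|)/|\Im z|^2$.
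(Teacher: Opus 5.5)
Your estimate for $N_T'(\sigma)$ is the paper's argument (H\"older for $s=0$, and \eqref{resolvent-Laplacian2} with $\delta=h^{-1}$ applied to the outer $\widetilde R_z$ for $s=1$), and it is fine.

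\textbf{The gap is in the bound on $\widetilde N_T$.} You place in operator norm the one factor of $\sigma$ that is \emph{not} sandwiched between free resolvents. Writing $\overline\sigma=(\one-\Delta_X)\tau$ with $\tau=(\one-\Delta_X)^{-1}\overline\sigma$ and distributing $-\Delta_X$ only helps if both neighbours of $\tau$ commute with $p_j$ and can absorb $-\Delta$ and $p_j$. In $R_z(\sigma)\,\overline\sigma\,[(z-H(\sigma))^{-1}]_{12}$, the right neighbour of the outer $\overline\sigma$ is $[(z-H(\sigma))^{-1}]_{12}$. The terms $\tau(-\Delta)$ and $p_j\tau p_j$ then need bounds on $(-\Delta)[(z-H(\sigma))^{-1}]_{12}$ and $p_j[(z-H(\sigma))^{-1}]_{12}$, and the available identities give these only in terms of $\|\sigma\|$.

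Your repair $[(z-H(\sigma))^{-1}]_{12}=(z-\fh)^{-1}\sigma[(z-H(\sigma))^{-1}]_{22}$ does flank $\overline\sigma$ correctly. But the leftover block is $\sigma[(z-H(\sigma))^{-1}]_{22}$, not a bare resolvent block. Bounding it in operator norm costs $\|\sigma\|$, which does not appear in \eqref{Hs-NL}. Putting this fifth $\sigma$ into $\cL_h^6$ instead leaves four $\cL_h^6$ factors, so H\"older yields an $\cL_h^{3/2}$ norm with $\|\sigma\|_{\cL_h^6}^4$ rather than an $\cL_h^2$ norm with $\|\sigma\|_{\cL_h^6}^3$. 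Applying the trick to two factors would give $\|(\one-\Delta_X)^{-1}\sigma\|^2\|\sigma\|_{\cL_h^6}^3$, which is a different estimate. Your fallback via $\|\sqrt{-\Delta}(z-\fh)^{-1}\|$ does not help, because the obstruction is the non-free right neighbour, not the $p_j\tau p_j$ term itself.

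\textbf{The fix is to leave the outer $\overline\sigma$ in $\cL_h^6$.} The three factors of $\sigma$ inside $R_z(\sigma)$ are already flanked by free resolvents, and the paper uses the first one:
\begin{itemize}
\item Write $R_z(\sigma)=\widetilde R_z(\sigma)\,\overline\sigma(z-\fh)^{-1}\sigma(z+\overline\fh)^{-1}$.
\item Bound $\widetilde R_{i\omega_n}(\sigma)$ in operator norm, the other three factors of $\sigma$ (including the outer one) in $\cL_h^6$, and $[(i\omega_n-H(\sigma))^{-1}]_{12}$ by \eqref{inv-blockij-norm}. The summand is then $\lesssim|\omega_n|^{-3}\|\sigma\|_{\cL_h^6}^3\|\widetilde R_{i\omega_n}(\sigma)\|$.
\item Since $\widetilde R_z$ commutes with $\Delta_X$, one has $\widetilde R_z(\sigma)=(\one-\Delta_X)\widetilde R_z\bigl((\one-\Delta_X)^{-1}\sigma\bigr)$.
\item Lemma \ref{lem:Hs-resol-norm} with $p=\infty$, $s=2$, $\delta=1$ bounds this by $F_2(i\omega_n)\|(\one-\Delta_X)^{-1}\sigma\|$. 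This is exactly your distribution argument, applied to a factor where it closes.
\end{itemize}
The resulting summand is $O(|\omega_n|^{-4})$, so the series over $n$ converges.
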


To prove this, we first prove the following lemma:

\begin{lemma}\label{lem:resolvent-norms}
Let $z\in\C\setminus\R$ and $\sigma\in\cL_{h}^{2}$.  Then it holds for $s=0,1,2$ that
\begin{align}
	\label{Hs-resol}&\|R_{z}(\sigma)\|_{\cH_{h}^{s}}\lesssim h^{-s}|\Im(z)|^{-2}F_{s}(z)\|\sigma\|_{\cL_{h}^{6}}^{3},
\end{align}
where we recall $F_{s}(z)$ from Lemma \ref{lem:Hs-resol-norm}.
\end{lemma}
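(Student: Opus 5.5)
The plan is to write $R_z(\sigma)=\widetilde R_z(\tau)$, where $\widetilde R_z(\cdot)=(z-\fh)^{-1}\,\cdot\,(z+\overline{\fh})^{-1}$ is the map from the proof of Lemma \ref{lem:Hs-resol-norm}, and where
$$
\tau:=\sigma(z+\overline{\fh})^{-1}\overline{\sigma}(z-\fh)^{-1}\sigma .
$$
Then Lemma \ref{lem:Hs-resol-norm} handles the outer resolvents, with $p=2$ and $\delta=h^{-1}$, while H\"older's inequality \eqref{Holder-periodic} handles the inner product $\tau$. Since $\|\sigma\|_{\cH_h^s}=\|(\one-h^{-2}\Delta_X)^{s/2}\sigma\|_{\cL_h^2}$, we first bound
$$
\|R_z(\sigma)\|_{\cH_h^s}=\big\|(\one-h^{-2}\Delta_X)^{s/2}\widetilde R_z(\tau)\big\|_{\cL_h^2}.
$$

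For $s=0$ and $s=2$ we apply \eqref{resolvent-Laplacian} with $p=2$ and $\delta=h^{-1}$. This gives
$$
\|R_z(\sigma)\|_{\cH_h^s}\lesssim (1+h^{-1})^sF_s(z)\|\tau\|_{\cL_h^2}\lesssim h^{-s}F_s(z)\|\tau\|_{\cL_h^2}
$$
for $h\le 1$. For $s=1$ we use \eqref{resolvent-Laplacian2} in the same way, which yields $h^{-1}F_1(z)\|\tau\|_{\cL_h^2}$. Next, H\"older's inequality with exponents $1/2=1/6+1/6+1/6$ and the operator-norm bounds $\|(z-\fh)^{-1}\|,\ \|(z+\overline{\fh})^{-1}\|\le|\Im z|^{-1}$ give
$$
\|\tau\|_{\cL_h^2}\le \|\sigma\|_{\cL_h^6}\,\|\overline\sigma\|_{\cL_h^6}\,\|\sigma\|_{\cL_h^6}\,|\Im z|^{-2}.
$$
Here we use $\|\overline\sigma\|_{\cL_h^6}=\|\sigma\|_{\cL_h^6}$, which holds because $\overline\sigma=\sigma^*$ and the Schatten-type norm is invariant under taking adjoints, by cyclicity of the trace per unit volume. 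Combining the two bounds gives \eqref{Hs-resol}.

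The only delicate points are technical. First, $\tau\in\cL_h^2$ is justified by the H\"older bound above whenever $\sigma\in\cL_h^6$. Second, Lemma \ref{lem:Hs-resol-norm} applies to $\tau$ even though $\tau$ need not satisfy $\overline\tau=\tau^*$: its proof only uses Hölder's inequality and operator-norm bounds, so it extends verbatim to general periodic operators in $\cL_h^p$. If one wants to avoid this extension, the main obstacle to check carefully is that the identity \eqref{Lap-conj-resolv} for $-\Delta_X\widetilde R_z(\tau)$, which underlies \eqref{resolvent-Laplacian} and \eqref{resolvent-Laplacian2}, remains valid for such $\tau$. It does, because the formula $-\Delta_X\tau=(-\Delta)\tau+\tau(-\Delta)-2\sum_j p_j\tau p_j$ is purely algebraic.

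If $\sigma\notin\cL_h^6$, the right-hand side of \eqref{Hs-resol} is infinite and there is nothing to prove.
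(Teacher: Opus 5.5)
Your proposal is correct and is essentially the paper's proof. Like the paper, you factor $R_z(\sigma)$ as the outer resolvent sandwich applied to $\sigma(z+\overline{\fh})^{-1}\overline{\sigma}(z-\fh)^{-1}\sigma$, invoke Lemma \ref{lem:Hs-resol-norm} with $\delta=h^{-1}$ to get $h^{-s}F_s(z)$, and then use H\"older's inequality with three $\cL_h^6$ factors and two resolvents of norm at most $|\Im(z)|^{-1}$. You also note that this inner operator need not satisfy $\overline{\tau}=\tau^*$, so Lemma \ref{lem:Hs-resol-norm} is being applied to general periodic operators; the paper passes over this silently, and your justification for it is fine.
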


\begin{proof}
By H\"older inequality \eqref{Holder-periodic} and Lemma \ref{lem:Hs-resol-norm} (with $\delta=h^{-1}$), we obtain
\begin{align}
	\|R_{z}(\sigma)\|_{\cH_{h}^{s}}&=\|(\one-h^{-2}\Delta_{X})^{s/2} \left( (z-\fh)^{-1}\sigma(z+\overline{\fh})^{-1}\overline{\sigma}(z-\fh)^{-1}\sigma(z+\overline{\fh})^{-1} \right) \|_{\cL_{h}^{2}}\nonumber\\
	&\lesssim h^{-s}F_{s}(z)\|\sigma(z+\overline{\fh})^{-1}\overline{\sigma}(z-\fh)^{-1}\sigma\|_{\cL_{h}^{2}}\nonumber\\
	&\lesssim h^{-s}F_{s}(z)\|(z-\fh)^{-1}\|\|(z+\overline{\fh})^{-1}\|\|\sigma\|_{\cL_{h}^{6}}^{3}\nonumber\\
	&\lesssim h^{-s}|\Im(z)|^{-2}F_{s}(z)\|\sigma\|_{\cL_{h}^{6}}^{3},
\end{align}
which proves \eqref{Hs-resol}.
\end{proof}

\begin{proof}[Proof of Proposition \ref{prop:NL-norm}]
First, for the $\cH_{h}^{s}$-norm of $N_{T}'(\sigma)$ with $s=0,1$, by Lemma \ref{lem:resolvent-norms} and the fact that $F_{s}(i\omega_{n})\lesssim|\omega_{n}|^{-2+s/2}$ for each $n\in\Z$, we have
\begin{align*}
	\|N_{T}'(\sigma)\|_{\cH_{h}^{s}}&\leq\frac{2}{\beta}\sum_{n\in\Z}\|R_{i\omega_{n}}(\sigma)\|_{\cH_{h}^{s}}\lesssim h^{-s}\sum_{n\in\Z}|\omega_{n}|^{-2}F_{s}(i\omega_{n})\|\sigma\|_{\cL_{h}^{6}}^{3}\nonumber\\
    &\lesssim h^{-s}\|\sigma\|_{\cL_{h}^{6}}^{3}\sum_{n\in\Z}|\omega_{n}|^{-4+s/2}\lesssim h^{-s}\|\sigma\|_{\cL_{h}^{6}}^{3}.
\end{align*}
These prove the first estimate in \eqref{Hs-NL}.


Next, for the second estimate in \eqref{Hs-NL}, 
we recall from the proof of Lemma \ref{lem:Hs-resol-norm} that, for $z\in\C\setminus\R$,
\begin{align*}
    \widetilde{R}_{z}(\sigma)&=(z-\fh)^{-1}\sigma(z+\overline{\fh})^{-1},
\end{align*}
so that
\begin{align*}
	R_{z}(\sigma)=\widetilde{R}_{z}(\sigma)\overline{\sigma}(z-\fh)^{-1}\sigma(z+\overline{\fh})^{-1}.
\end{align*}
It the follows from the triangle and H\"older inequalities that
\begin{align}\label{NL-subleading}
	\big\|\widetilde{N}_{T}(\sigma)\big\|_{\cL_{h}^{2}}&\leq \frac{2}{\beta}\sum_{n\in\Z}\big\|R_{i\omega_{n}}(\sigma)\overline{\sigma}[(i\omega_n -H(\sigma))^{-1}]_{12}\big\|_{\cL_{h}^{2}}\nonumber\\
	&\lesssim\sum_{n\in\Z}\big\|\widetilde{R}_{i\omega_{n}}(\sigma)\overline{\sigma}(i\omega_{n}-\fh)^{-1}\sigma(i\omega_{n}+\overline{\fh})^{-1}\overline{\sigma}[(i\omega_{n}-H(\sigma))^{-1}]_{12}\big\|_{\cL_{h}^{2}}\nonumber\\
	&\lesssim\sum_{n\in\Z}\big\|\widetilde{R}_{i\omega_{n}}(\sigma)\big\|\big\|\overline{\sigma}(i\omega_{n}-\fh)^{-1}\sigma(i\omega_{n}+\overline{\fh})^{-1}\overline{\sigma}[(i\omega_{n}-H(\sigma))^{-1}]_{12}\big\|_{\cL_{h}^{2}}\nonumber\\
	&\lesssim \|\sigma\|_{\cL_{h}^{6}}^{3}\sum_{n\in\Z}\|(i\omega_{n}-\fh)^{-1}\|\|(i\omega_{n}+\overline{\fh})^{-1}\|\big\|[(i\omega_{n}-H(\sigma))^{-1}]_{12}\big\|\big\|\widetilde{R}_{i\omega_{n}}(\sigma)\big\|\nonumber\\
	&\lesssim\|\sigma\|_{\cL_{h}^{6}}^{3}\sum_{n\in\Z}|\omega_{n}|^{-3}\|\big\|\widetilde{R}_{i\omega_{n}}(\sigma)\big\|.
\end{align}
It remains to estimate the last factor in each summand.  Since $\fh$ commutes with $p_{j}=-i\partial_{j}$ for each $j=1,...,d$, we see that $-\Delta_{X}$ commutes with $\widetilde{R}_{i\omega_{n}}$.  By denoting $\widetilde{\sigma}=(\one-\Delta_{X})^{-1}\sigma$, we have
\begin{align*}
	\widetilde{R}_{i\omega_{n}}(\sigma)&=\widetilde{R}_{i\omega_{n}}\big((\one-\Delta_{X})\widetilde{\sigma}\big)=(\one-\Delta_{X})\big(\widetilde{R}_{i\omega_{n}}(\sigma)\big).
\end{align*}
Hence, by Lemma \ref{lem:Hs-resol-norm} with $\delta=1$, we obtain 
\begin{align*}
	\big\|\widetilde{R}_{i\omega_{n}}(\sigma)\big\|&=\big\|(\one -\Delta_{X})\widetilde{R}_{i\omega_{n}}(\widetilde{\sigma})\big\|\lesssim F_{2}(i\omega_{n})\|\widetilde{\sigma}\|=|\omega_{n}|^{-2}(1+|\omega_{n}|)\|\widetilde{\sigma}\|.
\end{align*}
Substituting this into \eqref{NL-subleading} yields
\begin{align}\label{NL-subleading-new}
	\big\|\widetilde{N}_{T}(\sigma)\big\|_{\cL_{h}^{2}}&\lesssim\|(\one-\Delta_{X})^{-1}\sigma\|\|\sigma\|_{\cL_{h}^{6}}^{3}\sum_{n\in\Z}|\omega_{n}|^{-4}(1+|\omega_{n}|)\nonumber\\
	&\lesssim\|(\one-\Delta_{X})^{-1}\sigma\|\|\sigma\|_{\cL_{h}^{6}}^{3}.
\end{align}
This implies the second estimate in \eqref{Hs-NL} and thus completes the proof.
\end{proof}

\smallskip

Now, we conclude this section by estimating the norm of $N_{T}'(\sigma_{1})-N_{T}'(\sigma_{2})$.

\begin{prop}\label{prop:NL-diff-norm}
Let $\sigma_{1},\sigma_{2}\in\cL_{h}^{2}\cap\cL_{h}^{6}$.  Then it holds that
\begin{align}\label{Hs-NL-diff-leading}
	\|N_{T}'(\sigma_{1})-N_{T}'(\sigma_{2})\|_{\cL_{h}^{2}}&\lesssim \Big(\max_{j=1,2}\|\sigma_{j}\|_{\cL_{h}^{6}}^{2}\Big)\|\sigma_{1}-\sigma_{2}\|_{\cL_{h}^{6}}.
\end{align}
\end{prop}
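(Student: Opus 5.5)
We will prove the bound by writing $R_z(\sigma_1)-R_z(\sigma_2)$ as a telescoping sum and estimating each Matsubara term with the H\"older inequality \eqref{Holder-periodic}, as in the proof of Proposition \ref{prop:NL-norm}. Since $N_T'(\sigma)=\frac{2}{\beta}\sum_{n\in\Z}R_{i\omega_n}(\sigma)$ and $R_z$ is trilinear in $(\sigma,\overline\sigma,\sigma)$, we write, with $A_z:=(z-\fh)^{-1}$ and $B_z:=(z+\overline{\fh})^{-1}$,
\begin{align*}
R_z(\sigma_1)-R_z(\sigma_2)&=A_z(\sigma_1-\sigma_2)B_z\overline{\sigma_1}A_z\sigma_1B_z+A_z\sigma_2B_z(\overline{\sigma_1}-\overline{\sigma_2})A_z\sigma_1B_z\\
&\quad+A_z\sigma_2B_z\overline{\sigma_2}A_z(\sigma_1-\sigma_2)B_z .
\end{align*}
This is the standard telescoping identity $abc-a'b'c'=(a-a')bc+a'(b-b')c+a'b'(c-c')$, applied with the resolvents inserted in between. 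The differences are linear in $\sigma_1-\sigma_2$. We also use that complex conjugation preserves the $\cL^6_h$ norm, i.e. $\|\overline{\sigma}\|_{\cL_h^6}=\|\sigma\|_{\cL_h^6}$; this holds because $\overline\sigma=\sigma^*$ for $\sigma\in\cL_h^\infty$, or simply because conjugation is an antiunitary that commutes with $\chi_{\T^d_h}$.

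Next we bound each of the three terms in $\cL^2_h$ by the H\"older inequality with exponents $1/2=1/6+1/6+1/6$, placing the four bounded resolvent factors in operator norm. Since $\|A_{i\omega_n}\|,\|B_{i\omega_n}\|\le|\omega_n|^{-1}$, each term is bounded by
$$
|\omega_n|^{-4}\Big(\max_{j=1,2}\|\sigma_j\|_{\cL_h^6}^2\Big)\|\sigma_1-\sigma_2\|_{\cL_h^6}.
$$
Here we need that H\"older holds with bounded operators interspersed. This follows from $\|K\sigma\|_{\cL_h^p}\le\|K\|\,\|\sigma\|_{\cL_h^p}$ for periodic bounded $K$, which is the $(\infty,p)$ case of \eqref{Holder-periodic}; the resolvents commute with lattice translations, so they are periodic.

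Finally we sum over $n$. We have $\frac{2}{\beta}\sum_{n}|\omega_n|^{-4}<\infty$, and this sum is uniformly bounded for $\beta$ near $\beta_c$ because $\beta=\beta_c(1+Dh^2)$. This gives \eqref{Hs-NL-diff-leading}. One should also note that the series defining $N_T'$ converges absolutely in $\cL^2_h$, by the same bound with $\sigma_2=0$, so that the difference can be taken termwise.

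There is no real obstacle in this argument. The only points that need care are the bookkeeping of the conjugated factors $\overline{\sigma_j}$ and the justification of the termwise interchange, and both are handled by the absolute convergence noted above.
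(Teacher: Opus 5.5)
Your proposal is correct and follows essentially the same route as the paper. The paper first proves Lemma~\ref{lem:L2-resolv-Delta-diff} using exactly your three-term telescoping of $R_z(\sigma_1)-R_z(\sigma_2)$ and H\"older's inequality, with the four resolvents taken in operator norm to get the factor $|\Im(z)|^{-4}$; it then sums over the Matsubara frequencies $z=i\omega_n$ as in Proposition~\ref{prop:NL-norm}, and your remarks on the conjugated factors and on termwise summation simply spell out what the paper leaves as ``the same argument''.
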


To prove this, we first prove the following lemma.

\begin{lemma}\label{lem:L2-resolv-Delta-diff}
Let $z\in \C\setminus\R$ and $\sigma_{1},\sigma_{2}\in\cL_{h}^{2}\cap\cL_{h}^{6}$.  Then it holds that 
\begin{align}
	\label{L2-resolv-Delta-diff1}\big\|R_{z}(\sigma_{1})-R_{z}(\sigma_{2})\big\|_{\cL_{h}^{2}}&\lesssim |\Im(z)|^{-4}\Big(\max_{j=1,2}\|\sigma_{j}\|_{\cL_{h}^{6}}^{2}\Big)\|\sigma_{1}-\sigma_{2}\|_{\cL_{h}^{6}}.
\end{align}
\end{lemma}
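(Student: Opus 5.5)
The plan is a telescoping argument combined with the H\"older inequality \eqref{Holder-periodic}. $R_z(\sigma)$ is trilinear in the triple $(\sigma,\overline{\sigma},\sigma)$, with four resolvent factors interlaced. I will write $A:=(z-\fh)^{-1}$ and $B:=(z+\overline{\fh})^{-1}$. Each of these is a bounded $h^{-1}\Z^d$-periodic operator with $\|A\|,\|B\|\le|\Im(z)|^{-1}$, since $\fh=\overline{\fh}$ is self-adjoint. Then
\begin{align*}
	R_z(\sigma_1)-R_z(\sigma_2)&=A(\sigma_1-\sigma_2)B\overline{\sigma_1}A\sigma_1B+A\sigma_2B(\overline{\sigma_1}-\overline{\sigma_2})A\sigma_1B\\
	&\quad+A\sigma_2B\overline{\sigma_2}A(\sigma_1-\sigma_2)B,
\end{align*}
which one checks by expanding, since the intermediate terms cancel pairwise.

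Each of the three terms is estimated in $\cL_h^2$ in the same way. I pull out the four bounded resolvent factors in operator norm, using H\"older with exponents $\infty$, which costs $|\Im(z)|^{-4}$. I then apply the generalized H\"older inequality with $1/6+1/6+1/6=1/2$ to the three remaining $\sigma$-type factors. This gives for instance
\[
\|A(\sigma_1-\sigma_2)B\overline{\sigma_1}A\sigma_1B\|_{\cL_h^2}\le|\Im(z)|^{-4}\|\sigma_1-\sigma_2\|_{\cL_h^6}\|\overline{\sigma_1}\|_{\cL_h^6}\|\sigma_1\|_{\cL_h^6}.
\]
Similarly, the second term is bounded by $|\Im(z)|^{-4}\|\sigma_2\|_{\cL_h^6}\|\overline{\sigma_1-\sigma_2}\|_{\cL_h^6}\|\sigma_1\|_{\cL_h^6}$, and the third by $|\Im(z)|^{-4}\|\sigma_2\|_{\cL_h^6}^2\|\sigma_1-\sigma_2\|_{\cL_h^6}$. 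The mixed products $\|\sigma_1\|_{\cL_h^6}\|\sigma_2\|_{\cL_h^6}$ are bounded by $\max_j\|\sigma_j\|_{\cL_h^6}^2$, and summing via the triangle inequality \eqref{CS-triangle} yields \eqref{L2-resolv-Delta-diff1}.

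The only point needing care is that complex conjugation preserves the $\cL_h^6$ norm, i.e.\ $\|\overline{\tau}\|_{\cL_h^p}=\|\tau\|_{\cL_h^p}$. This holds because $C$ is antiunitary and commutes with $\chi_{\T^d_h}$, so $\Tr_{\T^d_h}(\overline{\tau}^*\overline{\tau})^{p/2}=\overline{\Tr_{\T^d_h}(\tau^*\tau)^{p/2}}$, which is real. Equivalently, one can use $\overline{\tau}=\tau^*$ for $\tau\in\cL_h^\infty$. I also need that H\"older's inequality applies to products interlacing bounded periodic operators with elements of $\cL_h^p$, which is exactly \eqref{Holder-periodic} with $p=\infty$ for the resolvent factors. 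I do not expect a genuine obstacle: the statement is purely algebraic plus H\"older. Proposition \ref{prop:NL-diff-norm} then follows by summing over Matsubara frequencies $z=i\omega_n$, since $\sum_n|\omega_n|^{-4}<\infty$.
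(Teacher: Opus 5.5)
Your proof is correct and follows the paper's argument: the same three-term telescoping decomposition of $R_z(\sigma_1)-R_z(\sigma_2)$, followed by the triangle inequality and H\"older's inequality, with the four resolvents bounded in operator norm by $|\Im(z)|^{-1}$ each and the three $\sigma$-type factors in $\cL_h^6$. (The paper's display writes $\alpha_1$ for $\sigma_1$ in the first term, a harmless typo.) Your explicit check that $\|\overline{\tau}\|_{\cL_h^6}=\|\tau\|_{\cL_h^6}$ is a detail the paper uses implicitly.
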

\begin{proof}
First, we compute
\begin{align*}
	R_{z}(\sigma_{1})-R_{z}(\sigma_{2})&=(z-\fh)^{-1}(\sigma_{1}-\sigma_{2})(z+\overline{\fh})^{-1}\overline{\sigma_{1}}(z-\fh)^{-1}\alpha_{1}(z+\overline{\fh})^{-1}\nonumber\\
	&\quad+(z-\fh)^{-1}\sigma_{2}(z+\overline{\fh})^{-1}(\overline{\sigma_{1}}-\overline{\sigma_{2}})(z-\fh)^{-1}\sigma_{1}(z+\overline{\fh})^{-1}\nonumber\\
	&\quad+(z-\fh)^{-1}\sigma_{2}(z+\overline{\fh})^{-1}\overline{\sigma_{2}}(z-\fh)^{-1}(\sigma_{1}-\sigma_{2})(z+\overline{\fh})^{-1}.
\end{align*}
Then, by \eqref{CS-triangle}, \eqref{Holder-periodic} and a similar argument in the proof of Lemma \ref{lem:resolvent-norms}, we obtain
\begin{align*}
	\big\|R_{z}(\sigma_{1})-R_{z}(\sigma_{2})\big\|_{\cL_{h}^{2}}\DETAILS{&\leq \|(z-\fh)^{-1}\|^{2}\|(z+\overline{\fh})^{-1}\|^{2}\|\sigma_{1}-\sigma_{2}\|_{\cL_{h}^{6}}\nonumber\\
		&\quad\quad\times\Big(\|\sigma_{1}\|_{\cL_{h}^{6}}^{2}+\|\sigma_{1}\|_{\cL_{h}^{6}}\|\sigma_{2}\|_{\cL_{h}^{6}}+\|\sigma_{2}\|_{\cL_{h}^{6}}^{2}\Big)\nonumber\\}
	&\lesssim |\Im(z)|^{-4}\|\sigma_{1}-\sigma_{2}\|_{\cL_{h}^{6}}\Big(\max_{j=1,2}\|\sigma_{j}\|_{\cL_{h}^{6}}^{2}\Big).
\end{align*}
This completes the proof.
%
\end{proof}

\begin{proof}[Proof of Proposition \ref{prop:NL-diff-norm}]
Everything follows from the same argument as in Proposition \ref{prop:NL-norm}, which we omit for simplicity.
\end{proof}
%


\begin{cor}\label{cor:NL-H1-bdd}
Under Assumptions (A1)--(A5), for any $\varphi,\varphi_{1},\varphi_{2}\in\cH_{h}^{1}$ and $s=0,1,2$, it holds that
\begin{align}\label{NL-H1-bdd}
	&\big\|N_{T}'(-2V^{1/2}\varphi)\big\|_{\cH_{h}^{s}}\lesssim h^{-s}\|\varphi\|_{\cH_{h}^{1}}^{3},\\
	\label{NL-H1-bdd3}&\big\|N_{T}'(-2V^{1/2}\varphi_{1})-N_{T}'(-2V^{1/2}\varphi_{2})\big\|_{\cL_{h}^{2}}\lesssim \Big(\max_{j=1,2} \|\varphi_{j}\|_{\cH_{h}^{1}}^{2}\Big) \big\|\varphi_{1}-\varphi_{2}\big\|_{\cH_{h}^{1}},
\end{align}
and, for each $\varepsilon>0$, there is some constant $C_{d,\varepsilon}>0$ such that
\begin{align}
	\label{NL-H1-bdd2}&\big\|\widetilde{N}_{T}(-2V^{1/2}\varphi)\big\|_{\cL_{h}^{2}}\leq C_{d,\varepsilon}h^{-(d/2-1+\varepsilon)}\|\varphi\|_{\cH_{h}^{1}}^{4}.
\end{align}
\end{cor}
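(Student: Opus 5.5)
The corollary follows by inserting $\sigma=-2V^{1/2}\varphi$ into Propositions \ref{prop:NL-norm} and \ref{prop:NL-diff-norm}. The norms of $\sigma$ that appear there are then controlled by $\|\varphi\|_{\cH^1_h}$ using the estimates of Section \ref{sec:useful}.

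\textbf{Step 1: the $\cL^6_h$ norm.} For \eqref{NL-H1-bdd} we use the first bound in \eqref{Hs-NL}. That bound covers $s=0,1$; for $s=2$ we run the same argument with Lemma \ref{lem:resolvent-norms} at $s=2$, using $F_2(i\omega_n)\lesssim|\omega_n|^{-1}$. The resulting series $\sum_n|\omega_n|^{-3}$ still converges. This gives
$$\|N_T'(\sigma)\|_{\cH^s_h}\lesssim h^{-s}\|\sigma\|_{\cL^6_h}^3 .$$
Next apply Lemma \ref{lem:Lpnorm-Delta} with $p=3$, which is allowed with $s=1$ in all dimensions $d\le 3$. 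Together with $V\in L^1\cap L^\infty\subset L^{3/2}$ from (A2), this yields
$$\|V^{1/2}\varphi\|_{\cL^6_h}\lesssim\|V\|_{L^{3/2}}^{1/2}\|\varphi\|_{\cH^1_h}.$$
This proves \eqref{NL-H1-bdd}. The difference estimate \eqref{NL-H1-bdd3} follows in the same way from Proposition \ref{prop:NL-diff-norm}, by linearity of $\varphi\mapsto V^{1/2}\varphi$.

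\textbf{Step 2: the operator-norm factor in $\widetilde N_T$.} For \eqref{NL-H1-bdd2} we use the second bound in \eqref{Hs-NL}, which requires control of $\|(\one-\Delta_X)^{-1}\sigma\|$. Since $V^{1/2}$ commutes with $-\Delta_X$, this equals $\|V^{1/2}(\one-\Delta_X)^{-1}\varphi\|$. Lemma \ref{lem:op-norm-Delta} with $s=d/2+\varepsilon$ bounds it by
$$C_{d,\varepsilon}\|V\|_{L^1}^{1/2}\,\|(\one-\Delta_X)^{-1}\varphi\|_{\cH^{d/2+\varepsilon}_h}.$$

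\textbf{Step 3: the Fourier multiplier (main obstacle).} This is the only step that needs care: we must track the $h$-dependence coming from the mismatch between $-\Delta_X$ and the $h$-scaled Sobolev weight. In Fourier variables $-\Delta_X$ acts as $h^2|q|^2$, so it suffices to bound the multiplier
$$\frac{(1+|q|^2)^{(d/2+\varepsilon)/2}}{(1+h^2|q|^2)(1+|q|^2)^{1/2}}=\frac{(1+|q|^2)^{(d/2-1+\varepsilon)/2}}{1+h^2|q|^2}.$$
Since $d\le 3$, we have $d/2-1+\varepsilon<2$ for small $\varepsilon$. Splitting into $|q|\le h^{-1}$ and $|q|>h^{-1}$ shows the multiplier is $\lesssim h^{-(d/2-1+\varepsilon)}$. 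For $d=1$ the exponent is negative, and the multiplier is simply bounded by $1$, which is still dominated by the stated bound for $h\le1$. Hence
$$\|(\one-\Delta_X)^{-1}\sigma\|\lesssim h^{-(d/2-1+\varepsilon)}\|\varphi\|_{\cH^1_h}.$$
Combining this with $\|\sigma\|_{\cL^6_h}^3\lesssim\|\varphi\|_{\cH^1_h}^3$ from Step 1 gives \eqref{NL-H1-bdd2}.

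Finally, Proposition \ref{prop:NL-norm} assumes $\sigma\in\cL^2_h\cap\cL^6_h$ with $(\one-\Delta_X)^{-1}\sigma$ bounded. These hypotheses hold by the same estimates together with Lemma \ref{lem:Hsnorm-Delta}.
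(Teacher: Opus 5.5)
Your argument follows the paper's proof step by step. For the first two bounds you use Lemma \ref{lem:Lpnorm-Delta} with $p=3$ together with Propositions \ref{prop:NL-norm} and \ref{prop:NL-diff-norm}. For the third you commute $V^{1/2}$ through $(\one-\Delta_X)^{-1}$, apply Lemma \ref{lem:op-norm-Delta} with $s=d/2+\varepsilon$, and bound a multiplier. Your explicit treatment of $s=2$ via $F_2(i\omega_n)\lesssim|\omega_n|^{-1}$ fills a point the paper leaves implicit.

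\textbf{The $d=1$ step is false.} Set $a:=d/2-1+\varepsilon$. When $a<0$ (i.e.\ $d=1$, $\varepsilon<1/2$), the target $h^{-a}=h^{|a|}$ is \emph{smaller} than $1$ for $h<1$. So ``bounded by $1$, hence dominated by the stated bound'' runs the wrong way: the multiplier $(1+|q|^2)^{a/2}/(1+h^2|q|^2)$ equals $1$ at $q=0$. The paper's proof has the same defect, since its supremum bound $\lesssim h^{-2(d/2-1+\varepsilon)}$ fails at $q=0$ for a negative exponent.

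This cannot be repaired, because \eqref{NL-H1-bdd2} is false in this regime. Take a translation-invariant $\varphi(x,y)=\epsilon_0 f(x-y)$ with $f$ real and even, and let $\hat g$ be the symbol of $-2V^{1/2}\varphi$. Then:
\begin{itemize}
\item $\|\varphi\|_{\cH^1_h}=\epsilon_0\|f\|_{L^2}$, with no $h$-dependence.
\item $\widetilde N_T(-2V^{1/2}\varphi)$ is, up to sign, the Fourier multiplier given by the part of order $\geq 5$ in $\hat g$ of $\hat g(p)\,\chi_\beta\big(\sqrt{(p^2-\mu)^2+\hat g(p)^2}\big)$.
\item For small $\epsilon_0$, its $\cL^2_h$-norm is $\geq c\,\epsilon_0^5$ with $c>0$ independent of $h$. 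The only $h$-dependence is through $\beta\to\beta_c$, which is harmless.
\end{itemize}
A lower bound $c\,\epsilon_0^5$ is incompatible with an upper bound $C h^{1/2-\varepsilon}\epsilon_0^4$ as $h\to0$.

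\textbf{What your Step 3 does prove.} It gives the true bound with exponent $(d/2-1+\varepsilon)_+$. The multiplier is $\le 1$ when $a\le 0$, and $\lesssim h^{-a}$ by your splitting when $0\le a\le 2$.

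For $a>2$, i.e.\ $\varepsilon>3-d/2$, your splitting breaks down because the multiplier is unbounded in $q$. You need the remark the paper makes: since $h\le 1$, the bound for such $\varepsilon$ follows from the one for a smaller $\varepsilon$.

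The corrected statement is enough for every later use. The paper only invokes $\varepsilon=1/4$, in the weakened form $h^{-3/4}$, which holds for all $d\le 3$.
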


\begin{proof}
The estimates \eqref{NL-H1-bdd} and \eqref{NL-H1-bdd3} are direct applications of Lemma \ref{lem:Lpnorm-Delta} and Proposition \ref{prop:NL-norm}, so we omit their proofs.

To prove \eqref{NL-H1-bdd2}, we apply Proposition \ref{prop:NL-norm} and Lemma \ref{lem:Lpnorm-Delta} to obtain
\begin{align*}
    \big\|\widetilde{N}_{T}(-2V^{1/2}\varphi)\big\|_{\cL_{h}^{2}}
    &\lesssim \| (\one-\Delta_X)^{-1}(V^{1/2}\varphi) \| \|V^{1/2}\varphi\|_{\mathcal L^6_h}^3 \\
    & \lesssim \| (\one-\Delta_X)^{-1}(V^{1/2}\varphi) \| \|\varphi\|_{\mathcal H^1_h}^3 \,.
\end{align*}
It remains to bound the first factor on the r.h.s. Using $(\one-\Delta_X)^{-1}(V^{1/2}\varphi)= V^{1/2} (\one-\Delta_X)^{-1}(\varphi)$ together with Lemma \ref{lem:op-norm-Delta}, we obtain for any $0<\varepsilon\leq 3-d/2$ a constant $C_{d,\varepsilon}>0$ such that
\begin{align*}
	\big\|(\one-\Delta_{X})^{-1}V^{1/2}\varphi\big\|&\leq C_{d,\varepsilon}\big\|(\one-h^{-2}\Delta_{X})^{(d/2+\varepsilon)/2}(\one-\Delta_{X})^{-1}\varphi\big\|_{\cL_{h}^{2}}\nonumber\\
	&=C_{d,\varepsilon}\big\|(\one-h^{-2}\Delta_{X})^{(d/2-1+\varepsilon)/2}(\one-\Delta_{X})^{-1}\varphi\big\|_{\cH_{h}^{1}}\nonumber\\
	&\leq C_{d,\varepsilon}h^{-(d/2-1+\varepsilon)}\|\varphi\|_{\cH_{h}^{1}}.
\end{align*}
The last inequality comes from
$$
\sup_{q\in 2\pi\Z^d} \frac{(1+|q|^2)^{\frac d2 - 1+ \epsilon}}{(1+h^2|q|^2)^2} \lesssim h^{-2(\frac d2 - 1+\epsilon)} \,.
$$
This proves \eqref{NL-H1-bdd2} for $\varepsilon\leq 3-d/2$, which trivially implies the inequality also for larger $\varepsilon$.
\end{proof}

\bigskip


\section{Proof of Theorem \ref{thm:deriv-GL}}\label{sec:pf-deriv-GL}

Our goal in this section is to prove our first main result, Theorem \ref{thm:deriv-GL}. In the first part of the proof we will work with the Birman--Schwinger reformulation of the BdG equation and prove bounds on the corresponding solution $\varphi=V^{1/2}\alpha$. Only at the end will we convert these bounds into bounds on $\alpha$.


\subsection{Projecting along \texorpdfstring{$P_\kappa$}{Pκ}}

Let $\varphi\in \cH_{h}^{1}$ be a solution of the BdG equation \eqref{BdG-BS-eqn} in Birman--Schwinger form and assume that $\|\varphi\|_{\cH_{h}^{1}}\lesssim h$.

Recalling the definition of the projection $P_{\kappa}$, $0<\kappa\leq 1$, in \eqref{P}, we project \eqref{BdG-BS-eqn} onto the ranges of $P_{\kappa}$ and $P_{\kappa}^{\perp}=1-P_{\kappa}$ to obtain
\begin{align}
	\label{BdG-BS-eqn-1}0&=P_{\kappa}L_{T}P_{\kappa}\varphi+P_{\kappa}L_{T}P_{\kappa}^{\perp}\varphi+\tfrac{1}{2}P_{\kappa}V^{1/2}N_{T}(-2V^{1/2}\varphi),\\
	\label{BdG-BS-eqn-2}0&=P_{\kappa}^{\perp}L_{T}P_{\kappa}\varphi+P_{\kappa}^{\perp}L_{T}P_{\kappa}^{\perp}\varphi+\tfrac{1}{2}P_{\kappa}^{\perp}V^{1/2}N_{T}(-2V^{1/2}\varphi).
\end{align}
We can rewrite \eqref{BdG-BS-eqn-1} as
\begin{align}\label{BdG-eqn8}
	& P_{\kappa}L_{T}P_{\kappa}\varphi+\tfrac{1}{2}PV^{1/2}N_{T}'(-2V^{1/2}P_{\kappa}\varphi)\nonumber\\
	&=\tfrac{1}{2}\lambda_{\kappa}^{\perp}PV^{1/2}N_{T}'(-2V^{1/2}P_{\kappa}\varphi)-P_{\kappa}L_{T}P_{\kappa}^{\perp}\varphi-\tfrac{1}{2}P_{\kappa}V^{1/2}\widetilde{N}_{T}(-2V^{1/2}\varphi)\nonumber\\
	&\quad\,-\tfrac{1}{2}(P_{\kappa}V^{1/2}N_{T}'(-2V^{1/2}\varphi)-P_{\kappa}V^{1/2}N_{T}'(-2V^{1/2}P_{\kappa}\varphi)).
\end{align}
A key step in the proof of Theorem \ref{thm:deriv-GL} will be to show that the r.h.s.~of \eqref{BdG-eqn8} is small. This is the assertion of the following proposition, which treats each one of the terms appearing on the r.h.s:

\begin{prop}\label{prop:small-NP-diff}
Let $\varphi$ be a solution to \eqref{BdG-BS-eqn}.  Under Assumptions (A1)--(A5) with sufficiently small $h>0$, if $\|\varphi\|_{\cH_{h}^{1}}\lesssim h$, then it holds that 
\begin{align}
	\label{small-offPal2}
	&\|P_{\kappa}L_{T}P_{\kappa}^{\perp}\varphi\|_{\cL_{h}^{2}}\lesssim h^{9/4}\|P_{\kappa}\varphi\|_{\cH_{h}^{1}},\\
	\label{small-NP-diff1}&\|P_{\kappa}V^{1/2}\widetilde{N}_{T}(-2V^{1/2}\varphi)\|_{\cL_{h}^{2}}\lesssim h^{9/4}\|P_{\kappa}\varphi\|_{\cH_{h}^{1}},\\
	\label{small-NP-diff2}&\|P_{\kappa}V^{1/2}N_{T}'(-2V^{1/2}\varphi)-P_{\kappa}V^{1/2}N_{T}'(-2V^{1/2}P_{\kappa}\varphi)\|_{\cL_{h}^{2}}\lesssim h^{13/6}\|P_{\kappa}\varphi\|_{\cH_{h}^{1}},\\
    \DETAILS{\nonumber\\
	&\quad\quad\quad\quad\quad\quad\quad\quad\quad\quad\quad\lesssim \big(h^{5/6}+h^{-11/6}\|P_{\kappa}\varphi\|_{\cH_{h}^{1}}^{2}\big)\|P_{\kappa}\varphi\|_{\cH_{h}^{1}}^{3}}
    \label{small-NP-diff3}&\|\lambda_{\kappa}^{\perp}PV^{1/2}N_{T}'(-2V^{1/2}P_{\kappa}\varphi)\|_{\cH_{h}^{-1}}\lesssim h^{31/12}\|P_{\kappa}\varphi\|_{\cH_{h}^{1}}.
\end{align}
\end{prop}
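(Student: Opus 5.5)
The plan has two stages. First I control the ``irrelevant'' part $P_\kappa^\perp\varphi$ using \eqref{BdG-BS-eqn-2}. Then I feed that control into each of the four terms. From \eqref{BdG-BS-eqn-2} we get
\[
P_\kappa^\perp\varphi=-(P_\kappa^\perp L_TP_\kappa^\perp)^{-1}\Big(L_TP_\kappa\varphi+\tfrac12 P_\kappa^\perp V^{1/2}N_T(-2V^{1/2}\varphi)\Big).
\]
For the linear piece I use \eqref{inv-bdd-LP1}--\eqref{inv-bdd-LP2}. For the nonlinear piece I use Proposition \ref{prop:inv-bdd}, which gives the bound $\kappa^{-1}$, together with Proposition \ref{prop:BCS-NL-norm}, Lemma \ref{lem:Lpnorm-Delta} and Lemma \ref{lem:Hsnorm-Delta}. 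This should yield
\[
\|P_\kappa^\perp\varphi\|_{\cL^2_h}\lesssim h\|P_\kappa\varphi\|_{\cH^1_h}+\kappa^{-1}\|\varphi\|_{\cH^1_h}^3 .
\]
I also need the $\cH^1_h$ version. Since all linear operators commute with $\Delta_X$, the same argument applied with $(\one-h^{-2}\Delta_X)^{1/2}$ inserted gives the analogous bound, using the $s=1$ case of \eqref{nonlinear-norm} at the cost of $h^{-1}$. With $\|\varphi\|_{\cH^1_h}\lesssim h$, the nonlinear contribution is $\kappa^{-1}h^2\|\varphi\|_{\cH^1_h}$. Because $\kappa\gg h$, this can be absorbed by writing $\|\varphi\|\le\|P_\kappa\varphi\|+\|P_\kappa^\perp\varphi\|$, and everything ends up bounded by $\|P_\kappa\varphi\|_{\cH^1_h}$. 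I fix $\kappa=h^{a}$, with $a$ chosen at the end to balance the exponents; $\kappa\sim h^{1/2}$ or $h^{1/3}$ looks plausible given the target exponents $9/4$, $13/6$ and $31/12$.

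With this in hand, the four terms are handled as follows.

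\textbf{Term \eqref{small-offPal2}.} Write $P_\kappa L_TP_\kappa^\perp\varphi=(P_\kappa^\perp L_TP_\kappa)^*P_\kappa^\perp\varphi$. Using the $\cL^2\to\cL^2$ bound $\kappa$ from Lemma \ref{lem:Q0LP-bdd}, or better, substituting the formula for $P_\kappa^\perp\varphi$ and using \eqref{inv-bdd-LP3}, the linear part gives $\kappa^{3/2}h\|P_\kappa\varphi\|_{\cH^1_h}$. The nonlinear part gives $\kappa\cdot\kappa^{-1}\cdot h^2\|\varphi\|$ after moving $P^\perp$ onto the $\kappa^{1/2}h$-small factor. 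Here I must be careful to exploit that $P_\kappa L_T P^\perp$ has norm $\lesssim\kappa^{1/2}h$ on $\cH^1$, and to use the $\cH^1$ bound of $N_T$, which costs $h^{-1}$ and is compensated by the $h$ gained.

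\textbf{Term \eqref{small-NP-diff1}.} This follows directly from \eqref{NL-H1-bdd2}: the bound is $h^{-(1/2+\varepsilon)}h^3\|\varphi\|_{\cH^1_h}$ in $d=3$, i.e.\ about $h^{5/2-\varepsilon}\ge h^{9/4}$, after converting $\|\varphi\|$ to $\|P_\kappa\varphi\|$.

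\textbf{Term \eqref{small-NP-diff2}.} Apply \eqref{NL-H1-bdd3} plus the bound on $\widetilde N_T$ to get $h^2\|P_\kappa^\perp\varphi\|_{\cH^1_h}$. The $\cH^1$ bound on $P_\kappa^\perp\varphi$ is weaker, costing a factor like $\kappa^{-1/2}$, so this is where $h^{13/6}$ comes from.

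\textbf{Term \eqref{small-NP-diff3}.} This is the main obstacle. The $\cH^{-1}_h$ norm gains a factor $(1+|q|^2)^{-1/2}$, and $\lambda_\kappa^\perp$ restricts to $h^2|q|^2>\kappa$. So $\|\lambda_\kappa^\perp f\|_{\cH^{-1}_h}\le (\kappa/h^2)^{-1/2}\|f\|_{\cL^2_h}$, or, using the $s=1$ estimate, $\le (h^2/\kappa)\|f\|_{\cH^1_h}$. Combined with \eqref{NL-H1-bdd} this gives $h\kappa^{-1/2}h^3$ or $h^2\kappa^{-1}h^{2}$ times $\|P_\kappa\varphi\|$. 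I would interpolate between $s=1$ and $s=2$ in \eqref{NL-H1-bdd} to sharpen this. The exponent $31/12$ then fixes $\kappa$, and I would check that the same choice is consistent with the other three bounds.
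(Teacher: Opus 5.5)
Your overall architecture is the paper's: you control $P_\kappa^\perp\varphi$ through \eqref{BdG-BS-eqn-2} with absorption, and then estimate the four terms with the same lemmas. Your treatment of \eqref{small-NP-diff1} is exactly the paper's.

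The genuine gap is the nonlinear part of \eqref{small-offPal2}. With the bound $\kappa^{-1}$ from Proposition~\ref{prop:inv-bdd} for $(P_\kappa^\perp L_TP_\kappa^\perp)^{-1}$, you get $\kappa\cdot\kappa^{-1}\cdot h^2=h^2$, which is not $O(h^{9/4})$. Your proposed repair does not help. Combining $\|P_\kappa L_TP^\perp\|_{\cH^1_h\to\cL^2_h}\lesssim\kappa^{1/2}h$, the inverse, and $\|N_T(-2V^{1/2}\varphi)\|_{\cH^1_h}\lesssim h^{-1}\|\varphi\|_{\cH^1_h}^3$ gives $\kappa^{-1/2}h^2$. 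Even with the sharper bound $\kappa^{-1/2}$ for $(P_\kappa^\perp L_TP_\kappa^\perp)^{-1}P^\perp\lambda_\kappa$ you only reach $h^2$.

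The missing observation is that the $\kappa^{-1}$ loss comes only from the sector $\Ran(\lambda_\kappa^\perp P)$, and that sector never enters here. Indeed $P_\kappa=P_\kappa\lambda_\kappa$, all operators involved commute with $\lambda_\kappa$, and $\lambda_\kappa P_\kappa^\perp=\lambda_\kappa P^\perp$. Hence
\[
P_\kappa L_TP_\kappa^\perp(P_\kappa^\perp L_TP_\kappa^\perp)^{-1}P_\kappa^\perp=(P_\kappa L_TP^\perp)\big(\lambda_\kappa P^\perp(P_\kappa^\perp L_TP_\kappa^\perp)^{-1}P^\perp\lambda_\kappa\big).
\]
The second factor is $O(1)$, either by \eqref{inv-bdd1} or by \eqref{PLP-inv-bdd}, since $\kappa^{-1}\frac{-\Delta_X}{\one-\Delta_X}\le\one$ on $\Ran\lambda_\kappa$. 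The plain $\cL^2_h$ route then gives $\kappa\cdot 1\cdot h^2\|P_\kappa\varphi\|_{\cH^1_h}=h^{17/6}\|P_\kappa\varphi\|_{\cH^1_h}$, which is what the paper does.

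Your exponent bookkeeping also needs repair.

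\begin{itemize}
\item The parameter $\kappa$ is not free. The linear part of \eqref{small-offPal2} is $\kappa^{3/2}h$, which forces $\kappa\le h^{5/6}$.
\item For \eqref{small-NP-diff3}, the $s=0$ route gives $h\kappa^{-1/2}\|N_T'(-2V^{1/2}P_\kappa\varphi)\|_{\cL^2_h}\lesssim h\kappa^{-1/2}\cdot h^2\|P_\kappa\varphi\|_{\cH^1_h}$, which forces $\kappa\ge h^{5/6}$. So $\kappa=h^{5/6}$ as in \eqref{kappa-fix}, and your guesses $h^{1/2}$ and $h^{1/3}$ would break \eqref{small-offPal2}.
\item Your counts $h\kappa^{-1/2}h^3$ and $h^2\kappa^{-1}h^2$ each carry a spurious factor $h$. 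The correct input is $\|P_\kappa\varphi\|_{\cH^1_h}^3\lesssim h^2\|P_\kappa\varphi\|_{\cH^1_h}$.
\item Interpolating toward $s=2$ only worsens \eqref{small-NP-diff3}. Each extra derivative costs $h^{-1}$ in \eqref{NL-H1-bdd} but gains only $h\kappa^{-1/2}$ from $\lambda_\kappa^\perp$.
\item For \eqref{small-NP-diff2} you need $\|P_\kappa^\perp\varphi\|_{\cH^1_h}\lesssim h^{1/6}\|P_\kappa\varphi\|_{\cH^1_h}$. The factor $h^{1/6}=\kappa^{-1}h$ comes from the nonlinear piece $\kappa^{-1}h^{-1}\|\varphi\|_{\cH^1_h}^3$, not from a ``$\kappa^{-1/2}$ cost''.
\item The linear piece in $\cH^1_h$ is not analogous to the $\cL^2_h$ bound $h$. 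Inserting $(\one-h^{-2}\Delta_X)^{1/2}$ trades $\|P_\kappa\varphi\|_{\cH^1_h}$ for $\|P_\kappa\varphi\|_{\cH^2_h}\lesssim h^{-1}\kappa^{1/2}\|P_\kappa\varphi\|_{\cH^1_h}$. This yields $\kappa^{1/2}=h^{5/12}$, which is harmless.
\item \eqref{small-NP-diff2} involves only $N_T'$, so \eqref{NL-H1-bdd3} alone suffices; $\widetilde N_T$ plays no role there.
\end{itemize}
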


To prove this proposition, we make use of \eqref{BdG-BS-eqn-2}. By Proposition \ref{prop:inv-bdd}, for $\beta$ that is sufficiently close to $\beta_{c}$ (i.e., that satisfies Assumption (A5) with sufficiently small $h$), the operator $P_{\kappa}^{\perp}L_{T}P_{\kappa}^{\perp}$ is invertible on $\Ran(P_{\kappa}^{\perp})$ so that we can rewrite \eqref{BdG-BS-eqn-2} as
\begin{align}\label{BdG-eqn7}
	P_{\kappa}^{\perp}\varphi&=-(P_{\kappa}^{\perp}L_{T}P_{\kappa}^{\perp})^{-1}\Big(P_{\kappa}^{\perp}L_{T}P_{\kappa}\varphi+ \tfrac12 P_{\kappa}^{\perp}V^{1/2}N_{T}(-2V^{1/2}\varphi)\Big).
\end{align}
In what follows, we choose
\begin{align}\label{kappa-fix}
	\kappa=h^{5/6}
\end{align}
and assume $h\leq 1$, so that $\kappa\leq 1$. 

As a preparation for the proof of Proposition \ref{prop:small-NP-diff}, we first show the following lemmas:

\begin{lemma}\label{lem:small-offPal}
Let $\varphi\in\cH_{h}^{1}$ be a solution to \eqref{BdG-BS-eqn}.  Under Assumptions (A1)--(A5) with sufficiently small $h>0$, it holds for $s=0,1$ that 
\begin{align}
	\label{small-offPal}\|P_{\kappa}^{\perp}\varphi\|_{\cH_{h}^{s}}&\lesssim h^{5/12}\|P_{\kappa}\varphi\|_{\cH_{h}^{s}}+h^{-(5/6+s)}\|\varphi\|_{\cH_{h}^{1}}^{3}.
\end{align}
\end{lemma}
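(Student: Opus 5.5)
Starting from the reformulation \eqref{BdG-eqn7}, we estimate the two terms on its right-hand side separately in $\cH_h^s$ for $s=0,1$. Since $-\Delta_X$ commutes with $L_T$, $P$, $\lambda_\kappa$ and hence with $P_\kappa^\perp$ and $(P_\kappa^\perp L_T P_\kappa^\perp)^{-1}$, all $\cL_h^2$ operator bounds transfer to $\cH_h^s$ with the same constants. This allows us to argue entirely at the level of $\cL_h^2$ operator norms applied to $(\one-h^{-2}\Delta_X)^{s/2}\varphi$.

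For the linear term $(P_\kappa^\perp L_T P_\kappa^\perp)^{-1}P_\kappa^\perp L_T P_\kappa\varphi$ we would write $P_\kappa\varphi = \lambda_\kappa P\varphi$ and use $P_\kappa^\perp L_T P_\kappa = P^\perp L_T P_\kappa$, as in the proof of Proposition \ref{prop:inv-bdd-LP}. The argument there gives
\begin{align*}
\|(P_\kappa^\perp L_TP_\kappa^\perp)^{-1}L_TP_\kappa\sigma\|_{\cL_h^2}^2\lesssim \kappa^{-1}\|P^\perp L_TP_\kappa\sigma\|_{\cL_h^2}^2 ,
\end{align*}
where we use \eqref{PLP-inv-bdd1} together with $\lambda_\kappa(\one+\kappa^{-1}\tfrac{-\Delta_X}{\one-\Delta_X})\lambda_\kappa\le 2$. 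By the first bound in \eqref{Q0LP-bdd}, $\|L_TP_\kappa\|_{\cL_h^2\to\cL_h^2}\lesssim\kappa$. The term is therefore bounded by $\kappa^{-1/2}\kappa\|P_\kappa\varphi\|_{\cH_h^s}=\kappa^{1/2}\|P_\kappa\varphi\|_{\cH_h^s}=h^{5/12}\|P_\kappa\varphi\|_{\cH_h^s}$ with $\kappa=h^{5/6}$. Applying this with $\sigma$ replaced by $(\one-h^{-2}\Delta_X)^{s/2}\varphi$ and commuting handles $s=1$. This produces exactly the first term in \eqref{small-offPal}.

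For the nonlinear term we use the crude operator bound $\|(P_\kappa^\perp L_TP_\kappa^\perp)^{-1}\|\lesssim\kappa^{-1}$ from \eqref{inv-bdd2}. Together with boundedness of $P_\kappa^\perp$ and Lemma \ref{lem:Hsnorm-Delta}, this gives
\begin{align*}
\|(P_\kappa^\perp L_TP_\kappa^\perp)^{-1}P_\kappa^\perp V^{1/2}N_T(-2V^{1/2}\varphi)\|_{\cH_h^s}\lesssim \kappa^{-1}\|N_T(-2V^{1/2}\varphi)\|_{\cH_h^s}.
\end{align*}
Proposition \ref{prop:BCS-NL-norm} then bounds $\|N_T(\sigma)\|_{\cH_h^s}\lesssim h^{-s}\|\sigma\|_{\cL_h^6}^3$. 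Lemma \ref{lem:Lpnorm-Delta} with $p=3$, where $s=1$ suffices in every dimension $d\le3$ and $V\in L^{3/2}$ by (A2), gives $\|V^{1/2}\varphi\|_{\cL_h^6}\lesssim\|\varphi\|_{\cH_h^1}$. Altogether this term is $\lesssim \kappa^{-1}h^{-s}\|\varphi\|_{\cH_h^1}^3=h^{-(5/6+s)}\|\varphi\|_{\cH_h^1}^3$, which is the second term in \eqref{small-offPal}.

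We expect the main obstacle to be the first step. The naive bound $\kappa^{-1}\cdot\kappa$ only yields $O(1)\|P_\kappa\varphi\|$, so the gain $\kappa^{1/2}$ must come from \eqref{PLP-inv-bdd1}: on the range of $P^\perp$ the inverse is better behaved, costing only $\kappa^{-1/2}$ in norm after squaring. One must also check that $L_TP_\kappa$ maps into a region where $-\Delta_X\le\kappa$, which holds because $L_T$ commutes with $\lambda_\kappa$. Some minor care is needed to justify commuting $(\one-h^{-2}\Delta_X)^{1/2}$ through the inverse, but this follows from all operators involved being functions of $p_X$ fiberwise.
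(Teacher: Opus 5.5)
Your proposal is correct and is essentially the paper's proof: you split \eqref{BdG-eqn7} into the linear term, bounded by $\kappa^{-1/2}\cdot\kappa=h^{5/12}$ using \eqref{PLP-inv-bdd1} (which is indeed the right source of the $\kappa^{-1/2}$ gain) together with $\|L_TP_\kappa\|_{\cL_h^2\to\cL_h^2}\lesssim\kappa$ from Lemma \ref{lem:Q0LP-bdd}, and the nonlinear term, bounded by $\kappa^{-1}h^{-s}\|\varphi\|_{\cH_h^1}^3$ via \eqref{inv-bdd2}, Lemma \ref{lem:Hsnorm-Delta}, Proposition \ref{prop:BCS-NL-norm} and Lemma \ref{lem:Lpnorm-Delta}. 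The passage to $s=1$ by commuting all operators with $-\Delta_X$ is also exactly how the paper handles it.
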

\begin{proof}
	First, 
    by Proposition \ref{prop:inv-bdd} and using the fact that $P_{\kappa}^{\perp} L_{T}P_{\kappa} = P^{\perp}\lambda_{\kappa}L_{T}P_{\kappa}$, we obtain
	\begin{align}\label{BdG-eqn7-1}
		P_{\kappa}^{\perp}\varphi=-(P_{\kappa}^{\perp}L_{T}P_{\kappa}^{\perp})^{-1}\big(P^{\perp}\lambda_{\kappa}L_{T}P_{\kappa}\varphi+ \tfrac12 P_{\kappa}^{\perp}V^{1/2}N_{T}(-2V^{1/2}\varphi)\big).
	\end{align}
	For the linear part, since $\|A\|_{\cH_{h}^{s}\rightarrow \cH_{h}^{s}}\leq \|A\|_{\cL_{h}^{2}\rightarrow\cL_{h}^{2}}$ for each $s\in\R$ whenever $A$ commutes with $-\Delta_{X}$, using the fact that $L_{T}$ and $P_{\kappa}$ commute with $-\Delta_{X}$ and Proposition \ref{prop:inv-bdd}, we obtain
	\begin{align*}
		\|(P_{\kappa}^{\perp}L_{T}P_{\kappa}^{\perp})^{-1}P^{\perp}\lambda_{\kappa}\|_{\cL_{h}^{2}\rightarrow\cL_{h}^{2}}\lesssim\kappa^{-1/2} 
        \quad\text{and}\quad \|P^{\perp}L_{T}P_{\kappa}\|_{\cL_{h}^{2}\rightarrow\cL_{h}^{2}}\lesssim\kappa. 
	\end{align*}
	This implies for $s=0,1$ that 
	\begin{align}\label{offP-al-L2}
		&\|(P_{\kappa}^{\perp}L_{T}P_{\kappa}^{\perp})^{-1}P^{\perp}\lambda_{\kappa}L_{T}P_{\kappa}\varphi\|_{\cH_{h}^{s}}\nonumber\\
		&\leq \|(P_{\kappa}^{\perp}L_{T}P_{\kappa}^{\perp})^{-1}P^{\perp}\lambda_{\kappa}\|_{\cL_{h}^{2}\rightarrow\cL_{h}^{2}}\|P^{\perp}L_{T}P_{\kappa}\|_{\cL_{h}^{2}\rightarrow\cL_{h}^{2}}\|P_{\kappa}\varphi\|_{\cH_{h}^{s}}\nonumber\\
		&\lesssim \kappa^{1/2}\|P_{\kappa}\varphi\|_{\cH_{h}^{s}}=h^{5/12}\|P_{\kappa}\varphi\|_{\cH_{h}^{s}}.
	\end{align}
	For the nonlinear component, by Lemma \ref{lem:Hsnorm-Delta}, Proposition \ref{prop:inv-bdd} and \ref{prop:BCS-NL-norm}, we obtain
	\begin{align*}
		&\big\|(P_{\kappa}^{\perp}L_{T}P_{\kappa}^{\perp})^{-1} P_{\kappa}^{\perp} V^{1/2}N_{T}(-2V^{1/2}\varphi)\big\|_{\cH_{h}^{s}}\nonumber\\
		&\lesssim \|(P_{\kappa}^{\perp}L_{T}P_{\kappa}^{\perp})^{-1}P_{\kappa}^{\perp}\|_{\cL_{h}^{2}\rightarrow\cL_{h}^{2}}\|V^{1/2}N_{T}(-2V^{1/2}\varphi)\|_{\cH_{h}^{s}}\nonumber\\
		&\lesssim\kappa^{-1}h^{-s}\|V^{1/2}\varphi\|_{\cL_{h}^{6}}^{3}\lesssim h^{-(5/6+s)}\|\varphi\|_{\cH_{h}^{1}}^{3}.
	\end{align*}
	Hence, we obtain \eqref{small-offPal}.
\end{proof}
\begin{lemma}\label{lem:Pper-Pbdd}
Let $\varphi$ be a solution to \eqref{BdG-BS-eqn}.  Under Assumptions (A1)--(A5) with sufficiently small $h>0$, if $\|\varphi\|_{\cH_{h}^{1}}\lesssim h$, then it holds that 
\begin{align}
	\label{Pper-H1norm}\|P_{\kappa}^{\perp}\varphi\|_{\cH_{h}^{1}}&\lesssim h^{1/6}\|P_{\kappa}\varphi\|_{\cH_{h}^{1}}.
\end{align}
\end{lemma}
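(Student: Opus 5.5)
The plan is to start from Lemma \ref{lem:small-offPal} with $s=1$,
\begin{align*}
\|P_{\kappa}^{\perp}\varphi\|_{\cH_{h}^{1}}\lesssim h^{5/12}\|P_{\kappa}\varphi\|_{\cH_{h}^{1}}+h^{-11/6}\|\varphi\|_{\cH_{h}^{1}}^{3},
\end{align*}
and absorb the cubic term. The linear contribution $h^{5/12}\|P_\kappa\varphi\|_{\cH^1_h}$ is already better than the claimed $h^{1/6}$. For the cubic term, I write $\|\varphi\|_{\cH_{h}^{1}}^{3}\le \|\varphi\|_{\cH_{h}^{1}}^{2}\big(\|P_{\kappa}\varphi\|_{\cH_{h}^{1}}+\|P_{\kappa}^{\perp}\varphi\|_{\cH_{h}^{1}}\big)$ and use the hypothesis $\|\varphi\|_{\cH_h^1}\lesssim h$. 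This gives $h^{-11/6}\|\varphi\|^{2}_{\cH^1_h}\lesssim h^{1/6}$, so
\begin{align*}
\|P_{\kappa}^{\perp}\varphi\|_{\cH_{h}^{1}}\lesssim (h^{5/12}+h^{1/6})\|P_{\kappa}\varphi\|_{\cH_{h}^{1}}+C h^{1/6}\|P_{\kappa}^{\perp}\varphi\|_{\cH_{h}^{1}} .
\end{align*}
For $h$ sufficiently small we have $Ch^{1/6}\le 1/2$, so the last term can be moved to the left-hand side. This yields $\|P_\kappa^\perp\varphi\|_{\cH^1_h}\lesssim h^{1/6}\|P_\kappa\varphi\|_{\cH^1_h}$, which is \eqref{Pper-H1norm}.

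Two points need checking. First, the implicit constant in $\|\varphi\|_{\cH^1_h}\lesssim h$ is some fixed $M$, so the absorption constant is $CM^2h^{1/6}$; this is still small for $h\le h_0(M)$, which matches the convention stated in the introduction. Second, the cubic bound in Lemma \ref{lem:small-offPal} relied on $\|V^{1/2}\varphi\|_{\cL^6_h}\lesssim\|\varphi\|_{\cH^1_h}$ from Lemma \ref{lem:Lpnorm-Delta} with $p=3$, where $s=1$ suffices for $d\le3$. That lemma is already in place, so no new estimate is needed.

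The only real obstacle is making sure the exponents line up. The factor $h^{-11/6}$ comes from $\kappa^{-1}h^{-1}$ with $\kappa=h^{5/6}$, and combined with $h^2$ it gives exactly $h^{1/6}$, which is why the lemma is stated with that power. There is no circularity: we only use Lemma \ref{lem:small-offPal}, whose proof uses \eqref{BdG-eqn7-1} and the invertibility from Proposition \ref{prop:inv-bdd}. Finally, because $P_\kappa$ and $P_\kappa^\perp$ are orthogonal projections commuting with $-\Delta_X$, the norm $\|\varphi\|_{\cH^1_h}$ is comparable to $\|P_\kappa\varphi\|_{\cH^1_h}+\|P_\kappa^\perp\varphi\|_{\cH^1_h}$, which justifies the splitting used above.
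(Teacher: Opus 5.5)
Your proposal is correct and is essentially the paper's own proof. Both apply Lemma \ref{lem:small-offPal} with $s=1$, split $\|\varphi\|_{\cH_h^1}^3\le\|\varphi\|_{\cH_h^1}^2\big(\|P_\kappa\varphi\|_{\cH_h^1}+\|P_\kappa^\perp\varphi\|_{\cH_h^1}\big)$, use $\|\varphi\|_{\cH_h^1}\lesssim h$ to obtain the factor $h^{-11/6}h^2=h^{1/6}$, and absorb the $P_\kappa^\perp$ term for small $h$.
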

\begin{proof}
Since, clearly,  $\|\varphi\|_{\cH_{h}^{1}}\leq\|P_{\kappa}\varphi\|_{\cH_{h}^{1}}+\|P_{\kappa}^{\perp}\varphi\|_{\cH_{h}^{1}}$, by Lemma \ref{lem:small-offPal} and our assumption $\|\varphi\|_{\cH_{h}^{1}}\lesssim h$, we obtain
\begin{align}
	\|P_{\kappa}^{\perp}\varphi\|_{\cH_{h}^{1}}&\lesssim h^{5/12}\|P_{\kappa}\varphi\|_{\cH_{h}^{1}}+h^{-11/6}\|\varphi\|_{\cH_{h}^{1}}^{2}\big(\|P_{\kappa}\varphi\|_{\cH_{h}}+\|P_{\kappa}^{\perp}\varphi\|_{\cH_{h}^{1}}\big)\nonumber\\
	&\lesssim (h^{5/12}+h^{1/6})\|P_{\kappa}\varphi\|_{\cH_{h}^{1}}+h^{1/6}\|P_{\kappa}^{\perp}\varphi\|_{\cH_{h}^{1}}.
\end{align}
If we assume that $h$ is sufficiently small, then the second term on the r.h.s.~can be absorbed in the l.h.s.~and we arrive at the claimed bound.
\end{proof}

\begin{proof}[Proof of Proposition \ref{prop:small-NP-diff}]
    Let us begin with the proof of \eqref{small-offPal2}. We bound
    \begin{align*}
	&\|P_{\kappa}L_{T}P_{\kappa}^{\perp}\varphi\|_{\cL_{h}^{2}}\leq\|P_{\kappa}L_{T}(P_{\kappa}^{\perp}L_{T}P_{\kappa}^{\perp})^{-1}L_{T}P_{\kappa}\varphi\|_{\cL_{h}^{2}}\nonumber\\
	&\quad\quad\quad\quad\quad\quad\quad\quad\quad+\|P_{\kappa}L_{T}P_{\kappa}^{\perp}(P_{\kappa}^{\perp}L_{T}P_{\kappa}^{\perp})^{-1}P_{\kappa}^{\perp} \tfrac12 V^{1/2}N_{T}(-2V^{1/2}\varphi)\|_{\cL_{h}^{2}}\nonumber\\
	&\leq \|P_{\kappa}L_{T}(P_{\kappa}^{\perp}L_{T}P_{\kappa}^{\perp})^{-1}L_{T}P_{\kappa}\|_{\cH_{h}^{1}\rightarrow\cL_{h}^{2}} \|P_{\kappa}\varphi\|_{\cH_{h}^{1}}\nonumber\\
	&\quad+ \|P_{\kappa}L_{T}P^{\perp}\|_{\cL_{h}^{2}\rightarrow\cL_{h}^{2}} \| \lambda_{\kappa} P^{\perp}(P_{\kappa}^{\perp}L_{T}P_{\kappa}^{\perp})^{-1} P^{\perp}\lambda_{\kappa}\|_{\cL_{h}^{2}\rightarrow\cL_{h}^{2}} \| \tfrac12 V^{1/2}N_{T}(-2V^{1/2}\varphi)\|_{\cL_{h}^{2}}.
    \end{align*}
    Here we used the fact that $\lambda_\kappa$ commutes with $L_T$ and $P$ and therefore
    $$
    P_{\kappa}L_{T}P_{\kappa}^{\perp}(P_{\kappa}^{\perp}L_{T}P_{\kappa}^{\perp})^{-1}P_{\kappa}^{\perp}
    = \big( P_\kappa L_{T} P^\bot \big) \big( \lambda_\kappa P^{\perp}(P_{\kappa}^{\perp}L_{T}P_{\kappa}^{\perp})^{-1} P^{\perp}\lambda_{\kappa} \big).
    $$
    
    According to Proposition \ref{prop:inv-bdd-LP}, we have
    $$
    \|P_{\kappa}L_{T}(P_{\kappa}^{\perp}L_{T}P_{\kappa}^{\perp})^{-1}L_{T}P_{\kappa}\|_{\cH_{h}^{1}\rightarrow\cL_{h}^{2}} \lesssim \kappa^{3/2} h = h^{9/4} \,.
    $$
    Moreover, according to Proposition \ref{prop:PLP-inv-bdd}, we have
    $$
    \| \lambda_{\kappa} P^{\perp}(P_{\kappa}^{\perp}L_{T}P_{\kappa}^{\perp})^{-1} P^{\perp}\lambda_{\kappa}\|_{\cL_{h}^{2}\rightarrow\cL_{h}^{2}} \lesssim 1
    $$
    and according to Lemma \ref{lem:Q0LP-bdd}, we have
    $$ \|P_{\kappa}L_{T}P^{\perp}\|_{\cL_{h}^{2}\rightarrow\cL_{h}^{2}} \lesssim \kappa = h^{5/6}.
    $$
    Finally, we bound, using Lemma \ref{lem:Hsnorm-Delta}, Proposition \ref{prop:PLP-inv-bdd} and Lemmas \ref{lem:Lpnorm-Delta} and \ref{lem:Pper-Pbdd},
    \begin{align*}
        \| \tfrac12 V^{1/2}N_{T}(-2V^{1/2}\varphi)\|_{\cL_{h}^{2}}
        & \lesssim \| N_{T}(-2V^{1/2}\varphi)\|_{\cL_{h}^{2}} \\
        & \lesssim \| V^{1/2}\varphi\|_{\cL_{h}^{6}}^3 \lesssim \|\varphi\|_{\cH_{h}^{1}}^3 \lesssim h^2 \|P_\kappa\varphi\|_{\cH_{h}^{1}} \,.    
    \end{align*}
    Collecting all these bounds, we arrive at \eqref{small-offPal2}.

Next, for \eqref{small-NP-diff1}, we\DETAILS{recall the definition of $\widetilde{N}_{T}$ from \eqref{NL-higher} and then} apply Corollary \ref{cor:NL-H1-bdd} with $\varepsilon=1/4$ to obtain
\begin{align*}
    \big\|P_{\kappa}V^{1/2}\widetilde{N}_{T}(-2V^{1/2}\varphi)\big\|_{\cL_{h}^{2}}&\lesssim\big\|\widetilde{N}_{T}(-2V^{1/2}\varphi)\big\|_{\cL_{h}^{2}}\lesssim h^{-3/4}\|\varphi\|_{\cH_{h}^{1}}^{4}.
\end{align*}
Since $\|\varphi\|_{\cH_{h}^{1}}\lesssim h$, we obtain \eqref{small-NP-diff1}.

Now, by Proposition \ref{prop:NL-diff-norm} and Lemmas \ref{lem:Lpnorm-Delta}, \ref{lem:Hsnorm-Delta} and \ref{lem:Pper-Pbdd}, we obtain \eqref{small-NP-diff2} as follows:
\begin{align*}
	&\|P_{\kappa}V^{1/2}N_{T}'(-2V^{1/2}\varphi)-P_{\kappa}V^{1/2}N_{T}'(-2V^{1/2}P_{\kappa}\varphi)\|_{\cL_{h}^{2}}\nonumber\\
	&\lesssim\|N_{T}'(-2V^{1/2}\varphi)-N_{T}'(-2V^{1/2}P_{\kappa}\varphi)\|_{\cL_{h}^{2}} \nonumber\\
    & \lesssim \big( \|V^{1/2}\varphi\|_{\cL_{h}^{6}}^{2} + \|V^{1/2}P_\kappa \varphi\|_{\cL_{h}^{6}}^{2} \big) \|V^{1/2}P_{\kappa}^{\perp}\varphi\|_{\cL_{h}^{6}}\nonumber\\
	&\lesssim\|\varphi\|_{\cH_{h}^{1}}^{2}\|P_{\kappa}^{\perp}\varphi\|_{\cH_{h}^{1}}\lesssim h^{1/6}\|P_{\kappa}\varphi\|_{\cH_{h}^{1}}^{3}\lesssim h^{13/6}\|P_{\kappa}\varphi\|_{\cH_{h}^{1}}.
\end{align*}

Finally, to prove \eqref{small-NP-diff3}, we first note that, by Fourier transform, for each $\sigma\in\cH_{h}^{1}$,
\begin{align*}
	\big\|\lambda_{\kappa}^{\perp}\sigma\big\|_{\cH_{h}^{-1}}^{2}&=\sum_{q\in 2\pi\Z^{d}}\int_{\R^{d}}\frac{\one(h^{2}|q|^{2}>\kappa)}{1+|q|^{2}}|\hat{\sigma}(q,p)|^{2}dp\nonumber\\
	&\leq h^{2}\kappa^{-1}\sum_{q\in 2\pi\Z^{d}}\int_{\R^{d}}\one(h^{2}|q|^{2}>\kappa)\frac{|q|^{2}}{1+|q|^{2}}|\hat{\sigma}(q,p)|^{2}dp\nonumber\\
	&\leq h^{2}\kappa^{-1}\|\sigma\|_{\cL_{h}^{2}}^{2}=h^{7/6}\|\sigma\|_{\cL_{h}^{2}}^{2}.
\end{align*}
It then follows from Lemma \ref{lem:Hsnorm-Delta}, Proposition \ref{prop:NL-norm} and Lemma \ref{lem:Lpnorm-Delta} that
\begin{align}
    \big\|\lambda_{\kappa}^{\perp}PV^{1/2}&N_{T}'(-2V^{1/2}P_{\kappa}\varphi)\big\|_{\cH_{h}^{-1}}\lesssim h^{7/12}\big\|N_{T}'(-2V^{1/2}P_{\kappa}\varphi)\big\|_{\cL_{h}^{2}}\nonumber\\
    &\lesssim h^{7/12}\|V^{1/2}P_{\kappa}\varphi\|_{\cL_{h}^{6}}^{3}\lesssim h^{7/12}\|P_{\kappa}\varphi\|_{\cH_{h}^{1}}^{3}\lesssim h^{31/12}\|P_{\kappa}\varphi\|_{\cH_{h}^{1}}.
\end{align}
This proves \eqref{small-NP-diff3} and thus completes the proof of the proposition.
\end{proof}


\subsection{Extracting the leading term}

In the previous subsection, we have shown that the r.h.s.~of \eqref{BdG-eqn8} is small. Now we will extract the leading order terms from the l.h.s. We define a function $\Psi$ on $\T^d_{h}$ by
\begin{align}\label{GL-fun1}
	\Psi&=\lambda_{\kappa}\int_{\R}\varphi_{*}(s)\varphi(\zeta_{\ \cdot \,}^{s},\zeta_{\ \cdot \,}^{-s}) \,ds \,.
\end{align}
(Here the projection $\lambda_\kappa$ acts with respect to the variable $X$ of $\varphi(\zeta_{X}^{s},\zeta_{X}^{-s})$. This definition implies that
\begin{align}\label{GL-fun}
	\Psi(X)\varphi_{*}(r)&=(P_{\kappa}\varphi)(\zeta_{X}^{r},\zeta_{X}^{-r}),
\end{align}
Since $\varphi_*$ is normalized, we see that 
\begin{equation}
    \label{eq:normpsiphi}
    \|\Psi\|_{H_{h}^{s}(\T^d_{h})}=\|P_{\kappa}\varphi\|_{\cH_{h}^{s}}
    \quad \text{for any}\ s\in\R \,.
\end{equation}
We recall that the norm in the Sobolev space $H_{h}^{s}(\T^d_{h})$ on the l.h.s.~was defined in Section \ref{sec:setup}.

The following proposition extracts the leading term from the first term on the l.h.s.~of \eqref{BdG-eqn8}.

\begin{prop}\label{prop:GL-matrix}
Let $\varphi$ be a solution to \eqref{BdG-BS-eqn} and define $\Psi$ by \eqref{GL-fun1}.  Under the same assumptions as in Lemma \ref{lem:small-offPal}, it holds that 
\begin{align}\label{GL-matrix}
	\big\|P_{\kappa}L_{T}P_{\kappa}\varphi-\big((-\nabla_{X}\cdot \Lambda_{0}\nabla_{X}-\Lambda_{2} D h^{2})\Psi\big)\varphi_{*}\|_{\cL_{h}^{2}}\lesssim h^{9/4}\|\Psi\|_{H^{1}_h(\T^d_{h})} \,,
\end{align}
where $\Lambda_{0}$ and $\Lambda_{2}$ are given in \eqref{eq:lambda0} and \eqref{eq:lambda2}, respectively.
\end{prop}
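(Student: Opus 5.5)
Everything is diagonal in the Fourier variable $q$ of the center of mass, so the proof reduces to a symbol computation. Since $P_\kappa\varphi=\Psi\varphi_*$ and $P_\kappa$ commutes with $-\Delta_X$, the operator $P_\kappa L_T P_\kappa$ acts on $\Psi$ as the Fourier multiplier
\[
m_T(q)=\one(h^2|q|^2\le\kappa)\,\big\langle \varphi_*,(\one-V^{1/2}K_T^{(q)}V^{1/2})\varphi_*\big\rangle .
\]
Here $K_T^{(q)}$ is multiplication in relative momentum by $f_\beta(p+hq/2,p-hq/2)$, by the Fourier representation lemma for $K_T$. Since $\ell_{T_c}\varphi_*=0$ and $\langle\varphi_*,V^{1/2}\alpha_*\rangle=1$, we get $V^{1/2}\varphi_*$ in terms of $t_*$. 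Writing things in momentum space, with $\widehat{V^{1/2}\varphi_*}\propto t_*$, we obtain
\[
m_T(q)=\tfrac14\int \big(f_{\beta_c}(p,p)-f_\beta(p+hq/2,p-hq/2)\big)\,t_*(p)^2\,\frac{dp}{(2\pi)^d},
\]
up to the normalization constants, which I will fix by comparing with \eqref{eq:lambda2}. It then suffices to show
\[
\sup_{h^2|q|^2\le\kappa}\frac{\big|m_T(q)-h^2\big(q\cdot\Lambda_0 q-\Lambda_2 D\big)\big|}{h(1+|q|)}\lesssim h^{9/4}/h .
\]
Equivalently, the error must be $\lesssim h^{9/4}(1+|q|)$ on the support. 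By \eqref{eq:normpsiphi} this gives the $\cL^2_h$ bound against $\|\Psi\|_{H^1_h}$.

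The expansion has two pieces. For the temperature part I write $f_{\beta_c}(p,p)-f_\beta(p,p)=-(\beta-\beta_c)\partial_\beta\chi_\beta(p^2-\mu)|_{\beta_c}+O((\beta-\beta_c)^2)$. Using (A5) and $\partial_\beta\chi_\beta(E)=\tfrac12\cosh^{-2}(\beta E/2)$, this yields $-\Lambda_2Dh^2+O(h^4)$. For the $q$ part I Taylor expand $f_\beta(p+k,p-k)$ in $k=hq/2$ around $k=0$. The first-order term vanishes by the symmetry $k\mapsto-k$, since $f_\beta$ is symmetric in its arguments. The second-order term gives $h^2 q\cdot\Lambda_0 q$ after identifying the Hessian of $\Xi_\beta$ with the $g_1,g_2$ formulas of \eqref{eq:defg12}, following \cite{FHSS-12}. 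There is also a cross term of order $h^4|q|^2$ from replacing $\beta$ by $\beta_c$ in the Hessian, and this is $\lesssim h^2\kappa$ on the support. The resulting errors are $O(h^4|q|^4)$ from the third/fourth-order remainder, $O(h^4)$, and $O(h^2\kappa)$.

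On the support $h|q|\le\kappa^{1/2}=h^{5/12}$, the quartic remainder satisfies
\[
h^4|q|^4\le h^2|q|\,(h|q|)^2\,|q|\,h\cdot h^{-1}.
\]
More simply, $h^4|q|^4=(h|q|)^3\,h|q|\le \kappa^{3/2}h|q|=h^{9/4}|q|$. This is exactly the claimed rate, which explains the exponent $9/4=1+\tfrac32\cdot\tfrac56$. The term $h^2\kappa|q|^2\cdot h^2$ and the $O(h^4)$ term are smaller.

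The main obstacle is justifying the Taylor remainder bound uniformly in $p$ with integrable weight $t_*(p)^2$: I need $|\partial_k^4 f_\beta(p+k,p-k)|\lesssim C(1+|p|)^{a}$ for $|k|\le1$. Derivatives in $k$ bring factors of $p$ through $(p\pm k)^2$. I would handle this via the contour/Matsubara representation $f_\beta=-\frac2\beta\sum_n (i\omega_n-E)^{-1}(i\omega_n+E')^{-1}$, differentiating the resolvents, which gives polynomial growth in $p$. I then need $t_*(p)^2(1+|p|)^4$ integrable. This should follow from (A2), since $|x|^2V\in L^\infty$ together with $V\alpha_*\in L^1\cap L^2$ gives decay/smoothness of $t_*$. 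If moment control of that order is unavailable, I would instead interpolate: use only second-order Taylor plus the crude bound $|f_\beta(p+k,p-k)-f_\beta(p,p)|\lesssim\min(1,|k|^2)$ on large $p$, splitting the integral at $|p|\sim h^{-\epsilon}$.
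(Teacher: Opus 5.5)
Your argument follows the paper's proof. Your multiplier $m_T(q)$ is the sum of the paper's $G(hq)$ and its temperature term $\widetilde\Lambda_2$; the paper obtains these by splitting $P_\kappa L_TP_\kappa$ into $k_T-K_T$ and $k_{T_c}-k_T$. The expansion is the same second-order Taylor expansion with a quartic remainder, and the rate comes from the same inequality $h^4|q|^4\le\kappa^{3/2}h|q|$ on $h^2|q|^2\le\kappa$. \emph{The gap is the step you yourself flag as the main obstacle: the uniform-in-$p$ justification of that quartic remainder does not go through as proposed.}

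You accept growth $(1+|p|)^4$ in $\partial_k^4 f_\beta(p+k,p-k)$ and plan to absorb it with $\int t_*(p)^2(1+|p|)^4\,dp<\infty$, which you say follows from (A2). It does not. The condition $|x|^2V\in L^\infty$ controls decay of $V\alpha_*$ in $x$, i.e.\ smoothness of $t_*$, not decay of $t_*$. The moment condition is equivalent to $V\alpha_*\in H^2$, which fails in general under (A2), e.g.\ for $V$ a multiple of the indicator of a ball with $\alpha_*$ not vanishing on its boundary, where $V\alpha_*$ jumps. Your fallback has the same defect. With the crude bound $\min(1,|k|^2)$, the region $|p|>R$ contributes up to $h^2|q|^2\int_{|p|>R}t_*^2\,dp$. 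Keeping this below $h^{9/4}|q|$ at $h|q|=\kappa^{1/2}$ forces $\int_{|p|>R}t_*^2\,dp\lesssim h^{5/6}$, a decay rate for $t_*$ that is not available. The inner region $|p|\le R$ also loses a factor $R^4$.

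The missing observation is that no weight is needed. Use the absolutely convergent Matsubara representation $f_\beta(p+k,p-k)=-\frac{2}{\beta}\sum_n(i\omega_n-E)^{-1}(i\omega_n+E')^{-1}$, with $E=|p+k|^2-\mu$ and $E'=|p-k|^2-\mu$.
\begin{itemize}
\item By induction, $\partial_k^\alpha(i\omega_n-E)^{-1}$ is a sum of terms $c\,(p+k)^{\gamma}(i\omega_n-E)^{-1-m}$ with $|\gamma|\le m$.
\item Since $|p+k|\le(|E|+|\mu|)^{1/2}$ and $|i\omega_n-E|\ge\max\{|\omega_n|,|E|\}$, we have $|p+k|\,|i\omega_n-E|^{-1}\lesssim1$ uniformly in $p,k$. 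The same holds for $p-k$ and $E'$.
\item Hence $\sup_{p,k}|\partial_k^\alpha f_\beta(p+k,p-k)|\lesssim\sum_n\omega_n^{-2}<\infty$ for $|\alpha|\le4$, uniformly for $\beta$ near $\beta_c$. The factors of $p$ are absorbed by the denominators instead of producing growth.
\end{itemize}
With evenness in $k$, the remainder is $\lesssim h^4|q|^4\int t_*^2\,dp$, so only $t_*\in L^2$, i.e.\ $V\alpha_*\in L^2$, is used. Likewise $\partial_\beta^2\chi_\beta(E)=-\frac{E}{2}\sinh(\beta E/2)\cosh^{-3}(\beta E/2)$ is bounded, so the $O(h^4)$ temperature error also needs only $t_*\in L^2$. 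With this correction your proof is complete. It also supplies the justification of the quartic remainder that the paper leaves implicit when it appeals only to twice-differentiability of $G$.
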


\begin{proof}
First, we note that, since $\varphi_{*}$ is the eigenvector of $\ell_{T_{c}}$ with eigenvalue zero, we obtain
\begin{align}\label{PLTP-decomp}
	P_{\kappa}L_{T}P_{\kappa}&=P_{\kappa}(L_{T}-\ell_{T})P_{\kappa}+P_{\kappa}(\ell_{T}-\ell_{T_{c}})P_{\kappa}\nonumber\\
	&=P_{\kappa}V^{1/2}(k_{T}-K_{T})V^{1/2}P_{\kappa}+P_{\kappa}V^{1/2}(k_{T_{c}}-k_{T})V^{1/2}P_{\kappa}.
\end{align}
We begin by studying $P_{\kappa}V^{1/2}(k_{T}-K_{T})V^{1/2}P_{\kappa}\varphi$. In view of \eqref{GL-fun} we have 
$$
(V^{1/2}P_{\kappa}\varphi)(\zeta_{X}^{r},\zeta_{X}^{-r})=V^{1/2}(r)\varphi_{*}(r)\Psi(X) \,,
$$
whose Fourier transform takes the form
\begin{align*}
	\widehat{(V^{1/2}P_{\kappa}\varphi)}(q,p)&=\frac{1}{2}t_{*}(p)\hat{\Psi}(q),
\end{align*}
where, recall, $t_{*}(p)$ is defined in \eqref{eq:deft} (the Fourier transform of $2V\alpha_{*}$), and $\hat{\Psi}$ denotes the Fourier transform of $\Psi$, given by \eqref{hZ-FT}. It then follows from \eqref{KT-kernel-rep} that
\begin{align}\label{GL-kinetic}
	&[P_{\kappa}V^{1/2}(k_{T}-K_{T})V^{1/2}P_{\kappa}\varphi](\zeta_{X}^{r},\zeta_{X}^{-r})\nonumber\\
	&=\varphi_{*}(r)\int_{\R^{d}}V^{1/2}(s)\varphi_{*}(s)\Big[\frac{1}{(2\pi)^{d/2}}\sum_{q\in2\pi\Z^{d}}\int_{\R^{d}}\big(f_{\beta}(p+hq/2,p-hq/2)-f_{\beta}(p,p)\big)\nonumber\\
	&\quad\quad\quad\quad\quad\quad\quad\quad\quad\quad\quad\quad\quad\quad\quad\quad\times e^{ihq\cdot X}e^{ip\cdot s}\eta(p)\hat{\Psi}(q)dp\Big]ds\nonumber\\
	&=\varphi_{*}(r)\sum_{q\in 2\pi\Z^{d}}\frac{1}{4(2\pi)^{d}}\int_{\R^{d}}\big(f_{\beta}(p+hq/2,p-hq/2)-f_{\beta}(p,p)\big)t_{*}(p)^{2}e^{ihq\cdot X}\hat{\Psi}(q)dp\nonumber\\
	&=\varphi_{*}(r)\sum_{q\in 2\pi\Z^{d}}e^{ihq\cdot X}G(hq)\hat{\Psi}(q),
\end{align}
where
\begin{align}\label{Gq}
	G(hq)&:=\frac{1}{4(2\pi)^{d}}\int_{\R^{d}}\big(f_{\beta}(p,p)-f_{\beta}(p+hq/2,p-hq/2)\big)t_{*}(p)^{2}dp \,.
\end{align}

To evaluate $G(hq)$, we observe that it suffices to consider $h|q|\leq \kappa^{1/2}\leq 1$ and $G(q)$ is even and twice-differentiable at $q=0$ (using the property of $t_{*}(p)$, thanks to \cite[Appendix A]{FHSS-12}).  Using Taylor's theorem, the momentum cut-off with sufficiently small $\kappa$ and the fact that $G(0)=0$, we obtain
\begin{align}\label{GL-kinetic-esti}
	\big|G(hq)-\frac{1}{2}h^{2}(q\cdot\nabla)^{2}G(0)\big|\lesssim h^{4}|q|^{4},
\end{align}
with
\begin{align}
	&\frac{h^{2}}{2}(q\cdot \nabla)^{2}G(0)=\frac{h^{2}}{2}\sum_{i,j=1}^{3}q_{i}q_{j}\partial_{i}\partial_{j}G(0)\nonumber\\
	&=h^{2}\sum_{i,j=1}^{3}q_{i}q_{j}\int_{\R^{d}}\frac{t_{*}(p)^{2}}{4(2\pi)^{d}}\Big[\frac{\beta^{2}\tanh\big(\beta(|p|^{2}-\mu)/2\big)}{4(|p|^{2}-\mu)\cosh^{2}\big(\beta(|p|^{2}-\mu)/2\big)}p_{i}p_{j}\nonumber\\
	&\quad\quad\quad\quad\quad\quad+\Big(\frac{\tanh\big(\beta(|p|^{2}-\mu)/2\big)}{4(|p|^{2}-\mu)}-\frac{\beta/8}{(|p|^{2}-\mu)\cosh^{2}\big(\beta(|p|^{2}-\mu)/2\big)}\Big)\delta_{ij}\Big]dp\nonumber\\
	&=:h^{2}\sum_{i,j=1}^{3}(\Lambda_{0})_{ij}(\beta)q_{i}q_{j}.
\end{align}
Note that, by Assumption (A5), $(\Lambda_{0})_{ij}(\beta)=(\Lambda_{0})_{ij}+O(h^{2})$ with $(\Lambda_{0})_{ij}\equiv(\Lambda_{0})_{ij}(\beta_{c})$ defined in \eqref{eq:lambda0}.  We thus obtain
\begin{align*}
	\frac{h^{2}}{2}(q\cdot\nabla)^{2}G(0)&=h^{2}\sum_{i,j=1}^{3}(\Lambda_{0})_{ij}q_{i}q_{j}+O(h^{4})|q|^{2}.
\end{align*}
Consequently, by \eqref{GL-kinetic-esti} and the momentum cutoff in center of mass coordinate, we obtain
\begin{align*}
	&\big\|P_{\kappa}V^{1/2}(k_{T_{c}}-K_{T_{c}})V^{1/2}P_{\kappa}\varphi-(-\nabla_{X}\cdot\Lambda_{0}\nabla_{X})\Psi\varphi_{*}\big\|_{\cL_{h}^{2}}^{2}\nonumber\\
	&\lesssim\sum_{q\in2\pi\Z^{d}}\big|G(hq)-\frac{h^{2}}{2}(q\cdot\nabla)^{2}G(0)\big|^{2}|\hat{\Psi}(q)|^{2}+h^{4}\sum_{q\in2\pi\Z^{d}}\one(h^{2}|q|^{2}\leq\kappa)h^{4}|q|^{4}|\hat{\Psi}(q)|^{2}\nonumber\\
	&\lesssim\sum_{q\in 2\pi\Z^{d}}\one(h^{2}|q|^{2}\leq\kappa)\big(h^{8}|q|^{8}+h^{8}|q|^{4}\big)|\hat{\Psi}(q)|^{2}\lesssim\kappa(\kappa^{2}+h^{4})\sum_{q\in2\pi\Z^{d}}h^{2}|q|^{2}|\hat{\Psi}(q)|^{2}\nonumber\\
	&\lesssim\kappa(\kappa^{2}+h^{4}) \|\nabla_{X}\Psi\|_{L^{2}_h(\T^d_{h})}^{2} \lesssim\kappa h^{2}(\kappa^{2}+h^{4})\|\Psi\|_{H_h^{1}(\T^d_{h})}^{2}\lesssim h^{9/2}\|\Psi\|_{H_h^{1}(\T^d_{h})}^{2} \,.
\end{align*}

Next, for the term $P_{\kappa}V^{1/2}(k_{T_{c}}-k_{T})V^{1/2}P_{\kappa}\varphi$ from \eqref{PLTP-decomp}, we recall $\beta-\beta_{c}=Dh^{2}\beta_{c}$ from Assumption (A5) and compute as in \eqref{GL-kinetic} to obtain
\begin{align*}
	[P_{\kappa}V^{1/2}(k_{T_{c}}-k_{T})V^{1/2}P_{\kappa}\varphi](\zeta_{X}^{r},\zeta_{X}^{-r})&=\widetilde{\Lambda}_{2}\varphi_{*}(r)\sum_{q\in 2\pi\Z^{d}}\hat{\Psi}(q),
\end{align*}
where
\begin{align}\label{tildeC1}
	\widetilde{\Lambda}_{2}&=\frac{1}{4(2\pi)^{d}}\int_{\R^{d}}t_{*}(p)^{2}\big(f_{\beta_{c}}(p,p)-f_{\beta}(p,p)\big)dp.
\end{align}
Let $\widetilde{f}_{p}(\beta):=f_{\beta}(p,p)$.  By the fundamental theorem of calculus and the fact that
\begin{align*}
	\beta\mapsto \widetilde{f}_{p}'(\beta)=\frac{1}{2\cosh^{2}\big(\beta(|p|^{2}-\mu)/2\big)}
\end{align*}
is Lipschitz for each fixed $p\in\R^{d}$ and satisfies
\begin{align}
	\big|\widetilde{f}_{p}'(\beta_{1})-\widetilde{f}_{p}'(\beta_{2})\big|&\leq C\big||p|^{2}-\mu\big||\beta_{1}-\beta_{2}|,
\end{align}
for some constant $C>0$, independent of $p$, we obtain for each $p\in\R^{d}$ that
\begin{align}\label{PKTKTcP-diff}
	f_{\beta_{c}}(p,p)-f_{\beta}(p,p)&=\widetilde{f}_{p}(\beta_{c})-\widetilde{f}_{p}(\beta)=-Dh^{2}\beta_{c}\int_{0}^{1}\widetilde{f}_{p}'(\beta_{c}+t\delta\beta)dt\nonumber\\
	&=-Dh^{2}\beta_{c}\widetilde{f}_{p}'(\beta_{c})-\delta\beta\int_{0}^{1}\big(\widetilde{f}_{p}'(\beta_{c}+t\delta\beta)-\widetilde{f}_{p}'(\beta_{c})\big)dt,
\end{align}
which gives
\begin{align*}
	\big|f_{\beta_{c}}(p,p)-f_{\beta}(p,p)+Dh^{2}\beta_{c}\widetilde{f}_{p}'(\beta_{c})\big|&\leq C\big||p|^{2}-\mu\big|(Dh^{2}\beta_{c})^{2}\lesssim h^{4}(1+|p|^{2}).
\end{align*}
Since $\int_{\R^{d}}t_{*}(p)^{2}(1+|p|^{2})dp$ is bounded (see \cite{FHSS-12} for details), this implies that 
\begin{align*}
	\widetilde{\Lambda}_{2}&=\frac{Dh^{2}}{8(2\pi)^{d}}\int_{\R^{d}}\frac{\beta_{c}}{\cosh^{2}\big(\beta_{c}(|p|^{2}-\mu)/2\big)}t_{*}(p)^{2}dp+O(h^{4})=:\Lambda_{2}Dh^{2}+O(h^{4}).
\end{align*}
This implies that 
\begin{align*}
	\big\|P_{\kappa}V^{1/2}(k_{T_{c}}-k_{T})V^{1/2}P_{\kappa}\varphi-\Lambda_{2}Dh^{2}\Psi\varphi_{*}\big\|_{\cL_{h}^{2}}&\lesssim h^{4}\|\Psi\|_{H^{1}_h(\T^d_{h})}
	,
\end{align*}
which completes the proof.
\end{proof}

The following proposition extracts the leading term from the second term on the l.h.s.~of \eqref{BdG-eqn8}.

\begin{prop}\label{prop:GL-nonlinear}
Let $\varphi$ be a solution to \eqref{BdG-BS-eqn} and define $\Psi$ by \eqref{GL-fun1}.  Under the same assumptions as in Lemma \ref{lem:small-offPal}, it holds that 
\begin{align}\label{GL-nonlinear}
	\big\|\tfrac{1}{2}PV^{1/2}N_{T}'(-2V^{1/2}P_{\kappa}\varphi)-(\Lambda_{3}|\Psi|^{2}\Psi)\varphi_{*}\big\|_{\cL_{h}^{2}}\lesssim h^{29/12}\|\Psi\|_{H^{1}_h(\T^d_{h})},
\end{align}
where $\Lambda_{3}$ is the constant given in \eqref{eq:lambda3}.
\end{prop}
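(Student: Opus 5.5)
The plan is to compute $N_T'(-2V^{1/2}P_\kappa\varphi)$ explicitly in the Fourier variables $(q,p)$ of \eqref{eq:fourier} and then compare it with the ``local'' expression in which all center-of-mass momenta are set to zero. Write $\sigma:=-2V^{1/2}P_\kappa\varphi$. By \eqref{GL-fun} its kernel is $-2V^{1/2}(r)\varphi_*(r)\Psi(X)$, so $\hat\sigma(q,p)=-t_*(p)\hat\Psi(q)$ with $\hat\Psi$ supported in $h^2|q|^2\le\kappa$. Inserting this into $R_{i\omega_n}(\sigma)$ (a product of four resolvents and three copies of $\sigma$) gives a trilinear expression in $\hat\Psi(q_1)\overline{\hat\Psi(-q_2)}\hat\Psi(q_3)$. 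The integrand contains the momenta $p$ shifted by combinations of $hq_j/2$, and the resolvent symbols $(i\omega_n\mp(|p\pm hq_j/2|^2-\mu))^{-1}$ appear in it; the output center-of-mass momentum is $q_1+q_2+q_3$. We then apply $\tfrac12 PV^{1/2}$, i.e.\ pair in $p$ with $\tfrac12 t_*(p)$, and sum over Matsubara frequencies.

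At $q_j=0$ the expression factorizes. The $q$-sum becomes the Fourier series of $|\Psi|^2\Psi$, and the $p$-integral multiplied by the Matsubara sum gives
\[
\frac{2}{\beta}\sum_n \frac{1}{(i\omega_n-e)^2(i\omega_n+e)^2}, \qquad e=|p|^2-\mu .
\]
This sum equals a multiple of $\beta^2 g_1(\beta e)/e$, and at $\beta=\beta_c$ it reproduces exactly the constant $\Lambda_3$ of \eqref{eq:lambda3}. Replacing $\beta$ by $\beta_c$ costs $O(h^2)$, by the same Lipschitz argument as in Proposition~\ref{prop:GL-matrix}. The first key step is this algebraic identification.

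The second step, which is the main work, is to bound the error coming from the $q$-dependence. For each $n$ the symbol is smooth in the shifts, and its difference from the $q=0$ value is bounded by
\[
\Bigl(\sum_j h|q_j|\Bigr)(1+|p|)\,|\omega_n|^{-4}\cdot(\text{decay in } p).
\]
Since $t_*$ decays (with $\int t_*^2(1+|p|^2)\,dp<\infty$) and $h|q_j|\le\kappa^{1/2}=h^{5/12}$, the error is a trilinear form with kernel of size $\lesssim \kappa^{1/2}$, summable in $n$. I would estimate it in $L^2_h$ by moving the factor $h|q_j|$ onto one of the $\Psi$'s. This turns it into $\|\nabla_X\Psi\|$, which we already know is $\lesssim h\|\Psi\|_{H^1_h}$, while the other two factors are controlled in $L^\infty$- or $L^4$-type norms via Sobolev embedding (Lemma~\ref{lem:Lpnorm-Delta}). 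The resulting bound is of the form
\[
\kappa^{a}\, h\,\|\Psi\|_{H^1_h}\,\|\Psi\|_{H^1_h}^2 \lesssim \kappa^a h^3\|\Psi\|_{H^1_h},
\]
using $\|\Psi\|_{H^1_h}=\|P_\kappa\varphi\|_{\cH^1_h}\lesssim h$. The target exponent $29/12=2+5/12$ suggests that the error is instead obtained as $h^2\kappa^{1/2}\|\Psi\|$, i.e.\ $\|\Psi\|^2_{H^1_h}$ times one gained power $\kappa^{1/2}$ from the first-order Taylor term. I would therefore use only a first-order mean-value bound, $|\text{symbol}(q)-\text{symbol}(0)|\lesssim \max_j h|q_j|\le\kappa^{1/2}$, and control the trilinear form with Young's inequality on $2\pi\Z^d$ together with Hölder, as in the proof of Lemma~\ref{lem:Lpnorm-Delta}. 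This requires no derivatives at all.

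The hardest point is the trilinear convolution bound on the Fourier side. One needs $\|\sum_{q_1+q_2+q_3=q}|\hat\Psi(q_1)||\hat\Psi(q_2)||\hat\Psi(q_3)|\|_{\ell^2}\lesssim\|\Psi\|_{H^1_h}^3$ uniformly in $h$ for $d\le3$. This is the statement that $|\check{\Psi}|^3\in L^2_h$ when $H^1\subset L^6$, applied to the function whose Fourier coefficients are $|\hat\Psi|$ (which has the same $H^1_h$ norm). The other technical care is justifying the uniformity in $p$ of the Taylor bound, since large $|p|$ makes the shifts $p\cdot hq$ large. This is handled by the decay of $t_*$ and the $|\omega_n|^{-4}$ resolvent bounds, exactly as in Lemma~\ref{lem:Hs-resol-norm}, where the extra factor $|p|$ is absorbed by $\sqrt{-\Delta}\,(i\omega_n-\fh)^{-1}$.
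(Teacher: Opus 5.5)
Your final argument is correct, but it follows a different route from the paper.

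\textbf{The paper's route (position space).} It writes $PV^{1/2}N_T'(-2V^{1/2}P_\kappa\varphi)$ as $\varphi_*(r)$ times an integral of $\Psi(X+Y_1)\overline{\Psi(X+Y_2)}\Psi(X+Y_3)$ against a kernel $\widetilde{\cG}_V(Y_1,Y_2,Y_3)$ built from the Green's functions $G_{\pm i\omega_n}$. It then replaces each $\Psi(X+Y_j)$ by $\Psi(X)$, at the cost of $\Phi_Y=(1-e^{iY\cdot(-i\nabla_X)})\Psi$, and uses the cutoff to bound $\|\Phi_Y\|_{L^6}\lesssim\kappa^{1/2}|Y|\,\|\Psi\|_{H^1_h}$. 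The first moment $\int|\widetilde{\cG}_V|(|Y_1|+|Y_2|+|Y_3|)\,dY_1\,dY_2\,dY_3$ is controlled by weighted $L^1$/$L^2$ bounds on $G_{\pm i\omega_n}$ and Young's inequality. Finally, $\widetilde\Lambda_3$ is computed separately by Fourier transform.

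\textbf{Your route (momentum space).} The cutoff enters directly as $h|q_j|\le\kappa^{1/2}$ in a Lipschitz bound for the symbol, which is the Fourier dual of the first $Y$-moment. Your $\ell^2$ bound on the triple convolution of $|\hat\Psi|$ replaces the H\"older/$L^6$ step; it is just Sobolev applied to the function with Fourier coefficients $|\hat\Psi(q)|$. Taking absolute values is exactly what allows a mere sup bound on the symbol difference. This matters because that difference is not of product form in $(q_1,q_2,q_3)$, so H\"older in $X$ cannot be applied directly.

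\textbf{What each buys.} Your version is more elementary: it needs no Green's function decay estimates, the main term and the error live in one representation, and the gain $\kappa^{1/2}$ is transparent. The paper's position-space expansion is closer to what one needs once $\fh$ is no longer diagonal in momentum, for instance with external fields.

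\textbf{Two remarks.}
\begin{enumerate}
\item[(i)] The Lipschitz bound on the symbol needs $t_*$ and $\nabla t_*$ to be bounded. The decay $\int t_*^2(1+|p|^2)\,dp<\infty$ that you cite is not the relevant property. $\nabla t_*$ is bounded because
\[
\int|x|V|\alpha_*|\le\||x|^2V\|_{L^\infty}^{1/2}\|V\|_{L^1}^{1/2}\|\alpha_*\|_{L^2}.
\]
This is the same first-moment input that the paper uses in the form $\||x|V^{1/2}\varphi_*\|_{L^1}$.
\item[(ii)] Discard the intermediate idea of moving $h|q_j|$ onto one $\Psi$ to obtain $\kappa^a h^3$. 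Putting that factor in $L^2$ would require the other two factors in $L^\infty$, which $H^1_h$ does not control for $d=2,3$. Putting it in $L^6$ merely reproduces $\kappa^{1/2}\|\Psi\|_{H^1_h}^3$. The sup-bound version you settle on is the one that works.
\end{enumerate}
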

\begin{proof}
Recall the integral kernel $G_{z}(x,y)=G_{z}(x-y)$ of $(z-\fh)^{-1}$ and the definition of $N_{T}'$ from \eqref{NL-leading}.  In terms of integral kernels and coordinate $\zeta_{X}^{r}$, we can write for each $\sigma\in\cL_{h}^{2}\cap\cL_{h}^{6}$ that
\begin{align*}
	[N_{T}'(\sigma)](\zeta_{X}^{r},\zeta_{X}^{-r})\DETAILS{&=\frac{1}{\beta}\sum_{n\in\Z}\int_{\R^{6}} G_{i\omega_{n}}(x-w_{1})\Delta_{\alpha}(w_{1},w_{2})G_{-i\omega_{n}}(w_{2}-w_{3})\overline{\Delta_{\alpha}(w_{3},w_{4})}\\
	&\quad\quad\quad\quad\quad\quad\times G_{i\omega_{n}}(w_{4}-w_{5})\Delta_{\alpha}(w_{5}-w_{6})G_{-i\omega_{n}}(w_{6}-y)d\textbf{w}_{1...6}\\}
	&=-\frac{2}{\beta}\sum_{n\in\Z}\int_{\R^{3d}\times\R^{3d}} d\textbf{Y}_{123}d\textbf{s}_{123}\sigma(\zeta_{Y_{1}}^{s_{1}},\zeta_{Y_{1}}^{-s_{1}})\overline{\sigma(\zeta_{Y_{2}}^{s_{2}},\zeta_{Y_{2}}^{-s_{2}})}\sigma(\zeta_{Y_{3}}^{s_{3}},\zeta_{Y_{3}}^{-s_{3}})\\
	&\quad\times G_{i\omega_{n}}(\zeta_{X-Y_{1}}^{r-s_{1}})G_{-i\omega_{n}}(\zeta_{Y_{1}-Y_{2}}^{-(s_{1}+s_{2})})G_{i\omega_{n}}(\zeta_{Y_{2}-Y_{3}}^{-(s_{2}+s_{3})})G_{-i\omega_{n}}(\zeta_{Y_{3}-X}^{r-s_{3}}),
\end{align*}
where $d\textbf{Y}_{123}=dY_{1}dY_{2}dY_{3}$ and $d\textbf{s}_{123}=ds_{1}ds_{2}ds_{3}$ are product measures on $\R^{3d}$.  By substituting $\sigma=P_{\kappa}\varphi$, shifting $Y_{i}\mapsto Y_{i}+X$ for each $i=1,2,3$ and using the fact that $\Psi(X+Y)=(e^{iY\cdot(-i\nabla_{X})}\Psi)(X)$, we obtain yield
\begin{align*}
	&[PV^{1/2}N_{T}'(-2V^{1/2}P_{\kappa}\varphi)](\zeta_{X}^{r},\zeta_{X}^{-r})\nonumber\\
    &=-\varphi_{*}(r)\int_{\R^{3d}} d\textbf{Y}_{123}\widetilde{\cG}_{V}(Y_{1},Y_{2},Y_{3}) \\
    & \qquad\qquad\qquad \times (e^{iY_{1}\cdot (-i\nabla_{X})}\Psi)(X)\overline{(e^{iY_{2}\cdot (-i\nabla_{X})}\Psi)(X)}(e^{iY_{3}\cdot (-i\nabla_{X})}\Psi)(X).
\end{align*}
where we denote the kernel
\begin{align}\label{G-kernel}
	\widetilde{\cG}_{V}(Y_{1},Y_{2},Y_{3})&=\frac{8}{\beta}\sum_{n\in\Z}\int_{\R^{4d}}d\textbf{s}_{0123}(V^{1/2}\varphi_{*})(s_{0})(V^{1/2}\varphi_{*})(s_{1})(V^{1/2}\varphi_{*})(s_{2})(V^{1/2}\varphi_{*})(s_{3})\nonumber\\
	&\quad\quad\times G_{i\omega_{n}}(\zeta_{Y_{1}}^{s_{1}-s_{0}})G_{-i\omega_{n}}(\zeta_{Y_{1}-Y_{2}}^{-(s_{1}+s_{2})})G_{i\omega_{n}}(\zeta_{Y_{2}-Y_{3}}^{-(s_{2}+s_{3})})G_{-i\omega_{n}}(\zeta_{Y_{3}}^{s_{0}-s_{3}}).
\end{align}
By denoting $\Phi_{Y}(X)=(1-e^{iY\cdot(-i\nabla_{X})})\Psi$, it follows that 
\begin{align}
	&[PV^{1/2}N_{T}'(-2V^{1/2}P_{\kappa}\varphi)](\zeta_{X}^{r},\zeta_{X}^{-r})=\widetilde{C}_{2}|\Psi(X)|^{2}\Psi(X)\varphi_{*}(r)\nonumber\\
	&\quad+\varphi_{*}(r)\int_{\R^{3d}} d\textbf{Y}_{123} \, \widetilde{\cG}_{V}(Y_{1},Y_{2},Y_{3})\Big(\Phi_{Y_{1}}(X)\overline{(e^{iY_{2}\cdot(-i\nabla_{X})}\Psi)(X)}(e^{iY_{3}\cdot(-i\nabla_{X})}\Psi)(X)\nonumber\\
	&\quad\quad\quad\quad\quad+\Psi(X)\overline{\Phi_{Y_{2}}(X)}(e^{iY_{3}\cdot(-i\nabla_{X})}\Psi)(X)+|\Psi(X)|^{2}\Phi_{Y_{3}}(X)\Big)\nonumber\\
	&=:\widetilde{\Lambda}_{3}|\Psi(X)|^{2}\Psi(X)\varphi_{*}(r)+[\cN_{T}(\varphi)](\zeta_{X}^{r},\zeta_{X}^{-r}),
\end{align}
where
\begin{align}
	\widetilde{\Lambda}_{3}&:=\int_{\R^{3}} d\textbf{Y}_{123} \, \widetilde{\cG}_{V}(Y_{1},Y_{2},Y_{3}).
\end{align}
It remains to compute the constant $\widetilde{\Lambda}_{3}$ explicitly and estimate $\|\cN_{T}(\varphi)\|_{\cL_{h}^{2}}$.  For the latter purpose, we note that, on one hand, one can show for each $Y\in\R^{d}$ that, using Sobolev inequality,
\begin{align}
	\|e^{iY\cdot(-i\nabla_{X})}\Psi\|_{L^{6}(\T^d_{h})}&\lesssim \|e^{iY\cdot(-i\nabla_{X})}\Psi\|_{H^{1}_h(\T^d_{h})}=\|\Psi\|_{H^{1}_h(\T^d_{h})}.
\end{align} 
On the other hand, using the fundamental theorem of calculus, we obtain
\begin{align*}
	\Phi_{Y}(X)&=(1-e^{iY\cdot(-i\nabla_{X})})\Psi(X)=-\int_{0}^{1}(iY\cdot(-i\nabla_{X}))e^{itY\cdot(-i\nabla_{X})}\Psi(X)dt\nonumber\\
	&=-Y\cdot\int_{0}^{1}\nabla_{X}\Psi(X+tY)dt.
\end{align*}
Hence, we obtain the pointwise estimate
\begin{align*}
	\big|\Phi_{Y}(X)\big|&\leq |Y|\int_{0}^{1}|\nabla_{X}\Psi(X+tY)|dt.
\end{align*}
By Minkowski and Sobolev inequalities, we obtain for each $Y\in\R^{d}$ that
\begin{align*}
	\|\Phi_{Y}\|_{L^{6}(\T^d_{h})}&\leq |Y|\Big(h^{d}\int_{\T^d_{h}}\Big|\int_{0}^{1}|\nabla_{X}\Psi(X+tY)|dt\Big|^{6}\,dX\Big)^{1/6}\nonumber\\
	&\leq |Y|\int_{0}^{1}\Big(h^{d}\int_{\T^d_{h}}|\nabla_{X}\Psi(X+tY)|^{6}\,dX\Big)^{1/6}dt\nonumber\\
	&\leq |Y|\int_{0}^{1}\big\|\nabla_{X}\Psi(\cdot+tY)\big\|_{L^{6}(\T^d_{h})}dt\lesssim |Y|\int_{0}^{1}\|\nabla_{X}\Psi(\cdot+tY)\|_{H^{1}_h(\T^d_{h})}dt\nonumber\\
	&\lesssim|Y|\|\nabla_{X}\Psi\|_{H^{1}_h(\T^d_{h})}\lesssim\kappa^{1/2}|Y|\|\Psi\|_{H^{1}_h(\T^d_{h})}\lesssim h^{5/12}|Y|\|\Psi\|_{H^{1}_h(\T^d_{h})}.
\end{align*}
Together with triangle, Minkowski and Sobolev inequalities, it follows from these estimates that 
\begin{align}\label{cNT-esti}
	&\|\cN_{T}(\varphi)\|_{\cL_{h}^{2}} \nonumber\\
    & =\Big[h^{d}\int_{\T^d_{h}}\Big|\int_{\R^{3d}} \widetilde{\cG}_{V}(Y_{1},Y_{2},Y_{3})\Big(\Phi_{Y_{1}}(X)\overline{(e^{iY_{2}\cdot(-i\nabla_{X})}\Psi)(X)}(e^{iY_{3}\cdot(-i\nabla_{X})}\Psi)(X)\nonumber\\
	&\quad\quad\quad+\Psi(X)\overline{\Phi_{Y_{2}}(X)}(e^{iY_{3}\cdot(-i\nabla_{X})}\Psi)(X)+|\Psi(X)|^{2}\Phi_{Y_{3}}(X))d\textbf{Y}_{123}\Big|^{2}\,dX\Big]^{1/2}\nonumber\\
	&\leq \int_{\R^{3d}}\Big[h^{d}\int_{\T^d_{h}}|\widetilde{\cG}_{V}(Y_{1},Y_{2},Y_{3})|^{2}\Big(|\Phi_{Y_{1}}(X)|^{2}|(e^{iY_{2}\cdot(-i\nabla_{X})}\Psi)(X)|^{2}|(e^{iY_{3}\cdot(-i\nabla_{X})}\Psi)(X)|^{2}\nonumber\\
	&\quad\quad+|\Psi(X)|^{2}|\Phi_{Y_{2}}(X)|^{2}|(e^{iY_{3}\cdot(-i\nabla_{X})}\Psi)(X)|^{2}+|\Psi(X)|^{4}|\Phi_{Y_{3}}(X)|^{2}\Big)\,dX\Big]^{1/2}d\textbf{Y}_{123}\nonumber\\
	&\leq \int_{\R^{3d}}|\widetilde{\cG}_{V}(Y_{1},Y_{2},Y_{3})|\Big(\|\Phi_{Y_{1}}\|_{L^{6}(\T^d_{h})}\|e^{iY_{2}\cdot(-i\nabla_{X})}\Psi\|_{L^{6}(\T^d_{h})}\|e^{iY_{3}\cdot(-i\nabla_{X})}\Psi\|_{L^{6}(\T^d_{h})}\nonumber\\
	&\quad\quad\quad+\|\Psi\|_{L^{6}(\T^d_{h})}\|\Phi_{Y_{2}}\|_{L^{6}(\T^d_{h})}\|e^{iY_{3}\cdot(-i\nabla_{X})}\Psi\|_{L^{6}(\T^d_{h})}+\|\Psi\|_{L^{6}(\T^d_{h})}^{2}\|\Phi_{Y_{3}}\|_{L^{6}(\T^d_{h})}\Big)d\textbf{Y}_{123}\nonumber\\
	&\leq h^{5/12} \|\Psi\|_{H^{1}_h(\T^d_{h})}^{3}\int_{\R^{3d}}|\tilde{\cG}_{V}(Y_{1},Y_{2},Y_{3})|\big(|Y_{1}|+|Y_{2}|+|Y_{3}|\big)d\textbf{Y}_{123}\nonumber\\
	&\lesssim h^{29/12}\|\Psi\|_{H^{1}_h(\T^d_{h})} \sum_{n\in\Z}\int_{\R^{7d}}|(V^{1/2}\varphi_{*})(s_{0})||(V^{1/2}\varphi_{*})(s_{1})||(V^{1/2}\varphi_{*})(s_{2})||(V^{1/2}\varphi_{*})|(s_{3})|\nonumber\\
	&\quad\quad\quad\quad\quad\quad\times G_{i\omega_{n}}^{s_{1}-s_{0}}(Y_{1})G_{-i\omega_{n}}^{-(s_{1}+s_{2})}(Y_{1}-Y_{2})G_{i\omega_{n}}^{-(s_{2}+s_{3})}(Y_{2}-Y_{3})G_{-i\omega_{n}}^{s_{0}-s_{3}}(Y_{3})\nonumber\\
	&\quad\quad\quad\quad\quad\quad\times\big(|Y_{1}|+|Y_{1}-Y_{2}|+|Y_{2}-Y_{3}|+|Y_{3}|\big)d\textbf{Y}_{123}d\textbf{s}_{0123},
\end{align}
where
\begin{align}
	G_{z}^{s}(X)&:=|G_{z}(\zeta_{X}^{s})|=|G_{z}(X+s/2)|.
\end{align}
To estimate the last integral, we note that the kernel $G_{\pm i\omega_{n}}(x)$ is a radial function and satisfies the following decay property for each $n$:
\begin{align*}
	\||x|^{s}G_{\pm i\omega_{n}}\|_{L^{p}(\R^{d})}&\leq C_{s,d}|\omega_{n}|^{-1}
\end{align*}
for $p=1,2$ and any $s\in\mathbb{N}_{0}$, where $C_{s,d}$ is some constant depends on $s$ and $d$ (since $\beta$ is fixed, we do not display it explicitly).  By the Young and Cauchy--Schwarz inequalities, it follows for each $n\in\Z$ and fixed $(s_{0},s_{1},s_{2},s_{3})$ that 
\begin{align*}
	&\int_{\R^{3d}}G_{i\omega_{n}}^{s_{1}-s_{0}}(Y_{1})G_{-i\omega_{n}}^{-(s_{1}+s_{2})}(Y_{1}-Y_{2})G_{i\omega_{n}}^{-(s_{2}+s_{3})}(Y_{2}-Y_{3})G_{-i\omega_{n}}^{s_{0}-s_{3}}(Y_{3})\nonumber\\
	&\quad\quad\quad\quad\quad\quad\times\big(|Y_{1}|+|Y_{1}-Y_{2}|+|Y_{2}-Y_{3}|+|Y_{3}|\big)d\textbf{Y}_{123}\nonumber\\
	&=\int_{\R^{d}}|Y_{1}|G_{i\omega_{n}}^{s_{1}-s_{0}}(Y_{1})\big(G_{-i\omega_{n}}^{-(s_{1}+s_{2})}*G_{i\omega_{n}}^{-(s_{2}+s_{3})}*G_{-i\omega_{n}}^{s_{0}-s_{3}}\big)(Y_{1})dY_{1}\nonumber\\
	&\quad\quad+\int_{\R^{d}}G_{i\omega_{n}}^{s_{1}-s_{0}}(Y_{1})\big((|\cdot|G_{-i\omega_{n}}^{-(s_{1}+s_{2})})*G_{i\omega_{n}}^{-(s_{2}+s_{3})}*G_{-i\omega_{n}}^{s_{0}-s_{3}}\big)(Y_{1})dY_{1}\nonumber\\
	&\quad\quad+\int_{\R^{d}}G_{i\omega_{n}}^{s_{1}-s_{0}}(Y_{1})\big(G_{-i\omega_{n}}^{-(s_{1}+s_{2})}*(|\cdot|G_{i\omega_{n}}^{-(s_{2}+s_{3})})*G_{-i\omega_{n}}^{s_{0}-s_{3}}\big)(Y_{1})dY_{1}\nonumber\\
	&\quad\quad+\int_{\R^{d}}G_{i\omega_{n}}^{s_{1}-s_{0}}(Y_{1})\big(G_{-i\omega_{n}}^{-(s_{1}+s_{2})}*G_{i\omega_{n}}^{-(s_{2}+s_{3})}*(|\cdot|G_{-i\omega_{n}}^{s_{0}-s_{3}})\big)(Y_{1})dY_{1}\nonumber\\
	&\leq \||x|G_{i\omega_{n}}^{s_{1}-s_{0}}\|_{L^{2}}\|G_{-i\omega_{n}}^{-(s_{1}+s_{2})}\|_{L^{2}}\|G_{i\omega_{n}}^{-(s_{2}+s_{3})}\|_{L^{1}}\|G_{-i\omega_{n}}^{s_{0}-s_{3}}\|_{L^{1}}\nonumber\\
	&\quad+\|G_{i\omega_{n}}^{s_{1}-s_{0}}\|_{L^{2}}\Big(\|(|x|G_{-i\omega_{n}}^{-(s_{1}+s_{2})})\|_{L^{2}}\|G_{i\omega_{n}}^{-(s_{2}+s_{3})}\|_{L^{1}}\|G_{-i\omega_{n}}^{s_{0}-s_{3}}\|_{L^{1}}\nonumber\\
	&\quad\quad\quad\quad\quad\quad\quad\quad\quad\quad+\|G_{-i\omega_{n}}^{-(s_{1}+s_{2})}\|_{L^{1}}\|(|x|G_{i\omega_{n}}^{-(s_{2}+s_{3})})\|_{L^{2}}\|G_{-i\omega_{n}}^{s_{0}-s_{3}}\|_{L^{1}}\nonumber\\
	&\quad\quad\quad\quad\quad\quad\quad\quad\quad\quad+\|G_{-i\omega_{n}}^{-(s_{1}+s_{2})}\|_{L^{1}}\|\|G_{i\omega_{n}}^{-(s_{2}+s_{3})}\|_{L^{1}}\||x|G_{-i\omega_{n}}^{s_{0}-s_{3}}\|_{L^{2}}\Big)\nonumber\\
	&\lesssim \big(1+|s_{0}|+|s_{1}|+|s_{2}|+|s_{3}|\big)|\omega_{n}|^{-4},
\end{align*}
where, in the last line, we use the estimate
\begin{align*}
	\big\||x|G_{\pm i\omega_{n}}^{s}\|_{L^{2}}&\leq \||x|G_{\pm i\omega_{n}}\|_{L^{2}}+|s|\|G_{\pm i\omega_{n}}\|_{L^{2}}.
\end{align*}
Substituting this estimate into \eqref{cNT-esti}, summing over $n$ and then applying Assumption (A1) yields the bound
\begin{align}
	\|\cN_{T}(\varphi)\|_{\cL_{h}^{2}}&\lesssim h^{29/12}\|\Psi\|_{H^{1}_h(\T^d_{h})}\|V^{1/2}\varphi_{*}\|_{L^{1}}^{3}\big(\|V^{1/2}\varphi_{*}\|_{L^{1}}+\||x|V^{1/2}\varphi_{*}\|_{L^{1}}\big)\nonumber\\
	&\lesssim h^{29/12}\|\Psi\|_{H^{1}_h(\T^d_{h})}\|V\|_{L^{1}}^{3/2}\big(\|V\|_{L^{1}}^{1/2}+\||x|^{2}V\|_{L^{1}}^{1/2}\big)\nonumber\\
	&\lesssim h^{29/12}\|\Psi\|_{H^{1}_h(\T^d_{h})}.
\end{align}

Finally, we compute $\widetilde{\Lambda}_{3}$.  By Fourier transform and omitting integral domain for simplicity in notations, we obtain
\begin{align*}
	\widetilde{\Lambda}_{3}&=-\int_{\cC}\frac{4dz}{2\pi i}\rho(\beta z)\int_{\R^{7d}} (V^{1/2}\varphi_{*})(s_{0})(V^{1/2}\varphi_{*})(s_{1})(V^{1/2}\varphi_{*})(s_{2})(V^{1/2}\varphi_{*})(s_{3})\nonumber\\
	&\quad\times G_{z}(\zeta_{Y_{1}}^{s_{1}-s_{0}})G_{-z}(\zeta_{Y_{1}-Y_{2}}^{-(s_{1}+s_{2})})G_{z}(\zeta_{Y_{2}-Y_{3}}^{-(s_{2}+s_{3})})G_{-z}(\zeta_{Y_{3}}^{s_{0}-s_{3}})d\textbf{Y}_{123}d\textbf{s}_{0123}\nonumber\\
	&=-\int_{\cC}\frac{4dz}{2\pi i}\rho(\beta z)\int_{\R^{11d}}(V^{1/2}\varphi_{*})(s_{0})(V^{1/2}\varphi_{*})(s_{1})(V^{1/2}\varphi_{*})(s_{2})(V^{1/2}\varphi_{*})(s_{3})\nonumber\\
	\displaybreak
	&\times\frac{e^{ip_{0}\cdot \zeta_{Y_{1}}^{s_{1}-s_{0}}}e^{ip_{1}\cdot\zeta_{Y_{1}-Y_{2}}^{-(s_{1}+s_{2})}}e^{ip_{2}\cdot\zeta_{Y_{2}-Y_{3}}^{-(s_{2}+s_{3})}}e^{ip_{3}\cdot \zeta_{Y_{3}}^{s_{0}-s_{3}}}d\textbf{Y}_{123}d\textbf{s}_{0123}d\textbf{p}_{0123}}{(2P)^{4d}\big(|p_{0}|^{2}-(\mu+\omega_{n})\big)\big(|p_{1}|^{2}-(\mu-\omega_{n})\big)\big(|p_{2}|^{2}-(\mu+\omega_{n})\big)\big(|p_{3}|^{2}-(\mu-\omega_{n})\big)}\nonumber\\
	&=-\int_{\cC}\frac{4dz}{2\pi i}\rho(\beta z)\int_{\R^{5d}}(V^{1/2}\varphi_{*})(s_{0})(V^{1/2}\varphi_{*})(s_{1})(V^{1/2}\varphi_{*})(s_{2})(V^{1/2}\varphi_{*})(s_{3})\nonumber\\
	&\quad\times\frac{e^{-ip\cdot(s_{0}-s_{1})/2}}{|p|^{2}-(\mu+z)}\frac{e^{ip\cdot(s_{1}+s_{2})/2}}{|p|^{2}-(\mu-z)}\frac{e^{ip\cdot(s_{2}+s_{3})/2}}{|p|^{2}-(\mu+z)}\frac{e^{-ip\cdot(s_{0}-s_{3})/2}}{|p|^{2}-(\mu-z)}d\textbf{s}_{0123}dp\nonumber\\
	&=\frac{1}{16(2\pi)^{d}}\int_{\R^{d}}t_{*}(p)^{4}\Big(\frac{\tanh\big(\beta(|p|^{2}-\mu)/2\big)}{2(|p|^{2}-\mu)^{3}}-\frac{\beta}{4(|p|^{2}-\mu)^{2}\cosh^{2}\big(\beta(|p|^{2}-\mu)/2\big)}\Big)dp.
\end{align*}
Since the integrand above is Lipschitz, it implies that $\widetilde{\Lambda}_{3}=\Lambda_{3}+O(h^{2})$, which completes the proof.
\end{proof}



\subsection{Putting everything together}

We are finally in position to prove our first main result.

\begin{proof}[Proof of Theorem \ref{thm:deriv-GL}]
    Let $\alpha$ be a solution of the BdG equation \eqref{BdG-eqn3a} satisfying \eqref{small-data-deriv}. Then, as discussed in Section \ref{sec:bs}, $\varphi:=V^{1/2}\alpha$ is a solution of the BdG equation \eqref{BdG-BS-eqn} in Birman--Schwinger form and, by Lemma \ref{lem:Hsnorm-Delta}, it satisfies $\|\varphi\|_{\cH^1_h}\lesssim h$.

    We define $\Psi$ by \eqref{GL-fun1} and note that, by \eqref{eq:normpsiphi},
    $$
    \|\Psi\|_{H^{1}_h(\T^d_{h})} = \|P_\kappa\varphi\|_{\cH_{h}^{1}}\lesssim h \,.
    $$
    In particular, $\Psi\in H^1_h(\T^d_h)$ and therefore
    \begin{align}\label{GL-f}
	   F_{T}^{\rm GL,sc}(\Psi) := \big(-\nabla_{X}\cdot\Lambda_{0}\nabla_{X}-\Lambda_{2}Dh^{2}\big)\Psi+\Lambda_{3}|\Psi|^{2}\Psi\in H_{h}^{-1}(\T^d_{h}),
    \end{align}
    where the matrix $\Lambda_{0}$ and constants $\Lambda_{2},\Lambda_{3}$ are given in \eqref{eq:lambda0}--\eqref{eq:lambda3}. Because of the momentum cut-off, we also know that $\Psi\in H^s_h(\T^d)$ for any $s>0$, but at this stage of the proof our norm bounds for $s>1$ are not optimal.

    Let us define
    \begin{align}\label{approx-BdG}
        F_{T}^{\rm BCS,app}(\varphi) := P_{\kappa}L_{T}P_{\kappa}\varphi+\tfrac{1}{2}PV^{1/2}N_{T}'(-2V^{1/2}P_{\kappa}\varphi).
    \end{align}
    Then, by \eqref{BdG-eqn8} and Proposition \ref{prop:small-NP-diff},
    \begin{align}\label{appBdG-eqn}
    \|F_{T}^{\rm BCS,app}(\varphi)\|_{\cH_{h}^{-1}}
    & \leq \big\|\tfrac{1}{2}\lambda_{\kappa}^{\perp}PV^{1/2}N_{T}'(-2V^{1/2}P_{\kappa}\varphi)\big\|_{\cH_{h}^{-1}}+\|P_{\kappa}L_{T}P_{\kappa}^{\perp}\varphi\|_{\cL_{h}^{2}}\nonumber\\
    &\quad\quad+\|\tfrac12 P_{\kappa}V^{1/2}N_{T}'(-2V^{1/2}\varphi)-\tfrac12 P_{\kappa}V^{1/2}N_{T}'(-2V^{1/2}P_{\kappa}\varphi)\|_{\cL_{h}^{2}}\nonumber\\
    &\quad\quad\quad\|\tfrac12 P_{\kappa}V^{1/2}\widetilde{N}_{T}(-2V^{1/2}\varphi)\|_{\cL_{h}^{2}}\nonumber\\
	&\lesssim \big(h^{9/4}+h^{13/6}\big)\|\Psi\|_{H^{1}_h(\T^d_{h})}\lesssim h^{13/6}\|\Psi\|_{H^{1}_h(\T^d_{h})},
    \end{align}
    and, by Propositions \ref{prop:GL-matrix} and \ref{prop:GL-nonlinear},
    \begin{align}\label{appBdG-scaleGL-diff}
	\|F_{T}^{\rm BCS,app}(\varphi)&-F_{T}^{\rm GL,sc}(\Psi)\varphi_{*}\|_{\cL_{h}^{2}}\nonumber\\
	&\lesssim \big(h^{9/4}+h^{29/12}\big)\|\Psi\|_{H^{1}_h(\T^d_{h})}\lesssim h^{9/4}\|\Psi\|_{H^{1}_h(\T^d_{h})}.
    \end{align}
    By the triangle inequality \eqref{appBdG-eqn} and \eqref{appBdG-scaleGL-diff} imply
    \begin{align}\label{GL-sc-norm}
	h^{-2}&\|F_{T}^{\rm GL,sc}(\Psi)\|_{H^{-1}_h(\T^d_{h})}=h^{-2}\|F_{T}^{\rm GL,sc}(\Psi)\varphi_{*}\|_{\cL_{h}^2}\nonumber\\
	&\leq h^{-2}\|F_{T}^{\rm BCS,app}(\varphi)-F_{T}^{\rm GL,sc}(\Psi)\varphi_{*}\|_{\cL_{h}^{2}}+h^{-2}\|F_{T}^{\rm BCS,app}(\varphi)\|_{\cH_{h}^{-1}}\nonumber\\
	&\lesssim h^{1/6}\|\Psi\|_{H^{1}_h(\T^d_{h})}.
    \end{align}

    Now we rescale and define the $\Z^d$-periodic function $\psi$ by
    $$
    \psi(X):= h^{-1}\Psi(X/h) \,.
    $$
    It follows that
    $$
    \|\psi\|_{L^2(\T^d)} = h^{-1} \|\Psi\|_{L^2_h(\T^d_h)} \lesssim 1 \,,
    \quad
    \|\psi\|_{H^1(\T^d)} = h^{-1} \|\Psi\|_{H^1_h(\T^d_h)}
    $$
    and
    $$
    \big\|\big(-\nabla\cdot \Lambda_0\nabla-\Lambda_{2}D\big)\psi+\Lambda_{3}|\psi|^{2}\psi \big\|_{H^{-1}(\T^d)}
    = h^{-3} \big\|F_{T}^{\rm GL,sc}(\Psi) \big\|_{H_{h}^{-1}(\T^d_{h})}
    \lesssim h^{1/6} \|\psi\|_{H^1(\T^d)} \,.
    $$
    This proves one of the bounds in \eqref{GL-esti}, as well as \eqref{GL-esti1}, except that the latter is stated with $\|\psi\|_{L^{2}(\T^d)}$ in place of $\|\psi\|_{H^{1}(\T^d)}$.

    To prove the first bound in \eqref{GL-esti}, it suffices to bound $\|\Psi\|_{H^{1}_h(\T^d_{h})}$ by $\|\Psi\|_{L^{2}_h(\T^d_{h})}$. Let $f:=F_{T}^{\rm GL,sc}(\Psi)$.  By multiplying both sides by $\overline{\Psi}$ and then integrating over $\T^d_{h}$, we obtain
\begin{align}
	\int_{\T^d_{h}}\overline{\nabla_{X}\Psi(X)}\cdot&(\Lambda_{0}\nabla_{X}\Psi)(X)\,dX+\Lambda_{3}\int_{\T^d_{h}}|\Psi(X)|^{4}\,dX\nonumber\\
	&=\Lambda_{2}Dh^{2}\int_{\T^d_{h}}|\Psi(X)|^{2}\,dX+\int_{\T^d_{h}}\overline{\Psi(X)}f(X)\,dX.
\end{align}
Let $\nu_{\min}>0$ denotes the smallest eigenvalue of $\Lambda_0$ (recall that $\Lambda_0$ is positive definite, as shown in \cite[Sec.~1.4]{FHSS-12}). Dropping the non-negative quartic term on the l.h.s.~and applying the Cauchy--Schwarz inequality yields
\begin{align}\label{GL-H1-L2}
	\nu_{\rm min}\|\nabla_{X}\Psi\|_{L_{h}^{2}(\T_{h}^{d})}^{2}&\leq h^{d} \int_{\T_{h}^{d}}\overline{\nabla_{X}\Psi(X)}\cdot\Lambda_{0}\nabla_{X}\Psi(X)\,dX\nonumber\\
	&\leq \Lambda_{2}Dh^{2}\|\Psi\|_{L_{h}^{2}(\T_{h}^{d})}^{2}+\|f\|_{H_{h}^{-1}(\T_{h}^{d})}\|\Psi\|_{H_{h}^{1}(\T_{h}^{d})}\nonumber\\
	&\lesssim h^{2}\|\Psi\|_{L_{h}^{2}(\T_{h}^{d})}^{2}+h^{13/6}\|\Psi\|_{H_{h}^{1}(\T_{h}^{d})}^{2}\nonumber\\
	&\lesssim h^{2}\|\Psi\|_{L_{h}^{2}(\T_{h}^{d})}^{2}+h^{13/6}\|\Psi\|_{L_{h}^{2}(\T_{h}^{d})}^{2}+h^{1/6}\|\nabla_{X}\Psi\|_{L_{h}^{2}(\T_{h}^{d})}^{2}\nonumber\\
	&\lesssim h^{2}\|\Psi\|_{L_{h}^{2}(\T_{h}^{d})}^{2}+h^{1/6}\|\nabla_{X}\Psi\|_{L_{h}^{2}(\T_{h}^{d})}^{2}.
\end{align}
Here we used the bound \eqref{GL-sc-norm} for $f$. For all sufficiently small $h>0$, the last term on the r.h.s.~side can be absorbed into the l.h.s.~ and we arrive at the bound
\begin{align}\label{eq:gradbound}
	h^{-2}\|\nabla_{X}\Psi\|_{L^{2}_h(\T^d_{h})}^{2}&\lesssim \|\Psi\|_{L^{2}_h(\T^d_{h})}^{2}.
\end{align}
This implies the first bound in \eqref{GL-esti}. In particular, this proves \eqref{GL-esti1}.

    To complete the proof of Theorem \ref{thm:deriv-GL}, it remains to prove the bound on $\xi$, defined in \eqref{alpha-decomposition}. Since $\varphi_{*}=V^{1/2}\alpha_{*}$ and $\alpha_{*}$ obeys the eigenvalue equation
    \begin{align*}
        \alpha_{*}&=k_{T_{c}}V\alpha_{*}=k_{T_{c}}V^{1/2}\varphi_{*},
    \end{align*}
    we note that
    \begin{align}\label{xi}
        h\psi(hX)\alpha_{*}(r)&=\Psi(X)\alpha_{*}(r)=(k_{T_{c}}V^{1/2}\varphi_{*})(r)\lambda_{\kappa}\int_{\R^{d}}\varphi_{*}(s)\varphi(\zeta_{X}^{s},\zeta_{X}^{-s})ds\nonumber\\
        &=(k_{T_{c}}V^{1/2}P_{\kappa}\varphi)(\zeta_{X}^{r},\zeta_{X}^{-r}) \,,
    \end{align}
    that is,
    $$
    \xi = \alpha - k_{T_{c}}V^{1/2}P_{\kappa}\varphi \,.
    $$
    Recall from Section \ref{sec:bs} that $\alpha$ and $\varphi$ are related by 
    \begin{align*}
	   \varphi&=V^{1/2}\alpha,\quad\alpha=K_{T}V^{1/2}\varphi-\tfrac{1}{2}N_{T}(-2V^{1/2}\varphi) \,.
    \end{align*}
    It follows that 
    \begin{align}\label{xi2}
	   \xi &=K_{T}V^{1/2}\varphi-\tfrac{1}{2}N_{T}(-2V^{1/2}\varphi)-k_{T_{c}}V^{1/2} P_{\kappa}\varphi\nonumber\\
	   &=(K_{T}-k_{T_{c}})V^{1/2}P_{\kappa}\varphi+K_{T}V^{1/2}P_{\kappa}^{\perp}\varphi-\tfrac{1}{2}N_{T}(-2V^{1/2}\varphi).
    \end{align}
    Let us bound the three terms on the r.h.s.
    
    For the last term we use Proposition \ref{prop:BCS-NL-norm} and Lemma \ref{lem:Lpnorm-Delta} and obtain
    $$
    \| \tfrac{1}{2}N_{T}(-2V^{1/2}\varphi) \|_{\cH^1_h} \lesssim h^{-1} \|V^{1/2} \varphi\|_{\cL^6_h}^3 \lesssim h^{-1} \|\varphi\|_{\cH^1_h}^3 \lesssim h^2\|\psi\|_{L^{2}(\T^{d})} \,.
    $$
    For the second term we use the fact that $K_{T}V^{1/2}$ is a bounded operator on $\cL_{h}^{2}$ that commutes with $-\Delta_{X}$. Using Lemma \ref{lem:Pper-Pbdd}, we obtain
    \begin{align}
	   \|K_{T}V^{1/2}P_{\kappa}^{\perp}\varphi\|_{\cH_{h}^{1}}&=\|(\one-h^{-2}\Delta_{X})^{1/2}K_{T}V^{1/2}P_{\kappa}^{\perp}\varphi\|_{\cL_{h}^{2}}\nonumber\\
	   &\leq \|K_{T}V^{1/2}\|_{\cL_{h}^{2}\rightarrow\cL_{h}^{2}}\|(\one-h^{-2}\Delta_{X})P_{\kappa}^{\perp}\varphi\|_{\cL_{h}^{2}}\nonumber\\
	   &\lesssim\|P_{\kappa}^{\perp}\varphi\|_{\cH_{h}^{1}}\lesssim h^{7/6}\|\psi\|_{L^{2}(\T^{d})}.
    \end{align}
    For the first term in \eqref{xi2}, we use Lemmas \ref{lem:KTkT-diff} and \ref{lem:kT-diff} and the fact that both $K_{T}$ and $k_{T}$ commutes with $-\Delta_{X}$ for any $T$ to obtain
    \begin{align}
	\big\|(K_{T}-k_{T_{c}})V^{1/2}P_{\kappa}\varphi\big\|_{\cH_{h}^{1}}&\leq \big\|(K_{T}-k_{T})V^{1/2}P_{\kappa}\varphi\big\|_{\cH_{h}^{1}}+\big\|(k_{T}-k_{T_{c}})V^{1/2}P_{\kappa}\varphi\big\|_{\cH_{h}^{1}}\nonumber\\
	&=\big\|(K_{T}-k_{T})V^{1/2}(\one-h^{-2}\Delta_{X})^{1/2}P_{\kappa}\varphi\big\|_{\cL_{h}^{2}}\nonumber\\
	&\quad\quad\quad\quad+\big\|(k_{T}-k_{T_{c}})V^{1/2}(\one-h^{-2}\Delta_{X})^{1/2}P_{\kappa}\varphi\big\|_{\cL_{h}^{2}}\nonumber\\
	&\lesssim\Big\|\frac{\Delta_{X}}{\one-\Delta_{X}}P_{\kappa}\varphi\Big\|_{\cH_{h}^{1}}+(\beta-\beta_{c})\big\|P_{\kappa}\varphi\|_{\cH_{h}^{1}}\nonumber\\
	&\lesssim(\kappa+h^{2})\|P_{\kappa}\varphi\|_{\cH_{h}^{1}}\lesssim h^{11/6}\|\psi\|_{L^{2}(\T^{d})}.
    \end{align}

    Combining the bounds on the three terms in \eqref{xi2}, we get $\|\xi \|_{\cH_{h}^{1}}\lesssim h^{7/6}\|\psi\|_{L^{2}(\T^{d})}$. This completes the proof of Theorem \ref{thm:deriv-GL}.
\end{proof}

\section{Proof on Theorem \ref{thm:deriv-BdG}}\label{sec:pf-deriv-BdG}
In this section, we prove another version of Theorem \ref{thm:deriv-BdG}.  The proof follows from a similar logic as for the proof of Theorem \ref{thm:deriv-GL}, except in reverse order, together with an application of second order perturbation theory.

\smallskip


\subsection{A priori estimates on solution to Ginzburg-Landau equation}\label{sec:a-priori-GL}

Before we construct our trial state, we need to establish some elementary estimates on solution of the GL equation.  Let $\psi\in H^{1}(\T^{d})$ be a solution to \eqref{GL-eqn}.  Then, by standard regularity argument, we see that $\psi$ is smooth.  

In order to the similarity to the proof of Theorem \ref{thm:deriv-GL}, we will work with the function
\begin{align}\label{Psi-trial}
	\Psi(X) :=h \, \psi(hX),\quad\text{ for }X\in\T_{h}^{d} \,.
\end{align}
Thus, $\Psi$ is smooth, periodic on $\T_{h}^{d}$ and satisfies the estimate
\begin{align}\label{GL-scaling}
	\|\Psi\|_{H_{h}^{s}(\T_{h}^{d})}&=h\|\psi\|_{H^{s}(\T^{d})}\lesssim_s h
    \qquad\text{for all}\ s
\end{align}
and the rescaled GL equation 
\begin{align}\label{GL-eqn-sc}
	F_{T}^{\rm GL,sc}(\Psi)= 0 \,,
\end{align}
where
\begin{align*}
	F_{T}^{\rm GL,sc}(\Psi)=\big(-\nabla_{X}\cdot\Lambda_{0}\nabla_{X}-\Lambda_{2}Dh^{2}\big)\Psi+\Lambda_{3}|\Psi|^{2}\Psi
\end{align*}
was defined in \eqref{GL-f}.  

Then we have the following elementary lemma:

\begin{lemma}\label{lem:GL-H2-norm}
Let $\psi$ be a solution to \eqref{GL-eqn}.  Then it holds that
\begin{align}\label{GL-ortho-norm}
	\|\psi\|_{H^{2}(\T^{d})} \lesssim \|\psi\|_{H^{1}(\T^{d})}\lesssim \|\psi\|_{L^{2}(\T^{d})} \lesssim 1\,.
\end{align}
\end{lemma}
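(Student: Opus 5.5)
We prove the three inequalities in \eqref{GL-ortho-norm} from right to left, using only energy identities for \eqref{GL-eqn} on $\T^d$ together with the positivity of $\Lambda_0$, $\Lambda_2$, $\Lambda_3$ and $D$. Since $\psi$ is smooth by standard regularity, all integrations by parts below are legitimate.

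\emph{Step 1: $\|\psi\|_{L^2(\T^d)}\lesssim 1$.} Test \eqref{GL-eqn} against $\overline\psi$, integrate over $\T^d$ and take real parts. This gives
\begin{equation*}
\int_{\T^d}\overline{\nabla\psi}\cdot\Lambda_0\nabla\psi\,dX+\Lambda_3\|\psi\|_{L^4(\T^d)}^4=\Lambda_2D\|\psi\|_{L^2(\T^d)}^2 .
\end{equation*}
Drop the nonnegative gradient term. Since $|\T^d|=1$, Hölder's inequality gives $\|\psi\|_{L^2}^4\le\|\psi\|_{L^4}^4$. Hence $\Lambda_3\|\psi\|_{L^2}^4\le\Lambda_2D\|\psi\|_{L^2}^2$, that is, $\|\psi\|_{L^2}^2\le\Lambda_2D/\Lambda_3$. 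The right-hand side depends only on $\mu$, $D$ and $V$, so this is the claimed bound.

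\emph{Step 2: $\|\psi\|_{H^1(\T^d)}\lesssim\|\psi\|_{L^2(\T^d)}$.} Use the same identity, now dropping the nonnegative quartic term instead. Let $\nu_{\min}>0$ be the smallest eigenvalue of $\Lambda_0$. Then
\begin{equation*}
\nu_{\min}\|\nabla\psi\|_{L^2}^2\le\Lambda_2D\|\psi\|_{L^2}^2 ,
\end{equation*}
which gives the $H^1$ bound.

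\emph{Step 3: $\|\psi\|_{H^2(\T^d)}\lesssim\|\psi\|_{H^1(\T^d)}$.} This step needs the most care, because the cubic term must be controlled linearly in $\|\psi\|_{H^1}$. Rewrite the equation as
\begin{equation*}
-\nabla\cdot\Lambda_0\nabla\psi=\Lambda_2D\psi-\Lambda_3|\psi|^2\psi .
\end{equation*}
The operator $-\nabla\cdot\Lambda_0\nabla$ is a constant-coefficient elliptic operator. On Fourier coefficients this gives
\begin{equation*}
\|\psi\|_{H^2}\lesssim\|\psi\|_{L^2}+\|\nabla\cdot\Lambda_0\nabla\psi\|_{L^2}\lesssim\|\psi\|_{L^2}+\||\psi|^2\psi\|_{L^2}.
\end{equation*}
For $d\le3$, the Sobolev embedding $H^1(\T^d)\subset L^6(\T^d)$ yields
\begin{equation*}
\||\psi|^2\psi\|_{L^2}=\|\psi\|_{L^6}^3\lesssim\|\psi\|_{H^1}^3 .
\end{equation*}
By Steps 1 and 2, $\|\psi\|_{H^1}\lesssim1$, so $\|\psi\|_{H^1}^3\lesssim\|\psi\|_{H^1}$. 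This closes the chain of inequalities.
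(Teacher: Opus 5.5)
Your proof is correct. Steps 1 and 2 are the paper's argument: test with $\overline\psi$ and drop one of the two nonnegative terms. Step 3 takes a different route.

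You invert the constant-coefficient operator $-\nabla\cdot\Lambda_0\nabla$ on the Fourier side. You then bound the cubic term by Sobolev, $\||\psi|^2\psi\|_{L^2}=\|\psi\|_{L^6}^3\lesssim\|\psi\|_{H^1}^3$, and linearize it using the a priori bound $\|\psi\|_{H^1}\lesssim 1$ from Steps 1--2.

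The paper instead tests the equation against $\nabla\cdot\Lambda_0\nabla\psi$. Because $\Lambda_0$ is real symmetric, the cubic term then contributes $\int|\psi|^2\,\overline{\nabla\psi}\cdot\Lambda_0\nabla\psi+\frac12\int\nabla|\psi|^2\cdot\Lambda_0\nabla|\psi|^2\geq 0$. This can simply be dropped, giving $\|\nabla\cdot\Lambda_0\nabla\psi\|_{L^2}^2\le\Lambda_2 D\|\Lambda_0^{1/2}\nabla\psi\|_{L^2}^2$.

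The paper's route exploits the defocusing sign of the nonlinearity. For this step it therefore needs neither $d\le 3$ nor the bound $\|\psi\|_{L^2}\lesssim 1$, and it yields the homogeneous estimate $\|\Delta\psi\|_{L^2}\lesssim\|\nabla\psi\|_{L^2}$. However, it presupposes enough regularity to justify the test, which the paper takes from ``standard regularity''.

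Your route ignores the sign structure and pays with the dimensional restriction and the a priori bound. In exchange it is more robust: it would work for nonlinearities without a favorable sign. Also, since your Fourier argument applies to the equation in the distributional sense, it shows directly that an $H^1$ weak solution lies in $H^2$. So you do not actually need to invoke smoothness beforehand.
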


\begin{proof}
Testing the GL equation with $\psi$ gives
\begin{align*}
    \|\nabla\psi \|_{L^2(\T^d)}^2 + \|\psi\|_{L^4(\T^d)}^4 & \lesssim \|\Lambda_0^{1/2}\nabla\psi\|_{L^2(\T^d)}^2 + \Lambda_3 \|\psi\|_{L^4(\T^d)}^4 = \Lambda_2 D \|\psi\|_{L^2(\T^d)}^2 \\
    & \lesssim \|\psi\|_{L^2(\T^d)}^2 \,.    
\end{align*}
In particular, this gives $\|\nabla\psi \|_{L^2(\T^d)}\lesssim \|\psi\|_{L^2(\T^d)}$, which gives the second inequality in \eqref{GL-ortho-norm}. It also gives, combined with H\"older's inequality,
$$
\|\psi\|_{L^2(\T^d)} \leq \|\psi\|_{L^4(\T^d)} \lesssim \|\psi\|_{L^2(\T^d)}^{1/2} \,,
$$
so $\|\psi\|_{L^2(\T^d)}\lesssim 1$, which is the third inequality in \eqref{GL-ortho-norm}.

To proceed, we test the GL equation with $\nabla\cdot\Lambda_0\nabla\psi$ and obtain, after a short computation,
\[
\|\Delta\psi\|_{L^2(\T^d)}^2 \lesssim \|\nabla\cdot\Lambda_0\nabla\psi\|_{L^2(\T^d)}^2 \leq \Lambda_2 D \|\Lambda_0^{1/2}\nabla\psi\|_{L^2(\T^d)}^2\lesssim \|\nabla\psi \|_{L^2(\T^d)}^2 \,.
\]
This gives the first inequality in \eqref{GL-ortho-norm} and concludes the proof.
\end{proof}

Furthermore, with $\kappa$ is chosen as in \eqref{kappa-fix}, we decompose
\begin{align}
	\Psi=\Psi_{<}+\Psi_{>},\quad\text{where } \Psi_{<} :=\lambda_{\kappa}\Psi,\quad \Psi_{>} :=\lambda_{\kappa}^{\perp}\Psi \,.
\end{align} 

\begin{lemma}\label{lem:small-H1-largeGL}
If $\psi\in H^2(\T^d)$, then
\begin{align}\label{small-H1-largeGL}
	\|\Psi_{>}\|_{H_{h}^{1}(\T_{h}^{d})}&\lesssim h^{19/12}\|\psi\|_{H^{2}(\T^{d})}.
\end{align}
\end{lemma}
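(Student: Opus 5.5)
This is a pure Fourier-side estimate: the high-frequency cutoff $\lambda_\kappa^\perp$ lets us trade one derivative of $\Psi$ for a factor $\kappa^{-1/2}$ up to scaling. In the Fourier variables of \eqref{hZ-FT}, $\lambda_\kappa^\perp$ is multiplication by $\one(h^2|q|^2>\kappa)$, since $-\Delta_X$ acts as $h^2|q|^2$. The rescaling \eqref{Psi-trial} gives $\hat\Psi(q)=h\,\hat\psi(q)$, where $\hat\psi$ is the standard Fourier coefficient on $\T^d$. This holds because $h^d\int_{\T^d_h}e^{-ihq\cdot X}h\psi(hX)\,dX=h\int_{\T^d}e^{-iq\cdot Y}\psi(Y)\,dY$. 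Therefore
\begin{align*}
\|\Psi_>\|_{H^1_h(\T^d_h)}^2 = h^2\sum_{q\in2\pi\Z^d}\one(h^2|q|^2>\kappa)\,(1+|q|^2)\,|\hat\psi(q)|^2 .
\end{align*}

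On the support of the indicator we have $|q|^2>\kappa h^{-2}$. Hence
$$(1+|q|^2)\leq \frac{(1+|q|^2)^2}{1+\kappa h^{-2}}\leq \kappa^{-1}h^{2}(1+|q|^2)^2 .$$
Inserting this gives
$$\|\Psi_>\|_{H^1_h}^2\leq h^4\kappa^{-1}\|\psi\|_{H^2(\T^d)}^2 .$$
With $\kappa=h^{5/6}$ from \eqref{kappa-fix}, this becomes $h^{4-5/6}\|\psi\|_{H^2}^2=h^{19/6}\|\psi\|_{H^2}^2$. Taking square roots yields $h^{19/12}\|\psi\|_{H^2(\T^d)}$, which is the claim.

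There is no real obstacle here. The only points to check carefully are the normalization of the Fourier transform under the rescaling $\Psi(X)=h\psi(hX)$, and that the $H^s_h$ norm uses $(1+|q|^2)$ rather than $(1+h^2|q|^2)$. Both follow directly from the definitions in Section \ref{sec:setup}. The required finiteness of $\|\psi\|_{H^2}$ is provided by Lemma \ref{lem:GL-H2-norm} for GL solutions.
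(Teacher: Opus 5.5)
Your proof is correct and is essentially the paper's argument: you pass to Fourier coefficients, use that $|q|^2>\kappa h^{-2}$ on the range of $\lambda_\kappa^\perp$ to trade one extra factor of $(1+|q|^2)$ for $h^2\kappa^{-1}$, and conclude with $\hat\Psi(q)=h\,\hat\psi(q)$ (i.e.\ $\|\Psi\|_{H^2_h(\T^d_h)}=h\|\psi\|_{H^2(\T^d)}$) and $\kappa=h^{5/6}$. The only cosmetic difference is that the paper routes the middle step through $h^{-2}\|\nabla_X\Psi\|_{L^2_h}^2$; also, as the paper stresses, the GL equation is not needed here, so your closing appeal to Lemma~\ref{lem:GL-H2-norm} is superfluous, since $\psi\in H^2(\T^d)$ is already the hypothesis.
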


We emphasize that in this lemma, we do not need the GL equation for $\psi$. In fact, if $\psi$ solve this equation, then $\psi\in H^s(\T^d)$ for arbitrary large $s$ and then a small variation of the argument below shows that $\|\Psi_{>}\|_{H_{h}^{1}(\T_{h}^{d})}\lesssim_N h^N$ for any $N$. For us, however, \eqref{small-H1-largeGL} suffices.

\begin{proof}
On the range of $\lambda_{\kappa}^{\perp}$, we know that $-h^{-2}\Delta_{X}\geq \kappa h^{-2}=h^{-7/6}$ so that, by Fourier transform,
\begin{align*}
	\big\|\Psi_{>}\big\|_{H_{h}^{1}(\T_{h}^{d})}^{2}&=\big\|(\one-h^{-2}\Delta_{X})^{1/2}\lambda_{\kappa}^{\perp}\Psi\big\|_{L_{h}^{2}(T_{h}^{d})}^{2}\lesssim h^{-2}\big\|\nabla_{X}\Psi\|_{L_{h}^{2}(T_{h}^{d})}^{2}\nonumber\\
	&\lesssim \sum_{q\in 2\pi\Z^{d}}\one(h^{2}|q|^{2}>\kappa)|q|^{2}|\hat{\Psi}(q)|^{2}\nonumber\\
	&\lesssim h^{2}\kappa^{-1}\sum_{q\in 2\pi\Z^{d}}\one(h^{2}|q|^{2}>\kappa)|q|^{4}|\hat{\Psi}(q)|^{2}\lesssim h^{7/6}\|\Psi\|_{H_{h}^{2}(\T_{h}^{d})}^{2}.
\end{align*}
It then follows from \eqref{GL-scaling} with $s=2$ that $\|\Psi_{>}\|_{H_{h}^{1}(\T_{h}^{d})}^2 \lesssim h^{19/6}\|\psi\|_{H^{2}(\T^{d})}^{2}$.
\end{proof}

\begin{lemma}\label{lem:GL-eqn-Psi<}
Let $\psi$ be a solution of the GL equation \eqref{GL-eqn}. Then it holds that
\begin{align}
	\|\lambda_{\kappa}F_{T}^{\rm GL,sc}(\Psi_{<})\|_{L_{h}^{2}(T_{h}^{d})}&\lesssim h^{43/12}\|\psi\|_{L^{2}(\T^{d})}.
\end{align}
\end{lemma}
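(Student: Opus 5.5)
Since $F_T^{\rm GL,sc}(\Psi)=0$ by \eqref{GL-eqn-sc}, the natural route is to compare $F_T^{\rm GL,sc}(\Psi_<)$ with $F_T^{\rm GL,sc}(\Psi)$ and estimate the difference. The linear part $(-\nabla_X\cdot\Lambda_0\nabla_X-\Lambda_2Dh^2)$ is a Fourier multiplier, so it commutes with $\lambda_\kappa$. Hence $\lambda_\kappa$ applied to the linear part of $F_T^{\rm GL,sc}(\Psi_<)$ equals $\lambda_\kappa$ applied to the linear part of $F_T^{\rm GL,sc}(\Psi)$. Using $\lambda_\kappa F_T^{\rm GL,sc}(\Psi)=0$, only the cubic difference survives:
\begin{align*}
\lambda_\kappa F_T^{\rm GL,sc}(\Psi_<)=\Lambda_3\lambda_\kappa\big(|\Psi_<|^2\Psi_<-|\Psi|^2\Psi\big).
\end{align*}
So the whole task reduces to bounding $\||\Psi_<|^2\Psi_<-|\Psi|^2\Psi\|_{L^2_h(\T^d_h)}$, using that $\|\lambda_\kappa\|\le 1$ on $L^2_h$.

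Expanding $\Psi=\Psi_<+\Psi_>$, the difference is a sum of trilinear terms, each containing at least one factor $\Psi_>$ and otherwise factors $\Psi$ or $\Psi_<$ (or their conjugates). Each term is bounded by H\"older in $L^6_h$:
\begin{align*}
\||\Psi_<|^2\Psi_<-|\Psi|^2\Psi\|_{L^2_h}\lesssim \|\Psi_>\|_{L^6_h}\big(\|\Psi\|_{L^6_h}+\|\Psi_<\|_{L^6_h}\big)^2 .
\end{align*}
The Sobolev embedding $H^1\subset L^6$ on $\T^d$ for $d\le3$ transfers to the normalized spaces. Under $\Psi(X)=h\psi(hX)$, the $L^6_h(\T^d_h)$ norm equals $h\|\psi\|_{L^6(\T^d)}$, and the $H^1_h$ norm matches $h\|\psi\|_{H^1}$; Sobolev is applied to the rescaled functions on $\T^d$. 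This gives $\|\Psi\|_{L^6_h},\|\Psi_<\|_{L^6_h}\lesssim \|\Psi\|_{H^1_h}\lesssim h\|\psi\|_{H^1}\lesssim h$. The last step uses \eqref{GL-scaling} and Lemma~\ref{lem:GL-H2-norm}, and $\lambda_\kappa$ does not increase $H^1_h$ norms. Similarly, $\|\Psi_>\|_{L^6_h}\lesssim\|\Psi_>\|_{H^1_h}\lesssim h^{19/12}\|\psi\|_{H^2}\lesssim h^{19/12}\|\psi\|_{L^2}$, by Lemma~\ref{lem:small-H1-largeGL} followed by Lemma~\ref{lem:GL-H2-norm}.

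Putting these together gives the bound $h^{19/12}\cdot h^2\|\psi\|_{L^2}=h^{43/12}\|\psi\|_{L^2(\T^d)}$, which is exactly the claimed exponent. The only delicate point is the scaling check for the Sobolev embedding in the $h$-normalized norms: the constant must be $h$-independent, with no loss in $h$ from the rescaling. This holds because both the $L^6_h$ and $H^1_h$ norms were normalized so that rescaling to $\T^d$ is exact. Everything else is routine.
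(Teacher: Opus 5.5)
Your proposal is correct and follows the paper's proof essentially step for step. You use $\lambda_\kappa F_T^{\rm GL,sc}(\Psi)=0$ to reduce to $\Lambda_3\lambda_\kappa\big(|\Psi_<|^2\Psi_<-|\Psi|^2\Psi\big)$, then bound it with H\"older in $L^6_h$, the $h$-independent Sobolev embedding, Lemma~\ref{lem:small-H1-largeGL} and Lemma~\ref{lem:GL-H2-norm}, which gives $h^{19/12}\cdot h^{2}$. Putting all three factors in $L^6_h$ is also the right H\"older step; the paper's display has $\|\Psi_>\|_{L^2_h}$ in the first term, which appears to be a typo.
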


\begin{proof}
By projecting \eqref{GL-eqn-sc} onto $\Ran(\lambda_{\kappa})$ 
, we obtain
\begin{align}
	\label{GL-eqn-sc1}0&=(-\nabla_{X}\cdot\Lambda_{0}\nabla_{X}-\Lambda_{2}Dh^{2})\Psi_{<}+\Lambda_{3}\lambda_{\kappa}(|\Psi|^{2}\Psi) \,.
\end{align} 
This implies
\begin{align}
	\lambda_{\kappa}F_{T}^{\rm GL,sc}(\Psi_{<})&=(-\nabla_{X}\cdot\Lambda_{0}\nabla_{X}-\Lambda_{2}Dh^{2})\Psi_{<}+\Lambda_{3}\lambda_{\kappa}\big(|\Psi_{<}|^{2}\Psi_{<}\big)\nonumber\\
	&=\Lambda_{3}\lambda_{\kappa}\big(|\Psi_{<}|^{2}\Psi_{<}-|\Psi|^{2}\Psi\big)\nonumber\\
	&=-\Lambda_{3}\lambda_{\kappa}\Big(|\Psi_{<}|^{2}\Psi_{>}+\big(|\Psi_{>}|^{2}+2\Re(\overline{\Psi_{<}}\Psi_{>})\big)\Psi\Big).
\end{align}
It follows from the Sobolev inequality, \eqref{GL-scaling} (with $s=1$), Lemmas \ref{lem:GL-H2-norm} and \ref{lem:small-H1-largeGL} that
\begin{align*}
	\big\|\lambda_{\kappa}F_{T}^{\rm GL,sc}(\Psi_{<})\big\|_{L_{h}^{2}(\T_{h}^{d})}&\lesssim\||\Psi_{<}|^{2}\Psi_{>}\|_{L_{h}^{2}(\T_{h}^{d})}+\|(|\Psi_{>}|^{2}+2\Re(\overline{\Psi_{<}}\Psi_{>}))\Psi\|_{L_{h}^{2}(\T_{h}^{d})}\nonumber\\
	&\lesssim\|\Psi_{<}\|_{L_{h}^{6}(\T_{h}^{d})}^{2}\|\Psi_{>}\|_{L_{h}^{2}(\T_{h}^{d})}+\|\Psi_{>}\|_{L_{h}^{6}(\T_{h}^{d})}^{2}\|\Psi\|_{L_{h}^{6}(\T_{h}^{d})}\nonumber\\
	&\quad\quad\quad+\|\Psi_{<}\|_{L_{h}^{6}(\T_{h}^{d})}\|\Psi_{>}\|_{L^{6}(\T_{h}^{d})}\|\Psi\|_{L^{6}(\T_{h}^{d})}\nonumber\\
	&\lesssim\|\Psi\|_{H_{h}^{1}(\T_{h}^{d})}^{2}\|\Psi_{>}\|_{H_{h}^{1}(\T_{h}^{d})}\lesssim h^{19/12+2}\|\psi\|_{H^{1}(\T^{d})}^2\|\psi\|_{H^{2}(\T^{d})}\nonumber\\
    &\lesssim h^{43/12} \|\psi\|_{L^{2}(\T^{d})}.
\end{align*}
This completes the proof.
\end{proof}


\subsection{Construction of trial state}\label{sec:trial}

In this subsection, given a solution $\psi\in H^{1}(\T^{d})$ of the GL equation \eqref{GL-eqn}, we construct an operator $\alpha_\psi$ which, as we shall see in the next subsection, is an approximate solution of the BdG equation \eqref{BdG-eqn3a}. Here we will prove some properties of $\alpha_\psi$ that will be crucial in the next subsection.

First, to simplify notations, we abbreviate, for $\varphi\in\cL_{h}^{2}$,
\begin{align}
    F_{T}^{\rm BCS}(\varphi)&:=L_{T}\varphi+\tfrac{1}{2}V^{1/2}N_{T}(-2V^{1/2}\varphi),
\end{align}
where we recall $L_{T}=\one-V^{1/2}K_{T}V^{1/2}$.  Note that this is the l.h.s.~of \eqref{BdG-BS-eqn}. Moreover, we shall use the operators $P_\kappa$ and $P_{\kappa}^{\perp}=\one-P_{\kappa}$ from \eqref{P}. The parameter $\kappa$ will be chosen as in \eqref{kappa-fix}. Under Assumptions (A1)--(A5), which we assume throughout, we know from Proposition \ref{prop:inv-bdd} that the operator $P_{\kappa}^{\perp}L_{T}P_{\kappa}^{\perp}$ is boundedly invertible on $\Ran(P_{\kappa}^{\perp})$.

For $\psi\in H^1(\T^d)$, we define $\varphi_{\psi}\in\cL_{h}^{2}$ by
\begin{align}\label{eq:defphipsi}
    \varphi_{\psi}&:=\varphi_{\psi,0}+\varphi_{\psi,1} \,,
\end{align}
where
\begin{align}
    \varphi_{\psi,0}(\zeta_{X}^{r},\zeta_{X}^{-r})&:= h\,\psi(hX)\,\varphi_{*}(r) \,, \quad
    \varphi_{\psi,1}:=-(P_{\kappa}^{\perp}L_{T}P_{\kappa}^{\perp})^{-1} P_\kappa^\perp F_{T}^{\rm BCS}(\varphi_{\psi,0}) \,.
\end{align}
Then we define $\alpha_\psi\in\cL_{h}^{2}$ by
\begin{align}\label{trial}
    \alpha_\psi &:=-\frac{1}{2}\Big[\tanh\Big(\frac{\beta H(-2V^{1/2}\varphi_{\psi})}{2}\Big)\Big]_{12}=K_{T}V^{1/2}\varphi_{\psi}-\tfrac{1}{2}N_{T}(-2V^{1/2}\varphi_{\psi}) \,.
\end{align}
Note that, setting
$$
\gamma_\psi := \frac12 - \frac{1}{2}\Big[\tanh\Big(\frac{\beta H(-2V^{1/2}\varphi_{\psi})}{2}\Big)\Big]_{11} \,,
$$
we obtain a pair of operators $(\gamma_\psi,\alpha_\psi)$ on $L^2(\R^d)$ that satisfies \eqref{BdG-1pdm}; see Appendix~\ref{app:derbdg}.

The main result of this subsection is the following proposition:

\begin{prop}\label{prop:trial-property}
Let $\psi$ be a solution of the GL equation \eqref{GL-eqn}.  Then
\begin{align}\label{trial-property}
	\big\|\varphi_{\psi}-V^{1/2}\alpha_{\psi}\big\|_{\cL_{h}^2}& \lesssim h^{19/6}\|\psi\|_{L^{2}(\T^{d})}.
\end{align}
\end{prop}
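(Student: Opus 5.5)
Throughout, write $\varphi_0:=\varphi_{\psi,0}$ and $\varphi_1:=\varphi_{\psi,1}$, so that $\varphi_\psi=\varphi_0+\varphi_1$. Note that $\varphi_0(\zeta_X^r,\zeta_X^{-r})=\Psi(X)\varphi_*(r)$, so $P_\kappa\varphi_0=\Psi_<\varphi_*$ and $\lambda_\kappa^\perp P\varphi_0=\Psi_>\varphi_*$.

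The plan is to first reduce the statement to a bound on the Birman--Schwinger residual. By \eqref{trial},
$$
V^{1/2}\alpha_\psi=V^{1/2}K_TV^{1/2}\varphi_\psi-\tfrac12 V^{1/2}N_T(-2V^{1/2}\varphi_\psi)=\varphi_\psi-F_T^{\rm BCS}(\varphi_\psi),
$$
so \eqref{trial-property} is equivalent to $\|F^{\rm BCS}_T(\varphi_\psi)\|_{\cL^2_h}\lesssim h^{19/6}\|\psi\|_{L^2(\T^d)}$. I will bound the components $P_\kappa^\perp F^{\rm BCS}_T(\varphi_\psi)$ and $P_\kappa F^{\rm BCS}_T(\varphi_\psi)$ separately. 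All sizes below are measured against $\|\psi\|_{L^2(\T^d)}$, using Lemma \ref{lem:GL-H2-norm} and \eqref{GL-scaling}; in particular $\|\varphi_0\|_{\cH^1_h}=\|\Psi\|_{H^1_h}\lesssim h\|\psi\|_{L^2}$.

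\emph{Step 1: a priori bound on $\varphi_1$.} I expect $\|\varphi_1\|_{\cH^1_h}\lesssim h^{7/6}\|\psi\|_{L^2}$. Since $L_T$ commutes with $\lambda_\kappa$, the linear part of $P_\kappa^\perp F^{\rm BCS}_T(\varphi_0)$ is
$$
P^\perp L_TP_\kappa\varphi_0+\lambda_\kappa^\perp L_T\lambda_\kappa^\perp P\varphi_0 .
$$
For the first piece, \eqref{inv-bdd-LP1} gives a contribution $\lesssim h\cdot h$ after applying $(P_\kappa^\perp L_TP_\kappa^\perp)^{-1}$. For the second piece, the inverse costs $\kappa^{-1}$, so the contribution is $\lesssim \kappa^{-1}\|\Psi_>\|_{H^1_h}\lesssim h^{-5/6}h^{19/12}=h^{3/4}$ by Lemma \ref{lem:small-H1-largeGL}. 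This is too weak. I would try to improve it using the $h^N$-smallness of $\Psi_>$ that holds for smooth GL solutions (noted after Lemma \ref{lem:small-H1-largeGL}), or, more simply, by using $\|\Psi_>\|_{H^1_h}\lesssim h^{1+\kappa\text{-gain}}$ through $H^3$ regularity of $\psi$. For the nonlinear part, Proposition \ref{prop:BCS-NL-norm} with $s=1$ together with Lemma \ref{lem:Lpnorm-Delta}, and the fact that the inverse commutes with $-\Delta_X$ (so its $\cL^2_h$ bound of $\kappa^{-1}$ transfers to $\cH^1_h$), give $\kappa^{-1}h^{-1}h^3=h^{7/6}$. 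A version of this bound in $\cL^2_h$ is better, namely $h^{13/6}$. Combining these gives the claimed $h^{7/6}$ bound on $\varphi_1$.

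\emph{Step 2: the $P_\kappa^\perp$ component.} Because $\varphi_1\in\Ran P_\kappa^\perp$, the definition of $\varphi_1$ gives $P_\kappa^\perp L_T\varphi_1=-P_\kappa^\perp F^{\rm BCS}_T(\varphi_0)$. Hence
$$
P_\kappa^\perp F^{\rm BCS}_T(\varphi_\psi)=\tfrac12P_\kappa^\perp V^{1/2}\big[N_T(-2V^{1/2}\varphi_\psi)-N_T(-2V^{1/2}\varphi_0)\big].
$$
I split $N_T=N_T'+\widetilde N_T$. The $N_T'$ difference is bounded by \eqref{NL-H1-bdd3} as $h^2\cdot h^{7/6}=h^{19/6}$. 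The $\widetilde N_T$ terms are each bounded by \eqref{NL-H1-bdd2} as $\lesssim h^{-3/4}h^4$, which is acceptable.

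\emph{Step 3: the $P_\kappa$ component.} I expand
$$
P_\kappa F^{\rm BCS}_T(\varphi_\psi)=P_\kappa L_TP_\kappa\varphi_0+P_\kappa L_T\varphi_1+\tfrac12P_\kappa V^{1/2}N_T(\cdot),
$$
where $P_\kappa L_TP_\kappa^\perp\varphi_0=0$ because $L_T$ commutes with $\lambda_\kappa$. The terms are handled as follows.
\begin{itemize}
\item The term $P_\kappa L_T\varphi_1$ contains $-P_\kappa L_T(P_\kappa^\perp L_TP_\kappa^\perp)^{-1}L_TP_\kappa\varphi_0$, which is bounded by $\kappa^{3/2}h\cdot h=h^{13/4}$ using \eqref{inv-bdd-LP3}. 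Its nonlinear piece is bounded by $\kappa\cdot h^3$, using \eqref{Q0LP-bdd} and the $\cL^2_h$ version of Step 1.
\item Next, $\tfrac12P_\kappa V^{1/2}N_T(\varphi_\psi)$ is replaced by $\tfrac12\lambda_\kappa PV^{1/2}N_T'(-2V^{1/2}P_\kappa\varphi_0)$. The errors are: the $N'$-difference, bounded by $h^2(\|\varphi_1\|_{\cH^1_h}+\|\Psi_>\|_{H^1_h})$; and $\widetilde N_T$, bounded as in Step 2.
\item Propositions \ref{prop:GL-matrix} and \ref{prop:GL-nonlinear}, applied with $\Psi_<$, replace $P_\kappa L_TP_\kappa\varphi_0+\tfrac12PV^{1/2}N_T'$ by $F_T^{\rm GL,sc}(\Psi_<)\varphi_*$. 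The errors are $h^{9/4}\cdot h=h^{13/4}$ and $h^{29/12}\cdot h=h^{41/12}$, both $o(h^{19/6})$. After projecting with $\lambda_\kappa$, this becomes $\lambda_\kappa F^{\rm GL,sc}_T(\Psi_<)\varphi_*$, which is $O(h^{43/12})$ by Lemma \ref{lem:GL-eqn-Psi<}.
\end{itemize}

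\emph{Expected obstacle.} The main difficulty is the error budget: several contributions sit exactly at or just below $h^{19/6}$. These are the $N'$-Lipschitz term involving $\varphi_1$, and the terms involving $\Psi_>$. For the latter, I would rely on the smoothness of $\psi$ (all $H^s$ norms are bounded by $\|\psi\|_{L^2}$) to obtain $\|\Psi_>\|_{H^1_h}\lesssim h^N$, which makes every $\Psi_>$ contribution negligible.
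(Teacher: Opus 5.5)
Your architecture matches the paper's decomposition. You reduce to bounding $\|F_T^{\rm BCS}(\varphi_\psi)\|_{\cL^2_h}$, use the defining equation of $\varphi_{\psi,1}$ to collapse the $P_\kappa^\perp$ component to the difference of nonlinear terms, and treat the $P_\kappa$ component with Propositions \ref{prop:GL-matrix}, \ref{prop:GL-nonlinear} and Lemma \ref{lem:GL-eqn-Psi<}.

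However, one estimate in Step 3 does not close as you justify it. Take the nonlinear piece of $P_\kappa L_T\varphi_{\psi,1}$, namely $\tfrac12 P_\kappa L_T(P_\kappa^\perp L_TP_\kappa^\perp)^{-1}P_\kappa^\perp V^{1/2}N_T(-2V^{1/2}\varphi_{\psi,0})$. You cite $\|P_\kappa L_T\|\lesssim\kappa$ from \eqref{Q0LP-bdd} and the $\cL^2_h$ bound $\kappa^{-1}h^3=h^{13/6}$ from your Step 1. Their product is $\kappa\cdot h^{13/6}=h^{3}$, not $\kappa h^3$: you dropped the factor $\kappa^{-1}$ that comes from the inverse. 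An error of size $h^3$ is exactly the order the proposition must beat. With the tools you state, this term breaks the $h^{19/6}$ bound.

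The missing observation is that the full $\kappa^{-1}$ cost of $(P_\kappa^\perp L_TP_\kappa^\perp)^{-1}$ arises only on the high-momentum block $\Ran\lambda_\kappa^\perp$, and $P_\kappa L_T$ annihilates that block.
\begin{itemize}
\item Since $\lambda_\kappa$ commutes with $L_T$ and $P$, one has $P_\kappa L_TP_\kappa^\perp=P_\kappa L_T\lambda_\kappa P^\perp$.
\item On $\Ran(\lambda_\kappa P^\perp)$ the inverse is much better behaved. The paper uses \eqref{PLP-inv-bdd1} to get $\|\lambda_\kappa P^\perp(P_\kappa^\perp L_TP_\kappa^\perp)^{-1}\|\lesssim\kappa^{-1/2}$. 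In fact $P_\kappa^\perp L_TP_\kappa^\perp$ is block diagonal with respect to $\lambda_\kappa$ and acts as $P^\perp L_TP^\perp$ on that block, so \eqref{inv-bdd1} even gives $O(1)$.
\item Combined with $\|F_T^{\rm BCS}(\varphi_{\psi,0})\|_{\cL^2_h}\lesssim h^3$ from Lemma \ref{lem:ortho-GL-regularity}, this yields $\kappa^{1/2}h^3=h^{41/12}$ for the whole term $P_\kappa L_TP^\perp\varphi_{\psi,1}$. There is no need to split the source of $\varphi_{\psi,1}$ into linear and nonlinear parts.
\end{itemize}

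Two smaller remarks on your Step 1:
\begin{itemize}
\item You do not need higher regularity of $\psi$ for the $\Psi_>$ piece. Since $\ell_{T_c}\varphi_{\psi,0}=0$, you have $L_T\varphi_{\psi,0}=(L_T-\ell_{T_c})\varphi_{\psi,0}$, which is $O(h^{3-s})$ in $\cH^s_h$ by Lemmas \ref{lem:KTkT-diff} and \ref{lem:kT-diff}, high-momentum part included. Your bootstrap via $\|\psi\|_{H^3}\lesssim\|\psi\|_{L^2}$ also works but is a detour.
\item \eqref{inv-bdd-LP1} is only an $\cH^1_h\to\cL^2_h$ bound. For the $\cH^1_h$ norm of the first linear piece, use instead that the operator commutes with $-\Delta_X$.
\end{itemize}
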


Before proceeding to the proof of Proposition \ref{prop:trial-property}, we study the components $\varphi_{\psi,0}$ and $\varphi_{\psi,1}$ in the next lemma:

\begin{lemma}\label{lem:ortho-GL-regularity}
We have 
\begin{align}\label{eq:phipsi0}
	\|\varphi_{\psi,0}\|_{\cH_{h}^{1}}&\lesssim h\|\psi\|_{H^{1}(\T^{d})} \,.
\end{align}
Moreover, if $\psi\in H^2(\T^d)$, then it holds for $s=0,1$ that
\begin{align}\label{ortho-GL-regularity}
	\big\|F_{T}^{\rm BCS}(\varphi_{\psi,0})\big\|_{\cH_{h}^{s}}&\lesssim h^{3-s}\left( \|\psi\|_{H^{2}(\T^{d})} + \|\psi\|_{H^{1}(\T^{d})}^3 \right)
\end{align}
and
\begin{align}\label{ortho-GL-regularity2}
	\|\varphi_{\psi,1}\|_{\cH_{h}^{s}}&\lesssim h^{13/6-s} \left( \|\psi\|_{H^{2}(\T^{d})} + \|\psi\|_{H^{1}(\T^{d})}^3 \right).
\end{align}
In particular,
\begin{align}
    \label{eq:phipsisize}
    \|\varphi_\psi\|_{\mathcal H^1_h} \lesssim h\left( \|\psi\|_{H^{2}(\T^{d})} + \|\psi\|_{H^{1}(\T^{d})}^3 \right).
\end{align}
\end{lemma}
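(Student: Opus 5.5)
The plan is to work directly with the rescaled function $\Psi(X)=h\psi(hX)$, for which $\varphi_{\psi,0}(\zeta_X^r,\zeta_X^{-r})=\Psi(X)\varphi_*(r)$. Since $\varphi_*$ is normalized, we have $\|\varphi_{\psi,0}\|_{\cH^s_h}=\|\Psi\|_{H^s_h(\T^d_h)}=h\|\psi\|_{H^s(\T^d)}$ by \eqref{GL-scaling}. This gives \eqref{eq:phipsi0} immediately.

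For \eqref{ortho-GL-regularity} we split $F_T^{\rm BCS}(\varphi_{\psi,0})=L_T\varphi_{\psi,0}+\tfrac12 V^{1/2}N_T(-2V^{1/2}\varphi_{\psi,0})$.

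\emph{Linear part.} We mimic \eqref{PLTP-decomp} without the cutoff $\lambda_\kappa$. Using $\ell_{T_c}\varphi_*=0$, we write
$$L_T\varphi_{\psi,0}=V^{1/2}(k_T-K_T)V^{1/2}\varphi_{\psi,0}+V^{1/2}(k_{T_c}-k_T)V^{1/2}\varphi_{\psi,0}.$$
The second term has $\cL^2_h$-norm $\lesssim h^2\|\Psi\|_{L^2_h}$ by Lemma \ref{lem:kT-diff}. The first term is bounded by Lemma \ref{lem:KTkT-diff}, $\|\frac{\Delta_X}{\one-\Delta_X}\varphi_{\psi,0}\|_{\cL^2_h}\le\|\Delta_X\Psi\|_{L^2_h}=h^2\cdot h\|\Delta\psi\|_{L^2}$, which is $h^3\|\psi\|_{H^2}$. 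The second term is only $O(h^3\|\psi\|_{L^2})$, as needed.

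All these operators commute with $-\Delta_X$. For $s=1$ we therefore apply the same bounds to $(\one-h^{-2}\Delta_X)^{1/2}\varphi_{\psi,0}$. This costs a factor $h^{-1}$ in the $\|\Psi\|_{H^1_h}$ scale: we need $\|\Delta_X(\one-h^{-2}\Delta_X)^{1/2}\Psi\|\lesssim h^{2}\|\Psi\|_{H^2_h}$. This uses $\frac{-\Delta_X}{\one-\Delta_X}\lesssim \min(1,-\Delta_X)$ and $\sqrt{-\Delta_X}\le h\,(\one-h^{-2}\Delta_X)^{1/2}$, combined so as to get one power $h$ from $\sqrt{-\Delta_X}$ and one from $\Psi$'s size. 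This yields $h^{2}\|\psi\|_{H^2}$. Doing this exponent bookkeeping carefully is the one place I would double-check.

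\emph{Nonlinear part.} Proposition \ref{prop:BCS-NL-norm} together with Lemmas \ref{lem:Hsnorm-Delta} and \ref{lem:Lpnorm-Delta} give
$$\|V^{1/2}N_T(-2V^{1/2}\varphi_{\psi,0})\|_{\cH^s_h}\lesssim h^{-s}\|\varphi_{\psi,0}\|_{\cH^1_h}^3\lesssim h^{3-s}\|\psi\|_{H^1}^3.$$
Combining the linear and nonlinear parts gives \eqref{ortho-GL-regularity}.

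For \eqref{ortho-GL-regularity2}, we use that $(P_\kappa^\perp L_TP_\kappa^\perp)^{-1}P_\kappa^\perp$ commutes with $-\Delta_X$ and has $\cL^2_h$-operator norm $\lesssim\kappa^{-1}=h^{-5/6}$ by \eqref{inv-bdd2}. Hence it has the same norm on every $\cH^s_h$, and \eqref{ortho-GL-regularity} gives $\|\varphi_{\psi,1}\|_{\cH^s_h}\lesssim h^{-5/6}h^{3-s}(\dots)=h^{13/6-s}(\dots)$.

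The main obstacle is whether this crude $\kappa^{-1}$ loss is acceptable. It is: the claimed exponent $13/6=3-5/6$ is exactly this bound, so no finer use of Proposition \ref{prop:PLP-inv-bdd} is needed.

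Finally, \eqref{eq:phipsisize} follows from the triangle inequality, since $h^{7/6}\le h$ and $\|\psi\|_{H^1}\le\|\psi\|_{H^2}$.
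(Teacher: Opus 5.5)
Your proposal is correct and follows the paper's proof essentially step by step. You use $\ell_{T_c}\varphi_{\psi,0}=0$, bound the linear part with Lemmas \ref{lem:KTkT-diff} and \ref{lem:kT-diff}, and bound the cubic part with Proposition \ref{prop:BCS-NL-norm} and Lemma \ref{lem:Lpnorm-Delta}. You then pay $\kappa^{-1}=h^{-5/6}$ from \eqref{inv-bdd2} for $\varphi_{\psi,1}$. Splitting $K_T-k_{T_c}$ through $k_T$ instead of through $K_{T_c}$ is immaterial.

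The one slip is in the $s=1$ bookkeeping you flagged. The displayed target $\|\Delta_X(\one-h^{-2}\Delta_X)^{1/2}\Psi\|\lesssim h^2\|\Psi\|_{H^2_h}$ is false as written, since it would require $H^3$ control of $\psi$. However, the inequalities you then invoke, combined via $\min(1,t)\le\sqrt t$, give $\frac{-\Delta_X}{\one-\Delta_X}(\one-h^{-2}\Delta_X)^{1/2}\le h(\one-h^{-2}\Delta_X)$. Hence $\|\frac{\Delta_X}{\one-\Delta_X}\varphi_{\psi,0}\|_{\cH_h^1}\le h\|\Psi\|_{H^2_h(\T^d_h)}=h^2\|\psi\|_{H^2(\T^d)}$, exactly what the paper gets from $\sup_q(1+|q|^2)/(1+h^2|q|^2)^2\lesssim h^{-2}$.
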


\begin{proof}
The bound \eqref{eq:phipsi0} is clear since, due to Lemma \ref{lem:GL-H2-norm},
\begin{align}
    \label{eq:phipsinorm}
    \|\varphi_{\psi,0}\|_{\cH_{h}^{s}}=h\|\psi\|_{H^{s}(\T^{d})}\lesssim h\|\psi\|_{H^{1}(\T^{d})}
    \qquad\text{for}\ s=0,1 \,.
\end{align}

Next, we prove \eqref{ortho-GL-regularity}. Since $\ell_{T_{c}}\varphi_{*}=0$, we can write
\begin{align*}
	F_{T}^{\rm BCS}(\varphi_{\psi,0})&=(L_{T}-\ell_{T_{c}})\varphi_{\psi,0}+\tfrac{1}{2}V^{1/2}N_{T}(-2V^{1/2}\varphi_{\psi,0}).
\end{align*}
The r.h.s.~consists of a linear and a nonlinear part, which we bound separately. For the linear part, by Lemmas \ref{lem:KTkT-diff}, \ref{lem:kT-diff} and \ref{lem:Hsnorm-Delta}, we obtain
\begin{align*}
	&\big\|(L_{T}-\ell_{T_{c}})\varphi_{\psi,0}\big\|_{\cH_{h}^{s}}=\big\|V^{1/2}(K_{T}-k_{T_{c}})V^{1/2}\varphi_{\psi,0}\big\|_{\cH_{h}^{s}}\nonumber\\
    &\leq \big\|V^{1/2}(K_{T}-K_{T_{c}})V^{1/2}\varphi_{\psi,0}\big\|_{\cH_{h}^{s}}+\big\|V^{1/2}(K_{T_{c}}-k_{T_{c}})V^{1/2}\varphi_{\psi,0}\big\|_{\cH_{h}^{s}}\nonumber\\
    &\lesssim\Big\|\frac{\Delta_{X}}{\one-\Delta_{X}}\varphi_{\psi,0}\Big\|_{\cH_{h}^{s}}+h^{2}\|\varphi_{\psi,0}\big\|_{\cH_{h}^{s}}\lesssim h^{-s}\big\|\Delta_{X}\varphi_{\psi,0}\big\|_{\cL_{h}^{2}}+h^{2}\|\varphi_{\psi,0}\|_{\cH_{h}^{s}}\,.
\end{align*}
In the last inequality, we used
$$
\sup_{q\in 2\pi\Z^d} \frac{(1+|q|^2)^s}{(1+ h^2|q|^2)^2} \lesssim h^{-2s} \,.
$$
As in \eqref{eq:phipsinorm}, using Lemma \ref{lem:GL-H2-norm}, we find
\begin{align*}
	& h^{-s}\big\|\Delta_{X}\varphi_{\psi,0}\big\|_{\cL_{h}^{2}}+h^{2}\|\varphi_{\psi,0}\|_{\cH_{h}^{s}} = h^{3-s} \|\Delta \psi \|_{L^2(\T^d)} + h^3 \|\psi\|_{H^2(\T^d)} \lesssim h^{3-s}\|\psi\|_{H^{2}(\T^{d})} \,.
\end{align*}
Combining these inequality gives
\begin{align}
    \label{eq:trialroughlinear}
    \big\|(L_{T}-\ell_{T_{c}})\varphi_{\psi,0}\big\|_{\cH_{h}^{s}} \lesssim h^{3-s}\|\psi\|_{H^{2}(\T^{d})} \,,
\end{align}
which is the claimed bound for the linear part.

For the nonlinear part, by Lemmas \ref{lem:Lpnorm-Delta}, \ref{lem:Hsnorm-Delta}, \ref{lem:GL-H2-norm}, Proposition \ref{prop:BCS-NL-norm} and \eqref{eq:phipsinorm}, we have
\begin{align*}
	\big\|V^{1/2}N_{T}(-2V^{1/2}\varphi_{\psi,0})\big\|_{\cH_{h}^{s}}&\lesssim h^{-s}\|V^{1/2}\varphi_{\psi,0}\|_{\cL_{h}^{6}}^{3}\lesssim\|\varphi_{\psi,0}\|_{\cH_{h}^{1}}^{3} = h^{3-s} \|\psi\|_{H^1(\T^d)}^3.
\end{align*}
This proves \eqref{ortho-GL-regularity}.

Next, by Proposition \ref{prop:inv-bdd}, the fact that $P_{\kappa}^{\perp}L_{T}P_{\perp}$ commutes with $-\Delta_{X}$ and estimate \eqref{ortho-GL-regularity}, we obtain immediately for $s=0,1$ that
\begin{align*}
	\big\|\varphi_{\psi,1}\big\|_{\cH_{h}^{s}}&=\big\|(P_{\kappa}^{\perp}L_{T}P_{\kappa}^{\perp})^{-1} P_\kappa^\perp F_{T}^{\rm BCS}(\varphi_{\psi,0})\big\|_{\cH_{h}^{s}}\nonumber\\
	&\leq \big\|(P_{\kappa}^{\perp}L_{T}P_{\kappa}^{\perp})^{-1}\big\|_{\cL_{h}^{2}\rightarrow\cL_{h}^{2}}\big\|F_{T}^{\rm BCS}(\varphi_{\psi,0})\big\|_{\cH_{h}^{s}}\lesssim 
    \kappa^{-1} h^{3-s}\|\psi\|_{H^{2}(\T^{d})} \nonumber\\
    &= h^{13/6-s}\|\psi\|_{H^{2}(\T^{d})}.
\end{align*}
This proves \eqref{ortho-GL-regularity2}. 

Finally, the bound \eqref{eq:phipsisize} follows immediately from \eqref{eq:phipsi0} and \eqref{ortho-GL-regularity2}, thus completing the proof.
\end{proof}

\begin{proof}[Proof of Proposition \ref{prop:trial-property}]
    Using the definitions of $\alpha_\psi$ and $F_{T}^{\rm BCS}$, we obtain
    \begin{align}\label{trial2}
	   \varphi_{\psi}-V^{1/2}\alpha_\psi & = \varphi_\psi - V^{1/2} K_{T}V^{1/2}\varphi_{\psi}+ \tfrac{1}{2}N_{T}(-2V^{1/2}\varphi_{\psi})
       =F_{T}^{\rm BCS}(\varphi_{\psi}) \,.
    \end{align}
    Thus, to prove \eqref{trial-property}, we need to bound $F_{T}^{\rm BCS}(\varphi_{\psi})$. Let us write this quantity in a convenient way. Recalling the map $F_{T}^{\rm BCS,app}$ from \eqref{approx-BdG}, we claim that we have
\begin{align}\label{trial-property2}
	F_{T}^{\rm BCS}(\varphi_{\psi})
	&=P_{\kappa}F_{T}^{\rm BCS,app}(\varphi_{\psi,0})+P_{\kappa}L_{T}P^{\perp}\varphi_{\psi,1}+\tfrac{1}{2}P_{\kappa}V^{1/2}\widetilde{N}_{T}(-2V^{1/2}\varphi_{\psi})\nonumber\\
	&\quad+\tfrac{1}{2}P_{\kappa}V^{1/2}\big(N_{T}'(-2V^{1/2}\varphi_{\psi})-N_{T}'(-2V^{1/2}P_{\kappa}\varphi_{\psi,0})\big)\nonumber\\
	&\quad+\tfrac{1}{2}P_{\kappa}^{\perp}V^{1/2}\big(N_{T}(-2V^{1/2}\varphi_{\psi})-N_{T}(-2V^{1/2}\varphi_{\psi,0})\big) \,.
\end{align}
    Indeed, we have
    \begin{align*}
	F_{T}^{\rm BCS}(\varphi_{\psi})&=P_{\kappa}F_{T}^{\rm BCS}(\varphi_{\psi})+P_{\kappa}^{\perp}F_{T}^{\rm BCS}(\varphi_{\psi})\nonumber     \\
    &=P_{\kappa}L_{T}P_{\kappa}\varphi_{\psi}+\tfrac{1}{2}P_{\kappa}V^{1/2}N_{T}'(-2V^{1/2}P_{\kappa}\varphi_{\psi})+P_{\kappa}L_{T}P_{\kappa}^{\perp}\varphi_{\psi}\nonumber\\
	&\quad+\tfrac{1}{2}P_{\kappa}V^{1/2}\big(N_{T}'(-2V^{1/2}\varphi_{\psi})-N_{T}'(-2V^{1/2}P_{\kappa}\varphi_{\psi})\big)\nonumber\\
	&\quad+\tfrac{1}{2}P_{\kappa}V^{1/2}\widetilde{N}_{T}(-2V^{1/2}\varphi_{\psi})+P_{\kappa}^{\perp}\big(L_{T}\varphi_{\psi}+\tfrac{1}{2}V^{1/2}N_{T}(-2V^{1/2}\varphi_{\psi})\big) \,.
    \end{align*}   
    To see that the r.h.s.~coincides with the r.h.s.~of \eqref{trial-property2} we use the facts that $P_{\kappa}\varphi_{\psi,1}=0$ because $\varphi_{\psi,1}\in\Ran(P_{\kappa}^{\perp})$ and $P^{\perp}\varphi_{\psi,0}=0$ so that
\begin{align*}
	P_{\kappa}L_{T}P_{\kappa}^{\perp}\varphi_{\psi}=P_{\kappa}L_{T}P^{\perp}(\varphi_{\psi,0}+\varphi_{\psi,1})=P_{\kappa}L_{T}P^{\perp}\varphi_{\psi,1},
\end{align*}
 since $\lambda_{\kappa}$ commutes with $L_{T}$ and $P$, and we used the following computation for the term $P_{\kappa}^{\perp}L_{T}\varphi_{\psi}$:
\begin{align*}
	P_{\kappa}^{\perp}L_{T}\varphi_{\psi} &=P_{\kappa}^{\perp}L_{T}\varphi_{\psi,0}-P_{\kappa}^{\perp}F_{T}^{\rm BCS}(\varphi_{\psi,0})\nonumber\\
	&=P_{\kappa}^{\perp}L_{T}\varphi_{\psi,0}-P_{\kappa}^{\perp}\big(L_{T}\varphi_{\psi,0}+\tfrac{1}{2}V^{1/2}N_{T}(-2V^{1/2}\varphi_{\psi,0})\big)\nonumber\\
	&=-\tfrac{1}{2}P_{\kappa}^{\perp}V^{1/2}N_{T}(-2V^{1/2}\varphi_{\psi,0}).
\end{align*}
    This proves \eqref{trial-property2}. Thus, to prove \eqref{trial-property}, we need to show that the $\mathcal H^1_h$-norm of the r.h.s.~of \eqref{trial-property2} is bounded by a constant times $h^{19/6}$. We prove this for each term individually.

We begin with the difference of nonlinear terms in the second-to-last line of \eqref{trial-property2}. By Corollary \ref{cor:NL-H1-bdd}, Lemmas \ref{lem:Hsnorm-Delta}, \ref{lem:GL-H2-norm}, \ref{lem:small-H1-largeGL} and \ref{lem:ortho-GL-regularity}, we have
\begin{align}
	\big\|P_{\kappa}V^{1/2}&\big(N_{T}'(-2V^{1/2}\varphi_{\psi})-N_{T}'(-2V^{1/2}P_{\kappa}\varphi_{\psi,0})\big)\big\|_{\cH_{h}^{s}}\nonumber\\
    &\lesssim\big(\|\varphi_{\psi}\|_{\cH_{h}^{1}}^{2}+\|P_{\kappa}\varphi_{\psi,0}\|_{\cH_{h}^{1}}^{2}\big)\big\|\varphi_{\psi}-P_{\kappa}\varphi_{\psi,0}\big\|_{\cH_{h}^{1}}\nonumber\\
	&\lesssim\big(\|\varphi_{\psi,0}\|_{\cH_{h}^{1}}^{2}+\|\varphi_{\psi,1}\|_{\cH_{h}^{1}}^{2}\big)\big(\|P_{\kappa}^{\perp}\varphi_{\psi,0}\|_{\cH_{h}^{1}}+\|\varphi_{\psi,1}\|_{\cH_{h}^{1}}\big)\nonumber\\
	&\lesssim h^{2}\big(\|\Psi_{>}\|_{H_{h}^{1}(\T_{h}^{d})}+\|\varphi_{\psi,1}\|_{\cH_{h}^{1}}\big)\lesssim h^{2}(h^{19/12}+h^{7/6})\|\psi\|_{H^{2}(\T^{d})}\nonumber\\
    &\lesssim h^{19/6}\|\psi\|_{L^{2}(\T^{d})}.
\end{align}
Here we used $\|P_{\kappa}^{\perp}\varphi_{\psi,0}\|_{\cH_{h}^{1}} = \|\Psi_{>}\|_{H_{h}^{1}(\T_{h}^{d})}$, which is shown similarly as \eqref{eq:phipsinorm}.

Next, we turn to the difference of nonlinear terms in the last line of \eqref{trial-property2}. We divide it into leading ($N_T'$) and subleading ($\widetilde N_T$) terms as in \eqref{NL-leading} and \eqref{NL-subleading} and then estimate them separately.  For the leading terms, we argue in the same way as above to obtain
\begin{align}
	\big\|P_{\kappa}^{\perp}&V^{1/2}\big(N_{T}'(-2V^{1/2}\varphi_{\psi})-N_{T}'(-2V^{1/2}\varphi_{\psi,0})\big)\big\|_{\cL_{h}^{2}}\nonumber\\
	&\lesssim\big(\|\varphi_{\psi}\|_{\cH_{h}^{1}}^{2}+\|\varphi_{\psi,0}\|_{\cH_{h}^{1}}^{2}\big)\|\varphi_{\psi,1}\|_{\cH_{h}^{1}}\lesssim h^{19/6}\|\psi\|_{L^{2}(\T^{d})}.
\end{align}
For the subleading term, by Corollary \ref{cor:NL-H1-bdd} with $\varepsilon=1/4$ and Lemmas \ref{lem:Hsnorm-Delta}, \ref{lem:GL-H2-norm} and \ref{lem:ortho-GL-regularity}, we obtain
\begin{align}\label{trial-estimate-NL-sub}
	\big\|P_{\kappa}^{\perp}&V^{1/2}\big(\widetilde{N}_{T}(-2V^{1/2}\varphi_{\psi})-\widetilde{N}_{T}(-2V^{1/2}\varphi_{\psi,0})\big)\big\|_{\cL_{h}^{2}}\nonumber\\
	&\lesssim\big\|\widetilde{N}_{T}(-2V^{1/2}\varphi_{\psi})\big\|_{\cL_{h}^{2}}+\big\|\widetilde{N}_{T}(-2V^{1/2}\varphi_{\psi,0})\big\|_{\cL_{h}^{2}}\nonumber\\
	&\lesssim h^{-3/4}\big(\|\varphi_{\psi,0}\|_{\cH_{h}^{1}}^{4}+\|\varphi_{\psi,1}\|_{\cH_{h}^{1}}^{4}\big)\lesssim h^{13/4}\|\psi\|_{L^{2}(\T^{d})}.
\end{align}
Consequently, we obtain
\begin{align}
	&\big\|P_{\kappa}^{\perp}V^{1/2}\big(N_{T}(-2V^{1/2}\varphi_{\psi})-N_{T}(-2V^{1/2}\varphi_{\psi,0})\big)\big\|_{\cL_{h}^{2}}\nonumber\\
	&\leq \big\|P_{\kappa}^{\perp}V^{1/2}\big(N_{T}'(-2V^{1/2}\varphi_{\psi})-N_{T}'(-2V^{1/2}\varphi_{\psi,0})\big)\big\|_{\cL_{h}^{2}}\nonumber\\
	&\quad\quad+	\big\|P_{\kappa}^{\perp}V^{1/2}\big(\widetilde{N}_{T}(-2V^{1/2}\varphi_{\psi})-\widetilde{N}_{T}(-2V^{1/2}\varphi_{\psi,0})\big)\big\|_{\cL_{h}^{2}}\lesssim h^{19/6}\|\psi\|_{L^{2}(\T^{d})}.
\end{align}

Next, we consider the last two terms in the first line of the r.h.s.~of \eqref{trial-property2}.  The nonlinear term $P_{\kappa}V^{1/2}\widetilde{N}_{T}(-2V^{1/2}\varphi_{\psi})$ can be estimated using \eqref{trial-estimate-NL-sub} and thus yields
\begin{align}
	\big\|P_{\kappa}V^{1/2}\widetilde{N}_{T}(-2V^{1/2}\varphi_{\psi})\big\|_{\cL_{h}^{2}}&\lesssim h^{13/4}\|\psi\|_{L^{2}(\T^{d})}.
\end{align}
For the term $P_{\kappa}L_{T}P^{\perp}\varphi_{\psi,1}$, we recall
\begin{align*}
	\varphi_{\psi,1}&=-(P_{\kappa}^{\perp}L_{T}P_{\kappa}^{\perp})^{-1} P_\kappa^\perp F_{T}^{\rm BCS}(\varphi_{\psi,0}),
\end{align*}
so
\begin{align}\label{trial-estimate-L3a}
	\big\|P_{\kappa}L_{T}P^{\perp}\varphi_{\psi,1}\big\|_{\cL_{h}^{2}}&\leq \big\|P_{\kappa}L_{T}P^{\perp} (P_\kappa^{\perp}L_{T}P_{\kappa}^{\perp})^{-1} \big\|_{\cL_{h}^{2}\rightarrow\cL_{h}^{2}} \|F_{T}^{\rm BCS}(\varphi_{\psi,0})\big\|_{\cL_{h}^{2}} \,.
\end{align}
The second factor on the right side is $\lesssim h^3\|\psi\|_{L^{2}(\T^{d})}$ by Lemmas \ref{lem:GL-H2-norm} and \ref{lem:ortho-GL-regularity}. To bound the remaining operator norm, we recall that $P_\kappa = P \lambda_\kappa$ and therefore, since $\lambda_\kappa$ commutes with $L_T$, 
$$
P_{\kappa}L_{T}P^{\perp} (P_\kappa^{\perp}L_{T}P_{\kappa}^{\perp})^{-1} 
= P_{\kappa}L_{T} \lambda_\kappa P^{\perp} (P_\kappa^{\perp}L_{T}P_{\kappa}^{\perp})^{-1} \,.
$$
It follows that
\begin{align}
    \label{trial-estimate-L3b}
    \big\|P_{\kappa}L_{T}P^{\perp} (P_\kappa^{\perp}L_{T}P_{\kappa}^{\perp})^{-1} \big\|_{\cL_{h}^{2}\rightarrow\cL_{h}^{2}}
\leq \| P_\kappa L_T \|_{\cL_{h}^{2}\rightarrow\cL_{h}^{2}} 
\big\| \lambda_{\kappa}P^{\perp}(P_{\kappa}^{\perp}L_{T}P_{\kappa}^{\perp})^{-1} \big\|_{\cL_{h}^{2}\rightarrow\cL_{h}^{2}} \,.
\end{align}
Proposition \ref{prop:PLP-inv-bdd} yields
\begin{align*}
    \lambda_{\kappa}P^{\perp}(P_{\kappa}^{\perp}L_{T}P_{\kappa}^{\perp})^{-2}P^{\perp}\lambda_{\kappa}&\lesssim \kappa^{-1} P^\bot \lambda_{\kappa}\Big(\one+\kappa^{-1}\frac{-\Delta_{X}}{\one-\Delta_{X}}\Big)\lambda_\kappa P^\bot \lesssim\kappa^{-1} \,,
\end{align*}
so
\[
\big\| \lambda_{\kappa}P^{\perp}(P_{\kappa}^{\perp}L_{T}P_{\kappa}^{\perp})^{-1} \big\|_{\cL_{h}^{2}\rightarrow\cL_{h}^{2}} \lesssim \kappa^{-1/2} \,.
\]
Meanwhile, Lemma \ref{lem:Q0LP-bdd} gives
\[
\| P_\kappa L_T \|_{\cL_{h}^{2}\rightarrow\cL_{h}^{2}} 
= \| L_T P_\kappa \|_{\cL_{h}^{2}\rightarrow\cL_{h}^{2}}
\lesssim \kappa \,.
\]
Inserting the previous two bounds into \eqref{trial-estimate-L3b}, we find from \eqref{trial-estimate-L3a} that
\begin{align}\label{trial-estimate-L3}
	\big\|P_{\kappa}L_{T}P^{\perp}\varphi_{\psi,1}\big\|_{\cL_{h}^{2}}
    &\lesssim\kappa^{1/2}h^{3}\|\psi\|_{L^{2}(\T^{d})}\lesssim h^{41/12}\|\psi\|_{L^{2}(\T^{d})}.
\end{align}

Finally, we estimate the first term on the r.h.s.~of \eqref{trial-property2}.  Again, we decompose
\begin{align}\label{trial-final}
	P_{\kappa}F_{T}^{\rm BCS,app}(\varphi_{\psi,0})&=\lambda_{\kappa}F_{T}^{\rm GL,sc}(\Psi_{<})\varphi_{*}\nonumber\\
	&\quad+\lambda_{\kappa}\Big(PL_{T}P_{\kappa}\varphi_{\psi}-\big(-\nabla_{X}\cdot\Lambda_{0}\nabla_{X}-\Lambda_{2}Dh^{2}\big)\Psi_{<}\varphi_{*}\Big)\nonumber\\
	&\quad+\lambda_{\kappa}\Big(\tfrac{1}{2}PV^{1/2}N_{T}'(-2V^{1/2}P_{\kappa}\varphi_{\psi,0})-(\Lambda_{3}|\Psi_{<}|^{2}\Psi_{<})\varphi_{*}\Big).
\end{align}
Note that $P_{\kappa}\varphi_{\psi,0}=\Psi_{<}\varphi_{*}$.  Then the last terms can be bounded in the same way as in Propositions \ref{prop:GL-matrix} and \ref{prop:GL-nonlinear}, thus yielding
\begin{align}
	\label{GL-trial-linear}&\big\|\lambda_{\kappa}\big(PL_{T}P_{\kappa}\varphi_{\psi}-\big(-\nabla_{X}\cdot\Lambda_{0}\nabla_{X}-\Lambda_{2}Dh^{2}\big)\Psi_{<}\varphi_{*}\big)\big\|_{\cL_{h}^{2}}\lesssim h^{13/4}\|\psi\|_{L^{2}(\T^{d})},\\
	\label{GL-trial-nonlinear}&\Big\|\lambda_{\kappa}\Big(\frac{1}{2}PV^{1/2}N_{T}'(-2V^{1/2}P_{\kappa}\varphi_{\psi,0})-(\Lambda_{3}|\Psi_{<}|^{2}\Psi_{<})\varphi_{*}\Big)\Big\|_{\cL_{h}^{2}}\lesssim h^{41/12}\|\psi\|_{L^{2}(\T^{d})}.
\end{align}
Finally, the first term on the r.h.s.~of \eqref{trial-final} is estimated directly by Lemma \ref{lem:GL-eqn-Psi<} and gives
\begin{align*}
	\big\|\lambda_{\kappa}F_{T}^{\rm GL,sc}(\Psi_{<})\varphi_{*}\big\|_{\cL_{h}^{2}}=\big\|\lambda_{\kappa}F_{T}^{\rm GL,sc}(\Psi_{<})\big\|_{L_{h}^{2}(\T_{h}^{d})}\lesssim h^{43/12}\|\psi\|_{L^{2}(\T^{d})}.
\end{align*}
Putting everything together yields \eqref{trial-property}, which completes the proof.
\end{proof}

In addition to the bound on the $\cL_{h}^2$-norm of $\varphi_\psi-V^{1/2}\alpha_\psi$ in Proposition \ref{prop:trial-property}, we also need a rough bound on its $\cH_{h}^{1}$-norm. This bound is significantly less precise but is nevertheless sufficient for our purpose. Note that we do not use the GL equation.

\begin{lemma}\label{lem:regularity-trial}
Let $\psi\in H^2(\T^d)$. Then it holds that
\begin{align}\label{regularity-trial}
	\|\varphi_{\psi} - V^{1/2}\alpha_{\psi}\|_{\cH_{h}^{1}}&\lesssim h^{7/6}\|\psi\|_{H^{2}(\T^{d})} + h^{2} \left( \|\psi\|_{H^{2}(\T^{d})}^3 + \|\psi\|_{H^{1}(\T^{d})}^9\right).
\end{align}
\end{lemma}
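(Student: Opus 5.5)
The plan is to write the quantity in \eqref{regularity-trial} through $F_T^{\rm BCS}$, exactly as in \eqref{trial2}:
$$
\varphi_\psi - V^{1/2}\alpha_\psi = F_T^{\rm BCS}(\varphi_\psi).
$$
Since $\varphi_\psi = \varphi_{\psi,0}+\varphi_{\psi,1}$, we then split
\begin{align*}
F_T^{\rm BCS}(\varphi_\psi) &= F_T^{\rm BCS}(\varphi_{\psi,0}) + L_T\varphi_{\psi,1} \\
&\quad + \tfrac12 V^{1/2}\big(N_T(-2V^{1/2}\varphi_\psi) - N_T(-2V^{1/2}\varphi_{\psi,0})\big).
\end{align*}
Each piece is bounded crudely in $\cH^1_h$. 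No cancellation and no GL equation are used, only $\psi\in H^2(\T^d)$. Throughout we use $\|\psi\|_{H^1(\T^d)}\le\|\psi\|_{H^2(\T^d)}$ freely.

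The first piece is handled by \eqref{ortho-GL-regularity} with $s=1$. It gives $h^2(\|\psi\|_{H^2}+\|\psi\|_{H^1}^3)$, which is of the admissible form $h^2\|\psi\|_{H^2} + h^2\|\psi\|_{H^1}^3$. For the third piece we do not estimate the difference, but each term separately. By Lemma \ref{lem:Hsnorm-Delta}, Proposition \ref{prop:BCS-NL-norm} with $s=1$ and Lemma \ref{lem:Lpnorm-Delta}, each term is $\lesssim h^{-1}\|\varphi\|_{\cH^1_h}^3$. Inserting \eqref{eq:phipsisize} then yields
$$
h^2\big(\|\psi\|_{H^2}+\|\psi\|_{H^1}^3\big)^3 \lesssim h^2\big(\|\psi\|_{H^2}^3 + \|\psi\|_{H^1}^9\big).
$$

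The middle piece is where one must be slightly careful. This is the only step needing attention, since a naive use of \eqref{ortho-GL-regularity2} would produce a term $h^{7/6}\|\psi\|_{H^1}^3$, which is not of the claimed form. Since $L_T$ is bounded on $\cL^2_h$ and commutes with $-\Delta_X$, we have $\|L_T\varphi_{\psi,1}\|_{\cH^1_h}\lesssim \|\varphi_{\psi,1}\|_{\cH^1_h}$. We then go back to the definition of $\varphi_{\psi,1}$ and split $F_T^{\rm BCS}(\varphi_{\psi,0})$ into its linear part $(L_T-\ell_{T_c})\varphi_{\psi,0}$ and its nonlinear part, as in the proof of Lemma \ref{lem:ortho-GL-regularity}. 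The inverse $(P_\kappa^\perp L_T P_\kappa^\perp)^{-1}$ costs $\kappa^{-1}=h^{-5/6}$ by Proposition \ref{prop:inv-bdd} and commutes with $-\Delta_X$. By \eqref{eq:trialroughlinear}, the linear part then contributes $\kappa^{-1}h^{2}\|\psi\|_{H^2} = h^{7/6}\|\psi\|_{H^2}$. The nonlinear part contributes $\kappa^{-1}h^{-1}\|\varphi_{\psi,0}\|_{\cH^1_h}^3 \lesssim h^{13/6}\|\psi\|_{H^1}^3 \le h^2\|\psi\|_{H^1}^3$.

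Finally we collect the three contributions. The leftover cubic terms $h^2\|\psi\|_{H^1}^3$ are absorbed into $h^2(\|\psi\|_{H^2}^3+\|\psi\|_{H^1}^9)$. This uses $b^3\le b + b^9$ together with $h^2\|\psi\|_{H^1}\le h^{7/6}\|\psi\|_{H^2}$. This gives \eqref{regularity-trial}.
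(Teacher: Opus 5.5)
Your decomposition is essentially the paper's, and your first and third pieces are handled correctly. The gap is in the middle piece, at exactly the step you singled out. Your bound for the nonlinear part of $\varphi_{\psi,1}$ has an exponent error. With $\kappa=h^{5/6}$ and $\|\varphi_{\psi,0}\|_{\cH^1_h}=h\|\psi\|_{H^1(\T^d)}$,
\[
\kappa^{-1}h^{-1}\|\varphi_{\psi,0}\|_{\cH^1_h}^3 = h^{-5/6}\,h^{-1}\,h^{3}\,\|\psi\|_{H^1(\T^d)}^3 = h^{7/6}\|\psi\|_{H^1(\T^d)}^3 ,
\]
not $h^{13/6}\|\psi\|_{H^1(\T^d)}^3$. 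The exponent $13/6$ is the $s=0$ one; in $\cH^1_h$, Proposition \ref{prop:BCS-NL-norm} costs an extra $h^{-1}$. So splitting $F_T^{\rm BCS}(\varphi_{\psi,0})$ into linear and nonlinear parts just reproduces \eqref{ortho-GL-regularity2} with $s=1$, and the term $h^{7/6}\|\psi\|_{H^1(\T^d)}^3$ you wanted to avoid survives. Your final absorption step cannot handle it, since it only deals with cubic terms of order $h^2$. Indeed, for $b:=\|\psi\|_{H^1(\T^d)}\approx\|\psi\|_{H^2(\T^d)}$ with $1\ll b\ll h^{-5/36}$, one has $h^{7/6}b^3\gg h^{7/6}b+h^2(b^3+b^9)$. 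As written, your argument therefore proves \eqref{regularity-trial} only with an additional $h^{7/6}\|\psi\|_{H^1(\T^d)}^3$ on the right-hand side.

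Your suspicion about this term is justified: the paper's own proof has the same weak spot. It quotes \eqref{ortho-GL-regularity2} for $\|L_T\varphi_{\psi,1}\|_{\cH^1_h}$ but records only $h^{7/6}\|\psi\|_{H^2(\T^d)}$; the cubic term is also dropped at the end of the proof of Lemma \ref{lem:ortho-GL-regularity}. This is harmless where the lemma is applied. For solutions of the GL equation, Lemma \ref{lem:GL-H2-norm} gives $\|\psi\|_{H^2(\T^d)}\lesssim 1$, so $h^{7/6}\|\psi\|_{H^1(\T^d)}^3\lesssim h^{7/6}\|\psi\|_{H^2(\T^d)}$.

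To get the bound exactly as stated for arbitrary $\psi\in H^2(\T^d)$, you need an extra ingredient that avoids the $\kappa^{-1}$ loss on the nonlinear part.
\begin{itemize}
\item On $\Ran\lambda_\kappa$ this is easy. There $\lambda_\kappa P_\kappa^\perp=\lambda_\kappa P^\perp$, so the inverse is $O(1)$ by \eqref{inv-bdd1}, and this part is $\lesssim h^2\|\psi\|_{H^1(\T^d)}^3$.
\item On $\Ran\lambda_\kappa^\perp$ you must show that the high-$X$-frequency part of $N_T(-2V^{1/2}\varphi_{\psi,0})$ is small, using that $\psi$ is smooth on the macroscopic scale. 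For example, commute $p_j$ through the resolvents so that $-\Delta_X$ falls on the factors $\varphi_{\psi,0}$; this gives an $\cH^2_h$-bound of order $h^3$ times powers of $\|\psi\|_{H^2(\T^d)}$. Combined with $\|\lambda_\kappa^\perp\sigma\|_{\cH^1_h}\le h\kappa^{-1/2}\|\sigma\|_{\cH^2_h}$ and the $\kappa^{-1}$ cost of the inverse, this yields order $h^{11/4}$.
\end{itemize}
The generic bound $h^{-2}\|\sigma\|_{\cL^6_h}^3$ from Proposition \ref{prop:BCS-NL-norm} is far too lossy for this purpose.
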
 

\begin{proof}
By \eqref{trial2} and the fact that $\ell_{T_{c}}\varphi_{\psi,0}=0$, we write
\begin{align}\label{varp-alpha-decomp}
	\varphi_{\psi}-V^{1/2}\alpha_{\psi}&=L_{T}\varphi_{\psi}+ \tfrac{1}{2}V^{1/2}N_{T}(-2V^{1/2}\varphi_{\psi})\nonumber\\
	&=(L_{T}-\ell_{T_{c}})\varphi_{\psi,0}+L_{T}\varphi_{\psi,1}+ \tfrac{1}{2}V^{1/2}N_{T}(-2V^{1/2}\varphi_{\psi}).
\end{align}
We will bound each term on the r.h.s of \eqref{varp-alpha-decomp}.

For the first term in \eqref{varp-alpha-decomp}, we use \eqref{eq:trialroughlinear} with $s=1$ and see that its $\cH_{h}^1$-norm is $\lesssim h^2\|\psi\|_{H^{1}(\T^{d})}$.

Next, since $K_{T}$ is bounded and commutes with $-\Delta_{X}$, by Lemma \ref{lem:ortho-GL-regularity}, the second term in \eqref{varp-alpha-decomp} is bounded as follows:
\begin{align}
	\big\|L_{T}\varphi_{\psi,1}\big\|_{\cH_{h}^{1}}&\leq \|L_{T}\|_{\cL_{h}^{2}\rightarrow\cL_{h}^{2}}\|\varphi_{\psi,1}\|_{\cH_{h}^{1}}\lesssim h^{7/6}\|\psi\|_{H^{2}(\T^{d})}.
\end{align}

Now, by Proposition \ref{prop:BCS-NL-norm}, Lemmas \ref{lem:Hsnorm-Delta}, \ref{lem:Lpnorm-Delta} and \eqref{eq:phipsisize}, we estimate the last term in \eqref{varp-alpha-decomp} as follows:
\begin{align}
	\big\|V^{1/2}N_{T}(-2V^{1/2}\varphi_{\psi})\big\|_{\cH_{h}^{1}} 
    & \lesssim h^{-1}\|V^{1/2}\varphi_{\psi}\|_{\cL_{h}^{6}}^{3}\lesssim h^{-1}\|\varphi_{\psi}\|_{\cH_{h}^{1}}^{3} \nonumber \\
    & \lesssim h^{2} \left( \|\psi\|_{H^{2}(\T^{d})}^3 + \|\psi\|_{H^{1}(\T^{d})}^9\right).
\end{align}
Putting everything together yields \eqref{regularity-trial}.
\end{proof}


\subsection{Completing the proof of Theorem \ref{thm:deriv-BdG}}\label{sec:pf-deriv-BdG2}

Finally, we finish the proof of Theorem \ref{thm:deriv-BdG}. Let $\psi\in H^1(\T^d)$ be a weak solution of the GL equation \eqref{GL-eqn} and construct $\alpha_{\psi}$ according to \eqref{trial}.

By the definition of $\alpha_{\psi}$, we obtain 
\begin{align}\label{final-deriv-BdG}
	\alpha_{\psi}&+\frac{1}{2}\Big[\tanh\Big(\frac{\beta H(-2V\alpha_{\psi})}{2}\Big)\Big]_{12}=\alpha_{\psi}-K_{T}V\alpha_{\psi}+\tfrac{1}{2}N_{T}(-2V\alpha_{\psi})\nonumber\\
	&=K_{T}V^{1/2}\big(\varphi_{\psi}-V^{1/2}\alpha_{\psi}\big) - \tfrac{1}{2} \left( N_{T}(-2V^{1/2}\varphi_{\psi}) - N_{T}(-2V\alpha_{\psi})\right).
\end{align}
The r.h.s.~has a linear and a nonlinear contribution, which we bound separately.

By the boundedness of operator $K_{T}V^{1/2}$ and Proposition \ref{prop:trial-property}, the linear term gives
\begin{align}\label{final-deriv-BdG-L}
	\big\|K_{T}V^{1/2}\big(\varphi_{\psi}-V^{1/2}\alpha_{\psi}\big)\big\|_{\cL_{h}^{2}}&\lesssim\big\|\varphi_{\psi}-V^{1/2}\alpha_{\psi}\big\|_{\cL_{h}^{2}}\lesssim h^{19/6}\|\psi\|_{L^{2}(\T^{d})}.
\end{align}

To estimate the nonlinear terms in \eqref{final-deriv-BdG}, we decompose them into leading and subleading terms as usual.  For the leading part, by Corollary \ref{cor:NL-H1-bdd}, \eqref{eq:phipsisize}, Lemmas \ref{lem:GL-H2-norm} and \ref{lem:regularity-trial}, we have
\begin{align}
	\big\|N_{T}'(-2V^{1/2}\varphi_{\psi})& -N_{T}'(-2V\alpha_{\psi}) \big\|_{\cL_{h}^{2}} \nonumber\\
    &\lesssim \big(\|\varphi_{\psi}\|_{\cH_{h}^{1}}^{2} + \|V^{1/2}\alpha_{\psi}\|_{\cH_{h}^{1}}^{2} \big) \big\|\varphi_{\psi} - V^{1/2}\alpha_{\psi}\big\|_{\cH_{h}^{1}} \nonumber\\
	&\lesssim \big(\|\varphi_{\psi}\|_{\cH_{h}^{1}}^{2} + \|V^{1/2}\alpha_{\psi}-\varphi_{\psi}\|_{\cH_{h}^{1}}^{2} \big) \big\|\varphi_{\psi} - V^{1/2}\alpha_{\psi}\big\|_{\cH_{h}^{1}} \nonumber \\
    &\lesssim h^{19/6}\|\psi\|_{L^{2}(\T^{d})}.
\end{align}
Similarly, for the subleading part, by Corollary \ref{cor:NL-H1-bdd} with $\varepsilon=1/4$, Lemmas \ref{lem:GL-H2-norm} and \ref{lem:regularity-trial}, we have
\begin{align}
	\big\|\widetilde{N}_{T}(-2V^{1/2}\varphi_{\psi})&-\widetilde{N}_{T}(-2V\alpha_{\psi})\big\|_{\cL_{h}^{2}}
    \leq \big\|\widetilde{N}_{T}(-2V^{1/2}\varphi_{\psi})\big\|_{\cL_{h}^{2}}
    + \big\|\widetilde{N}_{T}(-2V\alpha_{\psi})\big\|_{\cL_{h}^{2}} \nonumber \\
    & \lesssim h^{-3/4}\big(\|\varphi_{\psi}\|_{\cH_{h}^{1}}^{4}+\|V^{1/2}\alpha_{\psi}\|_{\cH_{h}^{1}}^{4}\big)\nonumber\\
	&\lesssim h^{-3/4}\big(\|\varphi_{\psi}\|_{\cH_{h}^{1}}^{4}+\|V^{1/2}\alpha_{\psi}-\varphi_{\psi}\|_{\cH_{h}^{1}}^{4}\big)\lesssim h^{13/4}\|\psi\|_{L^{2}(\T^{d})}.
\end{align}
By combining these estimates with \eqref{final-deriv-BdG-L}, we obtain \eqref{BdG-approx}.

\medskip

It remains to prove \eqref{Valpha-trial-rem}. We define $\xi_\psi$ by \eqref{Valpha-trial-decomp}. To simplify notations, we introduce $\varrho_{\psi},\eta_{\psi}\in\cL_{h}^{2}$ as
\begin{align}\label{rho-psi}
	\varrho_{\psi}&:=V^{1/2}\alpha_{\psi}-\varphi_{\psi},
\end{align}
and
\begin{align}\label{eta-psi}
	\eta_{\psi}&:=\alpha_{\psi}+\frac{1}{2}\Big[\tanh\Big(\frac{\beta H(-2V\alpha_{\psi})}{2}\Big)\Big]_{12}=(\one-K_{T}V)\alpha_{\psi}+\frac{1}{2}N_{T}(-2V\alpha_{\psi}),
\end{align}
which, thanks to Proposition \ref{prop:trial-property} and \eqref{BdG-approx}, satisfy the estimates
\begin{align*}
	\|\varrho_{\psi}\|_{\cH_{h}^{1}}&\lesssim h^{7/6}\|\psi\|_{L^{2}(\T^{d})},\quad\|\eta_{\psi}\|_{\cL_{h}^{2}}\lesssim h^{19/6}\|\psi\|_{L^{2}(\T^{d})}.
\end{align*}
In the next lemma, we prove a rough bound on the $\cH_{h}^{1}$-norm of $\eta_{\psi}$, which, as in Lemma \ref{lem:regularity-trial}, is much less precise compared to the corresponding $\cL_{h}^{2}$-norm in \eqref{BdG-approx}, but is sufficient for our purpose.

\begin{lemma}\label{lem:H1-bdd-eta}
If $\psi\in H^{2}(\T^{d})$, then it holds that
\begin{align}\label{H1-bdd-eta}
	\|\eta_{\psi}\|_{\cH_{h}^{1}} \lesssim h^{7/6}\|\psi\|_{H^{2}(\T^{d})} & + h^2 \left( \|\psi\|_{H^2(\T^d)}^3 + \|\psi\|_{H^1(\T^d)}^9 \right) \nonumber \\
    & + h^{5} \left( \|\psi\|_{H^2(\T^d)}^9 + \|\psi\|_{H^1(\T^d)}^{27} \right).
\end{align}
\end{lemma}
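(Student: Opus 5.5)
We start from the identity \eqref{final-deriv-BdG}, which writes $\eta_\psi$ as
\begin{align*}
\eta_\psi = -K_T V^{1/2}\varrho_\psi - \tfrac12\big( N_T(-2V^{1/2}\varphi_\psi) - N_T(-2V^{1/2}(\varphi_\psi+\varrho_\psi))\big),
\end{align*}
using $V\alpha_\psi = V^{1/2}(\varphi_\psi+\varrho_\psi)$. We then bound the $\cH^1_h$-norm of each term crudely, without trying to exploit cancellations between the two nonlinear terms. The GL equation is not used here, consistent with the hypothesis $\psi\in H^2(\T^d)$ only.

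For the linear term, $K_TV^{1/2}$ is bounded on $\cL^2_h$ and commutes with $-\Delta_X$ ($V$ depends only on $r$, and $K_T$ is a Fourier multiplier in $(q,p)$). Hence $\|K_TV^{1/2}\varrho_\psi\|_{\cH^1_h}\lesssim \|\varrho_\psi\|_{\cH^1_h}$. Lemma \ref{lem:regularity-trial} then gives exactly the first two contributions $h^{7/6}\|\psi\|_{H^2}+h^2(\|\psi\|_{H^2}^3+\|\psi\|_{H^1}^9)$ on the right side of \eqref{H1-bdd-eta}.

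For the nonlinear terms, we bound each separately by Proposition \ref{prop:BCS-NL-norm} with $s=1$ combined with Lemma \ref{lem:Lpnorm-Delta} (for $p=3$, which requires only $\cH^1_h$ in $d\le 3$). This gives
\begin{align*}
\|N_T(-2V^{1/2}\sigma)\|_{\cH^1_h}\lesssim h^{-1}\|\sigma\|_{\cH^1_h}^3 .
\end{align*}
With $\sigma=\varphi_\psi$, \eqref{eq:phipsisize} yields $h^{2}(\|\psi\|_{H^2}+\|\psi\|_{H^1}^3)^3\lesssim h^2(\|\psi\|_{H^2}^3+\|\psi\|_{H^1}^9)$. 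With $\sigma=\varphi_\psi+\varrho_\psi$, we cube the sum of \eqref{eq:phipsisize} and \eqref{regularity-trial}. The part $h^{-1}\|\varphi_\psi\|^3$ is handled as before. The part $h^{-1}(h^{7/6}\|\psi\|_{H^2})^3$ is $h^{5/2}\|\psi\|_{H^2}^3$, which is dominated by the $h^2$ term for $h\le1$. The part $h^{-1}\big(h^2(\|\psi\|_{H^2}^3+\|\psi\|_{H^1}^9)\big)^3$ equals $h^5(\|\psi\|_{H^2}^9+\|\psi\|_{H^1}^{27})$, which is precisely the last contribution in \eqref{H1-bdd-eta}. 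We absorb mixed terms using $(a+b+c)^3\lesssim a^3+b^3+c^3$.

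The main point to check is the legitimacy of applying Proposition \ref{prop:BCS-NL-norm} with $s=1$ to the argument $-2V\alpha_\psi$. This requires $V^{1/2}\alpha_\psi\in\cH^1_h$, which we obtain from the triangle inequality together with Lemma \ref{lem:regularity-trial} and \eqref{eq:phipsisize}. It also requires $\cL^6_h$ control via Lemma \ref{lem:Lpnorm-Delta}, which uses $V\in L^{3/2}$, available from (A2). The remaining estimates are bookkeeping of powers of $h$ and of the norms of $\psi$.
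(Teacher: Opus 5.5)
Your proposal is correct and follows the paper's proof essentially step for step. You use the same decomposition of $\eta_\psi$ into $K_TV^{1/2}(\varphi_\psi-V^{1/2}\alpha_\psi)$ plus the two nonlinear terms. The linear part is handled the same way, via commutation with $-\Delta_X$ and Lemma \ref{lem:regularity-trial}, and each nonlinear term is bounded separately through Proposition \ref{prop:BCS-NL-norm} with $s=1$ and Lemma \ref{lem:Lpnorm-Delta}. Your power counting is accurate, including the $h^2(\cdots)$ contribution from the linear term, which the paper's written proof omits but its stated bound contains.
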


We emphasize that in our application, $\psi$ will be a solution of the GL equation and therefore $\|\psi\|_{H^2(\T^d)}\lesssim 1$ by Lemma \ref{lem:GL-H2-norm}. Therefore, only the first term on the right side of \eqref{H1-bdd-eta} will be relevant.

\begin{proof}
By \eqref{trial} and \eqref{eta-psi}, we can write
\begin{align}\label{eta-psi2}
	\eta_{\psi}&=(\one-K_{T}V)\alpha_{\psi}+\frac{1}{2}N_{T}(-2V\alpha_{\psi})\nonumber\\
	&=K_{T}V^{1/2}\big(\varphi_{\psi}-V^{1/2}\alpha_{\psi}\big)+\frac{1}{2}N_{T}(-2V\alpha_{\psi})-\frac{1}{2}N_{T}(-2V^{1/2}\varphi_{\psi}).
\end{align}
Since $K_{T}V^{1/2}$ commutes with $-\Delta_{X}$ and is bounded as an operator on $\cL_{h}^{2}$, by Lemma \ref{lem:regularity-trial}, the linear part of \eqref{eta-psi2} yields 
\begin{align*}
	\big\|K_{T}V^{1/2}\big(\varphi_{\psi}-V^{1/2}\alpha_{\psi}\big)\big\|_{\cH_{h}^{1}}&\lesssim\big\|K_{T}V^{1/2}\big\|_{\cL_{h}^{2}\rightarrow\cL_{h}^{2}}\big\|\varphi_{\psi}-V^{1/2}\alpha_{\psi}\big\|_{\cH_{h}^{1}}\lesssim h^{7/6}\|\psi\|_{H^{2}(\T^{d})}.
\end{align*}

Next, for the nonlinear part in \eqref{eta-psi2}, we use the triangle inequality, Proposition \ref{prop:BCS-NL-norm}, Lemmas \ref{lem:Lpnorm-Delta} and \ref{lem:regularity-trial} to obtain 
\begin{align*}
	&\big\|N_{T}(-2V\alpha_{\psi})-N_{T}(-2V^{1/2}\varphi_{\psi})\big\|_{\cH_{h}^{1}}\nonumber\\
	&\lesssim h^{-1}\big(\|V\alpha_{\psi}\|_{\cL_{h}^{6}}^{3}+\|V^{1/2}\varphi_{\psi}\|_{\cL_{h}^{6}}^{3}\big)\lesssim h^{-1}\big(\|V^{1/2}\alpha_{\psi}\|_{\cH_{h}^{1}}^{3}+\|\varphi_{\psi}\|_{\cH_{h}^{1}}^{3}\big)\nonumber\\
	&\lesssim h^{-1}\big(\|\varphi_{\psi}\|_{\cH_{h}^{1}}^{3}+\|\varphi_{\psi}-V^{1/2}\alpha_{\psi}\|_{\cH_{h}^{1}}^{3}\big) \\
    & \lesssim 
    h^2 \left( \|\psi\|_{H^2(\T^d)}^3 + \|\psi\|_{H^1(\T^d)}^9 \right) + h^{5} \left( \|\psi\|_{H^2(\T^d)}^9 + \|\psi\|_{H^1(\T^d)}^{27} \right).
\end{align*}
Putting everything together yields \eqref{H1-bdd-eta}.
\end{proof}

Next, we note that one can write
\begin{align*}
	\alpha_{\psi}&=K_{T}V\alpha_{\psi}-\frac{1}{2}N_{T}(-2V\alpha_{\psi})+\eta_{\psi}\nonumber\\
	&=K_{T}V^{1/2}\big(\varphi_{\psi}+\varrho_{\psi}\big)-\frac{1}{2}N_{T}(-2V^{1/2}(\varphi_{\psi}+\varrho_{\psi}))+\eta_{\psi}\nonumber\\
	&=K_{T}V^{1/2}\varphi_{\psi,0}+K_{T}V^{1/2}\big(\varphi_{\psi,1}+\varrho_{\psi}\big)-\frac{1}{2}N_{T}(-2V^{1/2}(\varphi_{\psi}+\varrho_{\psi}))+\eta_{\psi}.
\end{align*}
We observe that, since $\alpha_{*}=k_{T_{c}}V\alpha_{*}=k_{T_{c}}V^{1/2}\varphi_{*}$ and 
\begin{align*}
	h \, \psi(h(x+y)/2) \, \alpha_{*}(x-y)&=\Psi(X) \, \alpha_{*}(r)=\alpha_{*}(r)\int_{\R^{d}}\varphi_{*}(s) \,\varphi_{\psi,0}(\zeta_{X}^{s},\zeta_{X}^{-s}) \, ds \nonumber\\
	&=(k_{T_{c}}V^{1/2}\varphi_{*})(r)\int_{\R^{d}}\varphi_{*}(s)\varphi_{\psi,0}(\zeta_{X}^{s},\zeta_{X}^{-s}) \, ds \nonumber\\
	&=\big(k_{T_{c}}V^{1/2}P\varphi_{\psi,0}\big)(\zeta_{X}^{r},\zeta_{X}^{-r}).
\end{align*}
Thus, using the same arguments as in \eqref{xi}--\eqref{xi2} and the fact that $P\varphi_{\psi,0}=\varphi_{\psi,0}$, we can write
\begin{align}\label{xi-psi}
	\xi_{\psi}&=(K_{T}-k_{T_{c}})V^{1/2}\varphi_{\psi,0}+K_{T}V^{1/2}\big(\varphi_{\psi,1}+\varrho_{\psi}\big)\nonumber\\
	&\quad\quad\quad\quad\quad\quad\quad-\frac{1}{2}N_{T}(-2V^{1/2}(\varphi_{\psi}+\varrho_{\psi}))+\eta_{\psi}.
\end{align}

Now, we estimate the $\cL_{h}^{2}$-norm of each term on the r.h.s. of \eqref{xi-psi}.  By Lemma \ref{lem:H1-bdd-eta}, we already know that $\|\eta_{\psi}\|_{\cH_{h}^{1}}\lesssim h^{7/6}\|\psi\|_{L^{2}(\T^{d})}$.  For the nonlinear term in the second line of \eqref{xi-psi}, by Proposition \ref{prop:BCS-NL-norm}, Lemmas \ref{lem:ortho-GL-regularity} and \ref{lem:regularity-trial}, we obtain
\begin{align*}
	\big\|N_{T}(-2V^{1/2}(\varphi_{\psi}+\varrho_{\psi}))\big\|_{\cH_{h}^{1}}&\lesssim h^{-1}\big(\|V^{1/2}\varphi_{\psi}\|_{\cL_{h}^{6}}^{3}+\|V^{1/2}\varrho_{\psi}\|_{\cL_{h}^{6}}^{3}\big)\nonumber\\
	&\lesssim h^{-1}\big(\|\varphi_{\psi}\|_{\cH_{h}^{1}}^{3}+\|\varrho_{\psi}\|_{\cH_{h}^{1}}^{3}\big)\lesssim h^{-1}\big(h^{3}+h^{7/2}\big)\|\psi\|_{L^{2}(\T^{d})}\nonumber\\
    &\lesssim h^{2}\|\psi\|_{L^{2}(\T^{d})}.
\end{align*}
Similarly, for the second term in the first line of \eqref{xi-psi}, using the boundedness of $K_{T}V^{1/2}$ as an operator on $\cL_{h}^{2}$ and the fact that it commutes with $-\Delta_{X}$, we obtain from Lemmas \ref{lem:ortho-GL-regularity} and \ref{lem:regularity-trial} that
\begin{align*}
	\big\|K_{T}V^{1/2}(\varphi_{\psi,1}+\varrho_{\psi})\big\|_{\cH_{h}^{1}}&\lesssim \|K_{T}V^{1/2}\|_{\cL_{h}^{2}\rightarrow\cL_{h}^{2}}\big(\|\varphi_{\psi,1}\|_{\cH_{h}^{1}}+\|\varrho_{\psi}\|_{\cH_{h}^{1}}\big)\lesssim h^{7/6}\|\psi\|_{L^{2}(\T^{d})}.
\end{align*}

Finally, for the first term in the first line of \eqref{xi-psi}, we use Lemmas \ref{lem:KTkT-diff}, \ref{lem:kT-diff}, \ref{lem:GL-H2-norm} and Assumption (A5) to obtain
\begin{align*}
	&\big\|\big(K_{T}-k_{T_{c}}\big)V^{1/2}\varphi_{\psi,0}\big\|_{\cH_{h}^{1}}\lesssim\big\|(K_{T}-k_{T})V^{1/2}\varphi_{\psi,0}\big\|_{\cH_{h}^{1}}+\big\|(k_{T}-k_{T_{c}})V^{1/2}\varphi_{\psi,0}\big\|_{\cH_{h}^{1}}\nonumber\\
	&=\big\|(K_{T}-k_{T})V^{1/2}(\one-h^{-2}\Delta_{X})^{1/2}\varphi_{\psi,0}\big\|_{\cL_{h}^{2}} \nonumber \\
    & \quad +\big\|(k_{T}-k_{T_{c}})V^{1/2}(\one-h^{-2}\Delta_{X})^{1/2}\varphi_{\psi,0}\big\|_{\cL_{h}^{2}}\nonumber\\
	&\lesssim\Big\|\frac{\Delta_{X}}{\one-\Delta_{X}}(\one-h^{-2}\Delta_{X})^{1/2}\varphi_{\psi,0}\Big\|_{\cL_{h}^{2}}+(\beta-\beta_{c})\big\|(\one-h^{-2}\Delta_{X})^{1/2}\varphi_{\psi,0}\big\|_{\cL_{h}^{2}}\nonumber\\
	&\lesssim\Big\|\frac{(\one-h^{-2}\Delta_{X})^{1/2}}{\one-\Delta_{X}}\Big\| \big\|\Delta_{X}\Psi\big\|_{L_{h}^{2}(\T_{h}^{d})}+h^{2}\|\Psi\|_{H_{h}^{1}(\T_{h}^{d})}\nonumber\\
	&\lesssim h^2 \|\Delta\psi\|_{L^2(\T^d)} +h^{3}\|\psi\|_{L^{2}(\T^{d})}.\nonumber
\end{align*}
Here we also used the fact that all of $V,K_{T},k_{T}$ and $k_{T_{c}}$ commute with $-\Delta_{X}$, and, together with an application of spectral theory,
\begin{align*}
	\Big\|\frac{(\one-h^{-2}\Delta_{X})^{1/2}}{\one-\Delta_{X}}\Big\|&\leq\sup_{q\in 2\pi\Z^{d}}\frac{(\one+|q|^{2})^{1/2}}{\one+h^2 |q|^{2}}\lesssim h^{-1}.
\end{align*}
By Lemma \ref{lem:GL-H2-norm}, we obtain the bound $h^2 \|\psi\|_{L^2(\T^d)}$ for the first term in the first line of \eqref{xi-psi}.

Consequently, by putting everything together, we obtain \eqref{Valpha-trial-rem}, which completes the proof of Theorem \ref{thm:deriv-BdG}.$\hfill\qed$


\appendix

\section{Derivation of the BdG equation}\label{app:derbdg}

In this appendix we discuss the relation between the BCS functional and the BdG equation, namely we show that the BdG equation is satisfied by critical points of the BCS functional; see also \cite[Theorem 2.2]{ChenSigal-20}.

The BCS energy functional at inverse temperature $\beta=T^{-1}>0$ is
\begin{align}\label{BCS-energy}
	\cF_{T}^{\rm BCS}(\gamma,\alpha)&:=\Tr_{\T^d_{h}}[\fh\gamma]-\frac{1}{\beta}\Tr_{\T^d_{h}}[S(\Gamma_{(\gamma,\alpha)})]-\int_{\R}dx\int_{\T^d_{h}}dy \, V(x-y)|\alpha(x,y)|^{2},
\end{align}
where, as before, $\Tr_{\T^d_{h}}$ is the trace per unit volume, where $\fh=-\Delta-\mu$ and where 
$$
S(\Gamma):=-\frac{1}{2}\big(\Gamma\ln\Gamma+(1-\Gamma)\ln(1-\Gamma)\big)
$$ 
is the entropy operator. We call a pair $(\gamma,\alpha)$ \emph{admissible} if it satisfies \eqref{BdG-1pdm}, if $\gamma$ and $\alpha$ are $h^{-1}\Z^d$-periodic and if $\Tr_{\T^d_h}(-\Delta +1)^{1/2}\gamma(-\Delta+1)^{1/2}<\infty$.

For background on this functional and admissible states, we refer to \cite{BacLieSol-94,HainHaSeriSolov-08,HainzlSeiringer-16}.

Let $(\gamma,\alpha)$ be admissible, assume that $\ker\Gamma$ and $\ker(1-\Gamma)$ are trivial and assume that $(\gamma,\alpha)$ is a critical point of $\cF_T^{\rm BCS}$ in the sense that for any admissible pair $(\tilde\gamma,\tilde\alpha)$ we have
$$
\frac{d}{dt}\Big|_{t=0} \cF_T^{\rm BCS}((1-t)\gamma+t\tilde\gamma,(1-t)\alpha+t\tilde\alpha) = 0 \,.
$$
(Note that $((1-t)\gamma+t\tilde\gamma,(1-t)\alpha+t\tilde\alpha)$ is admissible.) Computing the derivative, we see that we have
\begin{align*}
    & \Tr_{\T^d_{h}}[\fh(\tilde\gamma-\gamma)]+\frac{1}{2\beta}\Tr_{\T^d_{h}}(\Gamma_{(\tilde\gamma,\tilde\alpha)} - \Gamma_{(\gamma,\alpha)}) \ln\frac{\Gamma_{(\gamma,\alpha)}}{1-\Gamma_{(\gamma,\alpha)}}\nonumber\\
	&\quad\quad\quad\quad\quad-2\Re\int_{\R}dx\int_{\T^d_{h}}dy \, V(x-y)\overline{\alpha(x,y)}(\tilde\alpha(x,y)-\alpha(x,y)) = 0 \,.
\end{align*}
Since
\begin{align*}
    & \Tr_{\T^d_{h}}[\fh(\tilde\gamma-\gamma)]-2\Re\int_{\R}dx\int_{\T^d_{h}}dy \, V(x-y)\overline{\alpha(x,y)}(\tilde\alpha(x,y)-\alpha(x,y)) \\
    & = \frac12\Tr_{\T^d_{h}}H(-2V\alpha)(\Gamma_{(\tilde\gamma,\tilde\alpha)} - \Gamma_{(\gamma,\alpha)})      
\end{align*}
and since $(\tilde\gamma,\tilde\alpha)$ are arbitrary, we deduce that
\begin{align}\label{BdG-eqn}
H(-2V\alpha) + \frac1\beta \ln\frac{\Gamma_{(\gamma,\alpha)}}{1-\Gamma_{(\gamma,\alpha)}} = 0 \,.
\end{align}
We can then solve \eqref{BdG-eqn} for $\Gamma_{\gamma,\alpha}$ and obtain
\begin{align}\label{BdG-eqn2}
	\Gamma_{(\gamma,\alpha)}&=\frac{1}{1+e^{\beta H(-2V\alpha)}}=\frac{1}{2}-\frac{1}{2}\tanh\Big(\frac{\beta H(-2V\alpha)}{2}\Big),
\end{align}
where we have used the identities
\begin{align*}
	\frac{1}{1+e^{x}}+\frac{1}{1+e^{-x}}=1,\quad\frac{1}{1+e^{x}}-\frac{1}{1+e^{-x}}=-\tanh(x/2).
\end{align*}
We observe that the r.h.s.~of \eqref{BdG-eqn2} depends only on $\alpha$, so that $\gamma$ can be seen as a function of $\alpha$, i.e.,
\begin{align*}
	\gamma&\equiv \gamma(-2V\alpha)=\Big[\frac{1}{1+e^{\beta H(-2V\alpha)}}\Big]_{11},
\end{align*}
where $[\cdot]_{ij}$ denotes the $ij$-entry of the operator in its block decomposition.  Hence, it suffices to consider the following nonlinear equation for $\alpha$:
\begin{align}\label{BdG-eqn3}
	\alpha&=\Big[\frac{1}{1+e^{\beta H(-2V\alpha)}}\Big]_{12}=-\frac{1}{2}\Big[\tanh\Big(\frac{\beta H(-2V\alpha)}{2}\Big)\Big]_{12}.
\end{align}
This is precisely equation \eqref{BdG-eqn3a}.



\begin{thebibliography}{17}
	
	\bibitem{BacLieSol-94} 
	V. Bach, E. H. Lieb and J. P. Solovej,  
	``Generalized Hartree--Fock theory and the Hubbard model",  
	J. Stat. Phys. {\bf 76} (1994), no. 1-2, 3--89.
	
	\bibitem{BCS-57}
	J. Bardeen, L. Cooper, J. Schrieﬀer, 
	``Theory of superconductivity," 
	\textit{Phys. Rev.} \textbf{108} (1957), 1175--1204.
	
	
	
	\bibitem{ChenSigal-20}
	L. Chen, I. M. Sigal.
	``Vortex lattices and the Bogoliubov--de Gennes equations,"
	\textit{Adv. Math.} \textbf{380} (2021), 107546.
	
	\bibitem{DeuHainzlMaier-22} 
    A. Deuchert, C. Hainzl, M. Maier,
	``Microscopic derivation of Ginzburg--Landau theory and the BCS critical temperature shift in a weak homogeneous magnetic field,"
    \textit{Probab. Math. Phys.} \textbf{4} (2023), no. 1, 1--89.

	\bibitem{DeuHainzlMaier-23}
	A. Deuchert, C. Hainzl, M. Maier,
	``Microscopic derivation of Ginzburg--Landau theory and the BCS critical temperature shift in general external fields,"
	\textit{Calc. Var.} \textbf{62}(203), (2023), 1--81.
	
	\bibitem{FrankHainzl-18}
	R. L. Frank, C. Hainzl,
	``The BCS Critical Temperature in a Weak External Electric Field via a Linear Two-Body Operator,"
    In: Cadamuro, D., Duell, M., Dybalski, W., Simonella, S. (eds) Macroscopic Limits of Quantum Systems. MaLiQS 2017. Springer Proceedings in Mathematics \& Statistics, vol 270. Springer, Cham. 

    \bibitem{FHL-19}
    R. L. Frank, C. Hainzl, E. Langmann, ``The BCS critical temperature in a weak homogeneous magnetic field," \textit{J. Spectr.} Theory \textbf{9} (2019), no. 3, 1005--1062.
    
	\bibitem{FHSS-12}
	R. L. Frank, C. Hainzl, R. Seiringer, J. P. Solovej,
	``Microscopic derivation of Ginzburg--Landau theory,"
	\textit{J. Amer. Math. Soc.} \textbf{25} (2012), 667--713.
	
	
	\bibitem{FHSS-16}
	R. L. Frank, C. Hainzl, R. Seiringer, J. P. Solovej,
	``The external field dependence of the BCS critical temperature,"
	\textit{Commun. Math. Phys.} \textbf{342} (2016), 189--216.
	
	\bibitem{deGennes-66}
	P. G. de Gennes,
	\textit{Superconductivity of Metals and Alloys, }
	Westview Press (1966).
	
	\bibitem{GinzLandau-50}
	V. L. Ginzburg, L. D. Landau, 
	``On the theory of superconductivity,"
	\textit{Zh. Eksp. Teor. Fiz.} \textbf{20} (1950), 1064--1082.
	
	\bibitem{Gorkov-59}
	L. P. Gor'kov, 
	``Microscopic derivation of the Ginzburg--Landau equations in the theory of superconductivity,"
	\textit{Zh. Eksp. Teor. Fiz.} \textbf{36} (1959), 1918--1923; English translation \textit{Soviet Phys. JETP} \textbf{9} (1959), 1364--1367.
	
	\bibitem{HainHaSeriSolov-08}
	C. Hainzl, E. Hamza, R. Seiringer, J. P. Solovej, 
	``The BCS functional for general pair interactions,"
	\textit{Commun. Math. Phys.} \textbf{281} (2008), 349--367.
%
%

    \bibitem{HainzlSeiringer-16}
    C. Hainzl, R. Seiringer,
    ``The BCS functional of superconductivity and its mathematical properties," \textit{J. Math. Phys.} \textbf{57} (2016), 021101.

	\bibitem{Hebey}
	E. Hebey,
	``\textit{Nonlinear Analysis on Manifolds: Sobolev Spaces and Inequalities},"
	American Mathematical Society (1999).
		
	\bibitem{LanLifs-81}
	L. D. Landau, E. M. Lifshitz, L. P. Pitaevskii,
	\textit{Course of Theoretical Physics, Volume 9: Statistical Physics, Part 2 (1st ed)}. Pergamon Press (1980).
	
\end{thebibliography}
\end{document}